\newtheorem{thm}{Theorem}[section]
\newtheorem{cor}[thm]{Corollary}
\newtheorem{lem}[thm]{Lemma}
\newtheorem{prop}[thm]{Proposition}
\newtheorem{conj}[thm]{Conjecture}
\theoremstyle{remark}
\newtheorem{rmk}[thm]{Remark}
\theoremstyle{definition}
\newtheorem{defi}[thm]{Definition}
\numberwithin{equation}{section}
\def \C {\mathbb C}
\def \Z {\mathbb Z}
\def \F {\mathcal F}
\def \E {\mathcal E}
\def \V {\mathcal V}
\def \P {\mathbb P}
\def \T {\mathcal T}
\def \p {\partial}
\def \bp {\bar{\partial}}
\def \dVol {\text{dVol}}
\def \O {\mathcal{O}}
\def \l0 {\lim_{r\rightarrow 0}}
\def \Id {\text{Id}}
\def \Gr {Gr}
\def \Tr {\text{Tr}}
\def \Sing {\text{Sing}}
\def \S {\mathcal S}
\def \G {\mathcal G}
\def \I {\mathcal I}
\DeclareMathOperator{\rk}{rank}
\def \C {\mathbb C}
\def \Z {\mathbb Z}
\def \F {\mathcal F}
\def \P {\mathbb P}
\def \p {\partial}
\def \bp {\bar{\partial}}
\def \dvol {\text{dVol}}
\def \O {\mathcal{O}}
\def \l0 {\lim_{r\rightarrow 0}}
\def \E {\mathcal{E}}
\def \Tr {\text{Tr}}
\def \mod {\text{mod}}
\begin{document}
\title{Singularities of Hermitian-Yang-Mills connections and Harder-Narasimhan-Seshadri filtrations}
\date{\today}
\author{Xuemiao Chen\thanks{University of Maryland, xmchen@umd.edu}, Song Sun\thanks{UC Berkeley, sosun@berkeley.edu}}

\maketitle
\begin{abstract}
This is the first of a series of papers where we relate tangent cones of Hermitian-Yang-Mills connections at an isolated singularity to the complex algebraic geometry of the underlying reflexive sheaf, when the sheaf is locally modelled on the pull-back of a holomorphic vector bundle from the projective space. In this paper we shall impose an extra assumption that the graded sheaf determined by the Harder-Narasimhan-Seshadri filtrations of the vector bundle is reflexive. In general we conjecture that the tangent cone is uniquely determined by the double dual of the associated graded object of a Harder-Narasimhan-Seshadri filtration of an algebraic tangent cone, which is a certain torsion-free sheaf on the projective space. In this paper we also prove this conjecture when there is an algebraic tangent cone which is locally free and stable. 
\end{abstract}
\tableofcontents

\section{Introduction}
Let $(X,\omega)$ be an $n$ dimensional K\"ahler manifold, and $(E, H)$ be a Hermitian vector bundle over $X\setminus S$ for a closed set $S\subset X$ with locally finite real codimension four Hausdorff measure. A smooth unitary connection $A$ on $(E, H)$ is called an \emph{admissible Hermitian-Yang-Mills connection} on $X$ if the following two conditions hold
\begin{enumerate}[(1)]
\item $A$ satisfies the Hermitian-Yang-Mills equation
\begin{equation}\label{eqn1-1}
F^{0, 2}_A=0; \ \ \ \ \sqrt{-1}\Lambda_\omega F_A=n\mu\cdot \Id_E, 
\end{equation}
where $\mu\in \mathbb R$ is a constant. 
In the literature, (\ref{eqn1-1}) is also usually referred to as the \emph{Hermitian-Einstein equation} with  Einstein constant $n\mu$ --in this paper we will use both terminologies interchangeably;
\item $A$ has locally finite Yang-Mills energy, i.e. for any compact subset $K\subset X$, we have
\begin{equation}
\int_{K\setminus S} |F_A|^2 \frac{\omega^n}{n!}< \infty
\end{equation}
\end{enumerate}
In particular, $\bp_A$ defines a holomorphic structure on $E$ over $X\setminus S$. We denote the resulting Hermitian holomorphic vector bundle by $(\mathcal E, H)$. Then $A$ is the Chern connection associated to $(\mathcal E, H)$. Bando and Siu \cite{BS} proved that $\mathcal E$ \footnote{Strictly speaking here $\mathcal E$ should be the locally free sheaf generated by local holomorphic sections of $E$. In this paper, to make the notations simpler, we will not distinguish between a holomorphic vector bundle and the corresponding locally free sheaf.} naturally extends to a reflexive sheaf over  the whole $X$, and $H$ (hence $A$) extends smoothly  to the complement of the singular set of $\E$, which is a complex analytic subvariety of codimension at least three. 

There are several motivations for studying admissible Hermitian-Yang-Mills connections. First, from the complex geometric point of view, it is proved by Bando-Siu \cite{BS} that a polystable reflexive sheaf over a compact K\"ahler manifold always admits an admissible Hermitian-Yang-Mills connection, as a generalization of the Donaldson-Uhlenbeck-Yau theorem \cite{Donaldson1, UY} for holomorphic vector bundles. As a result, these connections have their relevance in algebraic geometry. Second, from gauge theoretic point of view, by \cite{Nakajima} (see also \cite{Tian})
 these admissible Hermitian-Yang-Mills connections naturally arise at the boundary of the moduli space of  smooth Hermitian-Yang-Mills connections with bounded Yang-Mills energy, as Uhlenbeck limits, and therefore they play an important role in understanding the structure of the compactified moduli space in gauge theory over higher dimensional  K\"ahler manifolds. The third motivation is that, in connection with gauge theory over $G_2$ manifolds,  singularities of admissible Hermitian-Yang-Mills connections in dimension three are expected to provide one possible model for  singularities of $G_2$ instantons (when the $G_2$ metric is close to the product of $S^1$ with a three dimensional Calabi-Yau metric) (see \cite{SE1, W, SW1, JW} for recent research along this direction).

Given an admissible Hermitian-Yang-Mills connection $A$, a natural and interesting question is to study the behavior of $A$ near a singular point $x\in S$. In this paper, we will always restrict to the special case when $S$ is discrete. This is largely due to technical reasons and we certainly hope this assumption will be removed in the future. So without loss of generality, we may assume that $X$ is the unit ball $B=\{|z|<1\}$ in $\C^n$, and $S=\{0\}$. We also always assume $n\geq 3$ in this paper, since the singularity is removable if $n\leq 2$. 
By the monotonicity formula of Price \cite{Price}, there exist \emph{tangent cones} of $A$ at $0$. These are obtained by pulling back the connection $A$ via dilations $z\mapsto \lambda z$ and then taking all possible Uhlenbeck limits as $\lambda\rightarrow 0$.  (see Section \ref{Tangent Cones}.)  It is  proved by Tian in \cite{Tian} that any such limit $A_\infty$ is an admissible Hermitian-Yang-Mills connection on $(\C^n, \omega_0)$ (here $\omega_0$ is the flat metric on $\C^n$) with vanishing Einstein constant, hence it defines a reflexive sheaf  on $\C^n$, which we denote by $\E_\infty$. Notice however, the \emph{uniqueness} of tangent cones is not a priori guaranteed, as in many other geometric analytic problems.

Our goal in this paper is to study the algebro-geometric meaning of the tangent cones in terms of the sheaf $\mathcal E$. To state the main result, we recall that given a torsion free coherent sheaf $\mathcal F$ over $\C\P^{n-1}$,  one can define a \emph{Harder-Narasimhan-Seshadri filtration} of subsheaves (c.f.  \cite{Kobayashi})
$$0=\mathcal F_0\subset \mathcal F_1\subset \mathcal F_2\subset \cdots \subset \mathcal F_l=\mathcal F$$
such that the quotients $Q_i=\mathcal F_i/\mathcal F_{i-1}$ are torsion free and stable, and furthermore, the slopes of $Q_i$ are decreasing. Such a filtration may not be unique in general but the double dual of the associated graded object 
$$(\Gr^{HNS}(\mathcal F))^{**}=\bigoplus_{i=1}^{l}Q_i^{**}$$
is canonical, i.e. it is uniquely determined by $\F$ up to isomorphism. 

\

For the convenience of later discussions we make a few notations and conventions which will be used throughout this paper.

\begin{itemize}
\item We will \emph{always} assume the Einstein constant $n\mu$ is zero. This does not affect the generality since we can always  achieve it simply by multiplying the Hermitian metric $H$ by a positive smooth function on $B$, and this does not change the tangent cone.
\item  We will choose local holomorphic coordinates  $\{z_1, \cdots, z_n\}$ centered at $0$ so that 
\begin{equation}
\omega=\omega_0+O(|z|^2)
\end{equation}
where
$$\omega_0=\sqrt{-1}\p\bp |z|^2 $$
is the flat K\"ahler metric. 
When we study tangent cones the effect caused by the difference is negligible. Also we emphasize that on the tangent cones the background K\"ahler metric is given by $\omega_0$. 
\item We denote by  $\omega_{FS}=\sqrt{-1}\p\bp \log |z|^2$ the Fubini-Study metric on $\C\P^{n-1}$.
\item On $\C^n$, we denote by $r=|z|$ the radial function for the flat metric $\omega_0$, and denote by $\p_r$ the unit radial vector field on $\C^n$. For $r>0$ we denote by $B_r$ the ball $\{|z|< r\}$ in $\C^n$, and denote $B_r^*=B_r\setminus\{0\}$; when $r=1$ we drop the subscript $r$. 
\item When we perform an integration we often omit the volume form in the formula, which is always to be understood as using the one associated to the obvious K\"ahler metric in the context. When we work over $B$ we will use the metric $\omega$ and when we work on the tangent cones we will use the metric $\omega_0$. 
\item For any open subset  $U\subset\C^n$ containing $0$, we shall denote by $\iota_U: U\setminus \{0\}\hookrightarrow U$ the obvious inclusion map, and $\pi_
U: U\setminus \{0\}\rightarrow \C\P^{n-1}$ the obvious projection map. When $U=\C^n$ we ignore the subscript and write $\iota=\iota_{\C^n}$,
and  $\pi=\pi_{\C^n}$.     
\end{itemize}

\begin{thm}\label{main}
Suppose $\mathcal E$ is a reflexive sheaf on $B$ with $0$ as an isolated singularity, such that 
\begin{itemize}
\item $\mathcal E$ is isomorphic to ${\iota_B}_*\pi_B^* \underline\E$ for some holomorphic vector bundle $\underline\E$ over $\C\P^{n-1}$;
\item $\Gr^{HNS}(\underline\E)$ is a reflexive sheaf.
\end{itemize}
  Then for any admissible Hermitian-Yang-Mills connection $A$ on $\E$ there is a unique tangent cone $A_\infty$ at $0$. More precisely,  the corresponding $\E_\infty$ is isomorphic to $\iota_*\pi^*\Gr^{HNS}(\underline\E)$, and $A_\infty$ is gauge equivalent to the natural Hermitian-Yang-Mills cone connection that is induced by the admissible Hermitian-Yang-Mills connection on $\Gr^{HNS}(\underline\E)$.
\end{thm}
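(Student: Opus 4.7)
My plan is to combine an algebraic construction of the expected cone with an analytic convergence argument for rescalings. First I would build the candidate. Write the HNS filtration of $\underline{\E}$ as $0=\underline{\E}^0\subset\cdots\subset\underline{\E}^l=\underline{\E}$ with stable quotients $\underline Q_i=\underline{\E}^i/\underline{\E}^{i-1}$ of strictly decreasing slopes $\mu_1>\cdots>\mu_l$. Since $\Gr^{HNS}(\underline{\E})=\bigoplus_i\underline Q_i$ is reflexive by hypothesis, each $\underline Q_i$ is reflexive and stable, so Bando--Siu supplies an admissible HYM connection $\underline A_i$ on each with Einstein constant $\mu_i$. Taking the block-diagonal sum on $\Gr^{HNS}(\underline{\E})$, pulling back via $\pi$, and conformally twisting each summand by a radial power $r^{\alpha_i}$ to absorb the non-vanishing slopes, produces an admissible HYM cone connection $A_0$ on $\pi^*\Gr^{HNS}(\underline{\E})$ with vanishing Einstein constant on $\C^n\setminus\{0\}$; this is the proposed limit.

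Next I would study any tangent cone $A_\infty$. By Tian's theorem it is admissible HYM on $\C^n$ with zero Einstein constant and reflexive sheaf $\E_\infty$. The rescaling procedure commutes with the $\C^*$-dilation, so $A_\infty$ is scale-invariant and $\E_\infty|_{\C^n\setminus\{0\}}\cong\pi^*\underline{\E}_\infty$ for some reflexive sheaf $\underline{\E}_\infty$ on $\C\P^{n-1}$. The restriction of $A_\infty$ to each radial sphere descends to an admissible HYM connection on $\underline{\E}_\infty$, forcing $\underline{\E}_\infty$ to be polystable, hence an orthogonal direct sum of stable factors.

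The heart of the argument is the identification $\underline{\E}_\infty\cong\Gr^{HNS}(\underline{\E})$. The HNS filtration pulls back to a filtration of $\E|_{B^*}$, and the reflexivity hypothesis allows it to extend as a filtration $\E^\bullet$ of $\E$ across the origin. Decomposing the Chern connection $A$ smoothly relative to the orthogonal splitting of $\E^\bullet$, it acquires off-diagonal second fundamental forms $\beta_{ij}$ intertwining $\pi^*\underline Q_i$ and $\pi^*\underline Q_j$ for $i<j$. The strict inequality $\mu_i>\mu_j$, together with the HYM equation and the Price monotonicity formula, should yield a weighted energy estimate showing that each $\beta_{ij}$ decays strictly faster than the scale-invariant rate; in the rescaled Uhlenbeck limit these off-diagonal pieces vanish, and the limiting sheaf holomorphically splits as $\pi^*\Gr^{HNS}(\underline{\E})$ with the direct-sum HYM structure.

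Finally, uniqueness up to gauge follows from Bando--Siu's uniqueness theorem for admissible HYM connections on polystable reflexive sheaves once the common holomorphic structure has been identified. The main obstacle, in my view, is precisely the weighted energy estimate above: one must show that the strict slope decrease in the HNS filtration translates into quantitative analytic decay of the second fundamental forms, and this analytic-algebraic coupling is what both rules out other possible algebraic limits and pins down a single rescaling limit rather than merely subsequential ones.
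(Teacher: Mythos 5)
Your overall architecture (construct the model cone on $\pi^*\Gr^{HNS}(\underline\E)$, show any subsequential tangent cone is isomorphic to it, conclude uniqueness) matches the paper's, and the first and last steps are sound. But the central step --- the ``weighted energy estimate'' asserting that the second fundamental forms $\beta_{ij}$ of the Harder--Narasimhan--Seshadri filtration decay strictly faster than the scale-invariant rate --- is not proved, and no mechanism for it is given; it is essentially the entire content of the theorem, and Price monotonicity does not deliver it. Monotonicity controls $r^{4-2n}\int_{B_r}|F_A|^2$ and the radial contraction $\iota_{\p_r}F_A$ (whence the limit is a cone), but it says nothing about the off-diagonal blocks of $A$ relative to a filtration that is defined holomorphically and is in no way adapted to the unknown Hermitian--Einstein metric $H$. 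Two further obstructions: (i) the HNS filtration is by subsheaves, not subbundles, so the orthogonal splitting and the $\beta_{ij}$ are only defined away from $\pi^{-1}(\Sing(\Gr^{HNS}(\underline\E)))$ and can blow up there --- this is exactly why the paper needs the delicate cut-off of Lemma \ref{cut-off} and the comparison metric of Proposition \ref{constructedmetric}; (ii) even granting decay of the $\beta_{ij}$, the Uhlenbeck limit is taken after unknown gauge transformations and only away from a bubbling set, so ``the off-diagonal pieces vanish in the limit'' does not by itself produce an isomorphism $\E_\infty\cong\pi^*\Gr^{HNS}(\underline\E)$, especially when a stable factor occurs with multiplicity in the graded object.

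The paper's route is genuinely different: it works with holomorphic sections rather than with connection blocks. One proves a three-circle convexity (Proposition \ref{prop3-6}) giving every local holomorphic section a degree $d(s)$ valued in a rigid rational spectrum; one shows $d(s)=\mu_i$ for $s=\pi^*\underline s$ with $\underline s\in H^0(\underline\E_i)\setminus H^0(\underline\E_{i-1})$, the upper bound coming from a comparison metric built from approximate critical Hermitian structures on $\underline\E$ (Hermitian--Yang--Mills flow results of Jacob and Sibley--Wentworth) glued across the singular set by the cut-off, and the lower bound from stability; sections of known degree then pass to homogeneous sections on every tangent cone, and an induction using Corollary \ref{cor2-5} converts them into isomorphisms from the stable quotients onto direct summands of $\E_\infty$. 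Note also a small error in your second paragraph: $\underline\E_\infty$ is a direct sum of stable sheaves of possibly different slopes (one polystable piece per holonomy eigenvalue of the cone), not polystable as a whole --- consistent with the fact that $\Gr^{HNS}(\underline\E)$ itself is not polystable unless $\underline\E$ is semistable.
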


\begin{rmk}
\begin{itemize}
\item The second assumption is due to a technical reason that will be explained in Section \ref{semistable} (see Remark \ref{rmk3-15}). We will remove this assumption and prove a stronger result in \cite{CS2}, which also includes the information on the bubbling set.
\item Under our assumption we also say $\E$ is \emph{homogeneous} near $0$, see Definition \ref{defi3-31}. We refer the readers to Section \ref{Tangent Cones} for the precise definition of a Hermitian-Yang-Mills cone. 
\item Roughly speaking, the theorem says that the tangent cone, a priori an analytically defined object, is indeed a purely algebro-geometric invariant of the reflexive sheaf $\mathcal E$. Viewed from another angle, we obtain the interesting fact that  the graded sheaf $\iota_*\pi^*(\Gr^{HNS}(\underline\E))$ can  be recovered from any admissible Hermitian-Yang-Mills connection on $\E$ which is homogeneous near $0$. 

\item When $\underline \E$ is the direct sum of polystable bundles, Theorem \ref{main} is also proved recently by Jacob-S\'a Earp-Walpuski \cite{JSW}, using pure PDE methods by working in a fixed holomorphic gauge. In this particular case we know $\E_\infty$ is isomorphic to $\E$, and \cite{JSW} furthermore proves polynomial rate of convergence to the tangent cone under the fixed holomorphic gauge. Our proof is based on a combination of PDE analysis and complex-geometric methods, and the main difficulty lies in the fact that in general we can not expect to obtain estimates in a fixed holomorphic gauge. The new technique allows us to prove a more general result when $\E_\infty$ is different from $\E$, and more importantly when $\E_\infty$ has singularities,  but the statement on the rate of convergence does not follow directly from our proof. In the general case it is also interesting to understand the rate of convergence, and we leave this for future study.
\end{itemize}
\end{rmk}

In general an isolated singularity of a reflexive sheaf is not necessarily homogeneous. Let $p: \widehat B\rightarrow B$ be the blow-up at the origin, then one can ask for extension of $p^*({\E)|_{\hat B\setminus \{0\}}}$ across the exceptional divisor $p^{-1}(0)\simeq \C\P^{n-1}$ over $\widehat B$ as a reflexive sheaf.  Given such an extension we denote by $\underline{\widehat \E}$ the restriction to $\C\P^{n-1}$, and we call $\underline{\widehat \E}$ an \emph{algebraic tangent cone} of $\E$ at $0$.\footnote{It would probably be better to call the corresponding sheaf $\iota_*\pi^*\underline{\widehat \E}$ on $\C^n$ the algebraic tangent cone. Our choice of notation makes the notations simpler in later discussion.}   In the homogeneous case above when $\E={\iota_B}_*\pi_B^*\underline \E$, it is easy to see that $\underline \E$ itself is an algebraic tangent cone. For general non-homogeneous singularities such an algebraic tangent cone serves as a homogeneous approximation of $\E$ itself, and is in general not unique. The next result says that if we can find an algebraic tangent cone $\underline{\widehat \E}$ which is locally free and stable, then the same result holds as in the homogeneous case, with $\underline\E$ replaced by $\underline{\widehat\E}$.   For a more detailed discussion and a conjectural picture in the general setting we refer the readers to Section \ref{Section3-3}.

\begin{thm} \label{thm2}
Suppose $\E$ is a reflexive sheaf on $B$ with isolated singularity at $0$, such that there is an algebraic tangent cone given by a stable vector bundle $\underline{\widehat \E}$. 
Then for any admissible Hermitian-Yang-Mills connection $A$ on $\E$ there is a unique tangent cone $A_\infty$ at $0$, and the corresponding $\E_\infty$ is isomorphic to $\iota_*\pi^*\underline{\widehat \E}$
, and $A_\infty$ is gauge equivalent to the natural Hermitian-Yang-Mills cone connection that is induced by the Hermitian-Yang-Mills connection on $\underline{\widehat \E}$.

\end{thm}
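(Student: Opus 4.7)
The plan is to reduce the theorem to the stable homogeneous case already handled by Theorem \ref{main}, by proving that every analytic tangent cone must coincide, as a reflexive sheaf, with the homogeneous model $\pi^*\underline{\widehat\E}$ built from the algebraic tangent cone; the Donaldson-Uhlenbeck-Yau theorem will then force uniqueness of the Hermitian-Yang-Mills connection up to gauge.

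I would begin by fixing an arbitrary convergent subsequence of rescalings $z\mapsto\lambda_i z$ with $\lambda_i\to 0$, producing a tangent cone $A_\infty$ on a reflexive sheaf $\E_\infty$ on $\C^n$. Dilation invariance of the limit forces $\E_\infty$ to be equivariant under the $\C^*$-action on $\C^n$, so $\E_\infty|_{B^*}\cong\pi^*\underline{\E_\infty}$ for some reflexive sheaf $\underline{\E_\infty}$ on $\C\P^{n-1}$, and $A_\infty$ descends to an admissible Hermitian-Yang-Mills connection on $\underline{\E_\infty}$. The standard relationship between the HYM equation and polystability then implies that $\underline{\E_\infty}$ is polystable, and a Chern-class computation relating the blown-up geometry on $\widehat B$ with the dilation limits shows that its slope agrees with the slope of $\underline{\widehat\E}$.

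The crucial step is the identification $\underline{\E_\infty}\cong\underline{\widehat\E}$. For this I would work on the blow-up $p\colon\widehat B\to B$ together with the reflexive extension $\widehat\E$ across the exceptional divisor $D=p^{-1}(0)\simeq\C\P^{n-1}$. In a tubular neighborhood of $D$, realized inside the total space of $\O_{\C\P^{n-1}}(-1)$, the rescalings on $\C^n$ correspond precisely to the natural $\C^*$-action on the fibres; the family $\E_{\lambda_i}$ degenerates algebraically to the homogeneous model $\pi^*\underline{\widehat\E}$, while on the analytic side the rescaled connections Uhlenbeck-converge to $A_\infty$. Matching these two degenerations should yield a nonzero sheaf morphism between $\underline{\widehat\E}$ and $\underline{\E_\infty}$; because $\underline{\widehat\E}$ is stable and the two sheaves share rank and slope, the morphism is forced to be an isomorphism. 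Since this identification is independent of the chosen subsequence and the HYM metric on the stable bundle $\underline{\widehat\E}$ is unique up to positive scalar by Donaldson-Uhlenbeck-Yau, every tangent cone is gauge equivalent to the stated cone connection and hence the tangent cone is unique.

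The principal technical obstacle will be the matching step: a priori the analytic tangent cone could differ from the algebraic one by a codimension-two modification or by genuine bubbling, and the stability hypothesis on $\underline{\widehat\E}$ is precisely what forbids such discrepancies. Making the argument rigorous will require the same synthesis of refined PDE estimates on the rescaled connections and complex-geometric comparisons that underpins Theorem \ref{main}, so as to extract the nonzero sheaf morphism in a controlled way and to rule out loss of information when passing to reflexive hulls of the algebraic limits.
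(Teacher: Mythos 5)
Your high-level skeleton agrees with the paper's: exhibit a nonzero homogeneous morphism from $\pi^*\underline{\widehat\E}$ to $\E_\infty$ and let stability of $\underline{\widehat\E}$ (via Corollary \ref{cor2-5} and Corollary \ref{cor2-6}) force it to be an isomorphism, after which uniqueness of the Hermitian-Einstein metric gives gauge equivalence. However, the step you yourself flag as the "principal technical obstacle" — extracting the nonzero morphism from a "matching" of the algebraic degeneration on $\widehat B$ with the Uhlenbeck limit — is precisely the content of the proof, and your proposal does not supply a mechanism for it. As the introduction of the paper stresses, the danger is that every candidate section could have infinite degree (or degree strictly larger than $\mu(\underline{\widehat\E})$) and hence limit to the zero section on the tangent cone; bubbling and the absence of a one-sided curvature bound prevent any direct H\"ormander-type grafting from the cone. "Matching the two degenerations" does not by itself rule this out.

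The two missing ingredients are concrete. First, one needs a supply of holomorphic sections of $\E$ near $0$ tied to $\underline{\widehat\E}$: the paper obtains these by an Ohsawa--Takegoshi extension argument (Proposition \ref{holomorphicsectioninverse}), showing $H^0(\widehat B,\widehat\E\otimes\phi^*\O(k))\to H^0(\C\P^{n-1},\underline{\widehat\E}\otimes\O(k))$ is surjective for $k$ large, and then pushing forward to $B$. Second, and more importantly, one must compute the degree $d(s)=\mu(\underline{\widehat\E})$ of each such nonzero section: this is done by building a comparison metric $H_\epsilon=|z|^{2\mu}\phi^*\underline H_\epsilon$ from an approximately Hermitian--Einstein metric on the stable bundle $\underline{\widehat\E}$, estimating $|z|^2|\Lambda_{\omega_0}F_{H_\epsilon}|\le\epsilon+C|z|$ (Lemma \ref{lem3-31}), and invoking the PDE comparison of Lemmas \ref{lem2-19} and \ref{lem2-30} to get the two-sided bound $C|z|^{\epsilon}H_\epsilon\le H\le C|z|^{-\epsilon}H_\epsilon$. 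This degree computation is what guarantees that the normalized limits of these sections are nontrivial homogeneous sections of degree exactly $\mu$ (hence land in a simple HYM cone summand with the right Einstein constant, by Lemma \ref{lem3-2}) and what makes the induced map $\tau:\pi^*\underline{\widehat\E}\to\E_\infty$ well defined at points where a section vanishes. Your proposed alternative of identifying the slope of $\underline{\E_\infty}$ by a separate Chern-class computation is not justified either: a priori energy can be lost to bubbling, and in the paper the slope identification is a consequence of the degree computation rather than an independent topological argument. Without these two steps the proof does not close.
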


\

We finish the introduction with some discussion of the ideas involved in the proof of the above results. The key point is that in order to identify the limit Hermitian-Yang-Mills connection on the tangent cones, by a simple uniqueness theorem,  it suffices to first determine the underlying reflexive sheaf  which is a question of \emph{complex} geometric nature. In order to do this, we follow the basic principle that a reflexive sheaf can be understood by its holomorphic sections and we are lead to studying the behavior of holomorphic sections under rescaling. Motivated by the study on the algebraic structure of singularities of K\"ahler-Einstein metrics \cite{DS2012, DS2015, HS}, we prove a convexity result (see Proposition \ref{prop3-6}) 
for these holomorphic sections, which is a type of \emph{three-circle theorem}. The techniques developed in \cite{DS2015, HS} are robust and apply in greater generality, as long as one can obtain  a rigidity statement on the set of possible growth orders (which we can refer to as the \emph{spectrum})  of a homogeneous section on a tangent cone (which a priori may not  be unique). In our case this is  possible due to the fact that such growth orders are related to the slopes of reflexive sheaves on $\C\P^{n-1}$, which are always rational hence do not admit continuous deformations.

The convexity result implies in particular that any non-zero local holomorphic section $s$ of $\E$ always has a well-defined \emph{degree} $d(s)$ at $0$ (see (\ref{eqn3-1})) which is either a finite number in the spectrum, or is equal to $\infty$; moreover, if $d(s)<\infty$, then $s$ gives rise to homogeneous sections of degree $d(s)$ on \emph{all} the tangent cones.

 Roughly speaking, $d(s)$ measures the vanishing order of $s$ at $0$ with respect to the unknown Hermitian-Einstein metric. 
Notice a priori $d(s)$ can be $\infty$ in which case we would get the trivial zero section on the tangent cones, and then it would not provide useful information for us. One of the interesting aspects in the proof of our results lies in showing that $d(s)$ is always finite if $s$ is non-zero, and moreover, it can be a priori determined in terms of the slopes appeared in the successive quotients of the Harder-Narasimhan filtration of $\underline \E$ in the context of Theorem \ref{main}. This is quite different from the approach in  \cite{DS2015}, where one constructs local  holomorphic functions of finite degree by grafting holomorphic functions from the tangent cones and applying the \emph{H\"ormander construction}.  In our setting this construction does not work in any straightforward fashion, since we lack  H\"ormander's $L^2$ estimate in the absence of one-sided bound on the curvature of the Hermitian-Einstein metric; in fact, due to possible bubbling and the removable singularity theorem of Hermitian-Yang-Mills connections, one expects that in general not all the homogeneous sections on tangent cones may arise as limits. 

Instead, we go back to refine the PDE estimates for the Hermitian-Einstein equation on the original reflexive sheaf $\E$. The results of Bando-Siu \cite{BS} provide good control between two Hermitian-Einstein metrics, and with more delicate analysis (see Section 2.3)  to control $d(s)$ it suffices to construct a \emph{good} comparison Hermitian metric which is approximately Hermitian-Einstein in terms of the smallness of an $L^1$ integral on the mean curvature $\Lambda F$, and with respect to which one can understand the vanishing order of a holomorphic section $s$. This is easy in the case when $\underline \E$ is a direct sum of stable vector bundles since one can write down an explicit Hermitian-Einstein metric on $\E$ (roughly speaking, as a homogenous propagation of the corresponding Hermitian-Einstein metric on $\underline \E$),  and this observation has been used in \cite{JSW} to derive an $L^\infty$ bound, which has further lead to a precise decay rate of the convergence of the Hermitian-Yang-Mills connection to the tangent cone. 

In the general context of Theorem \ref{main} we make use of the important recent results of Jacob,  Sibley and Wentworth \cite{Jacob1, Jacob2, Sibley, SW} on the long time behavior of the Hermitian-Yang-Mills flow on compact K\"ahler manifolds. In the semistable case one obtains an approximately Hermitian-Einstein metric on $\underline \E$ which is good enough to tell us the degree $d(s)$. In the unstable case $\underline \E$ can only admit an approximately Hermitian-Yang-Mills connection which is not necessarily Hermitian-Einstein, in the sense that the mean curvature tensor is approximately block diagonal but not proportional to the identity matrix (the numbers appearing in the blocks are different ones given by the slopes of different pieces of the quotients from the Harder-Narasimhan filtration). One can try to compensate this deviation on $\E$ by making use of the fiber directions of the projection $\pi$. This solves the problem when the Harder-Narasimhan filtration is given by sub-bundles only; however, when the filtration has singularities, one can only perform this away from the singularities, and we need a more delicate choice of cut-off functions (see Lemma \ref{cut-off}). 

After understanding the degrees of holomorphic sections explicitly, one can start building non-trivial homomorphisms from the various subsheaves $\E_i$ appeared in the Harder-Narasimhan-Seshadri filtration of $\E$ to $\E_\infty$. The slope stability of the successive quotients in the Harder-Narasimhan-Seshadri filtration is then used to show that these homomorphisms must pass to isomorphisms between these quotients and direct summands of $\E_\infty$. An extra complication arises in that one also needs to deal with the \emph{Seshadri filtration} of a semistable sheaf. The latter is not a canonical object and we do not have an intrinsic characterization of it in terms of the Hermitian-Yang-Mills connection $A$. The difficulty is taken care of by refining the techniques in \cite{DS2015}. In our actual  proof of Theorem \ref{main} in Section \ref{section3-2}, to make the arguments clear we will treat cases of increasing generality,  see Sections 3.2.1-3.2.3.

A similar idea applies to prove Theorem \ref{thm2}. A good comparison metric is constructed out of the natural metric on the stable algebraic tangent cone $\underline{\widehat \E}$, using the natural relation between  $\E$ and $\underline{\widehat \E}$. The notion of algebraic tangent cone in our context does not seem to be well-known in the literature, so we give a slightly more detailed account of this. In the last subsection of this paper we discuss examples of reflexive sheaves which are not homogeneous. In particular, there are concrete non-trivial examples where Theorem \ref{thm2} can be applied.

\

\noindent \textbf{Acknowledgements:} We are grateful to Professor Jason Starr for helpful discussion concerning singularities of reflexive sheaves. We would like to thank Aleksander Doan, Simon Donaldson, Richard Thomas and Thomas Walpuski for useful comments. We thank the anonymous referee for helpful comments which greatly improves the presentation. This work was supported by a grant from the Simons Foundation (488633, S.S.).   S. S. is partially supported by an Alfred P. Sloan fellowship and NSF grant DMS-1708420.

\section{Preliminaries}

\subsection{Harder-Narasimhan-Seshadri filtration and canonical metrics}
In this section, we denote $X=\C\P^{n-1}$ endowed with the Fubini-Study metric $\omega_{FS}$ although the results apply to general compact K\"ahler manifolds. Recall a coherent sheaf $\F$ on $X$ is \emph{torsion free} if the natural map $\F\rightarrow \F^{**}$ is injective and \emph{reflexive} if the map is an isomorphism.  The singular set $\Sing(\F)$ is the set of points $x\in X$ where $\F_x$ is not free over $\O_{X,x}$. We know that $\Sing(\F)$ is always a complex analytic subset of $X$. It has complex co-dimension at least two if $\F$ is torsion free, and at least three if $\F$ is reflexive. A good nontrivial local example of a reflexive sheaf can be given by the sheaf $\iota_*\pi^*\underline \F$ on $\C^n$, where $\underline\F$ is a holomorphic vector bundle on $\C\P^{n-1}$, for example, the tangent bundle $\T_{\P^{n-1}}$ of $\C\P^{n-1}$.

The \emph{slope} of a coherent sheaf $\F$ is defined as 

\begin{equation}
\mu(\F):=\frac{2\pi  \int_X c_1(\F)\wedge\omega_{FS}^{n-2}}{\rk(\F)\int_{X}\omega_{FS}^{n-1}}\in \mathbb Q
\end{equation}
Here $c_1(\F)$ can be understood as the first Chern class of the \emph{determinant line bundle} of $\F$, which is always an integer,  and $\rk(\F)$ denotes the rank of $\F$.

\begin{defi}
A torsion free sheaf $\F$ is 
\begin{itemize}
\item \emph{semistable} if for all coherent subsheaves $\F'\subset \F$ with $rank(\F')>0$ we have $\mu(\F')\leq \mu(\F)$; 
\item  \emph{stable} if for all subsheaves $\F'\subset \F$ with $0<rank(\F')<rank(\F)$ we have $\mu(\F')< \mu(\F)$; 
\item \emph{polystable} if $\F$ is the direct sum of stable sheaves with equal slope;
\item \emph{unstable} if $\F$ is not semistable. 
\end{itemize}
\end{defi} 

The following definition is taken from Bando-Siu \cite{BS}
\begin{defi}
An \emph{admissible} Hermitian metric on $\F$ is a smooth Hermitian metric defined on $\F|_{X\setminus \Sing(\F)}$ such that the corresponding Chern connection $A$ satisfies $\int_{X\setminus \Sing(\F)}|F_A|^2\dVol_{\omega_{FS}}<\infty$,  and that $|\Lambda_{{\omega_{FS}}}F_A|$ is uniformly bounded on $X\setminus\Sing(\F)$; it is an \emph{admissible Hermitian-Einstein metric} if furthermore $\sqrt{-1}\Lambda_{\omega_{FS}}F_A=(n-1)\mu(\F)\Id$.
\end{defi}

By definition it follows that the Chern connection of an admissible Hermitian-Einstein metric is indeed an admissible Hermitian-Yang-Mills connection as defined in the introduction. Conversely, by \cite{BS} any admissible Hermitian-Yang-Mills connection on $\F$ defines a unique reflexive sheaf together with an admissible Hermitian-Einstein metric so that $A$ is the corresponding Chern connection. From now on, we will use the two terminologies interchangeably. We also drop the word ``admissible" when the meaning is clear from the context. 

The following theorem is proved by Donaldson and Uhlenbeck-Yau in the case of vector bundles, and later generalized by Bando-Siu to reflexive sheaves.

\begin{thm}[Donaldson-Uhlenbeck-Yau \cite{Donaldson1, Donaldson2, UY}, Bando-Siu \cite{BS}]\label{DUYBS}
A reflexive sheaf $\F$ over a compact K\"ahler manifold $(X, \omega)$ admits an admissible Hermitian-Einstein metric if and only if it is polystable.
\end{thm}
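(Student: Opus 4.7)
The plan is to prove the two implications separately. For the direction that existence of an admissible Hermitian-Einstein metric $H$ on $\F$ forces polystability, I would start with any coherent subsheaf $\F'\subset \F$ with $0<\rk(\F')<\rk(\F)$. On the open locus where both sheaves are locally free, the inclusion is a holomorphic section $\iota$ of $\F'^{*}\otimes \F$, and the induced metric is again Hermitian-Einstein with a known Einstein constant computed from the two slopes. A standard Bochner identity then shows that $\Delta |\iota|^{2}\geq 2(n-1)(\mu(\F)-\mu(\F'))|\iota|^{2}+|\nabla \iota|^{2}$ on the smooth locus; integrating and using the admissibility hypothesis $\int |F_{H}|^{2}<\infty$ together with the high codimension of $\Sing(\F)$ and $\Sing(\F')$ to absorb boundary contributions yields $\mu(\F')\leq \mu(\F)$. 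Equality forces $\nabla \iota=0$, which produces a holomorphic orthogonal splitting $\F=\F'\oplus (\F')^{\perp}$; iterating decomposes $\F$ as a direct sum of stable summands of common slope.

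For the converse I would reduce at once to the stable case, since a direct sum of admissible Hermitian-Einstein metrics with a common Einstein constant is again admissible Hermitian-Einstein. For stable $\F$ I would follow the Donaldson heat-flow approach extended to reflexive sheaves. Fix a smooth auxiliary admissible metric $H_{0}$ on $\F|_{X\setminus \Sing(\F)}$ and evolve by
\begin{equation*}
H_{t}^{-1}\partial_{t}H_{t}=-2\bigl(\sqrt{-1}\Lambda_{\omega}F_{H_{t}}-(n-1)\mu(\F)\Id\bigr).
\end{equation*}
Following Bando-Siu I would first solve the flow on an exhaustion $X\setminus U_{\varepsilon}$ by shrinking neighborhoods of $\Sing(\F)$ with Dirichlet-type boundary data $H_{0}$; parabolic theory gives long-time existence on each slab, and uniform energy bounds via Chern-Weil identities combined with the maximum principle applied to $\tr\log(H_{t}H_{0}^{-1})$ let me pass to the limit to obtain a globally defined flow on $X\setminus\Sing(\F)$ with uniformly bounded mean curvature and finite Yang-Mills energy.

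The heart of the proof is convergence as $t\to\infty$, and this is where stability enters and where the main obstacle lies. I would monitor Donaldson's functional $M(H_{0},H_{t})$, which is non-increasing along the flow, and argue by contradiction: if $\|H_{t}H_{0}^{-1}\|_{L^{\infty}}$ blows up, then after rescaling the endomorphisms $s_{t}=\log(H_{t}H_{0}^{-1})$ converge weakly in $L^{2}_{1}$ to a nontrivial self-adjoint endomorphism $s_{\infty}$ satisfying a weak Uhlenbeck-Yau equation. The spectral projections of $s_{\infty}$ should then define a weakly holomorphic subbundle, and I would invoke the Uhlenbeck-Yau regularity theorem to realize it as a coherent saturated destabilizing subsheaf of $\F$ on $X\setminus\Sing(\F)$, followed by a Bando-Siu extension across $\Sing(\F)$ as a coherent subsheaf of $\F$ itself, contradicting stability. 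The hardest technical point is precisely this passage: establishing the Uhlenbeck-Yau regularity result in the admissible setting and guaranteeing the coherent extension across $\Sing(\F)$, both of which use the reflexivity of $\F$ and the codimension-three bound on its singular set in an essential way.
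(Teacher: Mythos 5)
The first thing to say is that the paper does not prove this statement at all: it is quoted as Theorem \ref{DUYBS} with attributions to Donaldson, Uhlenbeck--Yau and Bando--Siu, so there is no in-paper argument to compare yours against. What you have written is a road map for reproving the Bando--Siu generalization itself. As an outline it follows the standard route (Donaldson-type heat flow, monotonicity of the Donaldson functional, blow-up analysis producing a destabilizing weakly holomorphic subsheaf), and you correctly locate where the real work is --- the Uhlenbeck--Yau regularity statement in the admissible setting and the coherent extension across $\Sing(\F)$ --- but you then defer exactly those steps to the literature. Since those steps, together with the construction of a good initial admissible metric near $\Sing(\F)$ (Bando--Siu do this by passing to a resolution of singularities and perturbing the K\"ahler form, not only by a Dirichlet exhaustion), constitute essentially the entire content of the theorem beyond the locally free case, your proposal is a correct plan rather than a proof.

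There is also a concrete flaw in your ``only if'' direction. To run a Bochner or vanishing argument on the inclusion $\iota\in H^{0}(\F'^{*}\otimes\F)$ you need a Hermitian metric on $\F'$ whose mean curvature you control, and none is given: the metric induced on $\F'$ by restricting $H$ is in general \emph{not} Hermitian--Einstein (its curvature differs from $F_{H}|_{\F'}$ by $-\beta\wedge\beta^{*}$, with $\beta$ the second fundamental form), so the inequality $\Delta|\iota|^{2}\geq 2(n-1)(\mu(\F)-\mu(\F'))|\iota|^{2}+|\nabla\iota|^{2}$ is unjustified and the argument is circular as stated. The standard correct argument replaces this by the Chern--Weil formula for the degree of a saturated subsheaf in terms of its weakly holomorphic projection $\pi_{\F'}$: up to universal positive normalizing constants,
\[
\deg(\F')=\int_{X}\Big(\big\langle\sqrt{-1}\Lambda_{\omega}F_{H},\pi_{\F'}\big\rangle-|\bp\pi_{\F'}|^{2}\Big)\,\dVol_{\omega},
\]
which combined with $\sqrt{-1}\Lambda_{\omega}F_{H}=(n-1)\mu(\F)\Id$ gives $\mu(\F')\leq\mu(\F)$ directly, with equality forcing $\bp\pi_{\F'}=0$ and hence the holomorphic orthogonal splitting you want; one must still check that this splitting extends reflexively across $\Sing(\F)$, which is where the Bando--Siu removable singularity theorem enters.
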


For later purpose we need the following

\begin{prop}[\cite{BS}, Proposition 3] \label{prop2-4}
Let $H$ be an admissible Hermitian-Einstein metric on a reflexive sheaf $\F$. Suppose $\mu(\F)\leq 0$, then any holomorphic section $s$ of $\F$ must be parallel with respect to the Chern connection. Furthermore, if $\mu(\F)<0$, then the only holomorphic section of $\F$ is the zero section. 
\end{prop}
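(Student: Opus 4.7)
The plan is to apply a standard Bochner / Kodaira-vanishing argument to $|s|^2_H$, viewed as a smooth function on $X\setminus\Sing(\F)$. Since $s$ is holomorphic, $\bar\partial s=0$, and the Chern-connection curvature identity $\bar\partial\partial_A + \partial_A\bar\partial = F_A$ on sections gives $\bar\partial\partial_A s = F_A\cdot s$. A short local computation then yields the pointwise Bochner formula
\begin{equation*}
\sqrt{-1}\Lambda_\omega\bar\partial\partial|s|^2_H \;=\; \langle\sqrt{-1}\Lambda_\omega F_A\cdot s,\,s\rangle_H \;-\; |\partial_A s|^2_H
\end{equation*}
on $X\setminus\Sing(\F)$. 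Substituting the Hermitian-Einstein equation $\sqrt{-1}\Lambda_\omega F_A=(n-1)\mu(\F)\Id$ converts this into
\begin{equation*}
\sqrt{-1}\Lambda_\omega\bar\partial\partial|s|^2_H \;=\; (n-1)\mu(\F)\,|s|^2_H \;-\; |\partial_A s|^2_H.
\end{equation*}

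The next step is to integrate this identity over the compact K\"ahler manifold $X$ and apply Stokes' theorem. Since $\F$ is reflexive, $\Sing(\F)$ has complex codimension at least three, so one has a standard family of cut-off functions $\chi_\epsilon\in C^\infty_c(X\setminus\Sing(\F))$ with $\chi_\epsilon\to 1$ pointwise and $\int_X|\nabla\chi_\epsilon|^2\,\omega^{n-1}\to 0$; this is the Bando-Siu cutoff from \cite{BS}. The admissibility hypotheses --- finite Yang-Mills energy and $L^\infty$ control on $\Lambda_\omega F_A$ --- combined with the fact that the Bochner formula makes $\log|s|^2_H$ essentially plurisubharmonic up to a bounded error (so that $|s|^2_H$ is uniformly bounded on $X$ by the standard removal-of-singularities theorem for plurisubharmonic-type functions across a codimension-two analytic set) let one pass to the limit $\epsilon\to 0$ and obtain the global identity
\begin{equation*}
\int_X|\partial_A s|^2_H \;=\; (n-1)\mu(\F)\int_X|s|^2_H.
\end{equation*}

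With this integral identity the two conclusions follow at once. If $\mu(\F)\le 0$, the right-hand side is non-positive while the left-hand side is non-negative, forcing both to vanish; hence $\partial_A s=0$, and combined with $\bar\partial s=0$ this yields $\nabla_A s=0$, so $s$ is parallel with respect to the Chern connection. If moreover $\mu(\F)<0$, then $\int_X|s|^2_H=0$, so $s\equiv 0$. The only genuinely delicate step is the integration by parts across $\Sing(\F)$, and this is exactly what Bando-Siu's cutoff technology in \cite{BS} is designed to handle: the codimension-three bound on $\Sing(\F)$ together with the $L^\infty$ bound on the mean curvature makes the boundary contribution from the cutoff annulus disappear in the $\epsilon\to 0$ limit. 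Everything else is elementary.
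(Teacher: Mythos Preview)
The paper does not give its own proof of this proposition: it is simply quoted as Proposition~3 of Bando--Siu \cite{BS}, with no argument supplied. Your Bochner/Kodaira argument is exactly the standard proof that underlies the Bando--Siu result, and it is correct. The pointwise identity you wrote, the substitution of the Hermitian--Einstein condition, and the conclusion from the integrated identity are all routine; the only genuine issue is justifying the integration by parts across $\Sing(\F)$, and you have correctly identified both the mechanism (cut-off functions exploiting real codimension $\geq 6$) and the a~priori input needed (the $L^\infty$ bound on $|s|^2_H$, which is precisely the content of Theorem~\ref{thm2-24} in the paper, itself quoted from \cite{BS}). One small remark: to close the cut-off argument cleanly you should note that the boundary term is controlled by $\bigl(\int|\nabla\chi_\epsilon|^2|s|^2\bigr)^{1/2}\bigl(\int\chi_\epsilon^2|\partial_A s|^2\bigr)^{1/2}$, and the second factor is bounded uniformly in $\epsilon$ by absorbing half of it back into the left-hand side of the integrated Bochner identity; this is standard but worth saying explicitly rather than leaving implicit.
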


This has a few consequences

\begin{cor} \label{cor2-5}
Let $\phi: \F_1\rightarrow\F_2$ be a non-trivial homomorphism between a stable reflexive sheaf $\F_1$ and a polystable reflexive sheaf $\F_2$ with $\mu(\F_1)=\mu(\F_2)$, then $\phi$ realizes $\F_1$ as a direct summand of $\F_2$.
\end{cor}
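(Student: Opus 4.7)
The plan is to reinterpret the homomorphism $\phi$ as a global holomorphic section of the reflexive sheaf $\mathcal{H}\!om(\F_1,\F_2)\cong(\F_1^*\otimes\F_2)^{**}$ and then reduce the statement to Proposition \ref{prop2-4}. First I would invoke Theorem \ref{DUYBS} to equip $\F_1$ with an admissible Hermitian-Einstein metric $H_1$ and each stable summand of $\F_2$ with one, thereby producing an admissible Hermitian-Einstein metric $H_2$ on $\F_2$. The induced tensor metric on $\mathcal{H}\!om(\F_1,\F_2)$ is then admissible Hermitian-Einstein with Einstein constant $-\mu(\F_1)+\mu(\F_2)=0$, using that tensor products and duals of Hermitian-Einstein connections are Hermitian-Einstein, and that the singular sets of $\F_1$ and $\F_2$ have complex codimension at least three, so the uniform $|\Lambda F|$ bound and the $L^2$ curvature bound transfer to the tensor construction.

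Next, since $\mu(\mathcal{H}\!om(\F_1,\F_2))=0$ and $\phi$ is a nonzero global holomorphic section, Proposition \ref{prop2-4} forces $\phi$ to be parallel for the Chern connection. Parallelism implies that the pointwise rank of $\phi$ is constant on the common smooth locus $U:=X\setminus(\Sing(\F_1)\cup\Sing(\F_2))$, so $K:=\ker\phi\subset\F_1|_U$ and $I:=\mathrm{Im}\,\phi\subset\F_2|_U$ are parallel holomorphic subbundles; taking the $H_1$-orthogonal complement produces a parallel holomorphic splitting $\F_1|_U=K\oplus K^\perp$ into Hermitian-Einstein subbundles of slope $\mu(\F_1)$.

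Now I would use stability of $\F_1$ to rule out any proper nontrivial parallel subbundle of the same slope, forcing $K=0$ (since $\phi\neq 0$ excludes $K=\F_1$); thus $\phi$ is injective on $U$ and realizes $\F_1|_U\cong I\subset\F_2|_U$. The parallel subbundle $I\subset\F_2|_U$ admits a parallel $H_2$-orthogonal complement $I^\perp$, giving a holomorphic splitting $\F_2|_U\cong\F_1|_U\oplus I^\perp$. Since both sides are reflexive sheaves on $X$ and agree off a set of complex codimension at least three, Hartogs extension globalizes this to $\F_2\cong\F_1\oplus(I^\perp)^{**}$, exhibiting $\F_1$ as a direct summand via $\phi$. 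The main technical point to justify carefully is the admissibility of the induced tensor metric on $\mathcal{H}\!om(\F_1,\F_2)$ across $\Sing(\F_1)\cup\Sing(\F_2)$ and the extension of the orthogonal splitting from $U$ to a global decomposition; both should follow routinely from the codimension-three bound on the singular set and the uniform mean-curvature bounds in the definition of admissibility.
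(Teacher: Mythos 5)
Your proposal follows the paper's own argument essentially verbatim: view $\phi$ as a holomorphic section of $Hom(\F_1,\F_2)$ equipped with the induced Hermitian-Einstein metric of slope zero, apply Proposition \ref{prop2-4} to make $\phi$ parallel, use stability of $\F_1$ to force $\ker\phi=0$, and extend the resulting orthogonal splitting of $\F_2$ across the codimension-three singular set by Bando-Siu and Hartogs. The argument is correct; the only place the paper is more explicit than you are is in the final extension step, where it extends $\phi^{-1}$ as a section of a locally free resolution rather than extending the splitting directly, but both routes rest on the same codimension bound.
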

\begin{proof}
We view $\phi$ as a holomorphic section of $\mathcal{H}om(\F_1, \F_2)=\F_1^*\otimes \F_2$, which is a reflexive sheaf (see Proposition $5.32$ in \cite{Kobayashi}). By Theorem \ref{DUYBS} we know $\F_1$ and $\F_2$ admit Hermitian-Einstein metric, so we get an induced Hermitian-Einstein metric on $Hom(\F_1, \F_2)$ . On the other hand, $\mu(Hom(\F_1, \F_2))=\mu(\F_2)-\mu(\F_1)=0$. So by Proposition \ref{prop2-4} $\phi$ is parallel. In particular,  on the complement of $\Sing(\F_1)\cup \Sing(\F_2)$, $Ker(\phi)$ defines a parallel sub-bundle of $\F_1$, and $Im(\phi)$ defines a parallel sub-bundle of $\F_2$, and both $Ker(\phi)$ and $Im(\phi)$ admits induced Hermitian-Einstein metrics induced from $\F_1$ and $\F_2$ respectively. So by \cite{BS} they extend to polystable reflexive sheaves on $X$. By assumption we have $Ker(\phi)=0$, and $\F'=Im(\phi)$ is a direct summand of $\F_2$.  Hence $\phi: \F_1\rightarrow \F'$ is an isomorphism away from $\Sing(\F_1)\cup \Sing(\F')$, so extends as an isomorphism globally since $\F_1$ and $\F'$ are both normal. Indeed, by taking a locally free resolution of $\F_1^*\otimes \F'$ and taking its dual,  one obtains the sheaf exact sequence $0\rightarrow (\F')^*\otimes\F_1\rightarrow \G_1\rightarrow \G_2$ for locally free sheaves $\G_1$ and $\G_2$. $\phi^{-1}$ can  be naturally seen as a section of $\G_1$ away from $\Sing(\F_1)\cup\Sing(\F')$ which has complex codimension at least three, and it maps to zero in $\G_2$. So by the usual Hartogs's theorem $\phi^{-1}$ extends to a global section of $\G_1$ that maps to zero in $\G_2$, thus it defines a global homomorphism from $\F'$ to $\F_1$. Clearly it is the inverse of $\phi$. 
\end{proof}

\begin{cor} \label{cor2-6}
A stable reflexive  sheaf admits a unique Hermitian-Einstein metric up to constant rescalings. In general, any two Hermitian -Einstein metrics on a polystable reflexive sheaf determines the same Chern connection and the two metrics differ by a parallel complex gauge transform \footnote{Here a complex gauge transform means a complex linear isomorphism.} on the complement of singular set of the sheaf.
\end{cor}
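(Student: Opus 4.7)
The plan is to apply Proposition \ref{prop2-4} to the identity endomorphism viewed as a holomorphic section of an appropriate $Hom$ sheaf. Given two admissible Hermitian-Einstein metrics $H_1, H_2$ on a polystable reflexive sheaf $\F$, I would consider $\Id_\F$ as a holomorphic section of $Hom(\F, \F)$, where the source copy of $\F$ carries $H_1$ and the target carries $H_2$. Using $H_1^*$ on $\F^*$ and $H_2$ on $\F$, the induced admissible metric on $Hom(\F, \F) \simeq \F^* \otimes \F$ is again Hermitian-Einstein, now with Einstein constant $\mu(\F) - \mu(\F) = 0$. Since $\mu(Hom(\F, \F)) = 0$, Proposition \ref{prop2-4} forces the identity section to be parallel with respect to the induced Chern connection.

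A direct local calculation gives $(\nabla_X \Id)(e) = \nabla^{H_2}_X e - \nabla^{H_1}_X e$, so $\Id$ being parallel is equivalent to the Chern connections of $(\F, H_1)$ and $(\F, H_2)$ agreeing on the smooth locus $X \setminus \Sing(\F)$. Denote this common Chern connection by $\nabla$. Since $\nabla$ is compatible with both metrics, the positive definite, $H_1$-self-adjoint endomorphism $h := H_1^{-1} H_2 \in End(\F)$ satisfies $\nabla h = 0$. This is precisely the statement that $H_1$ and $H_2$ differ by a parallel complex transformation on $X \setminus \Sing(\F)$, establishing the second sentence of the corollary.

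In the stable case, the parallel endomorphism $h$ decomposes $\F$ on the smooth locus into a direct sum of parallel holomorphic eigenspaces. Each of these extends by Bando-Siu to a reflexive subsheaf of $\F$ admitting an admissible Hermitian-Einstein metric with the same slope $\mu(\F)$; stability excludes any such proper subsheaf, so $h$ has a single eigenvalue. Positivity then forces $h = c \cdot \Id$ for some $c > 0$, giving $H_2 = c H_1$. The main subtlety to verify is that Proposition \ref{prop2-4} applies to the sheaf $Hom(\F, \F)$: it is reflexive away from $\Sing(\F)$ (of codimension at least three), and the induced metric is admissible in the Bando-Siu sense, since both $H_1$ and $H_2$ are.
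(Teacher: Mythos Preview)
Your proof is correct and follows essentially the same route as the paper: apply Proposition~\ref{prop2-4} to the identity section of $\F^*\otimes\F$ with the mixed metric $H_1^*\otimes H_2$ to conclude that the two Chern connections agree, then study the endomorphism $h=H_1^{-1}H_2$ and use its eigenspace decomposition together with stability. The only cosmetic difference is that the paper invokes Proposition~\ref{prop2-4} a second time (with the metric $H_1^*\otimes H_1$) to see that $g$ is parallel, whereas you deduce $\nabla h=0$ directly from $\nabla$ being compatible with both $H_1$ and $H_2$; these are equivalent one-line observations.
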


\begin{proof}
Suppose $\F$ is polystable, and $H_1$ and $H_2$ are two Hermitian-Einstein metrics on $\F$, then by Proposition \ref{prop2-4} the identity map in $End(\F)$ is parallel, with respect to the Chern connection of the Hermtian metric $H_1^*\otimes H_2$. This implies that the Chern connections of $H_1$ and $H_2$ coincide.  Suppose $H_2(\cdot, \cdot)=H_1(g\cdot, \cdot)$ for a complex gauge transformation $g$ of $\F$ over $X\setminus\Sing(\F)$, then it follows that $g$ is holomorphic. Now by Proposition \ref{prop2-4} we conclude that $g$ is parallel with respect to the Hermitian-Einstein metric $H_1^*\otimes H_1$ on $End(\F)$. Hence it decomposes $\F$ into  the direct sum of eigenspace pieces, each of which is again polystable. If $\F$ is stable, then $g$ must be a multiple of identity. 
\end{proof}

\
Now we move on to discuss the case when $\F$ is not polystable. The following two results are well-known, see for example Page $174$ in \cite{Kobayashi}.

\begin{prop}
Suppose $\F$ is an unstable reflexive sheaf, then there is a \emph{unique} filtration by reflexive subsheaves
$$0=\F_0\subset \F_1\subset\cdots\subset \F_m=\F,$$
such that the successive quotient $Q_i:=\F_i/\F_{i-1}$ is torsion free and semistable, with $\mu(Q_{i+1})<\mu(Q_{i})$. 
\end{prop}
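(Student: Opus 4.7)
The plan is to follow the classical inductive construction of the Harder--Narasimhan filtration, the two subtle points being to ensure reflexivity of each $\F_i$ and to verify that a maximal destabilizing subsheaf is unique at each stage.

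First I would establish the existence of a \emph{maximal destabilizing subsheaf} $\F_1 \subset \F$: a non-zero coherent subsheaf whose slope $\mu(\F_1)$ equals the supremum $\mu_{\max}$ of slopes over all non-zero coherent subsheaves of $\F$, and which has maximal rank among those realizing the supremum. Two ingredients make this well-defined. On the one hand, slopes of coherent subsheaves of $\F$ are bounded above — this is a standard boundedness statement on the projective variety $\C\P^{n-1}$, provable via Hilbert polynomials or Grauert--M\"ulich. On the other hand, $\mu$ takes values in $\tfrac{1}{\rk(\F)!}\Z$, so the supremum is attained and the ranks that realize it are bounded. Having chosen such $\F_1$, one replaces it by its saturation in $\F$; this preserves the rank and can only weakly increase the slope, so both extremality conditions are retained, and now $\F/\F_1$ is torsion free. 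Passing to the reflexive hull $\F_1^{**}$ and using that on the smooth base $\C\P^{n-1}$ a saturated subsheaf of a reflexive sheaf is automatically $S_2$, hence reflexive, makes $\F_1$ reflexive without disturbing extremality. Semistability of $\F_1$ is then automatic: any proper nonzero subsheaf would by definition have slope $\leq \mu_{\max} = \mu(\F_1)$.

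Second I would prove uniqueness of this maximal destabilizer. If $\F_1$ and $\F_1'$ both realize $(\mu_{\max}, r_{\max})$, then from the exact sequence $0 \to \F_1 \cap \F_1' \to \F_1 \oplus \F_1' \to \F_1 + \F_1' \to 0$ both rank and degree are additive, giving
\[
\rk(\F_1 \cap \F_1')\bigl(\mu_{\max}-\mu(\F_1 \cap \F_1')\bigr) + \rk(\F_1 + \F_1')\bigl(\mu_{\max}-\mu(\F_1 + \F_1')\bigr) = 0.
\]
Both terms are non-negative by maximality of $\mu_{\max}$, so both vanish; in particular $\mu(\F_1 + \F_1') = \mu_{\max}$, and maximality of rank forces $\F_1 + \F_1' = \F_1 = \F_1'$.

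Third I would iterate: since $\F/\F_1$ is torsion free, the same construction applied to $\F/\F_1$ produces its maximal destabilizer, whose preimage in $\F$ defines $\F_2$, and so on. The chain terminates on rank grounds. By construction each $Q_i$ is torsion free and semistable, and each $\F_i$ is saturated in $\F$, hence reflexive by the same $S_2$ argument. The crucial strict inequality $\mu(Q_{i+1}) < \mu(Q_i)$ is the core of the argument: if $\mu(Q_{i+1}) \geq \mu(Q_i)$ held, then the preimage of $Q_{i+1}$ in $\F/\F_{i-1}$ would be a subsheaf of slope $\geq \mu(Q_i)$ but of rank strictly greater than $\rk(Q_i)$, contradicting the maximal-rank choice of $Q_i$ as destabilizer of $\F/\F_{i-1}$. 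Uniqueness of the full filtration then follows by induction: any filtration with the stated properties has $\F_1$ equal to the unique maximal destabilizer (for otherwise the strict decrease of slopes would be violated), and the inductive hypothesis applies to $\F/\F_1$.

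The main obstacle is the interplay between saturation, double-dual, and the slope bookkeeping: one must verify that replacing a maximizing subsheaf by its saturation and then by its reflexive hull neither increases the rank nor decreases the slope, and that the ``successive quotient is torsion free'' property survives each step of the induction. The slope inequality $\mu(\G^{**}) \geq \mu(\G)$ for torsion-free $\G$, combined with the $S_2$ characterization of reflexive sheaves on $\C\P^{n-1}$, is precisely what keeps the construction inside the class of reflexive subsheaves.
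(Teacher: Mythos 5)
Your argument is correct and is essentially the standard Harder--Narasimhan construction that the paper itself does not reprove but simply cites from Kobayashi (Page 174), supplemented — exactly as you do — by the observation that a saturated subsheaf of a reflexive sheaf (equivalently, one with torsion-free quotient, Kobayashi's Proposition 5.22) is again reflexive, applied inductively to the $\F_i$. No substantive difference in approach.
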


\begin{rmk}
The construction of \cite{Kobayashi} on Page 174 only states that $\F_i$ is torsion-free, but it is easy to see each $\F_i$ is indeed reflexive if $\F$ is reflexive.  By Proposition 5.22 in \cite{Kobayashi} a coherent subsheaf of a reflexive sheaf is reflexive if the corresponding quotient sheaf is torsion free. One then applies this fact inductively to $\F_i$ for $i=m, \cdots, 1$.  
\end{rmk}

The above filtration is called the \emph{Harder-Narasimhan filtration} of $\F$.  It follows that the associated graded object $\bigoplus_{i}Q_i$, which we denote by $\Gr^{HN}(\F)$, is also uniquely determined by $\F$. 

\begin{prop}
Suppose $Q$ is a semistable  torsion-free sheaf, then there is a filtration by  subsheaves
$$0=G_0\subset G_1\subset\cdots\subset G_q=Q, $$
so that the quotients $G_i/G_{i-1}$ are torsion free and stable, with $\mu(G_i/G_{i-1})=\mu(Q)$. 
\end{prop}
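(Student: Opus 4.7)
The plan is to induct on the rank of $Q$. If $Q$ is itself stable, the trivial filtration $0 \subset Q$ already works. Otherwise, since $Q$ is semistable but not stable, there is by definition a proper coherent subsheaf $F \subset Q$ with $0 < \rk(F) < \rk(Q)$ and, using semistability to upgrade the $\geq$ coming from failure of stability, $\mu(F) = \mu(Q)$. Replacing $F$ by its saturation in $Q$ (i.e.\ the preimage in $Q$ of the torsion subsheaf of $Q/F$) does not change the rank, and can only increase the slope, since the quotient $F^{\mathrm{sat}}/F$ is torsion and therefore $c_1(F^{\mathrm{sat}}/F)$ pairs non-negatively with $\omega^{n-2}$; by semistability of $Q$ the saturation still has slope $\mu(Q)$. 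So I may choose $S_1 \subset Q$ to be a saturated proper subsheaf of slope $\mu(Q)$ of minimal positive rank, which exists since ranks are positive integers.

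Two key properties follow. First, $S_1$ is stable: any proper nonzero subsheaf $S_1' \subset S_1$ satisfies $\mu(S_1') \leq \mu(Q)$ by semistability of $Q$, and $\mu(S_1') = \mu(Q)$ would contradict the minimality of $\rk(S_1)$ (after passing to the saturation of $S_1'$ inside $Q$, whose rank is still smaller). Second, $Q/S_1$ is torsion-free by saturation of $S_1$, and semistable: for any subsheaf $T \subset Q/S_1$ with preimage $\tilde T \subset Q$, the exact sequence $0 \to S_1 \to \tilde T \to T \to 0$ combined with $\mu(\tilde T) \leq \mu(Q) = \mu(S_1)$ forces $\mu(T) \leq \mu(Q)$ via the additivity of $\rk\cdot\mu$ on short exact sequences.

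Now I would apply the inductive hypothesis to $Q/S_1$, whose rank is strictly smaller, obtaining a filtration $0 = T_0 \subset T_1 \subset \cdots \subset T_{q-1} = Q/S_1$ with torsion-free stable successive quotients of slope $\mu(Q)$. Define $S_i$ for $i \geq 2$ to be the preimage of $T_{i-1}$ under the projection $Q \to Q/S_1$. Then $S_i/S_{i-1} \cong T_{i-1}/T_{i-2}$ is torsion-free and stable, and the same additivity of $\rk\cdot\mu$ inductively yields $\mu(S_i) = \mu(Q)$ for every $i$.

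The only genuine subtlety I anticipate is the bookkeeping around saturations: both the minimality argument that forces stability of $S_1$ and the application of the inductive hypothesis require the relevant quotients to be torsion-free, and this is precisely what saturation ensures while preserving the slope. Apart from this point, the proof is purely numerical, resting on additivity of $\rk\cdot\mu$ on short exact sequences and on the sign of $c_1$ for torsion sheaves.
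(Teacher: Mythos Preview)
Your argument is correct and follows the standard induction-on-rank construction of a Seshadri (Jordan--H\"older) filtration. The paper does not actually give its own proof of this proposition; it simply cites page~174 of Kobayashi's book, so there is nothing to compare against beyond noting that your argument is essentially the one found there.
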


Such a filtration is usually referred to as a \emph{Seshadri filtration} of $Q$. Note that Seshadri filtration is in general \emph{not} unique; however, the associated graded sheaf $\bigoplus_{i} (S_i/S_{i-1})^{**}$ is nevertheless uniquely determined by $Q$. See \cite{BTT} for detailed discussion. 

Combining the above two results, given any reflexive sheaf $\F$, there is a double filtration by reflexive subsheaves
\begin{equation} \label{eqn2-2}
0=\F_0\subset \F_1\subset \cdots \F_m= \F
\end{equation}
and 
\begin{equation}
\F_{i-1}=\F_{i, 0}\subset \F_{i, 1}\subset \cdots \F_{i, q_i}=\F_{i}
\end{equation}
such that the successive quotients $\F_{i, j}/\F_{i, j-1}$ are torsion free and stable, and moreover the slope of these quotients is constant when $i$ is fixed, and strictly decreasing when $i$ increases. This is called the \emph{Harder-Narasimhan-Seshadri filtration} of $\F$, and we emphasize again that this filtration is not unique in general but the double dual of the associated graded object $(Gr^{HNS}(\F))^{**}$ is uniquely determined by $\F$. 

\

One can ask what is the analogue of a canonical Hermitian metric structure on a general $\F$. For semistable vector bundles on projective manifolds, the following is proved by Kobayashi \cite{Kobayashi}, using Hermitian-Yang-Mills flow. This is sufficient for our purpose, but we also mention that the result has been generalized to all compact K\"ahler manifolds by Jacob \cite{Jacob1}.

\begin{thm}[Kobayashi, Theorem $10.13$ in \cite{Kobayashi}] \label{thm2-10}
Suppose $\F$ is a semistable  vector bundle over $(X, {\omega_{FS}})$. Then $\F$ admits approximately Hermitian-Einstein metrics. Namely, for any $\epsilon>0$, there exists a Hermitian metric $H$ on $\F$ such that the associated Chern connection  $A$ satisfies
\begin{equation}
|\sqrt{-1}\Lambda_{{\omega_{FS}}}F_{A}-(n-1)\mu(\F)Id|_{L^\infty(X)}<\epsilon.
\end{equation}
\end{thm}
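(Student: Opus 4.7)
The plan is to proceed by induction on the length $q$ of a Seshadri filtration
\[
0 = \F_0 \subset \F_1 \subset \cdots \subset \F_q = \F,
\]
whose successive quotients $Q_i = \F_i/\F_{i-1}$ are torsion-free stable sheaves all of slope $\mu(\F)$. The base case $q=1$ is immediate from Theorem \ref{DUYBS}: a stable sheaf carries an exact Hermitian--Einstein metric, so the $L^\infty$ defect vanishes.

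For the inductive step, set $\F' := \F_{q-1}$ (semistable, of length $q-1$) and $Q := Q_q$ (stable, of slope $\mu(\F)$). By induction, for any $\delta > 0$ there is a metric $H'$ on $\F'$ with mean-curvature defect $<\delta$ in $L^\infty$, and from the base case $Q$ admits an exact Hermitian--Einstein metric $H_Q$. Let $\Sigma \subset X$ denote the analytic subset of complex codimension at least $2$ outside of which both $\F'$ and $Q$ are locally free. Over $X \setminus \Sigma$ pick a $C^\infty$-splitting $\F \simeq \F' \oplus Q$; the holomorphic structure then takes the upper-triangular form
\[
\bar\partial_\F = \bar\partial_{\F'} \oplus \bar\partial_Q + \gamma,
\]
with $\gamma \in \Omega^{0,1}(\text{Hom}(Q,\F'))$ the extension class.

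For small $t > 0$ apply the complex gauge transformation $g_t = \operatorname{diag}(\Id_{\F'}, t^{-1}\Id_Q)$; equivalently, pass to the metric $H_t := H' \oplus t^{-2}H_Q$ on $\F|_{X \setminus \Sigma}$. Under this rescaling the extension class is conjugated to $t\gamma$, so the Chern curvature has block form
\[
F_{H_t} = \begin{pmatrix} F_{H'} - t^2\,\gamma \wedge \gamma^*_0 & t\, \partial\gamma \\ -t\, \bar\partial \gamma^*_0 & F_{H_Q} - t^2\,\gamma^*_0 \wedge \gamma \end{pmatrix},
\]
where $\gamma^*_0$ denotes the adjoint in $H' \oplus H_Q$. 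Contracting with $\sqrt{-1}\Lambda_\omega$ and subtracting $(n-1)\mu(\F)\Id$, the diagonal blocks become $\sqrt{-1}\Lambda F_{H'} - (n-1)\mu(\F)\Id + O(t^2)$ and $O(t^2)$ (using the exact HE property of $H_Q$), while the off-diagonal blocks are $O(t)$. Thus
\[
\bigl|\sqrt{-1}\Lambda_\omega F_{H_t} - (n-1)\mu(\F)\Id\bigr|_{L^\infty(X\setminus\Sigma)} \;<\; \delta + C\, t,
\]
and choosing $\delta < \epsilon/2$ and then $t$ small enough yields the required bound on $X\setminus\Sigma$.

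The main obstacle is to promote this estimate to a \emph{global} smooth Hermitian metric on the locally free $\F$, since $H_t$ is constructed only on $X \setminus \Sigma$ and the subsheaves $\F_i$ in the Seshadri filtration are only reflexive/torsion-free. The gluing must be done in the complex gauge variable (rather than as a convex combination of two metrics), because one needs to preserve the $L^\infty$ control on the mean curvature through the transition region around $\Sigma$; this in turn requires a local Bando--Siu type analysis at the singularities of each $\F_i$. A conceptually cleaner alternative, which is the route actually taken by Kobayashi, is to run the Donaldson Hermitian--Yang--Mills heat flow from an arbitrary smooth initial metric on $\F$ and show that the mean-curvature defect decays in $L^\infty$, using semistability together with Uhlenbeck-type compactness to rule out any non-trivial limiting defect along the flow.
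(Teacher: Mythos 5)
This theorem is not proved in the paper at all: it is quoted verbatim from Kobayashi (Theorem 10.13 in \cite{Kobayashi}), and the paper explicitly records that Kobayashi's proof goes through the Hermitian--Yang--Mills flow. So the only part of your proposal that matches the actual argument is the single closing sentence deferring to the heat flow, which you do not carry out. The detailed argument you do give --- induction on the length of a Seshadri filtration, rescaling the quotient metric by $t^{-2}$ to shrink the extension class --- has a genuine gap that you name but do not close, and it is not a cosmetic one. The successive pieces of a Seshadri filtration of a semistable \emph{bundle} are in general only torsion-free/reflexive \emph{sheaves}: $\F'=\F_{q-1}$ is not a vector bundle, so your inductive hypothesis (stated for bundles) does not even apply to it, and $Q$ only carries an \emph{admissible} Hermitian--Einstein metric in the sense of Bando--Siu, singular along $\Sing(Q)$. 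More seriously, the second fundamental form $\gamma$ of the smooth splitting on $X\setminus\Sigma$ is unbounded near $\Sigma$ whenever $\F'$ fails to be a subbundle, so the constant $C$ in your claimed bound $\delta+Ct$ is $\sup(|\gamma|^2+|\partial\gamma|)=\infty$; choosing $t$ small does not produce an $L^\infty$ bound on $X\setminus\Sigma$, and $H_t$ does not extend to a smooth metric on $\F$ across $\Sigma$. The suggestion that a ``local Bando--Siu type analysis'' would repair this is not substantiated, and the paper itself signals that no such direct repair is available: its Proposition \ref{prop2-11}, which handles the general (possibly singular) filtration via the flow results of Jacob and Sibley--Wentworth, deliberately settles for $L^1$ control of the curvature defect globally and $L^\infty$ control only outside a $\delta$-neighborhood of $\S(\F)$.

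Your rescaling computation is correct and standard in the special case where the filtration is by holomorphic \emph{subbundles} with locally free quotients (there $\gamma$ is smooth and bounded, and the argument does give approximate Hermitian--Einstein metrics on an extension of two bundles of equal slope admitting such metrics). But for the theorem as stated one must either (i) run the Donaldson heat flow from an arbitrary smooth metric and prove decay of $\max_X|\sqrt{-1}\Lambda_\omega F - (n-1)\mu \Id|$ using semistability, which is Kobayashi's route and requires a genuine argument (a destabilizing subsheaf must be extracted from a hypothetical nonzero limiting defect), or (ii) find some other mechanism to deal with the singular set of the filtration. As written, the proposal establishes neither.
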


\ 

In the remainder of this section, we always assume $\F$ is locally free which may be unstable in general. This situation is more involved. Suppose the Harder-Narasimhan filtration of $\F$ is given as in  (\ref{eqn2-2}), and denote 
by $S(\F)$ the subset of $X$ where $Gr^{HNS}(\F)$ is not locally free. Given any Hermitian metric $H$ on $\F$, each $\F_{i}$ can be identified with a weakly holomorphic projection map $\pi_i \in W^{1,2}(\F^*\otimes \F)$ (see section $6$ in \cite{UY}), which is smoothly defined outside $S(\F)$ and satisfies the following:
\begin{itemize}
\item[(a).] $\pi_i=\pi_i^*=\pi_i^2$. This means that $\pi_i$ is a self-adjoint projection map. 
\item[(b).] $(\Id-\pi_i)\bp_{\F}\pi_i=0$. This condition is equivalent to $\F_i$ being a holomorphic sub-bundle outside $S(\F)$. 
\end{itemize}
In particular, $(\bp_{\F}\pi_i) \pi_i=0$, and taking adjoint we also have $\pi_i \p_\F \pi_i=0$, where $\partial_\F$ is the $(1, 0)$ component of the Chern connection on $End(\F)$ determined by the chosen metric $H$. Those simple observations will be used in the following calculations.

Now we define 
$$\psi^{H}=\sum_i (n-1)\mu_i (\pi_i-\pi_{i-1}), $$
 where $\mu_i=\mu(\F_i/\F_{i-1})$. Denote by $X_0=X\setminus \S(\F)$.  Then we have an orthogonal splitting over $X_0$ as
$$\F=\bigoplus_i Q_i, $$
where $Q_i:=\F_i/\F_{i-1}$ is naturally identified as a sub-bundle of $\F_i$, given by the orthogonal complement of $\F_{i-1}$ in $\F_i$. The splitting gives $\F$ another holomorphic structure $\bp_{S}$ outside $S(\F)$,  and this together with the fixed Hermitian metric defines a Chern connection which we deonte by $A_{(H,\bp_S)}$. 
\begin{rmk}\label{rmk2-11}
By definition, 
\begin{equation}\label{eqn2-15}
\bp_S=\sum_i (\pi_i-\pi_{i-1})\circ\bp_\F\circ (\pi_i-\pi_{i-1}).
\end{equation}
In particular, $\bp_\F=\bp_S-\sum_i(\pi_i-\pi_{i-1})\bp_\F \pi_i.$ 
\end{rmk}

Recall for each $i$, with respect to the orthogonal splitting $\F=\F_i\oplus \F_i^\perp$, the second fundamental form of $\F_i$ in $\F$ is a smooth section of $\Lambda_X^{1, 0}\otimes Hom(\F_i, \F_i^\perp)$ over $X_0$, whose adjoint is given by $\beta_i=-\pi_{i}\bp_\F\pi_i^{\perp}=\bp_\F \pi_i$, where $\pi_i^{\perp}$ denotes the projection map from $\F$ to $\F^{\perp}_i$. 

\begin{lem}\label{lem2-12}
The following estimates hold over $X_0$
\begin{enumerate}[(1)]
\item
\begin{equation}\label{eq2-5}
|\Lambda_{{\omega_{FS}}}\p_\F\beta_i|=|\Lambda_{{\omega_{FS}}}\p_\F\bp_\F\pi_i|\leq |\Lambda_{{\omega_{FS}}} F_{A_{(H,\bp_\F)}}+\sqrt{-1}\psi^H|+2|\beta_i|^2
\end{equation}
\item
\begin{equation}\label{sf}
|\p_\F\beta_i|\leq 2|\beta_i|^2+|F_{A_{(H,\bp_\F)}}|
\end{equation}
\end{enumerate}
\end{lem}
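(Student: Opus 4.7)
The plan is to derive a single pointwise identity for $\p_\F\beta_i$ decomposed by the blocks of $\pi_i$ and $\pi_i^\perp$, and then read off both estimates directly. Set $\beta_i^*:=\p_\F\pi_i$, the metric adjoint of $\beta_i$, a $(1,0)$-form valued in $\text{End}(\F)$. From $\pi_i^2=\pi_i$ and $(\Id-\pi_i)\bp_\F\pi_i=0$ one has $\pi_i\beta_i=\beta_i$, $\beta_i\pi_i=0$, and by adjunction $\beta_i^*\pi_i=\beta_i^*$, $\pi_i\beta_i^*=0$. Thus $\beta_i$ lies in the $\pi_i(\cdot)\pi_i^\perp$ block of $\text{End}(\F)$, and $\beta_i^*$ in the opposite block.

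The graded Leibniz rule applied to $\beta_i=\pi_i\beta_i$ gives $\pi_i^\perp\p_\F\beta_i=\beta_i^*\wedge\beta_i$, and applied to $\beta_i=\beta_i\pi_i^\perp$ gives $(\p_\F\beta_i)\pi_i=\beta_i\wedge\beta_i^*$. These determine three of the four blocks:
$$\pi_i(\p_\F\beta_i)\pi_i=\beta_i\wedge\beta_i^*,\quad \pi_i^\perp(\p_\F\beta_i)\pi_i=0,\quad \pi_i^\perp(\p_\F\beta_i)\pi_i^\perp=\beta_i^*\wedge\beta_i.$$
The remaining block is pinned down by the Chern curvature identity $\p_\F\bp_\F\pi_i+\bp_\F\p_\F\pi_i=[F_{A_{(H,\bp_\F)}},\pi_i]$ on $\text{End}(\F)$. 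An analogous Leibniz computation gives $\pi_i(\bp_\F\beta_i^*)\pi_i^\perp=0$, while $\pi_i[F_A,\pi_i]\pi_i^\perp=-\pi_i F_A\pi_i^\perp$ (writing $F_A:=F_{A_{(H,\bp_\F)}}$ for brevity), so $\pi_i(\p_\F\beta_i)\pi_i^\perp=-\pi_i F_A\pi_i^\perp$. Assembling the four blocks yields the key identity
$$\p_\F\beta_i=\beta_i\wedge\beta_i^*+\beta_i^*\wedge\beta_i-\pi_i F_A\pi_i^\perp.$$

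Inequality (\ref{sf}) follows immediately on taking the pointwise norm, since $|\beta_i\wedge\beta_i^*|,\,|\beta_i^*\wedge\beta_i|\leq|\beta_i|^2$ and $|\pi_i F_A\pi_i^\perp|\leq|F_A|$. For (\ref{eq2-5}) one applies $\Lambda_\omega$ to the identity; the wedge contributions are bounded pointwise by $2|\beta_i|^2$, and for the curvature term one observes that $\pi_i\psi^H\pi_i^\perp=0$: the filtration is nested, so $\pi_i\pi_j\pi_i^\perp=0$ for every $j$, and each summand $\pi_j-\pi_{j-1}$ of $\psi^H$ therefore sandwiches to zero. Hence $\pi_i\Lambda_\omega F_A\pi_i^\perp=\pi_i(\Lambda_\omega F_A+\sqrt{-1}\psi^H)\pi_i^\perp$, whose norm is bounded by $|\Lambda_\omega F_A+\sqrt{-1}\psi^H|$. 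The only subtlety throughout is the bookkeeping of signs in the graded Leibniz rule for $\text{End}(\F)$-valued forms and verifying which sub-block each derived term occupies; once the key identity is in hand, the argument is purely algebraic.
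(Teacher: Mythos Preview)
Your proof is correct and arrives at the same pointwise identity $\p_\F\beta_i=\beta_i\wedge\beta_i^*+\beta_i^*\wedge\beta_i-\pi_iF_{A_{(H,\bp_\F)}}\pi_i^\perp$ that underlies the paper's argument. The paper organizes the computation slightly differently: it introduces the split Chern connection $A_{(H,\bp_{S_i})}$ for the two-block splitting $\F=\F_i\oplus\F_i^\perp$, expands $F_{A_{(H,\bp_\F)}}=F_{A_{(H,\bp_{S_i})}}-\p_{S_i}\beta_i+\bp_{S_i}\beta_i^*-\beta_i\wedge\beta_i^*-\beta_i^*\wedge\beta_i$, and reads off $\p_{S_i}\beta_i=-\pi_iF_A\pi_i^\perp$ from the block structure before passing to $\p_\F\beta_i$ via $\p_\F=\p_{S_i}+\beta_i^*$; you instead work directly with the Leibniz rule and the curvature commutator $[F_A,\pi_i]$ on $\text{End}(\F)$, which is marginally more economical since it never names the auxiliary split connection.
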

\begin{proof}
Since $\bp_\F=\bp_{S_i}-\beta_i$, where $\bp_{S_i}$ defines the split holomorphic structure with respect to the splitting $\F=\F_i\bigoplus \F_i^{\perp}$,  we have
\begin{equation*}
\begin{aligned}
F_{A_{(H,\bp_\F)}}
&=\p_\F\circ\bp_\F+\bp_\F\circ\p_\F\\
&=F_{A_{(H,\bp_{S_i})}}-\p_{S_i} \beta_i+\bp_{S_i} \beta^*_i-\beta_i\wedge\beta_i^*-\beta_i^*\wedge\beta_i. \\
\end{aligned}
\end{equation*}
Notice $\Lambda_{{\omega_{FS}}}\p_{S_i} \beta_i$ is a section of $Hom(\F_i, \F_i^{\perp})$ and therefore is perpendicular to the remaining terms, we have 
$$|\Lambda_{{\omega_{FS}}}\p_{S_i} \beta_i|\leq  |\Lambda_{{\omega_{FS}}} F_{A_{(H,\bp_\F)}}+\sqrt{-1}\psi^H|.
$$
Since $\p_\F=\p_{S_i}+\beta_i^*$, we have
$|\Lambda_{{\omega_{FS}}}\p_\F\beta_i|\leq |\Lambda_{{\omega_{FS}}} F_{A_{(H,\bp_\F)}}+\sqrt{-1}\psi^H|+2|\beta_i|^2.$
Similarly $|\p_{S_i}\beta_i|\leq |F_{A_{(H,\bp_\F)}}|$ and thus $|\p_\F\beta_i|\leq 2|\beta_i|^2+|F_{A_{(H,\bp_\F)}}|.$
\end{proof}

\

\begin{prop}\label{prop2-11}
There is a $K>0$ such that  for any $\epsilon>0$ and $\delta>0$, $\exists$ a smooth Hermitian metric $H$ on $\F$ such that the following holds for each $i$
\begin{enumerate}
\item $\sup_i\int_{X_0}|\beta_i|^2\leq\epsilon$;
\item $\sup_i\int_{X_0} |\Lambda_{\omega_{FS}}\p_\F\beta_i|\leq \epsilon$;
\item $\int_{X_0} |\sqrt{-1}\Lambda_{\omega_{FS}} F_{A_{(H, \bp_S)}}-\psi^{H}|
\leq \epsilon$;
\item $\|\Lambda_{{\omega_{FS}}} F_{A_{(H, \bp_{\F})}}\|_{L^{\infty}(X)}\leq K$;
\item $\sup_{X\setminus \S(\F)^{\delta}}(|\sqrt{-1}\Lambda_{\omega_{FS}} F_{A_{(H, \bp_S)}}-\psi^{H}|+\sup_i|\beta_i|+\sup_i|\Lambda_{{\omega_{FS}}}\p_\F\beta_i|)\leq\epsilon, $ 
where $S(\F)^{\delta}$ denotes the $\delta$-neighborhood of $S(\F)$. 
\item $\sup_{X\setminus \S(\F)^{\delta}} |F_{A_{(H, \bp_\F)}}|\leq K. $
\end{enumerate}
\end{prop}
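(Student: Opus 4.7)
The plan is to take $H=H(t)$ for sufficiently large $t$, where $\{H(t)\}_{t\geq 0}$ denotes the solution of the Hermitian-Yang-Mills flow on $\F$ starting from an arbitrary smooth background metric $H(0)$. The five estimates will then be extracted from the long-time convergence of this flow in the unstable case, as developed in the works of Jacob, Sibley and Wentworth referenced in the introduction.

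Property (4) is elementary and produces the uniform constant $K$. A standard Bochner-type computation shows that $(\p_t-\Delta)|\sqrt{-1}\Lambda_\omega F_{A(t)}|^2 \leq 0$ along the flow, so by the parabolic maximum principle $\|\sqrt{-1}\Lambda_\omega F_{A(t)}\|_{L^\infty}$ is non-increasing in $t$. We may thus set $K:=\|\sqrt{-1}\Lambda_\omega F_{A(0)}\|_{L^\infty}$, which depends only on the fixed background metric and $\F$.

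The analytic heart of the matter is properties (1) and (3), for which I would invoke the convergence theorem of Sibley-Wentworth (and Jacob in the projective case). This asserts that as $t\to\infty$, the ``Harder-Narasimhan type'' of the connection converges to that of the filtration: explicitly, $\int_X |\sqrt{-1}\Lambda_\omega F_{A(t)}-\psi^{H(t)}| \to 0$, which is exactly property (3); moreover the weakly holomorphic projections $\pi_i^{H(t)}$ converge in $L^{2,1}$ to their counterparts associated to the polystable Hermitian-Einstein limit metric on $\bigoplus_i Q_i^{**}$. From the $L^{2,1}$-convergence of $\pi_i^{H(t)}$, combined with the fact that in the limit the filtration becomes a holomorphic direct sum so the corresponding second fundamental forms vanish, one extracts that the $L^2$-norms of $\beta_i^{H(t)}=\bp_\F\pi_i^{H(t)}$ tend to zero, giving (1). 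Property (2) is then an immediate consequence of (1) and (3): simply integrate the pointwise inequality (\ref{eq2-5}) in Lemma \ref{lem2-12} over $X$.

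Property (5) is the smooth, localized analogue of (1)--(3) away from $\S(\F)$, where the HN filtration is given by genuine holomorphic sub-bundles. It should follow from the smooth convergence of the HYM flow on compact subsets of $X\setminus\S(\F)$, applying parabolic $\epsilon$-regularity of Hong-Tian type together with the uniform $L^\infty$ bound on $\Lambda_\omega F$ coming from (4). The main technical obstacle is the bookkeeping: one must invoke the Sibley-Wentworth result in a form precise enough to deliver all five estimates simultaneously for a common choice of $t=t(\epsilon,\delta)$, so that after a single choice of $H=H(t)$ the global $L^1/L^2$ smallness in (1)--(3) and the pointwise smallness on $X\setminus\S(\F)^\delta$ in (5) hold at once, for the same Hermitian metric.
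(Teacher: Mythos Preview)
Your overall strategy---take $H=H(t)$ along the Hermitian--Yang--Mills flow for $t$ large and quote the asymptotic results of Jacob and Sibley--Wentworth---is exactly the paper's approach, and your derivations of (1), (2), (4), (5) match the paper's reasoning.

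There is one genuine slip. You write that the convergence $\int_X |\sqrt{-1}\Lambda_\omega F_{A(t)}-\psi^{H(t)}|\to 0$ ``is exactly property (3)''. It is not: property (3) concerns the curvature of the Chern connection for the \emph{split} holomorphic structure $\bp_S$, namely $F_{A_{(H,\bp_S)}}$, whereas the flow result you cite is about $F_{A_{(H,\bp_\F)}}$. These differ precisely by the second fundamental forms, and the passage from one to the other is not automatic. The paper handles this by expanding (via Remark~\ref{rmk2-11}) $\bp_\F=\bp_S-\sum_i(\pi_i-\pi_{i-1})\beta_i$, computing the resulting curvature difference, and bounding
\[
|\Lambda_\omega F_{(H_t,\bp_S)}+\sqrt{-1}\psi^{H_t}|
\;\leq\;
|\Lambda_\omega F_{(H_t,\bp_\F)}+\sqrt{-1}\psi^{H_t}|
+C\bigl(\sup_i|\beta_i^t|^2+\sup_i|\Lambda_\omega\partial_\F\beta_i^t|\bigr),
\]
so that (3) follows only \emph{after} combining the flow convergence with (1) and (2). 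The same remark applies to the split-curvature term in (5). Once you insert this step, your argument is complete and coincides with the paper's.
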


\begin{proof}
This is indeed an easy consequence of \cite{Jacob2} and \cite{SW}. Starting from any initial Hermitian metric $H_0$ on $\F$, let $H_t$ be the family of Hermitian metrics on $\F$ evolving along the Hermitian-Yang-Mills flow.   We denote by $A_t$ the Chern connection of $H_t$ and $\pi^t_i$ the projection map determined by the Harder-Narasimhan filtration and the metric $H_t$. Then the following holds
\begin{itemize}

\item [(1).]
\begin{equation}\label{eqn2.10}
\lim_{t\rightarrow\infty}\int_{X_0} |\sqrt{-1}\Lambda_{\omega_{FS}} F_{A_t}-\psi^{H_t}|_{H_t}^2 = 0, 
\end{equation}
see Proposition 8 in \cite{Jacob2}.
\item[(2).]  \begin{equation}\label{eqn2.9}
 \lim_{t\rightarrow\infty} \int_{X_0} |\nabla_{A_t} \pi^t_i|_{H_t}^2=0, 
 \end{equation}
  see $(4.20)$ in \cite{Jacob2}.
\item[(3).] There exists a constant $K>0$ independent of $t$  such that 
\begin{equation}
|\sup_X \Lambda_{\omega_{FS}} F_{A_t}|_{H_t}\leq K
\end{equation}
see Lemma $(8.15)$ on page $220$ in \cite{Kobayashi}. 
\end{itemize}
We claim for $t$ large, $H_t$ satisfies the desired properties in the Proposition with the choice of $K$ as in the third item above. 
 In fact, let $\beta^t_i=\bp_\F\pi^t_i$. Then (\ref{eqn2.9}) implies 
 $$\lim_{t\rightarrow\infty} \int_{X_0} |\beta_i^t|^2=0.$$
 By Lemma \ref{lem2-12} and (\ref{eqn2.10}), we have
$$
\lim_{t\rightarrow\infty}\int_{X_0} |\Lambda_{{\omega_{FS}}}\p_\F \beta^t_i|=\lim_{t\rightarrow\infty}\int_{X_0} |\Lambda_{{\omega_{FS}}}\bp_\F (\beta^t_i)^*|=0.
$$
Then by Remark \ref{rmk2-11}, $\bp_\F=\bp_S -\sum_i (\pi^t_i-\pi^t_{i-1})\beta^t_i$, which implies 
$$
\begin{aligned}
\Lambda_{\omega_{FS}} F_{A_t}&=\Lambda_{\omega_{FS}} F_{(H_t, \bp_S)}-\sum_{i,j}\Lambda_{\omega_{FS}}  (\pi^t_i-\pi^t_{i-1})\beta^t_i\wedge (\beta^t_j)^*(\pi^t_j-\pi^t_{j-1})
\\
&-\sum_{i,j} \Lambda_{\omega_{FS}} (\beta^t_j)^*(\pi^t_j-\pi^t_{j-1})\wedge (\pi^t_i-\pi^t_{i-1})\beta^t_i
-
\sum_i \Lambda_{\omega_{FS}} \p_S [(\pi^t_i-\pi^t_{i-1})\beta^t_i]\\
&+\sum_i \Lambda_{\omega_{FS}} \bp_S [(\beta^t_i)^*(\pi^t_i-\pi^t_{i-1})].
\end{aligned}
$$
From this, we get 
$$
|\Lambda_{\omega_{FS}} F_{(H_t, \bp_S)}+\sqrt{-1}\psi^{H_t}| \leq |\Lambda_{\omega_{FS}} F_{A_t}+\sqrt{-1}\psi^{H_t}| +C(\sup_i|\beta^t_i|^2+\sup_i|\Lambda_{\omega_{FS}} \p_\E\beta^t_i|)
$$
Then we have
 $$\lim_{t\rightarrow\infty}\int_{X_0} |\Lambda_{\omega_{FS}} F_{(H_t, \bp_S)}+\sqrt{-1}\psi^{H_t}|=0. $$
The last two items follow from \cite{SW} (with a possibly larger constant $K>0$) where the analytic bubbling set of the Yang-Mills flow is identified with $\S(\F)$ as a set. More precisely, let $g_t=(H_0^{-1} H_t)^{\frac{1}{2}}$ and $\tilde{A}_t=g_t \cdot A_t \cdot g_t^{-1}$ which is integrable and unitary with respect to $H_0$, i.e. $(H_0, \tilde{A}_t)=(g_t^{-1})^*(H_t, A_t)$. Outside $S(\F)$, modulo unitary gauge transforms, $\tilde{A}_t$ converges smoothly to the natural admissible Hermitian-Yang-Mills connection $A_\infty$ on $(Gr^{HNS}(\F))^{**}$. Since the quantities we are interested are invariant under pulling back by $g_t^{-1}$, we get what we need.
\end{proof}

\begin{rmk}
 Although not needed in this paper, we mention that in \cite{Sibley} Sibley  proved the existence of $L^{p}$ approximate critical Hermitian metrics,  in the sense that for any $\delta>0$ and $1\leq p<\infty$ there exists a metric $H_{\delta}$ whose associated Chern connection $A_\delta$ satisfies
$$\|\sqrt{-1}\Lambda_{\omega_{FS}} F_{A_\delta}-\psi^{H_\delta}\|_{L^p(X)} <\delta.$$
\end{rmk}

\

Now consider a complex gauge transform away from $\S(\F)$ of the form
$$g=\sum_{i} f_i (\pi_i-\pi_{i-1}),  $$
where each $f_i$ is a smooth positive function. Denote $\beta=\bp_\F-\bp_S=-\sum_i(\pi_i-\pi_{i-1})\bp_\F\pi_i$.

\begin{lem}\label{complexgauge}
The following holds over $X_0$
\begin{equation*}
F_{(H, g\cdot\bp_\F)}=T_0 +T_1+T_2
\end{equation*}
with 
$$
\begin{aligned}
&T_0= F_{(H,g\cdot\bp_S)}=F_{(H, \bp_S)}-\sum_{i} \partial\bp\log (f_i^2) (\pi_i-\pi_{i-1});\\
&T_1=-(g\cdot\bp_S)(g \beta g^{-1})^*
+(g\cdot \bp_S)^*(g \beta g^{-1});\\
&T_2=-
g \beta g^{-1}\wedge(g \beta g^{-1})^*-(g \beta g^{-1})^*\wedge g \beta g^{-1}.
\end{aligned}
$$
Here 
$$
\begin{aligned}
(g\cdot\bp_S)(g \beta g^{-1})^*=&\sum_{i<j}\frac{f_i}{f_j} (\pi_j-\pi_{j-1})(\bp_S \beta^*)(\pi_i-\pi_{i-1})\\
&-2\sum_{i<j}\bp(\frac{f_i}{f_j})\wedge(\pi_j-\pi_{j-1})(\p_\F \pi_i) (\pi_i-\pi_{i-1}), 
\end{aligned}
$$
and $(g\cdot\bp_S)^*$ denotes the $(1,0)$ component of the Chern connection determined by $(H, g\cdot \bp_S)$.
\end{lem}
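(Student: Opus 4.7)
The plan is to derive $F_{(H, g\cdot\bp_\F)} = T_0 + T_1 + T_2$ by tracking how the Chern connection transforms under the complex gauge $g$, exploiting the block-diagonal structure of $g = \sum_i f_i P_i$, where $P_i := \pi_i - \pi_{i-1}$, with respect to the $H$-orthogonal splitting $\F = \bigoplus_i Q_i$ induced by $\bp_S$. First I would isolate the clean additive decomposition
\[
g\cdot\bp_\F = (g\cdot\bp_S) + g\beta g^{-1},
\]
which follows from applying the standard identity $g\cdot \bp_0 = \bp_0 - (\bp_0 g)g^{-1}$ to both $\bp_\F = \bp_S + \beta$ and $\bp_S$, and noting that $(\bp_\F g)g^{-1} - (\bp_S g)g^{-1} = [\beta,g]g^{-1}$ simplifies to $\beta - g\beta g^{-1}$. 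Setting $\bp_S' := g\cdot\bp_S$ and $\gamma := g\beta g^{-1}$, the problem reduces to computing the curvature of $(H, \bp_S' + \gamma)$.

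The second step is a general curvature expansion. Since $H$-compatibility forces $\p_{(H, \bp_S'+\gamma)} = \p_{(H, \bp_S')} - \gamma^*$ (the $H$-adjoint of $\gamma$, interpreted as a $(1,0)$-form), expanding $F = \p'\bp' + \bp'\p'$ and grouping by order in $\gamma$ yields exactly $T_0 = F_{(H, \bp_S')}$, $T_1 = \p_{(H, \bp_S')}\gamma - \bp_S' \gamma^*$, and $T_2 = -\gamma \wedge \gamma^* - \gamma^* \wedge \gamma$, while the putative $(0,2)$ and $(2,0)$ contributions vanish by $(\bp_S')^2 = 0$ and its adjoint. To identify $T_0$ explicitly, I would use that $g$ preserves the splitting and is $H$-self-adjoint, so $g: (\F, \bp_S) \to (\F, \bp_S')$ is a holomorphic isomorphism and $F_{(H, \bp_S')} = g\, F_{(g^*H, \bp_S)}\, g^{-1}$. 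Since $g^*H$ restricts to $f_i^2 H|_{Q_i}$ on each summand and $g$ acts as the scalar $f_i$ on $Q_i$, the outer conjugation is blockwise trivial, and the standard scaling law for Chern curvature under conformal rescaling of a holomorphic summand supplies the correction $-\p\bp\log(f_i^2) P_i$ on each block, giving $T_0 = F_{(H, \bp_S)} - \sum_i \p\bp\log(f_i^2) (\pi_i - \pi_{i-1})$.

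The only computation requiring genuine care is the explicit expansion of $\bp_S'\gamma^*$. The integrability condition $(\Id-\pi_i)\bp_\F \pi_i = 0$ makes each $\F_i$ a $\bp_\F$-holomorphic subbundle, so $\beta = -\sum_k P_k(\bp_\F\pi_k)$ has $(k,\ell)$-block $-P_k(\bp_\F\pi_k)P_\ell$ nonzero only for $k < \ell$; taking $H$-adjoints and using the self-adjointness of each $\pi_k$ to convert $(\bp_\F\pi_k)^{*} = \p_\F\pi_k$, one obtains $\beta^* = -\sum_k (\p_\F\pi_k) P_k$, with $(j,i)$-block $-P_j(\p_\F\pi_i)P_i = -(\pi_j-\pi_{j-1})(\p_\F\pi_i)(\pi_i - \pi_{i-1})$ nonzero only for $j > i$. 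Using $g^{*}=g$, one has $\gamma^* = g^{-1}\beta^* g = \sum_{i<j}(f_i/f_j)\, P_j\beta^* P_i$. Writing $\bp_S' = \bp_S + \alpha$ with $\alpha = -\sum_k \bp\log f_k \cdot P_k$ and noting that each $P_k$ is annihilated by $\bp_S^{End}$ (since the splitting is $\bp_S$-holomorphic), the Leibniz rule for $\bp_S$ gives the contribution $\sum_{i<j}\bigl[\bp(f_i/f_j)\wedge P_j\beta^* P_i + (f_i/f_j) P_j(\bp_S \beta^*)P_i\bigr]$, while the graded commutator $[\alpha, \gamma^*]$ contributes an additional $\sum_{i<j}\bp(f_i/f_j)\wedge P_j\beta^* P_i$ after accounting for the sign rule $\gamma^* \wedge \alpha = -\alpha \wedge \gamma^*$ for two $1$-forms of complementary bidegree. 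The two $\bp(f_i/f_j)$ contributions combine into a factor $2$, and substituting the above expression for $P_j \beta^* P_i$ produces the coefficient $-2$ in the stated formula; $T_2$ is then immediate from the quadratic piece. The main obstacle, such as it is, is the careful sign accounting in the graded commutator, but once the block structure is in hand this reduces to a short bookkeeping computation.
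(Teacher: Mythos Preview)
Your proof is correct and follows essentially the same route as the paper. Both arguments hinge on the decomposition $g\cdot\bp_\F = (g\cdot\bp_S) + g\beta g^{-1}$ and then expand the Chern curvature of $(H, g\cdot\bp_S + \gamma)$ by order in $\gamma = g\beta g^{-1}$; the paper phrases this as starting from $F_{(H,g\cdot\bp_S)} = (g\cdot\bp_S)^*(g\cdot\bp_S) + (g\cdot\bp_S)(g\cdot\bp_S)^*$ and substituting $g\cdot\bp_S = g\cdot\bp_\F - g\beta g^{-1}$, which is the same expansion read backwards. For the explicit formula for $(g\cdot\bp_S)(g\beta g^{-1})^*$, the paper writes $g\cdot\bp_S = \bp_S - (\bp_S g)g^{-1}$ and expands directly, arriving at $g^{-1}(\bp_S\beta^*)g + 2(\bp_S g^{-1})\wedge\beta^* g - 2 g^{-1}\beta^*\wedge\bp_S g$; your block-by-block Leibniz computation with $\alpha = -\sum_k \bp\log f_k\, P_k$ yields the same expression once the blocks are reassembled. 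Your additional derivation of the explicit form $T_0 = F_{(H,\bp_S)} - \sum_i \p\bp\log(f_i^2)\,P_i$ via the conformal scaling law is a nice supplement, as the paper simply records this identity in the statement without proving it. One small caution: the phrase ``the sign rule $\gamma^*\wedge\alpha = -\alpha\wedge\gamma^*$'' is not literally true for $\mathrm{End}$-valued forms; what you actually use (correctly) is that the \emph{scalar} factors $\bp\log f_k$ anticommute with the $\mathrm{End}$-valued $(1,0)$-form $P_j\beta^* P_i$, which is what produces the combination $\bp\log f_i - \bp\log f_j = \bp\log(f_i/f_j)$ and hence the factor $2\,\bp(f_i/f_j)$.
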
 
\begin{proof}
By definition, 
$$ F_{(H,g\cdot\bp_S)}=(g\cdot\bp_S)^*\circ (g\cdot\bp_S)+(g\cdot\bp_S)\circ (g\cdot\bp_S)^*$$
Now the first part follows from this by plugging $g\cdot\bp_S=g\cdot\bp_\F-g\beta g^{-1}$. As for the second part,  
\begin{equation*}
\begin{aligned}
(g\cdot\bp_S)(g \beta g^{-1})^*
&=(\bp_S-\bp_Sg\cdot g^{-1})(g^{-1}\beta^* g)\\
&=\bp_S (g^{-1}\beta^* g)-[\bp_Sg\cdot g^{-1},g^{-1}\beta^* g]\\
&=g^{-1}\bp_S\beta^* g+2(\bp_S g^{-1})\wedge\beta^* g-2g^{-1}\beta^* \wedge\bp_S g
\end{aligned}
\end{equation*}
Plugging $\beta=-\sum_i(\pi_i-\pi_{i-1})\bp_\F\pi_i$ and using $\pi_{j}(\bp_S\beta^*)(\pi_i-\pi_{i-1})=0$ for $j\leq i$, we obtain the conclusion.
\end{proof}

\begin{cor}\label{splittingcurvature}
Over $X_0$, $|F_{(H,\bp_S)}| \leq |F_{(H,\bp_\F)}|+C\sup_i|\beta_i|^2$
\end{cor}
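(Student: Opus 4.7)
The plan is to specialize Lemma \ref{complexgauge} to the trivial gauge transformation $g=\Id$. All $f_i\equiv 1$, so $\p\bp\log(f_i^2)=0$ and $T_0$ collapses to $F_{(H,\bp_S)}$, giving
\begin{equation*}
F_{(H,\bp_\F)}=F_{(H,\bp_S)}+T_1+T_2,
\end{equation*}
with $T_1=-\bp_S\beta^*+\p_S\beta$ and $T_2=-\beta\wedge\beta^*-\beta^*\wedge\beta$. Here $T_2$ is pointwise controlled by $|\beta|^2$, and since $\beta=-\sum_i(\pi_i-\pi_{i-1})\beta_i$ is a sum of mutually orthogonal contributions, $|\beta|^2\leq C\sup_i|\beta_i|^2$. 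The only issue is that $T_1$ is first order in $\beta$ and admits no such pointwise bound; the strategy will be to argue that $T_1$ simply does not contribute to the diagonal block of the curvature identity.

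The key observation is that on $X_0=X\setminus\S(\F)$, relative to the orthogonal splitting $\F=\bigoplus_i Q_i$, the terms of the curvature identity respect a block-triangular structure. The operators $\bp_S$ and $\p_S$ are block-diagonal by the defining formula \eqref{eqn2-15}. The second fundamental form $\beta=-\sum_i(\pi_i-\pi_{i-1})\bp_\F\pi_i$ is strictly block-upper-triangular: $\bp_\F\pi_i$ vanishes on $\F_i$ by condition (b) and takes values in $\F_i$, so $(\pi_i-\pi_{i-1})\bp_\F\pi_i$ sends $Q_j$ (for $j>i$) into $Q_i$. Consequently $\beta^*$ is strictly block-lower-triangular, and hence $\bp_S\beta^*$ and $\p_S\beta$ are strictly triangular in opposite senses, so $T_1$ lies entirely off the block diagonal. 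The expanded formula for $(g\cdot\bp_S)(g\beta g^{-1})^*$ in Lemma \ref{complexgauge} with $g=\Id$ even displays this directly, via $\bp_S\beta^*=\sum_{i<j}(\pi_j-\pi_{j-1})(\bp_S\beta^*)(\pi_i-\pi_{i-1})$.

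The conclusion then follows by projecting onto the block diagonal. Let $P$ denote pointwise orthogonal projection of endomorphism-valued forms onto the block-diagonal part with respect to $\F=\bigoplus_i Q_i$. Since $F_{(H,\bp_S)}$ is block-diagonal and $P(T_1)=0$, applying $P$ to the curvature identity gives $F_{(H,\bp_S)}=P(F_{(H,\bp_\F)})-P(T_2)$, and since $P$ is norm non-increasing,
\begin{equation*}
|F_{(H,\bp_S)}|\leq |F_{(H,\bp_\F)}|+|T_2|\leq |F_{(H,\bp_\F)}|+C\sup_i|\beta_i|^2
\end{equation*}
on $X_0$, which is the desired bound.

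I do not anticipate a serious obstacle here. The entire substantive content of the argument is the triangularity observation for $\beta$, which is immediate from the definition of the projections $\pi_i$ and the formula \eqref{eqn2-15} for $\bp_S$; everything else is a mechanical application of the identity already established in Lemma \ref{complexgauge}.
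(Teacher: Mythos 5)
Your proof is correct, and it starts exactly where the paper does: set $g=\Id$ in Lemma \ref{complexgauge} to get $F_{(H,\bp_\F)}=F_{(H,\bp_S)}+T_1+T_2$. The only difference is in how $T_1$ is disposed of: the paper's one-line proof invokes the first-order estimate (\ref{sf}) on $\p_\F\beta_i$ (itself proved by an orthogonality argument in the two-block splitting $\F=\F_i\oplus\F_i^\perp$), whereas you observe that $T_1$ is strictly off-block-diagonal with respect to $\F=\bigoplus_i Q_i$ and project it away. Your variant is slightly cleaner in that it delivers the coefficient $1$ in front of $|F_{(H,\bp_\E)}|$ exactly as stated, while routing through (\ref{sf}) would a priori only give a constant there; since the corollary is only used up to multiplicative constants, both readings are fine. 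Your triangularity claims for $\beta$ and the block-diagonality of $\bp_S$, $\p_S$ from (\ref{eqn2-15}) are all verified correctly.
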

\begin{proof}
This follows by choosing $g=1$ in Lemma \ref{complexgauge} and applying Equation (\ref{sf}).
\end{proof}

We finish this subsection with a technical result which will be used in Section \ref{S3-5}. 

\begin{prop}\label{prop2-15}
Let $\G\subset\F$ be a saturated subsheaf and fix any smooth Hermitian metric $H$ on $\F$,  then there exists $\delta=\delta(\G)>0$ so that
$$\int_{X\setminus \Sing(\G)}|\bp\pi_\G|^{2+\delta}<\infty, $$
where $\pi_\G$ is the weakly holomorphic projection map defined by $\G$ with respect to $H$.
\end{prop}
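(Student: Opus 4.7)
The plan is to use resolution of singularities to reduce to a smooth situation on a birational model, and then estimate the resulting integral in local coordinate charts.

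Since $\F$ is locally free and $\G \subset \F$ is saturated, $\G$ is reflexive and hence $\Sing(\G)$ has complex codimension at least three in $X$. The projection $\pi_\G$ is smooth on $X \setminus \Sing(\G)$ and, viewed through the inclusion into $End(\F)$, defines a rational map from $X$ to the Grassmann bundle $Gr(\rk\G,\F)$, regular outside $\Sing(\G)$. By Hironaka's resolution of indeterminacy, there exists a projective birational morphism $p: \tilde X \to X$, given by a composition of blow-ups along smooth centers supported over $\Sing(\G)$, such that $\tilde\pi_\G := p^*\pi_\G$ extends to a smooth section of $p^*End(\F)$; in particular $|\bp\tilde\pi_\G|$ is uniformly bounded with respect to any fixed smooth K\"ahler metric $\omega_{\tilde X}$ on $\tilde X$.

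The change-of-variables formula then gives
\[
\int_{X \setminus \Sing(\G)} |\bp\pi_\G|_\omega^{2+\delta}\,\omega^n
\;=\; \int_{\tilde X \setminus E} |\bp\tilde\pi_\G|_{p^*\omega}^{2+\delta}\,(p^*\omega)^n,
\]
where $E = p^{-1}(\Sing(\G))$. To estimate the right-hand side, I would work in local coordinates adapted to a blow-up along a smooth codimension-$k$ center. A direct matrix computation shows that while the dominant component of $(p^*\omega)^{-1}$ blows up like $|\sigma|^{-2}$ near the corresponding component of $E$ (with $\sigma$ a local defining function), the volume factor $(p^*\omega)^n/\omega_{\tilde X}^n$ vanishes there to order exactly $2(k-1)$. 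Combined with the uniform $\omega_{\tilde X}$-bound on $|\bp\tilde\pi_\G|$, the integrand is dominated pointwise by $C|\sigma|^{2k-2-(2+\delta)}$, which is integrable against $\omega_{\tilde X}^n$ in a tubular neighborhood when $\delta < 2k-2$. Because the images of the blow-up centers have codimension at least $3$ in $X$, taking the relevant $k \geq 3$ and $\delta > 0$ sufficiently small yields the claim.

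The main technical obstacle is controlling iterated resolutions: after the first blow-up, subsequent centers may have codimension $2$ in the intermediate model, even though they still lie over codimension $\geq 3$ subvarieties of $X$. This is handled by the standard discrepancy bound $a_i \geq \mathrm{codim}_X(p(E_i)) - 1$ for each exceptional component $E_i$, which follows inductively from the formula $K_{\tilde X} = p^*K_X + \sum_i a_i E_i$; hence $(p^*\omega)^n$ vanishes to order at least $4$ along every component of $E$, giving sufficient room to absorb the $L^\infty$ blow-up of $|(p^*\omega)^{-1}|^{1+\delta/2}$ and produce a positive $\delta = \delta(\G)$.
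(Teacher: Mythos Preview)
Your overall strategy---pass to a resolution $p:\tilde X\to X$ on which $\G$ becomes a subbundle, change variables, and estimate locally near the exceptional set---is exactly the paper's. The difference, and the gap, is in how you control the integrand after iterated blow-ups.

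For a \emph{single} blow-up you correctly balance the $|\sigma|^{2(k-1)}$ vanishing of the volume against the $|\sigma|^{-2}$ blow-up of the largest eigenvalue of $(p^*\omega)^{-1}$. But for a tower of blow-ups you only bound the volume side (via the discrepancy inequality $a_i\ge\mathrm{codim}_X p(E_i)-1$) and then assert this ``gives sufficient room to absorb $|(p^*\omega)^{-1}|^{1+\delta/2}$''. That step is not justified: along a deep exceptional divisor $E_i$ the smallest eigenvalue $\lambda_{\min}$ of $p^*\omega$ with respect to $\omega_{\tilde X}$ can vanish to arbitrarily high order, so knowing $a_i\ge 2$ alone says nothing about whether the product $|\sigma_i|^{2a_i}\cdot\lambda_{\min}^{-(1+\delta/2)}$ is locally integrable. (Also: the set where $\pi_\G$ fails to be smooth is $\Sing(\F/\G)$, which for a saturated $\G$ has codimension $\ge 2$, not $\ge 3$; so the discrepancy bound you invoke would in any case only give $a_i\ge 1$.)

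The paper closes this gap by a purely linear-algebraic observation that makes the discrepancy detour unnecessary. Writing $\lambda_1,\dots,\lambda_{n-1}$ for the eigenvalues of $p^*\omega$ relative to a fixed K\"ahler form $\omega_N$ on the resolution, one has $\lambda_j\le C$ uniformly, hence
\[
|\bp\tilde\pi_\G|_{p^*\omega}^{2}\,\frac{(p^*\omega)^{n-1}}{\omega_N^{n-1}}
\;\le\; C'\,\lambda_{\min}^{-1}\prod_j\lambda_j
\;=\; C'\!\!\prod_{j\neq \min}\!\lambda_j
\;\le\; C''\,;
\]
equivalently, $\Tr_{p^*\omega}\omega_N\cdot(p^*\omega)^{n-1}/\omega_N^{n-1}=(n-1)\,\omega_N\wedge(p^*\omega)^{n-2}/\omega_N^{n-1}$ is bounded. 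Thus the $L^2$ part of the integrand is already controlled, and one is reduced to showing
\[
\int_{\tilde X}\bigl(\Tr_{p^*\omega}\omega_N\bigr)^{\delta/2}\,\omega_N^{n-1}<\infty.
\]
Since $\Tr_{p^*\omega}\omega_N\le C\prod_j|\sigma_j|^{-2a_j}$ for some integers $a_j$ (multiplicativity of the trace bound along the tower of blow-ups), and since the exceptional set can be arranged to be simple normal crossing, this is finite for all sufficiently small $\delta>0$. No information about $\mathrm{codim}\,\Sing(\G)$ or about discrepancies enters. You should also arrange the resolution so that $E$ is SNC; otherwise the local integrability near intersections of components is not immediate.
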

\begin{proof}
By Hironaka resolution of singularities (see \cite{BS}),  there is a sequence of blow-ups $p_k: X_k\rightarrow X_{k-1}$ ($k=1, \cdots, N$) along smooth submanifolds of $X_{k-1}$ of codimension at least two, with $X_0=X$, such that $p=p_N\circ \cdots \circ p_1$ is  biholomorphic on the complement of $\Sing(\G)$, $p^{-1}(\Sing(\G))$ is a union $E=\bigcup E_j$ of simple normal crossing divisors (with possibly multiplicities),  and $p^*\G|_{X_N\setminus E} $ extends to a holomorphic sub-bundle of $p^*\F$. We denote the sub-bundle by $\widetilde\G$. Pulling back the given Hermitian metric on $\F$ to $p^*\F$, we obtain a corresponding smooth projection map $\pi_{\widetilde \G}$ defined by $\widetilde \G$. So 
$$\int_{X\setminus \Sing(\G)} |\bp \pi_\G|^{2+\delta}\omega_{FS}^{n-1}=\int_{X_N\setminus p^{-1}(\Sing(\G))} |\bp \pi_{\widetilde G}|_{p^*{\omega_{FS}}}^{2+\delta} \ p^*\omega_{FS}^{n-1}. $$
Let $\omega_k$ be a smooth K\"ahler metric on $X_k$, where $\omega_0=\omega_{FS}$. Then we can naturally view $\omega_k$ as a smooth real valued $(1, 1)$ form on $X_N$ which are K\"ahler metrics outside $E$. On $X_N\setminus E$ we have 
$$|\bp \pi_{\widetilde G}|^{2+\delta}_{\omega_{FS}}\leq (\Lambda_{p^*\omega_{FS}}\omega_N)^{1+\delta/2} |\bp\pi_{\widetilde G}|^{2+\delta}_{\omega_N}. $$
Notice 
$$ \Lambda_{p^*\omega_{FS}}\omega_N \frac{(p^*\omega_{FS})^{n-1}}{\omega_N^{n-1}}=\frac{(n-1)\omega_N\wedge (p^*\omega_{FS})^{n-2}}{\omega_N^{n-1}}$$
 is uniformly bounded, and $\bp\pi_{\widetilde G}$ is smooth on $X_N$. Therefore to prove the conclusion it suffices to show that we can find $\delta>0$ such that 
$$\int_{X_N\setminus E} (\Lambda_{p^*\omega_{FS}}\omega_N)^{\delta/2}\omega_N^{n-1}<\infty. $$
To prove this, we first notice on $X_N\setminus E$ we have
$$\Lambda_{p^*\omega_{FS}}\omega_N\leq \Pi_{k=1}^N (p_N\cdots p_{k+1})^*\Lambda_{p^*_{k}\omega_{k-1}}\omega_{k}.$$
 Now for each $k$, by fixing any smooth Hermitian metric on the corresponding line bundle associated to the exceptional divisor of $p_k$ and doing a local calculation, one can easily check that the $\Lambda_{p_{k}^*\omega_{k-1}}\omega_k \leq C|s_k|^{-2}$, where $s_k$ is the defining section for the exceptional divisor of $p_k$. Then we have
$$\Lambda_{p^*\omega_{FS}}\omega_N\leq C \Pi_{j} |\sigma_j|^{-2a_j}, $$
where $\sigma_j$ is the defining section of $E_j$ over $X_N$, and $a_j$ is a positive integer. 
Since $E$ is a union of simple normal crossing divisors, it is clear that we can find the desired $\delta>0$, again by estimating the corresponding integral locally near any point of $E$. 

\end{proof}

\subsection{Tangent cones}\label{Tangent Cones}
We first describe some generalities on Hermitian-Yang-Mills cones. Let $\underline A$ be an admissible Hermitian-Yang-Mills connection on $(\underline E, \underline H)$ over $(\C\P^{n-1}, \omega_{FS})$ with singular set $\underline \Sigma$. Let $\underline {\E}$ be the corresponding reflexive sheaf on $\C\P^{n-1}$, then the Einstein constant is $\mu=\mu(\underline\E)$. Recall we denote $\pi: \C^n\setminus\{0\}\rightarrow \C\P^{n-1}$ the natural holomorphic projection. Let $\Sigma=\pi^{-1}(\underline \Sigma)$. 
Define 
$$\E:=\iota_*\pi^*\underline \E.$$
Since $\pi$ is flat and $0$ is of codimension $n\geq 2$ in $\C^n$,  we know $\E$ is reflexive.  We consider the Hermitian metric $H:=|z|^{2\mu}\pi^*\underline H$ on $\C^n\setminus \Sigma$, and let $A$ be the corresponding Chern connection, then it follows that 
\begin{equation} \label{eqn2-1}
F_{A}=\pi^*F_{\underline A}+\sqrt{-1}\mu\pi^*\omega_{FS}\cdot \Id. 
\end{equation}

We first state a simple Lemma, whose proof follows easily from the fact that $(\C\P^{n-1}, \omega_{FS})$ is the symplectic reduction of $(\C^n, \omega_0)$ under the natural $S^1$ action.
\begin{lem} \label{lem2-16}
Let $\alpha$ be a two form on $\C\P^{n-1}$, then $\Lambda_{\omega_0}\pi^*\alpha=|z|^{-2}\pi^*(\Lambda_{\omega_{FS}}\alpha)$. 
\end{lem}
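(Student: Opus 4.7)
The plan is to verify the identity pointwise at an arbitrary $z \in \C^n \setminus \{0\}$. Both sides are invariant under the natural $U(n)$ action (on $\C^n$ and its induced action on $\C\P^{n-1}$), so it suffices to check a single representative at each value of $r = |z|$. I would take $z = (r, 0, \ldots, 0)$, and introduce affine coordinates $u_i = w_i / w_1$ on the chart $\{w_1 \neq 0\}$ around $[z] = [1 : 0 : \cdots : 0]$.

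Next, I would carry out the standard computation of $\pi^*\omega_{FS}$ at $z$. Since $\omega_{FS}|_{[z]} = \sqrt{-1}\sum_{i \geq 2} du_i \wedge d\bar u_i$ (the Fubini--Study metric is Euclidean at the origin of such a chart), and since $\pi^* du_i|_z = r^{-1}\, dw_i$ for $i \geq 2$, one obtains
\begin{equation*}
\pi^*\omega_{FS}\big|_z \;=\; \frac{\sqrt{-1}}{r^2}\sum_{i=2}^{n} dw_i \wedge d\bar w_i.
\end{equation*}
In other words, $\pi^*\omega_{FS}$ equals $r^{-2}$ times the restriction of $\omega_0$ to the horizontal subspace $(\C\cdot z)^\perp$, while it vanishes on the Hopf/radial direction $\C\cdot z$. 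This is precisely the infinitesimal content of the symplectic reduction picture: the unit sphere $S^{2n-1}$ maps via $\pi$ to $\C\P^{n-1}$ as a Riemannian submersion, and the sphere of radius $r$ scales the fiber metric by $r^2$.

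For a general $2$-form $\alpha$ on $\C\P^{n-1}$, the $(2,0)$ and $(0,2)$ components pull back to forms of pure type $(2,0)$ and $(0,2)$, which are annihilated by both $\Lambda_{\omega_0}$ and $\Lambda_{\omega_{FS}}$, so the identity reduces to the case of $(1,1)$-forms. Writing $\alpha|_{[z]} = \sqrt{-1}\sum_{i,j \geq 2} \alpha_{i\bar j}\, du_i \wedge d\bar u_j$, one reads off $\Lambda_{\omega_{FS}} \alpha|_{[z]} = \sum_i \alpha_{i\bar i}$, while
\begin{equation*}
\pi^*\alpha\big|_z \;=\; \frac{\sqrt{-1}}{r^2}\sum_{i,j \geq 2} \alpha_{i\bar j}\, dw_i \wedge d\bar w_j,
\end{equation*}
so that $\Lambda_{\omega_0}\pi^*\alpha|_z = r^{-2}\sum_i \alpha_{i\bar i} = |z|^{-2}\,\pi^*(\Lambda_{\omega_{FS}}\alpha)|_z$, as claimed.

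The proof is essentially bookkeeping with the Riemannian submersion structure; there is no substantive obstacle. The only point that must be tracked carefully is the conformal factor $r^{-2}$ relating the horizontal part of $\omega_0$ to $\pi^*\omega_{FS}$, which is what produces the $|z|^{-2}$ in the statement.
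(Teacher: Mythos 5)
Your proof is correct, and it fleshes out exactly the fact the paper invokes: the published argument consists only of the remark that the identity "follows easily from the fact that $(\C\P^{n-1},\omega_{FS})$ is the symplectic reduction of $(\C^n,\omega_0)$," which is precisely the horizontal/vertical decomposition and the conformal factor $r^{-2}$ that your coordinate computation at $z=(r,0,\dots,0)$ makes explicit. No gaps; the $U(n)$-equivariance reduction and the vanishing of $\Lambda$ on the $(2,0)$ and $(0,2)$ parts are both handled correctly.
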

 It follows from Lemma \ref{lem2-16} and Equation (\ref{eqn2-1}) that $A$ is an admissible Hermitian-Yang-Mills connection on $\E$ (with respect to the K\"ahler metric $\omega_0$) with singular set $\Sigma$ and  vanishing Einstein constant. 
 
 \begin{defi}\label{defi2.19}
 We call such a Hermitian-Yang-Mills connection $(\E, H, A)$ a \emph{simple HYM cone}. When there is no confusion, sometimes we also use the notation $(\E, A)$ or simply $A$ for brevity.
 \end{defi}

We first note that different choices of $(\underline \E, \underline A)$ can give gauge equivalent simple Hermitian-Yang-Mills cones. For any $m\in \Z$, let $\underline A(m)$ be the Chern-connection on $\underline\E\otimes \O(m)$, where $\O(m)$ is endowed with the natural Hermitian metric whose Chern connection has curvature $-\sqrt{-1}m\omega_{FS}$, then it is easy to see that the Einstein constant of $\underline A(m)$ is $\mu_m=\mu+m$, and $\underline A(m)$ also gives rise to a simple HYM cone which is the same as the one given by $\underline A$. On the underlying sheaf, this is just the obvious fact that $\pi^*\O(m)$ is trivial on $\C^n\setminus\{0\}$, hence $\pi^*(\E\otimes \O(m))$ is isomorphic to $\pi^*\E$ and the metric then differs by a factor $|z|^{2m}$.

From the above discussion, in principle, given a simple HYM cone, there could be more than one way to modify the metric by a conformal factor so that the corresponding new Chern connection descends to be a connection over the projective space $\P^{n-1}$. One can ask whether the constructions above by tensoring with $\O(k)$ give all the possible ways to descend the modified connection. This is true indeed. As for this, notice there is a standard $S^1$ action on $\C^n$, given by $e^{i\theta}. z=e^{i\theta}z$. Parallel transport along the $S^1$ orbit determines a smooth section $P$ of the gauge group $\mathcal G$ of $A$. $P$ can be naturally viewed as a section of $End(\E)$, and using our definition it is easy to see that $P=e^{-2\pi \sqrt{-1}\mu}\Id$. It follows that $\mu$ is uniquely determined by $A$, modulo $\Z$. 
We fix such a choice of $\mu$. The conformal modification is to cancel the holonomy $P$ which implies that it has to be of the form $|z|^{2\mu+2k}$ for some $k$. As a result, once we have fixed $\mu$, the descending connection is also fixed. 

In sum, given two stable reflexive sheaves $\underline \E$ and $\underline \E'$, let $(\E, H, A)$ and $(\E', H', A')$ be the corresponding simple HYM cones constructed as in Definition \ref{defi2.19}, then we have
\begin{prop} \label{prop2-3}
$(\E, H, A)$ and $(\E', H', A')$ are gauge equivalent, i.e. there exists an holomorphic isomorphism $g: \E \rightarrow \E'$ so that $H=g^*H'$, if and only if $\underline \E$ and $\underline \E'$ differs by tensoring with a power of $\O(1)$. 
\end{prop}

For convenience we will simply call the matrix $e^{-2\pi\sqrt{-1}\mu}\Id$ the \emph{holonomy of $A$}. 
\begin{rmk}
If follows that $A$ is isomorphic to a pull-back connection from $\C\P^{n-1}$ if and only if the holonomy is trivial. In general $\mu$ does not have to be zero. For a simple example, we can take $\underline \E$ to be the tangent bundle of $\C\P^{n-1}(n\geq 3)$. It is well-known that $\underline\E$ is stable, with the obvious Hermitian-Einstein metric, and $\mu=\frac{n}{n-1}$. The corresponding simple HYM cone would have non-trivial holonomy and hence can not be a pull-back connection. 
\end{rmk}

\begin{defi}
A \emph{HYM} cone is a direct sum of simple HYM cones. 
\end{defi}

Similar to the above discussion, we can uniquely write a HYM cone $A$ as a direct sum of simple HYM cones $\bigoplus_{j}A_j$ such that each $A_j$ has distinct holonomy $e^{-2\pi\sqrt{-1}\mu_j}$.  We can similarly define the \emph{holonomy of $A$} as an element of $(S^1)^k\subset U(k)$, where $k=rank(E)$. It is uniquely determined by its eigenvalues (with multiplicities). The underlying sheaf $\E$ is also isomorphic to $\bigoplus_j \iota_*\pi^*\underline \E_j$ for reflexive sheaves $\underline \E_j$ over $\C\P^{n-1}$, with $\mu_j=\mu(\underline\E_j)$, and the corresponding Hermitian-Einstein metric on $\E$ can be written as $H=\bigoplus_j |z|^{2\mu_j} \pi^*\underline H_j$ for  Hermitian-Einstein metrics $\underline H_j$ on $\underline \E_j$. So it is clear that $\mu_j \in (k!)^{-1}\Z$ for all $j$.

\

Next we give an intrinsic characterization of a HYM cone. This is also observed in \cite{JSW}. 

\begin{thm} \label{thm2-6}
Let $A$ be an admissible Hermitian-Yang-Mills connection on $(\C^n, \omega_0)$ with vanishing Einstein constant and with singular set $\Sigma$, then $A$ is gauge equivalent to a HYM cone if and only if $\p_r\lrcorner F_A=0$ holds on $\C^n\setminus \Sigma$.
\end{thm}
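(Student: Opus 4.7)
The necessity direction is immediate from the definitions: for a HYM cone $A = \bigoplus_j A_j$ with each $A_j$ simple, formula (\ref{eqn2-1}) gives $F_{A_j} = \pi^*F_{\underline A_j} + \sqrt{-1}\mu_j\pi^*\omega_{FS}\cdot\Id$. Since $\partial_r$ is tangent to the fibers of $\pi$, interior product with $\partial_r$ annihilates any $\pi$-pullback form, so $\iota_{\partial_r}F_A = 0$; the property is preserved under unitary gauge change because $\iota_{\partial_r}(gF_Ag^{-1}) = g(\iota_{\partial_r}F_A)g^{-1}$.

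For sufficiency, the first move is to upgrade the hypothesis to full $\pi$-basicness of $F_A$. Let $Z = \sum_i z^i\partial_{z^i}$ be the holomorphic Euler field, so $r\partial_r = Z+\bar Z$ and $\partial_\theta = \sqrt{-1}(Z-\bar Z)$. Since $F_A$ is of pure type $(1,1)$, $\iota_Z F_A$ takes values in $(0,1)$-forms while $\iota_{\bar Z}F_A$ takes values in $(1,0)$-forms; the vanishing of their sum forces each to vanish separately, which also yields $\iota_{\partial_\theta}F_A = 0$. In particular $F_A$ is $\pi$-basic, and the restriction of $A$ to every $\C^*$-orbit $L\setminus\{0\}$ is a flat unitary connection.

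The crucial step is then to construct the global $S^1$-holonomy section $P \in \Gamma(End(\E))$, defined by letting $P(x)$ be parallel transport of $A$ once around the $S^1$-orbit through $x$, and to show that $P$ is $A$-parallel. For any smooth path $\gamma$ in $\C^n\setminus\Sigma$, the cylinder $C = \{e^{i\theta}\cdot\gamma(t)\}$ has tangent space spanned by $\partial_\theta$ and $\gamma'$, so $\iota_{\partial_\theta}F_A = 0$ forces $A|_C$ to be flat; hence the two boundary circle holonomies are conjugate by $A$-parallel transport along $\gamma$. Thus $P$ is $A$-parallel, unitary, and consequently holomorphic, and its spectral decomposition produces a holomorphic, orthogonal, $A$-invariant splitting $\E = \bigoplus_j \E_j$ with $P|_{\E_j} = e^{-2\pi\sqrt{-1}\mu_j}\Id$. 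On the $j$-th summand, replacing the induced metric by $H_j' := |z|^{-2\mu_j}H$ kills the $S^1$-holonomy; radial parallel transport of a frame from the reference slice $\{|z|=1\}$ then identifies $(\E_j, A_j')$ with $\pi^*(\underline\E_j, \underline A_j)$ for a smooth unitary connection $\underline A_j$ on $\C\P^{n-1}\setminus\underline\Sigma$. Applying Lemma \ref{lem2-16} block-wise to $\sqrt{-1}\Lambda_{\omega_0}F_A = 0$ yields $\sqrt{-1}\Lambda_{\omega_{FS}}F_{\underline A_j} = (n-1)\mu_j\Id$; Bando-Siu extends each $(\underline\E_j,\underline A_j)$ across $\underline\Sigma$ as an admissible HYM structure on a reflexive sheaf, and undoing the metric change realizes $A$ as gauge equivalent to $\bigoplus_j A_j$.

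The main technical obstacle is controlling the construction near $\Sigma$: one needs $\Sigma$ to be $\C^*$-invariant so that the $S^1$-orbits used to define $P$ are disjoint from singularities, and the eigenspace splitting must extend across $\Sigma$ as reflexive subsheaves. Both are expected to follow from Hartogs-type and removable-singularity arguments of Bando-Siu type once the structure has been established on $\C^n\setminus\Sigma$.
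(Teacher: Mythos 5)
Your proof follows essentially the same route as the paper: construct the $S^1$-holonomy section $P$, show it is $A$-parallel, split $\E$ by its eigenvalues, and descend each summand to $\C\P^{n-1}$ after twisting the metric by $|z|^{-2\mu_j}$. Your derivation of $\iota_{\p_\theta}F_A=0$ from type considerations is exactly the paper's observation that $\iota_{J\p_r}F_A=0$, and your cylinder argument for the parallelism of $P$ (flatness of $F_A$ restricted to the surface swept out by rotating a path) is a slightly cleaner alternative to the paper's argument, which instead chooses a local radial gauge $A_0(\p_r)=A_0(J\p_r)=0$, deduces that $A$ is locally $\C^*$-invariant and hence a local pullback $\pi^*\underline A$, and then concludes $d_AP=0$. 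Both give the same conclusion, and the rest of your descent argument matches Proposition \ref{prop2-3}.

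The one point you flag but do not resolve is a genuine gap as written, and it is precisely where the paper's proof does something specific. Since $\Sigma$ is not a priori $S^1$-invariant, $P$ is not even defined at a point of $\C^n\setminus\Sigma$ whose $S^1$-orbit meets $\Sigma$, so "establishing the structure on $\C^n\setminus\Sigma$ first and then removing singularities" does not get off the ground. The fix is to replace $\Sigma$ by its $\C^*$-saturation $\Sigma'=\{\lambda.x:\ x\in\Sigma,\ \lambda\in\C^*\}$ and run the entire construction on $\C^n\setminus\Sigma'$. The key numerical point is that $\Sigma$ has complex codimension at least \emph{three} (it is the singular set of a reflexive sheaf), so $\Sigma'$ still has complex codimension at least \emph{two}, which is exactly the threshold for the Bando--Siu removable singularity theorem: each parallel eigensummand of $P$, being an admissible HYM connection with finite energy on $\C^n\setminus\Sigma'$, extends to an admissible HYM connection on a reflexive sheaf over all of $\C^n$, and one then concludes as in Proposition \ref{prop2-3} that each piece is a simple HYM cone (and, a posteriori, that $\Sigma$ was $\C^*$-invariant all along). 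Without this codimension count your appeal to "Hartogs-type arguments" is not justified, since a general closed set of codimension two would not suffice for the holonomy construction, and a saturation of a merely codimension-two set could be a divisor, where extension fails.
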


\begin{proof}
The ``only if" direction follows easily from definition, so it suffices to prove the ``if" direction. Notice a priori we are not assuming $\Sigma$ is $\C^*$-invariant. We let $\Sigma'=\{\lambda. x|x\in \Sigma, \lambda\in \C^*\}$, then since $\Sigma$ is of complex codimension at least three,  $\Sigma'$ is of complex codimension at least two. We can use parallel transport along the $S^1$ orbit with respect to $A$ to define a smooth section $P$ of the gauge group $\mathcal G$ over $\C^n\setminus\Sigma'$.
We claim $P$ is covariantly constant, when viewed naturally as a section of $End(E)$. Notice this is a local property. To see this, we fix a point $z\in \C^n\setminus\{0\}$, and locally we can choose a trivialization of $E$ under which we can write $d_{A}=d+A_0$ for a $\mathfrak u(l)$-valued $1$-form $ A_0$, where $l$ is the rank of $E$. Modifying by an element of $\mathcal G$ we may assume locally around $z$ that $ A_0(\p_r)=A_0(J\p_r)=0$. Notice since $\p_r\lrcorner F_{A}=0$ and $F^{0, 2}_{A}=0$, we also have $(J\p_r)\lrcorner F_{A}=0$. Now we claim that $A_0$ is invariant under the local $\C^*$ action, so we can write $A=\pi^*\underline A$ for a locally defined unitary connection $\underline A$ on $\C\P^{n-1}$. Indeed, locally near $z$, we can choose a basis of vector fields $(e_1, \cdots e_{2n-2}, \p_r, J\p_r)$ so that $(e_1, \cdots, e_{2n-2})$ are $\C^*$ invariant. By assumption, we have $F_{A}(\p_r, e_i)=F_{A}(J\p_r, e_{2n-2})=0$, i.e. $\p_r(A_0(e_i))=J\p_r(A_0(e_i))=0$ for any $i$. In particular, $A_0(e_i)$ is $\C^*$ invariant. Since $e_i$'s  are all chosen to be $\C^*$ invariant, $A_0$ is $\C^*$ invariant. It then follows that parallel transport along the $\C^*$ action orbit commutes with the co-variant derivative $d_{A}$, hence propagating along the $S^1$ orbit we obtain $d_{A}P=0$.

Using $P$ we obtain a parallel splitting of $(E, A)$ over $\C^n\setminus \Sigma'$ into the direct sum of Hermitian-Yang-Mills connections. Since $\Sigma'$ has complex codimension at least two, by \cite{BS} each direct summand extends to an admissible Hermitian-Yang-Mills connection on $\C^n$ which defines a polystable stable reflexive sheaf respectively. Moreover, on each piece the holonomy $P$ is given by multiplication by $e^{-2\pi \sqrt{-1}\mu}$ for some $\mu$. From this, we know know as Proposition \ref{prop2-3} that each piece is indeed a direct sum of simple HYM cones. It also follows from the above argument that $\Sigma$ is indeed $\C^*$ invariant. 
\end{proof}

Now we apply the above discussion to our setting of taking tangent cones of admissible Hermitian-Yang-Mills connections. Let $(\E, H, A)$ be an admissible Hermitian-Yang-Mills connection on $(B, \omega)$ with isolated singularity at the origin. For  $\lambda>0$, we denote by $A_\lambda$ the pull-back of $A$ to $B_{\lambda^{-1}}\setminus\{0\}$ by the map $z\mapsto \lambda z$. Clearly $A_\lambda$ is Hermtian-Yang-Mills with respect to the metric $\lambda^{-2}\cdot \lambda^*\omega$. 

Using the monotonicity formula of Price  (Proposition $5.1.1.$ in \cite{Tian}), i.e. $e^{ar^2} r^{4-2n} \int_{B_r} |F_A|^2$ is decreasing as $r\rightarrow 0$ for some constant $a$, we know that there exists a fixed constant $C$ so that 
$$
\int_{B_{\lambda^{-1}}} |F_{A_\lambda}|^2 \leq C<\infty.
$$ 
Now over any fixed compact subset of $\C^n$, $A_\lambda$ has uniformly bounded $L^2$ norm of the curvature. Since over $B^*$, a fixed unitary vector bundle is always isomorphic to the pulled-back of a unitary vector bundle over $S^{2n-1}$, we can assume that the underlying unitary bundle is scaling invariant.  By using Uhlenbeck compactness theorem (see theorem $5.2$ in \cite{UY} and Theorem 2.2.1 in \cite{Tian} ) applied on an exhaution of $\C^n$ by any compact subsets  \footnote{Here, the limit is taken on an exhaustion given by compact subsets of $\C^n \setminus \tilde{\Sigma}$ first and then do a diagonaliozation argument to get the limit over $\C^n$.}, it follows that by passing to a subsequence $\{\lambda_{i}\}\rightarrow 0$, we may find a closed subset $\tilde\Sigma\subset \C^n$ which contains $0$ (the \emph{bubbling set}) and has  finite codimension 4 Hausdorff measure, a sequence of smooth gauge transformations $g_i$ over $B_{\lambda_i^{-1}}\setminus \tilde\Sigma$,  a smooth Hermitian-Yang-Mills connection $A_\infty$ on $(\E_\infty, H_\infty)$ over $(\C^n\setminus \tilde\Sigma, \omega_0)$ with vanishing Einstein constant and smooth unitary isomorphisms $P: (\lambda_i^* \E, \lambda_i^*H)\rightarrow (\E_\infty, H_\infty)$ over $B_{\lambda_i^{-1}} \setminus \tilde{\Sigma}$, such that $(P^{-1})^*g_i(A_{\lambda_i})$ converges smoothly to $A_\infty$ on any compact subset $K\subset \C^n\setminus \tilde\Sigma$ (\cite{Tian}, Proposition 3.1.2). 

Furthermore, for the limit connection $A_\infty$, on any fixed compact subset $K$ of $\C^n\setminus \tilde\Sigma$, there is a constant $C>0$ such that  over any compact set $K\subset \C^n$
$$\int_{K\setminus\tilde\Sigma}|F_{A_\infty}|^2\leq C,$$ 
hence by the removable singularity theorem of Bando-Siu \cite{BS}, $A_\infty$ is indeed an admissible Hermitian-Yang-Mills connection which defines a reflexive sheaf $\mathcal E_\infty$. By \cite{BS}, $H_\infty$ can be extended over $\C^n \setminus Sing(\E_\infty)$. Abusing notation, we still denote it as $H_\infty$. Notice that a priori we may obtain different limits $(\E_\infty, A_\infty)$ along different subsequences.

\begin{defi} 
Any such limit $(\E_\infty, A_\infty)$ is called a \emph{tangent cone} of $A$ at $0$.
\end{defi}

Using the monotonicity formula again Tian (\cite{Tian} Lemma 5.3.1) proved that $\p_r\lrcorner F_{A_\infty}=0$ so by Theorem \ref{thm2-6} we obtain the fact 

\begin{prop}
Any tangent cone is a HYM cone. 
\end{prop}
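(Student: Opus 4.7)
The plan is to combine the intrinsic characterization of HYM cones established in Theorem \ref{thm2-6} with the standard scaling/monotonicity argument of Price--Tian. By Theorem \ref{thm2-6}, it suffices to verify the pointwise identity $\iota_{\partial_r} F_{A_\infty} = 0$ on $\mathbb{C}^n \setminus \Sigma$, since $A_\infty$ is already an admissible Hermitian--Yang--Mills connection with vanishing Einstein constant (as recorded in the paragraph preceding the proposition). Once this is verified, Theorem \ref{thm2-6} immediately gives that $A_\infty$ is a direct sum of simple HYM cones.

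First I would set up Price's monotonicity formula for Yang--Mills connections on Euclidean domains. For the admissible HYM connection $A$ on $B \subset \mathbb{C}^n$ with vanishing Einstein constant, the flat metric makes the standard density
\[
M(r,A) := r^{4-2n} \int_{B_r} |F_A|^2 \, \frac{\omega_0^n}{n!}
\]
monotone non-decreasing in $r$, and moreover the derivative formula yields a nonnegative boundary integrand of the form
\[
\frac{d}{dr} M(r,A) = c_n \, r^{4-2n} \int_{\partial B_r} |\iota_{\partial_r} F_A|^2 \, d\sigma
\]
for some positive dimensional constant $c_n$. (Here I am using the standard fact that for the Euclidean dilation vector field the boundary term in the first variation of Yang--Mills energy reduces to the squared norm of the radial contraction of the curvature.)

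Next I would exploit scale invariance: $M(r, A_\lambda) = M(\lambda r, A)$ for all $\lambda \in (0,1)$ and $r < \lambda^{-1}$. Fixing any $0 < r_1 < r_2$ and letting $\lambda = \lambda_i \to 0$ along the subsequence extracting $A_\infty$, both $M(r_1, A_{\lambda_i})$ and $M(r_2, A_{\lambda_i})$ converge to the same limit, namely the density $\Theta(A,0) := \lim_{s \to 0^+} M(s, A)$ (which exists by monotonicity). On the other hand, by the smooth convergence $g_i(A_{\lambda_i}) \to A_\infty$ on compact subsets of $\mathbb{C}^n \setminus \tilde\Sigma$ together with uniform $L^2$ control of curvature across $\tilde\Sigma$ (the latter following from the bound $\int_K |F_{A_{\lambda_i}}|^2 \le C$ combined with the fact that $\tilde\Sigma$ has locally finite codimension four Hausdorff measure, hence is negligible for the $L^2$-integral), one passes to the limit to obtain $M(r_1, A_\infty) = M(r_2, A_\infty) = \Theta(A,0)$. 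Consequently $r \mapsto M(r, A_\infty)$ is constant.

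Finally, applying the derivative formula to $A_\infty$ (which is smooth away from $\Sigma$), constancy of $M(r,A_\infty)$ forces $\iota_{\partial_r} F_{A_\infty} \equiv 0$ on every sphere $\partial B_r$ disjoint from $\Sigma$, and hence on all of $\mathbb{C}^n \setminus \Sigma$. By Theorem \ref{thm2-6}, $A_\infty$ is a HYM cone. The main technical point to be careful about is the passage to the limit of the curvature integrals across the bubbling set $\tilde\Sigma$; this is the one place where one must invoke the standard fact (already used implicitly in the setup of the tangent cone) that concentration of $|F|^2$ occurs only on a set of finite codimension four Hausdorff measure, which therefore has zero $2n$-dimensional Lebesgue measure and does not obstruct the equality $\lim_i M(r, A_{\lambda_i}) = M(r, A_\infty)$.
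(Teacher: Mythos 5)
Your overall strategy is the same as the paper's: establish $\iota_{\partial_r}F_{A_\infty}=0$ via Price's monotonicity and then invoke Theorem \ref{thm2-6}. (The paper simply cites Tian, Lemma 5.3.1, for the radial vanishing, so you are in effect reconstructing that lemma.) However, there is a genuine gap in your limit-passage step. You claim that
$\lim_i M(r,A_{\lambda_i})=M(r,A_\infty)$ because $\tilde\Sigma$ has locally finite codimension-four Hausdorff measure and hence zero Lebesgue measure. This reasoning is wrong: the whole point of the bubbling set is that the energy densities $|F_{A_{\lambda_i}}|^2\,dx$ converge weakly to $|F_{A_\infty}|^2\,dx+\nu$ for a nonnegative defect measure $\nu$ supported on $\tilde\Sigma$, and $\nu$ is in general nonzero precisely because energy can concentrate on a Lebesgue-null set. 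Indeed, in the main setting of this paper $\E_\infty$ is typically not isomorphic to $\E$, and energy loss in the limit is expected. What smooth convergence away from $\tilde\Sigma$ gives you is only lower semicontinuity, $M(r,A_\infty)\leq\liminf_i M(r,A_{\lambda_i})=\Theta(A,0)$, which by itself does not yield constancy of $r\mapsto M(r,A_\infty)$.

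The argument can be repaired without claiming energy identity. Fix $0<\rho<\sigma$ and write the monotonicity formula for $A_{\lambda_i}$ in integrated form,
\begin{equation*}
M(\sigma,A_{\lambda_i})-M(\rho,A_{\lambda_i})=c_n\int_{B_\sigma\setminus B_\rho}|z|^{4-2n}\,|\iota_{\partial_r}F_{A_{\lambda_i}}|^2\,dz .
\end{equation*}
By scale invariance the left-hand side equals $M(\lambda_i\sigma,A)-M(\lambda_i\rho,A)$, which tends to $\Theta(A,0)-\Theta(A,0)=0$. On the right-hand side, restrict the integral to $(B_\sigma\setminus B_\rho)\setminus\tilde\Sigma$, where the convergence is smooth after gauge transformations, and apply Fatou's lemma to conclude $\int_{(B_\sigma\setminus B_\rho)\setminus\tilde\Sigma}|z|^{4-2n}|\iota_{\partial_r}F_{A_\infty}|^2=0$, hence $\iota_{\partial_r}F_{A_\infty}\equiv 0$ there by smoothness, and then on all of $\C^n\setminus\Sigma$. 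This avoids any assertion about the limit of the total curvature integrals across $\tilde\Sigma$. With that correction, the appeal to Theorem \ref{thm2-6} completes the proof as you intend.
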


\begin{rmk}
In \cite{Tian}, Page 263 it is claimed that any tangent cone is always the pull-back of a Hermitian-Yang-Mills connection on $\C\P^2$. This is \emph{not} precisely correct, since a HYM cone is not necessarily simple. Compare also Proposition 6.1 in \cite{JSW}. 
\end{rmk}

\subsection{PDE estimates}\label{PDE estimate}

 Let $\overline B^*=\overline B\setminus\{0\}\subset\C^n$. Again we always assume $n\geq 3$. For a function $g$, we denote $g^+=\max\{g,0\}$. The following Lemma is crucial for us. Here $\Delta=\Delta_{\omega}$ is the analyst's Laplacian operator.
 
\begin{lem}\label{lem2-19}
Suppose $g\in C^{2}(B^{*})\cap C^0(\overline B^*)$ with $\int_{ B^*}|g^+|^{\frac{n}{n-1}}<\infty$ and $f$ is a non-negative function on $B^*$. If on $B^*$ we have
\begin{equation} \label{eqn2-16}
\Delta g(z) \geq -{|z|^{-2}}{f(z)}, 
\end{equation}
 then for all $z\in B^*$ the following hold,
 \begin{enumerate}[(1).]
\item For all $z\in B^*$,  $$g(z)\leq |g|_{L^{\infty}(\p B)}+\int_{B^*} G(z,w) |w|^{-2} f(w)dw, $$
 where $G(z, w)$ is the (positive) Green's function for $-\Delta$ on $B$. The inequality is only meaningful when the right hand side is finite. 
\item For all $z\in B^*$, 
\begin{equation*}
g(z)\leq C_0(|g|_{L^\infty(\p B)}+\sup_{|w-z|\leq |z|/2}|f(w)|+(-\log |z|) \sup_{r\in (0, 1]} r^{1-2n}\int_{\p B_r}|f|), 
\end{equation*} 
where $C_0$ depends only on $n$.
\item If $|f|_{L^\infty(B^*)}<\infty$, then
$$\limsup_{z\rightarrow 0} \frac{g(z)}{-\log |z|}\leq C_0\limsup_{r\rightarrow 0} r^{1-2n}\int_{\p B_r}|f|.$$
\end{enumerate}
\end{lem}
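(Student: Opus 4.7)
The plan is to derive both bounds from a Green's function representation on $B\subset\mathbb R^{2n}$, with the potential singularity of $g$ at the origin handled by a barrier argument that exploits the sharp integrability hypothesis.

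For part (1), assume the integral on the right is finite (otherwise the bound is vacuous) and set $h(z):=\int_{B^*}G(z,w)|w|^{-2}f(w)\,dw$. Then $h\geq 0$, $h|_{\p B}=0$, and $-\Delta h=|z|^{-2}f$ on $B^*$ distributionally, so $u:=g-h-|g|_{L^\infty(\p B)}$ is subharmonic on $B^*$ with $u\leq 0$ on $\p B$. The goal is to show $u\leq 0$ on $B^*$. Since $h\geq 0$ and $|g|_{L^\infty(\p B)}\geq 0$, we have $u^+\leq g^+\in L^{n/(n-1)}(B^*)$. The subharmonic mean value inequality on the ball of radius $|z|/2$ centered at $z$, combined with H\"older's inequality with exponents $n/(n-1)$ and $n$, then yields
$$u(z)\leq \frac{1}{|B_{|z|/2}(z)|}\int_{B_{|z|/2}(z)} u^+ \leq C|z|^{2-2n}\|u^+\|_{L^{n/(n-1)}(B_{|z|/2}(z))} = o(|z|^{2-2n})$$
as $|z|\to 0$. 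Since this decay is strictly slower than the pole of the harmonic barrier $|z|^{2-2n}$, for any fixed $\epsilon>0$ the subharmonic function $u-\epsilon|z|^{2-2n}$ is $\leq -\epsilon$ on $\p B$ and $\leq 0$ on $\p B_\delta$ for all $\delta$ sufficiently small. The maximum principle on the annulus $B\setminus \overline{B_\delta}$ gives $u\leq \epsilon|z|^{2-2n}$ on $B^*$, and letting $\epsilon\to 0$ completes part (1).

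For part (2), one estimates $h(z)$ using the standard pointwise bound $G(z,w)\leq C|z-w|^{2-2n}$. Split the integration domain into $A:=B_{|z|/2}(z)$ and $A^c$. On $A$, where $|w|\sim|z|$, one obtains
$$\int_A G(z,w)|w|^{-2}f(w)\,dw \leq C|z|^{-2}\sup_{|w-z|\leq |z|/2}|f(w)|\int_A|z-w|^{2-2n}dw \leq C\sup_{|w-z|\leq|z|/2}|f(w)|,$$
using $\int_A|z-w|^{2-2n}dw\leq C|z|^2$. On $A^c$ the elementary inequality $|z-w|^{2-2n}\leq C\max(|z|,|w|)^{2-2n}$ holds, and integration in spherical shells gives
$$\int_{A^c}G(z,w)|w|^{-2}f(w)\,dw \leq C\sup_{s\in(0,1]}F(s)\int_0^1 \max(|z|,s)^{2-2n}s^{2n-3}\,ds = C\sup_s F(s)\bigl(\tfrac{1}{2n-2}-\log|z|\bigr),$$
where $F(s):=s^{1-2n}\int_{\p B_s}|f|$. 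Combining with part (1) yields the claimed pointwise bound with a constant depending only on $n$, and the asymptotic $\limsup$ assertion follows immediately since all other terms remain bounded as $|z|\to 0$.

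The main obstacle is the barrier step in part (1): the exponent $n/(n-1)$ is precisely sharp because $|z|^{2-2n}$ just fails to lie in $L^{n/(n-1)}(B)$, as $|z|^{-2n}$ is non-integrable in $\mathbb R^{2n}$. This sharp integrability is exactly what rules out the potentially offending homogeneous singular behavior of $u$ at $0$, allowing the maximum-principle comparison to close. Once this is in place, the remaining estimates in part (2) are routine Green's function computations.
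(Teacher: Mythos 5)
Your proof is correct, and part (1) takes a genuinely different route from the paper's. The paper first reduces to $g|_{\p B}=0$ by subtracting a harmonic function, replaces $g^+$ by the smooth approximations $g_\tau=\frac12(\sqrt{g^2+\tau^2}-\tau+g)$, inserts a cut-off $\chi_\epsilon$ supported outside $B_\epsilon$ into the Green representation formula, and then shows the two cut-off error terms vanish as $\epsilon\to 0$; the key computation is that $\epsilon^{-2}\mathrm{Vol}(B_{2\epsilon})^{1/n}$ stays bounded in $\R^{2n}$, so H\"older against $\|g^+\|_{L^{n/(n-1)}(B_{2\epsilon}\setminus B_\epsilon)}\to 0$ kills them. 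You instead run the classical removable-singularity argument for subharmonic functions: the sub-mean-value inequality plus H\"older shows $u=g-h-|g|_{L^\infty(\p B)}$ is $o(|z|^{2-2n})$ at the origin, this is beaten by the harmonic barrier $\epsilon|z|^{2-2n}$, and the maximum principle on annuli closes the comparison. Both arguments exploit the hypothesis $g^+\in L^{n/(n-1)}$ in exactly the same quantitative way (the exponent is dual to the fundamental solution's failure to lie in $L^{n/(n-1)}$), but yours avoids the regularization $g_\tau$ and the cut-off bookkeeping at the price of invoking standard potential theory for the superharmonic potential $h$ --- in particular that $u$ genuinely satisfies the sub-mean-value property when $f$ is merely measurable, which is true but worth a sentence (note also that your assertion $h|_{\p B}=0$ is not needed: $h\ge 0$ already gives $u\le 0$ on $\p B$). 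Part (2) is essentially identical to the paper's: the paper treats $\{|w|\geq |z|/2,\ |z-w|\ge|z|/2\}$ and $\{|w|\leq|z|/2\}$ separately, whereas you merge them via $|z-w|^{2-2n}\lesssim\max(|z|,|w|)^{2-2n}$, but the resulting estimates coincide.
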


\begin{proof}
We first solve the Dirichlet problem $\Delta h=0$, $h|_{\p B}=g|_{\p B}$, then $|h|_{L^\infty(\overline B)}\leq |g|_{L^\infty(\p B)}$. So we can reduce to the case that $g|_{\p B}=0$.
Fix any $z\in B^*$, for $\epsilon<|z|/4$, we choose a cut-off function $\chi_\epsilon$ supported in $B\setminus B_\epsilon$, and equal to $1$ on $B\setminus B_{2\epsilon}$, with $|\nabla \chi_\epsilon|\leq C\epsilon^{-1}$ and $|\nabla^2 \chi_\epsilon|\leq C\epsilon^{-2}$. For $\tau>0$ we denote $$g_\tau=\frac{1}{2}(\sqrt{g^2+\tau^2}-\tau+g),$$ then $g_\tau=0$ on $\p B$ and one can check 
$$\Delta g_\tau(z)\geq \frac{1}{2}\frac{\sqrt{g^2+\tau^2}+g}{\sqrt{g^2+\tau^2}}\Delta g\geq -\frac{1}{2}\frac{\sqrt{g^2+\tau^2}+g}{\sqrt{g^2+\tau^2}}|z|^{-2}f(z)\geq -|z|^{-2}f(z). $$
Using  Green's representation formula we have
\begin{eqnarray*}
g_\tau(z)=\int_{B^*} 2\nabla_w G(z,w) \nabla \chi_{\epsilon}(w)g_\tau(w)+G(z,w)\Delta\chi_\epsilon(w) g_\tau(w) -G(z,w) \chi_{\epsilon}(w)\Delta  g_\tau(w)\\
\end{eqnarray*}
Let $\tau\rightarrow 0$, we get 
$$g^{+}(z)\leq \int_{B^*} 2\nabla_w G(z,w) \nabla \chi_{\epsilon}(w)g^+(w)+G(z,w)\Delta\chi_\epsilon(w) g^+(w) +G(z,w) \chi_{\epsilon}(w) |w|^{-2}f(w).$$
Now let $\epsilon \rightarrow 0$, we claim the first two terms tend to zero. We only prove this for the second term and the first term can be dealt with similarly.  We use the well-known fact that 
$|G(z, w)|\leq C|z-w|^{2-2n}$ and $|\nabla G(z, w)|\leq C|z-w|^{1-2n}$ for $|z|\leq 1/2, |w|\leq 1/2$(see for example Theorem 4.13 in \cite{Aubin}).

We have 
\begin{eqnarray*}
&&|\int_{B^*}G(z, w)\Delta \chi_\epsilon(w)g^{+}(w)|\\
&\leq& C\epsilon^{-2} \int_{B_{2\epsilon}\setminus B_\epsilon}|z-w|^{-2n+2} |g^{+}(w)|\\
&\leq & C\epsilon^{-2}Vol(B_{2\epsilon})^{1/n}|z|^{-2n+2}(\int_{B_{2\epsilon}\setminus B_\epsilon} |g^{+}(w)|^{\frac{n}{n-1}})^{\frac{n-1}{n}}, 
\end{eqnarray*}
and the last term tends to zero, since $\int_{ B^*}|g^+|^{\frac{n}{n-1}}<\infty$. So we obtain
$$
g^{+}(z)\leq \int_{B} G(z,w) {|w|^{-2}}{f(w)} dw.
$$
This finishes the proof of (1).

\

Now we prove (2).  It suffices to estimate the integral $\int_B |z-w|^{2-2n}|w|^{-2}f(w)$. We divide this into two parts. 
When $|w-z|\leq |z|/2$ we have 
\begin{equation}\label{eqn2-17}
\int_{|w-z|\leq |z|/2} |z-w|^{2-2n}|w|^{-2}f(w)\leq  C\sup_{|w-z|\leq |z|/2} f(w).
\end{equation}
When $|w-z|\geq |z|/2$, we have 
$|w|\leq 3|z-w|$. Then
$$
\begin{aligned}
\int_{|w|\geq \frac{|z|}{2}, |z-w|\geq \frac{|z|}{2}} |z-w|^{2-2n}|w|^{-2}f(w)&\leq C\int_{|w|\geq \frac{|z|}{2}} |w|^{-2n}f(w)\\
&\leq C(-\log |z|) \sup_{r\in (0, 1]} r^{1-2n}\int_{\p B_r}|f| 
\end{aligned}
$$
We also have 
$$
\begin{aligned}
\int_{|w| \leq |z|/2} |z-w|^{2-2n}|w|^{-2} f(w)
&\leq C |z|^{2-2n} \int_{|w|\leq |z|/2}|w|^{-2}f(w)\\
&\leq C \sup_{r\in (0, 1]} r^{1-2n}\int_{\p B_r}|f|.
\end{aligned}
$$
Combining the estimates above, we easily get the conclusion.

Now to see (3) we first take $z\rightarrow 0$ in (2) to get 
$$\limsup_{z\rightarrow 0} \frac{g(z)}{-\log |z|}\leq C_0\sup_{r\in (0, 1]} r^{1-2n}\int_{\p B_r}|f|.$$
Now we notice \eqref{eqn2-16} is invariant under rescaling, so the same estimate holds on the ball $B_s$ for all $s\in (0, 1]$. So we can take the $\limsup$ on the right hand side of the above equation. 
\end{proof}

Let $H$ be an admissible Hermitian metric on a reflexive sheaf $\E$ defined on $\overline B$.  

\begin{thm}[\cite{BS}] \label{thm2-24}
For any holomorphic section $s$ of $\E$,  we have $\log^{+}|s|^2$ belongs to $H^1_{loc}$, and the following inequality holds  in weak sense
\begin{equation} \label{eqn2-4}
\Delta \log^{+}|s|^2\geq -2\frac{\langle \Lambda_{\omega}Fs, s\rangle}{|s|^2}\geq -2|\Lambda_{\omega}F|.
\end{equation}
\end{thm}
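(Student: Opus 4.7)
The plan is to first establish the inequality pointwise on the smooth locus where both the sheaf $\E$ and the section $s$ are regular, and then extend it across the zero set of $s$ and across the singular set $\Sing(\E)$ by approximation and cutoff arguments. On the open set $U = (B\setminus \Sing(\E)) \cap \{s\neq 0\}$, the bundle $\E$ is locally free with smooth Chern connection $A$ and $|s|^2$ is smooth and positive. Since $s$ is holomorphic, $\bp_A s = 0$, so $F_A s = \bp_A\p_A s$, and differentiating the Hermitian metric twice yields the standard identity
\begin{equation*}
\sqrt{-1}\p\bp \log|s|^2 = \frac{\langle \sqrt{-1}F_A s, s\rangle}{|s|^2} + \frac{|s|^2\,|\p_A s|^2_{\omega_0} - |\langle \p_A s, s\rangle|^2}{|s|^4},
\end{equation*}
whose last term is a non-negative $(1,1)$-form by the Cauchy--Schwarz inequality. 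Tracing with $\omega_0$ and discarding the non-negative Kato term yields the smooth pointwise version of the first inequality on $U$; the second inequality is immediate from Cauchy--Schwarz applied to the self-adjoint mean-curvature operator.

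Next, I would pass from $\log|s|^2$ to $\log^+|s|^2$ by approximating $t\mapsto t^+$ with smooth convex non-decreasing functions $\Phi_\tau$ satisfying $0\leq \Phi'_\tau\leq 1$ and $\Phi''_\tau\geq 0$, and applying the chain rule to $\Phi_\tau(\log|s|^2)$. Since $\Phi''_\tau\geq 0$, we have $\Delta \Phi_\tau(\log|s|^2)\geq \Phi'_\tau(\log|s|^2)\Delta\log|s|^2$. Combining this with the pointwise inequality from the previous step, and using that the dominating bound $-2|\Lambda_{\omega_0}F_A|$ is non-positive while $0\leq \Phi'_\tau\leq 1$, we obtain $\Delta \Phi_\tau(\log|s|^2)\geq -2|\Lambda_{\omega_0}F_A|$ classically on $U$. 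Letting $\tau\to 0$ gives the weak inequality for $\log^+|s|^2$ on $U$. Since $\log^+|s|^2$ vanishes in a neighborhood of any zero of $s$, it extends by zero as a locally Lipschitz function across $\{s=0\}\setminus \Sing(\E)$ and the distributional inequality extends there for free.

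Finally, I would extend everything across the singular set $\Sing(\E)$, which has complex codimension at least three by Bando--Siu. Using the admissibility of $H$ (so that $|\Lambda_{\omega_0}F_A|\in L^\infty$ and $|F_A|\in L^2_{loc}$) together with a family of cutoff functions $\chi_\delta$ vanishing on a $\delta$-neighborhood of $\Sing(\E)$ with $\int|\nabla\chi_\delta|^2\to 0$ (available from the codimension bound), I would test the smooth inequality on $U$ against $\chi_\delta^2 \log^+|s|^2$ multiplied by a fixed compactly supported cutoff in $B$. Integration by parts, combined with the uniform $L^\infty$ bound on $|\Lambda F|$, yields a uniform $H^1_{loc}$ bound on $\log^+|s|^2$ that survives $\delta\to 0$, giving $\log^+|s|^2\in H^1_{loc}(B)$. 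The weak inequality on all of $B$ then follows by testing against $\chi_\delta\varphi$ for non-negative $\varphi\in C_c^\infty(B)$ and passing to the limit by dominated convergence. The main obstacle is reconciling the two sources of irregularity, the zero set of $s$ and the analytic singular set of $\E$, within the weak-PDE framework; the decisive input is the codimension-three property of $\Sing(\E)$, which is what makes the cutoff estimates close.
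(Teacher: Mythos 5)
The paper offers no proof of this statement: it is quoted verbatim from Bando--Siu \cite{BS}, so the comparison is with their argument (and with the closely related Lemma \ref{lem2-30} later in the paper, whose proof the authors explicitly model on \cite{BS}). Your first two steps are the standard ones and are fine: the Weitzenb\"ock identity for $\p\bp\log|s|^2$ on the locus where $\E$ is locally free and $s\neq 0$, discarding the non-negative Kato term, and the convex monotone truncation $\Phi_\tau$ to pass to $\log^{+}$. One caveat: as you yourself implicitly concede, the truncation argument only delivers the outer inequality $\Delta\log^{+}|s|^{2}\geq -2|\Lambda_{\omega_0}F|$; the middle quantity in (\ref{eqn2-4}) need not be non-positive where $|s|<1$, so the first inequality is really a statement about $\log|s|^{2}$ on $\{s\neq 0\}$. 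Since every later application in the paper uses only the outer bound, this is harmless.

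The genuine gap is in your final step. The cutoff argument for crossing $\Sing(\E)$ is circular: testing against $\chi_\delta^{2}\varphi^{2}\log^{+}|s|^{2}$ produces the terms $\int (\log^{+}|s|^{2})^{2}|\nabla\chi_\delta|^{2}$ and $\int \chi_\delta^{2}\varphi^{2}\log^{+}|s|^{2}\,|\Lambda_{\omega_0}F|$, and to make these converge as $\delta\to 0$ --- even granting $\int|\nabla\chi_\delta|^{2}\to 0$ from the capacity-zero property of a real codimension $\geq 6$ set --- you need $\log^{+}|s|^{2}$ to lie a priori in $L^\infty_{loc}$, or at the very least $L^{2}_{loc}$, near $\Sing(\E)$. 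That is precisely part of what the theorem asserts: $H$ is only defined and smooth off $\Sing(\E)$, and nothing rules out $|s|_H$ blowing up there a priori. The missing ingredient is the slicing argument of \cite{BS}, reproduced in the proof of Lemma \ref{lem2-30}: for a generic complex $2$-plane $P$ (or generic fiber of a linear projection to $\C^{n-2}$), $P$ misses $\Sing(\E)$ entirely because $\Sing(\E)$ has complex codimension at least three; the restriction of $\log^{+}|s|^{2}$ to $P$ is then smooth and satisfies the subsolution inequality with right-hand side controlled by $|F|$, and a Moser/Poincar\'e estimate on each slice, uniform in the slice, integrates up to give the a priori $L^{p}_{loc}$ control (and slicewise $W^{1,2}$ bounds) that your cutoff computation needs as input. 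Only after that does your $\delta\to 0$ limit close and yield $\log^{+}|s|^{2}\in H^{1}_{loc}$ together with the weak inequality on all of $B$.
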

In particular, by Moser iteration,  $|s|\in L^\infty_{loc}$. Moreover, if $|s|$ is in $L^2(B)$, then we have 
\begin{equation} \label{eqn2-5}
|s|_{L^\infty(B_{1/2})}\leq K_1 |s|_{L^2(B)}
\end{equation}
for $K_1$ depending on $|\Lambda_{\omega}F|_{L^\infty(B)}$.

\

Now suppose $\E$ has an isolated singularity at $0$. let $H$ and $H'$ be two admissible Hermitian metrics on $\E$, then $\Tr_H(H')$ and $\Tr_{H'}(H)$ are both the norms of the identity section of $End(\E)$ with respect to the two admissible Hermitian metrics $H^*\otimes H'$ and $(H')^*\otimes H$ respectively. Applying (\ref{eqn2-4}) and Lemma \ref{lem2-19},
 we obtain 
\begin{eqnarray} \label{cor2-26}
&&\limsup_{z\rightarrow 0} \frac{|\log \Tr_H(H')(z)|+|\log \Tr_{H'}(H)(z)|}{-\log |z|}\nonumber \\&\leq& C_0\limsup_{r\rightarrow 0} r^{1-2n}\int_{\p B_r}r^2(|\Lambda_{\omega}F_H|+|\Lambda_{\omega}F_{H'}|).
\end{eqnarray}

Notice the left hand side bounds the ratio between the metrics  $H$ and $H'$. Indeed, we have 
$$H'\leq e^{|\log \Tr_{H} H'|} H, H\leq e^{|\log \Tr_{H'}H|}H'.$$ This can be seen by diagonalizing $H'$ ( resp. $H$) using $H$ (resp. $H'$) respectively and a direct comparison in terms of eignevalues. In particular, if both $H$ and $H'$ are Hermitian-Einstein with vanishing Einstein constant, then there is a constant $C>0$ such that $C^{-1}H'\leq H\leq CH'$. This has been observed in \cite{JSW}. 

\

For our later purposes we also need to deal with more general classes of Hermitian metrics which may not be admissible. The following Lemma makes it convenient to use Lemma \ref{lem2-19}. 

\begin{lem} \label{lem2-30}
Suppose $\E$ has an isolated singularity at $0$, and  $H$, $H'$ be two smooth Hermitian metrics on $\E|_{\overline B^*}$ such that for some $\delta\in (0, 1]$, 
$$|F_H|+|F_{H'}|\in L^{1+\delta}(B).$$ 
Then 
$$g^{+}\in L^{\frac{n}{n-1}(1+\delta)}(B)$$
where $g$ denotes either $\log Tr_{H}H'$ or $\log Tr_{H'}H$. In particular, by Lemma \ref{lem2-19}, if we further assume $r^2(|\Lambda_\omega F_H|+|\Lambda_\omega F_{H'}|)\in L^\infty(B^*)$, then (\ref{cor2-26}) continues to hold. 
\end{lem}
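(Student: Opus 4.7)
The plan is to derive a Bando--Siu type weak differential inequality for $g$ on $B^*$ and then bound $g^+$ pointwise by a Newton potential whose integrability follows from Hardy--Littlewood--Sobolev.

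Since $\E$ has an isolated singularity at $0$, $\E|_{B^*}$ is a locally free sheaf, and $\Id_\E$ is a nowhere-vanishing holomorphic section of $\mathrm{End}(\E) = \E^*\otimes\E$ on $B^*$. With respect to the smooth Hermitian metric $H^*\otimes H'$ on $\mathrm{End}(\E)$, one has $|\Id_\E|^2 = \Tr_H H'$, so the pointwise form of Theorem \ref{thm2-24} on the smooth locus $B^*$ yields
\[
\Delta g^+ \geq -2|\Lambda_{\omega_0} F_{\mathrm{End}(\E)}| \geq -C(|\Lambda_{\omega_0} F_H|+|\Lambda_{\omega_0} F_{H'}|) =: -\tilde f
\]
weakly on $B^*$, where $\tilde f\in L^{1+\delta}(B)$; the analogous argument handles $g = \log\Tr_{H'}H$.

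Next, let $v(z) := C\int_B G(z,w)\tilde f(w)\,dw$ be the Newton potential and let $h$ be the harmonic extension of $g^+|_{\partial B}$, which is uniformly bounded since $H, H'$ are smooth on $\overline B^*$. By Hardy--Littlewood--Sobolev applied to the Riesz kernel $|z-w|^{2-2n}$ in $\R^{2n}$, $v\in L^{q_1}(B)$ with $q_1 = \tfrac{n(1+\delta)}{n-1-\delta} > \tfrac{n(1+\delta)}{n-1}$. The function $w := g^+ - h - v$ is weakly subharmonic on $B^*$ with $w|_{\partial B} = 0$, so a maximum principle comparison on $B\setminus\overline B_\epsilon$ using the harmonic barrier $\epsilon'|z|^{2-2n}$ (and letting $\epsilon,\epsilon'\to 0$) yields the pointwise bound $g^+\leq h+v$ on $B^*$. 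Consequently $g^+\in L^{q_1}(B)\subset L^{n(1+\delta)/(n-1)}(B)$, proving the main claim; the ``in particular'' assertion is then immediate from Lemma \ref{lem2-19}.

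The hard part is justifying the limit $\epsilon'\to 0$ in the barrier argument, equivalently the a priori growth estimate $\sup_{\partial B_\epsilon} g^+ = o(\epsilon^{2-2n})$ as $\epsilon \to 0$. Since $H, H'$ are only assumed smooth on $\overline B^*$, $g^+$ may in principle blow up at $0$, and one must extract the required constraint on its blow-up rate from the $L^{1+\delta}$ curvature hypothesis. The plan is to rescale the inequality $\Delta g^+\geq -\tilde f$ on dyadic annular shells $A_k = \{2^{-k-1}\leq |z|\leq 2^{-k}\}$, apply a local Green's representation on the unit annulus together with an $L^q\to L^\infty$ bootstrap, and combine the absolute continuity of $\|\tilde f\|_{L^{1+\delta}(A_k)}\to 0$ with the favorable scaling exponent $\tfrac{2n\delta}{1+\delta} < 2n-2$ (which holds for $\delta\leq 1$ and $n\geq 3$, since this is equivalent to $\delta < n-1$) to conclude the desired decay.
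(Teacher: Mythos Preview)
Your overall strategy is different from the paper's, and the place you correctly flag as the ``hard part'' is in fact a genuine gap that your sketched argument cannot close. The only inputs your annular rescaling uses are the weak inequality $\Delta g^+\geq -\tilde f$ on $B^*$ with $\tilde f\in L^{1+\delta}$ and smoothness of $g$ on $\overline B^*$. But these hypotheses are satisfied by $g^+=|z|^{2-2n}$ with $\tilde f\equiv 0$, for which the conclusion $\sup_{\partial B_\epsilon}g^+=o(\epsilon^{2-2n})$ is plainly false. Put differently, a local $L^q\to L^\infty$ estimate for subsolutions of $\Delta u\geq -f$ requires $f\in L^q$ with $q>n$ (half the real dimension $2n$), while you only have $q=1+\delta\leq 2<n$; no bootstrap on annuli can bridge this, since the Newton potential of an $L^{1+\delta}$ source lands only in $L^{q_1}$ and never in $L^\infty_{\mathrm{loc}}$ without further structure. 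Your differential inequality also uses only $|\Lambda_{\omega_0}F|$, throwing away the full $|F|\in L^{1+\delta}$ hypothesis that turns out to be essential.

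The paper's proof, following Bando--Siu, sidesteps the growth estimate entirely via a slicing and Caccioppoli argument. One restricts to affine complex $2$--planes $p^{-1}(t)\subset\C^n$ with $t$ ranging over a complementary $\C^{n-2}$; for $t\neq 0$ the slice misses the origin, so $\E|_{p^{-1}(t)}$ is a smooth bundle and one has $\Delta_t g\geq -C(|F_H|+|F_{H'}|)$, now with the \emph{full} curvature because the contraction is against the slice K\"ahler form. Multiplying by $\chi^2(g^+)^\delta$, integrating by parts on the slice, applying Young's inequality and the Poincar\'e inequality for compactly supported functions on $\C^2$, and absorbing the small term yields a uniform bound on $\int_{p^{-1}(t)}\bigl(|\nabla^t(\chi(g^+)^{(1+\delta)/2})|^2+\chi^2(g^+)^{1+\delta}\bigr)$ in terms of $\int_{p^{-1}(t)}(|F_H|+|F_{H'}|)^{1+\delta}$. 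Integrating over $t$ and varying the direction of the slicing shows $(g^+)^{(1+\delta)/2}\in W^{1,2}(B)$, and Sobolev embedding in $\R^{2n}$ gives $g^+\in L^{n(1+\delta)/(n-1)}(B)$. The key point is that the Caccioppoli step needs no a priori integrability of $g^+$ near $0$, only smoothness on each slice and the $L^{1+\delta}$ curvature bound; this is precisely what your Newton--potential route cannot do without first controlling the blow--up of $g^+$.
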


\begin{proof}
The argument essentially follows from the proof of Theorem 2 in \cite{BS}.  Fix any complex subspace $V\subset \C^n$ of dimension $n-2$, and denote by $p: B\rightarrow B\cap V$ the orthogonal projection.  Let $\chi: \C^2\rightarrow[0, 1]$ be a cut-off function which is equal to 1 for $|z|\leq 1/100$ and equal to zero for $|z|\geq 2/100$. For each $t\in V$ with $0<|t|\leq 1/2$, $\chi$ defines a natural cut-off function on $p^{-1}(t)$. Since $p^{-1}(t)$ is a complex subspace, and $\E$ is a holomorphic vector bundle over $p^{-1}(t)$, we can apply the above discussion to $p^{-1}(t)$ and obtain 
$$\Delta_t g\geq -C (|F_{H}|+|F_{H'}|), $$
where $\Delta_t$ is the Laplacian operator on $p^{-1}(t)$. Multiplying both sides by $\chi^2(g^+)^{\delta}$, and integrating by parts on $p^{-1}(t)$ we obtain ($\nabla^t$ denotes the derivative on $p^{-1}(t)$)
\begin{eqnarray*}
&&\int_{p^{-1}(t)} |\nabla^t (\chi (g^+)^{\frac{\delta+1}{2}})|^2
\\&\leq& C(\int_{p^{-1}(t)} \chi^2 (g^+)^\delta (|F_H|+|F_{H'}|)+\int_{p^{-1}(t)} |\nabla^t \chi|^2 (g^{+})^{1+\delta}
\\&&+\int_{p^{-1}(t)} \chi |\nabla^t\chi|(g^+)^{\delta}|\nabla^t g^{+}|), 
\end{eqnarray*}
where the constant $C$ depends on $\delta$. 
Notice $\nabla^t\chi$ is supported outside the ball $|z|\leq 1/100$, and $H$ and $H'$ are both smooth away from zero, so the last two terms are uniformly bounded independent of $t$. For the first term on the right hand side we can use Young's inequality, and obtain for any $\epsilon>0$, a number $C(\epsilon)>0$ such that 
\begin{eqnarray*}
&&\int_{p^{-1}(t)} |\nabla^t (\chi (g^+)^{\frac{\delta+1}{2}})|^2
\\&\leq& \epsilon \int_{p^{-1}(t)} \chi^2 (g^+)^{\delta+1}+C(\epsilon) \int_{p^{-1}(t)}\chi^2 (|F_H|+|F_{H'}|)^{\delta+1}+C
\end{eqnarray*}
Using the Poincar\'e inequality on the unit ball in $\C^2$, and choosing $\epsilon$ sufficiently small, we conclude that 
$$\int_{p^{-1}(t)} |\nabla^t (\chi (g^+)^{\frac{\delta+1}{2}})|^2+\int_{p^{-1}(t)} \chi^2 (g^+)^{\delta+1}\leq C(\epsilon) \int_{p^{-1}(t)}\chi^2 (|F_H|+|F_{H'}|)^{\delta+1}+C. $$
Integrating this along $V$, and noticing that the inequality is uniform for all choices of complex subspaces $V$, one sees that $(g^{+})^{\frac{\delta+1}{2}}\in W^{1, 2}(B)$. Then by Sobolev embedding theorem, we get $g^+\in L^{\frac{n}{n-1}(\delta+1)}(B)$. 
\end{proof}

\section{Proof of the main results}

\subsection{Convexity for holomorphic sections}\label{Convexity for holomorphic sections}
We first assume $(\E, H, A)$ is a HYM cone on $\C^n$.  Let $\Sigma$ be the  singular set of $\E$, and $\underline \Sigma$ be the corresponding singular set of $\underline \E$ in $\C\P^{n-1}$. In the following, we will use $\nabla$ to denote the covariant derivative determined by $A$.

\begin{defi}
A holomorphic section $s$ of $\E$ over the ball $B$ is called \emph{homogeneous} if  $s$ satisfies $\nabla_{\p_r}s=dr^{-1}s$  on the locally free part of $\E$, for some constant $d$. We call $d$ the \emph{degree} of $s$. 
\end{defi}

It follows from the definition directly that for a homogeneous sections $s$ of degree $d$, $\nabla_{Jr\p_r}s=\sqrt{-1} d s$ 
since $\nabla_{\p_r}s=\mu r^{-1}s$. In particular, for fixed $z$ and any $t\in \C^*$, one has
\begin{equation}\label{eqn3.1}
|s|^2(tz)=|t|^{2d} |s|^2(z)
\end{equation}
which can be seen by taking derivatives with respect to $t$ for both sides. The following lemma will be used later. 

\begin{lem}\label{lem3-2}
If $s$ is a homogeneous holomorphic section of $\E$ with degree $\mu$, then there is a \emph{simple} HYM cone $\E'=\iota_*\pi^*\underline{\E}'$  which is a direct summand of $\E$, so that $s=\iota_*\pi^*\underline s$ for some holomorphic section $\underline s$ of $\underline{\E}'$ on $\C\P^{n-1}$. Moreover, the holonomy of $\E'$ is $e^{-2\pi\sqrt{-1}\mu}\Id$, and the Einstein constant of $\underline{\E}'$ is equal to $\mu$. 
\end{lem}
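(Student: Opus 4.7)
My plan is to reduce to the case of a single simple HYM cone by invoking the canonical decomposition of $\E$ into simple summands with distinct holonomies, and then to analyze homogeneous holomorphic sections on each such summand by a Taylor-expansion argument.

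First, by the discussion of HYM cones in this subsection, write $\E = \bigoplus_j \E_j$ where each $\E_j = \pi^*\underline{\E}_j$ is a simple HYM cone of Einstein constant $\mu_j$, with metric $|z|^{2\mu_j}\pi^*\underline{H}_j$ and holonomy $e^{-2\pi\sqrt{-1}\mu_j}$, and with the $\mu_j$'s lying in distinct residue classes modulo $\Z$. Since this splitting is orthogonal and parallel with respect to the Chern connection, $s = \sum_j s_j$ decomposes as a sum of homogeneous holomorphic sections $s_j$ of $\E_j$, each of the same degree $\mu$.

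Second, fix a single summand $\E_j$ and choose a local holomorphic frame $\{\underline{e}_\alpha\}$ of $\underline{\E}_j$ on a chart of $\C\P^{n-1}\setminus\underline{\Sigma}_j$. Writing $s_j = f^\alpha\,\pi^*\underline{e}_\alpha$ in the pullback frame, the Chern connection takes the form $A_j = \pi^*\underline{A}_j + \mu_j\,\p\log|z|^2\cdot\Id$. Because $\p_r$ is tangent to the $\C^*$-fibers of $\pi$ we have $\pi^*\underline{A}_j(\p_r)=0$, and a direct computation gives $\p\log|z|^2(\p_r) = r^{-1}$, so
\[
\nabla_{\p_r} s_j = \bigl(\p_r f^\alpha + \mu_j r^{-1} f^\alpha\bigr)\pi^*\underline{e}_\alpha.
\]
The homogeneity condition $\nabla_{\p_r}s_j = \mu r^{-1}s_j$ therefore forces $\p_r f^\alpha = (\mu-\mu_j)r^{-1}f^\alpha$ for each $\alpha$. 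By Hartogs, each $f^\alpha$ extends holomorphically across the codimension-at-least-two locus $\pi^{-1}(\underline{\Sigma}_j)\cup\{0\}$, and Taylor-expanding at $0$ gives $f^\alpha = \sum_{k\geq 0} f^\alpha_k$ with $f^\alpha_k$ a homogeneous polynomial of degree $k$, on which $\p_r$ acts as $k/r$. Hence only the component with $k = \mu-\mu_j$ can survive; so $s_j = 0$ unless $\mu-\mu_j\in\Z_{\geq 0}$, and in that case the surviving piece is naturally identified, via the canonical trivialization of $\pi^*\O(\mu-\mu_j)$ on $\C^n\setminus\{0\}$, with $\pi^*\underline{t}_j$ for a unique $\underline{t}_j\in H^0(\C\P^{n-1},\underline{\E}_j\otimes\O(\mu-\mu_j))$.

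Finally, the distinct-holonomy assumption means $\mu_j-\mu_{j'}\notin\Z$ for $j\neq j'$, so at most one index $j_0$ can satisfy $\mu-\mu_{j_0}\in\Z_{\geq 0}$, and hence $s = s_{j_0} = \pi^*\underline{t}_{j_0}$. Setting $\underline{\E}' := \underline{\E}_{j_0}\otimes\O(\mu-\mu_{j_0})$ gives a polystable reflexive sheaf with Einstein constant $\mu_{j_0}+(\mu-\mu_{j_0}) = \mu$; a direct check shows that the canonical HYM-cone metric $|z|^{2\mu}\pi^*\underline{H}'$ on $\pi^*\underline{\E}'$ equals $|z|^{2\mu_{j_0}}\pi^*\underline{H}_{j_0}$ (the $\pi^*h_{FS}^{\otimes(\mu-\mu_{j_0})}$ factor in $\pi^*\underline{H}'$ contributes exactly $|z|^{-2(\mu-\mu_{j_0})}$, shifting the $|z|^{2\mu}$ prefactor down to $|z|^{2\mu_{j_0}}$), so $\E' := \pi^*\underline{\E}'$ coincides with $\E_{j_0}$ as a direct summand of $\E$. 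Its holonomy is $e^{-2\pi\sqrt{-1}\mu}$, and $s = \pi^*\underline{s}$ with $\underline{s} := \underline{t}_{j_0}$, as claimed. The one genuinely delicate point is the bookkeeping of the identification between weight-$k$ $\C^*$-equivariant sections of $\pi^*\underline{\E}_j$ and pulled-back sections of $\underline{\E}_j\otimes\O(k)$, together with the matching of Hermitian metrics under this twist; modulo that identification the remaining content is the direct computation of the radial covariant derivative above.
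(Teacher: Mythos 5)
Your overall strategy is essentially the paper's: homogeneity pins down the $S^1$-weight of $s$, which (because the simple summands of a HYM cone have distinct holonomies) confines $s$ to a single simple summand $\E_{j_0}$ and then lets you descend $s$ to $\C\P^{n-1}$ after twisting by $\O(\mu-\mu_{j_0})$. The paper phrases the first step as ``$s(z)$ is an eigenvector of the holonomy $P$ with eigenvalue $e^{-2\pi\sqrt{-1}\mu}$,'' while you carry out the equivalent Fourier-mode computation in a local frame; your final bookkeeping of the twist and of the cone metrics is correct.

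There is, however, a genuine error in the middle step. The coefficients $f^\alpha$ are defined only on $\pi^{-1}(U)$ for $U$ a coordinate chart of $\C\P^{n-1}$, and the complement of $\pi^{-1}(U)$ near $0$ contains the cone over $\C\P^{n-1}\setminus U$, which is a hypersurface; so Hartogs does not extend $f^\alpha$ across $0$, and there is no Taylor expansion $\sum_{k\geq 0}f^\alpha_k$. Consequently the conclusion ``$s_j=0$ unless $\mu-\mu_j\in\Z_{\geq 0}$'' is false. Concretely, take $\underline\E_j=\O(5)$ on $\C\P^{n-1}$ (so $\mu_j=5$ and the simple HYM cone on $\pi^*\O(5)$ is flat under the tautological trivialization) and let $s$ be the pullback of a cubic $P$, viewed as a section of $\pi^*\O(5)\simeq\pi^*\O(3)$. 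This is a nonzero homogeneous section with $\mu=3$, hence $\mu-\mu_j=-2$; in the frame $z_0^5$ over $\{z_0\neq 0\}$ its coefficient $P/z_0^5$ is homogeneous of degree $-2$ and visibly does not extend across $\{z_0=0\}$. Your argument as written would force this $s$ to vanish. The fix is routine: instead of Taylor expanding at $0$, perform a Laurent expansion of $f^\alpha$ along the $\C^*$-fibres of $\pi$ over each point of $U$; the equation $\p_r f^\alpha=(\mu-\mu_j)r^{-1}f^\alpha$ together with holomorphy kills all but the single Laurent mode of weight $\mu-\mu_j$, forcing only $\mu-\mu_j\in\Z$ (of either sign). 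With this correction the rest of your proof, including the construction $\underline\E'=\underline\E_{j_0}\otimes\O(\mu-\mu_{j_0})$, goes through verbatim for negative twists.
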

\begin{proof}
Since $\nabla_{\p_r}s=\mu r^{-1}s$, we have $\nabla_{Jr\p_r}s=\sqrt{-1}\mu s$. For any $z\in \C^n\setminus\Sigma$ such that $s(z)\neq 0$, it follows that $s(z)$ is an eigenvector of the holonomy $P$ with eigenvalue $e^{-2\pi\sqrt{-1}\mu}$. So $s$ is a section of a simply HYM cone direct summand $\E'$ of $\E$. Then using the discussion of Section 2.2 we can show that $\E'$ is isomorphic to the pull-back of a reflexive sheaf $\underline{\E}'$ on $\C\P^{n-1}$ with Einstein constant $\mu$, and $s$ descends to a holomorphic section $\underline s$ of $\underline{\E}'$. More precisely, we multiply the original metric with the conformal factor $|z|^{-2\mu}$ and with respect to the new connection, we know that $s$ is parallel along the natural $S^1$ orbits, thus descending to be a section of $\underline \E'$ over the projective space.
\end{proof}  

 Let $\Gamma$ be the set of all possible degrees of  non-zero homogeneous sections of $\E$. Then by Lemma \ref{lem3-2}, we know
 \begin{equation} \label{rigidity}
 \Gamma\subset (\text{rank}(\E)!)^{-1}\Z.
 \end{equation}
  Also $\Gamma$ is bounded below since for all $i$,  $H^0(\C\P^{n-1}, \underline\E_i(m))=0$ if $m$ sufficiently negative.

Notice by Theorem \ref{thm2-24}, a holomorphic section $s$ of $\E$ satisfies that $|s|^2$ is locally bounded across $\Sigma$. Since $\Sigma$ has locally finite codimension four Hausdorff measure, it  does not contribute to the calculation of  the $L^2$ norm of $|s|$. In the following,  all the integrals can be understood as integrating on the complement of $\Sigma$. 

\begin{lem}
Suppose $s_1$ and $s_2$ are two homogeneous holomorphic sections of $\E$ with degree $d_1$, $d_2$ respectively. If $d_1\neq d_2$, then for any $S^1$ invariant subset $Z$ of $B$, 
$$\int_{Z}\langle s_1, s_2\rangle =0.$$
\end{lem}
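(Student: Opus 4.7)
The plan is to reduce the statement to a Fourier-style orthogonality argument for the natural $S^1$-action attached to any HYM cone. Since $(\E, H, A)$ is a HYM cone, Theorem \ref{thm2-6} gives $\iota_{\p_r}F_A = 0$, which combined with $F^{0,2}_A = 0$ forces $\iota_V F_A = 0$, where $V = Jr\p_r$ is the infinitesimal generator of the standard rotation $\phi_\theta(z) = e^{\sqrt{-1}\theta}z$. I will use this to define, via parallel transport along the $S^1$-orbits, a smooth unitary bundle automorphism $\tilde\phi_\theta$ of $\E$ on $\C^n\setminus\Sigma$ covering $\phi_\theta$; note that $\Sigma$ is $S^1$-invariant by Theorem \ref{thm2-6}, and the metric $H = \bigoplus_j|z|^{2\mu_j}\pi^*\underline H_j$ on the cone is visibly $S^1$-invariant.

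Next, since $s_i$ is holomorphic I have $\nabla_{JX}s_i = \sqrt{-1}\nabla_X s_i$ for every real vector field $X$, so the homogeneity condition $\nabla_{\p_r}s_i = d_i r^{-1}s_i$ upgrades to $\nabla_V s_i = \sqrt{-1}d_i s_i$. Setting $s_{i,\theta}(z) := \tilde\phi_\theta^{-1}s_i(\phi_\theta z)$, the $S^1$-symmetry of $(A,H)$ together with this infinitesimal equation yields the autonomous ODE $\frac{d}{d\theta}s_{i,\theta} = \sqrt{-1}d_i s_{i,\theta}$, whose solution is $s_{i,\theta} = e^{\sqrt{-1}d_i\theta}s_i$. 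Using unitarity of $\tilde\phi_\theta$ I then obtain
\[
\langle s_1, s_2\rangle(\phi_\theta z) = \langle \tilde\phi_\theta^{-1}s_1(\phi_\theta z),\, \tilde\phi_\theta^{-1}s_2(\phi_\theta z)\rangle_z = e^{\sqrt{-1}(d_1-d_2)\theta}\langle s_1, s_2\rangle(z).
\]
Because $Z$ and the Lebesgue measure on $\C^n$ are both $S^1$-invariant, the change of variables $z\mapsto \phi_\theta z$ gives $\int_Z\langle s_1, s_2\rangle = e^{\sqrt{-1}(d_1-d_2)\theta}\int_Z\langle s_1, s_2\rangle$ for every $\theta\in\R$; picking $\theta$ so that this factor is not $1$, which is possible because $d_1\neq d_2$, forces the integral to vanish. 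The codimension-four set $\Sigma$ has zero $2n$-dimensional Lebesgue measure and so does not affect the integral.

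The only technical point that requires care is the passage from the pointwise infinitesimal identity $\nabla_V s_i = \sqrt{-1}d_i s_i$ to the global eigenfunction relation $\tilde\phi_\theta^{-1}s_i(\phi_\theta z) = e^{\sqrt{-1}d_i\theta}s_i(z)$. This relies on the fact that the HYM cone structure is strictly $S^1$-invariant, so that the infinitesimal identity propagates along each orbit and the ODE for $s_{i,\theta}$ is autonomous in $\theta$; once this is observed the remainder is a routine ODE uniqueness argument, and I do not expect any genuine obstruction in implementing the plan.
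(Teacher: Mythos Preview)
Your proof is correct and follows essentially the same approach as the paper: both derive from homogeneity and holomorphicity the key relation $\nabla_{Jr\p_r}s_i=\sqrt{-1}d_i s_i$, and then use metric compatibility of the Chern connection to conclude that $\langle s_1,s_2\rangle$ transforms along each $S^1$-orbit by the character $e^{\sqrt{-1}(d_1-d_2)\theta}$, hence integrates to zero. The paper's version is simply more concise, writing the ODE $\frac{d}{d\theta}\langle s_1,s_2\rangle=\sqrt{-1}(d_1-d_2)\langle s_1,s_2\rangle$ directly and invoking Fubini, whereas you route through the parallel-transport automorphism $\tilde\phi_\theta$; your worry about autonomy of the ODE is in fact unnecessary, since $\frac{d}{d\theta}s_{i,\theta}=P_\theta^{-1}(\nabla_V s_i)(\phi_\theta z)=\sqrt{-1}d_i\,s_{i,\theta}$ holds for parallel transport along any curve tangent to $V$, without appealing to $S^1$-invariance of the connection.
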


\begin{proof}
Since $s_i$ is homogeneous, we have $\nabla_{Jr\p_r}s_i=\sqrt{-1}d_is_i$. Hence along the $S^1$ orbit of any $z\in B\setminus\Sigma$, we have
$$\frac{d}{d\theta}\langle s_1, s_2\rangle=\sqrt{-1}(d_1-d_2)\langle s_1, s_2\rangle.$$
So the integral of $\langle s_1, s_2\rangle$ over any $S^1$ orbit is zero. By Fubini theorem we obtain the conclusion. 
\end{proof}

\begin{lem} \label{lem3-4}
Given a holomorphic section $s$ of $\E$ over $B$ with $\int_{B}|s|^2<\infty$, we have an orthogonal decomposition over $B\setminus \Sigma$
$$s=\sum_{d\in \Gamma}s_d, $$
where each $s_d$ is homogeneous of degree $d$, and the convergence is understood as in $L^2(B\setminus\Sigma)$ and $C^\infty_{loc}(B\setminus\Sigma)$. 
\end{lem}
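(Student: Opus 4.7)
The plan is to produce the decomposition via Fourier analysis along the natural $S^1$-action $z\mapsto e^{i\theta}z$ on $B$. Since $\E$ is a HYM cone it splits orthogonally and parallelly as $\E=\bigoplus_{l}\pi^*\underline{\E}_l$, each $\underline{\E}_l$ polystable of slope $\mu_l$, with $H=\bigoplus_l|z|^{2\mu_l}\pi^*\underline{H}_l$. Decomposing $s=\sum_l s^{(l)}$ along this splitting, I reduce to the case of a single simple HYM cone $\E=\pi^*\underline{\E}$ of slope $\mu$. The set $\Sigma=\pi^{-1}(\underline{\Sigma})$ is $S^1$-invariant with complex codimension $\geq 3$, so convergence issues are handled on $B\setminus\Sigma$ and extended across $\Sigma$ by reflexivity/Hartogs.

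Next, lift the $S^1$-action to $\E$ by the canonical pullback lift: for $R_\theta(z)=e^{i\theta}z$, the fiber $\E_{e^{i\theta}z}=\underline{\E}_{[z]}=\E_z$ is identified tautologically, giving a holomorphic $S^1$-action on $\E$. Define the $k$-th Fourier mode
$$s_k(z)=\frac{1}{2\pi}\int_0^{2\pi}e^{-ik\theta}R_\theta^*s(z)\,d\theta\in\underline{\E}_{[z]},\qquad k\in\Z.$$
Each $s_k$ is holomorphic on $B\setminus\Sigma$ and satisfies $R_\theta^*s_k=e^{ik\theta}s_k$; by holomorphicity and analytic continuation from $|\lambda|=1$, this extends to $\C^*$-equivariance $s_k(\lambda z)=\lambda^k s_k(z)$ for $\lambda\in\C^*$.

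I would then check that each $s_k$ is homogeneous of degree $d_k=k+\mu$. In the pullback frame $e_\alpha=\pi^*\underline{e}_\alpha$ a direct calculation from $H=|z|^{2\mu}\pi^*\underline{H}$ gives the Chern connection one-form $\omega=\mu\,\p\log|z|^2\cdot I+\pi^*\underline{\omega}$. With $L=\sum z^i\p_{z^i}$ the Euler field, one has $r\p_r=L+\bar L$, and on the holomorphic $s_k$ the $\bar L$-derivative vanishes. The $\C^*$-homogeneity yields $\p_Ls_k^\alpha=ks_k^\alpha$, while $\omega(L)=\mu\cdot I$ since $\iota_L\pi^*\underline{\omega}=0$ ($L$ being tangent to the fibers of $\pi$) and $\iota_L\p\log|z|^2=1$. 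Hence $\nabla_{r\p_r}s_k=(k+\mu)s_k$, so $s_k$ has degree $k+\mu\in\Gamma$ by Lemma \ref{lem3-2}. Grouping the Fourier modes from all simple summands by the value $k+\mu_l=d$ produces the claimed decomposition $s=\sum_{d\in\Gamma}s_d$ (with only finitely many $d$ in each integer class contributing). Orthogonality of distinct $s_d$ on every $S^1$-invariant subset follows from the preceding Lemma, and Parseval applied to each $S^1$-orbit together with Fubini over $B$ (using $\int_B|s|^2<\infty$) gives convergence in $L^2(B\setminus\Sigma)$. For $C^\infty_{loc}(B\setminus\Sigma)$ convergence, on any compact $K\subset B\setminus\Sigma$ the section $s$ is smooth, so the standard rapid decay of Fourier coefficients of smooth $S^1$-equivariant data, combined with interior Cauchy-type estimates for holomorphic sections of $\E$ on a slightly larger compact, gives convergence in every $C^m$-norm on $K$.

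The main subtle point---and the one that must be tracked most carefully---is the identification of the Fourier weight $k$ (from the pullback $S^1$-action) with the geometric degree $k+\mu$ (defined via the Chern connection). The shift by $\mu$ reflects exactly the holonomy $e^{-2\pi\sqrt{-1}\mu}\Id$ of the simple HYM cone around $S^1$-orbits and is what produces degrees in the full set $\Gamma\subset(\rk(\E)!)^{-1}\Z$ rather than merely $\Z$. Everything else is a routine combination of Fourier analysis, the explicit form of the HYM cone connection, and extension across $\Sigma$.
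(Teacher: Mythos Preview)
Your proof is correct and reaches the same conclusion as the paper, but via a somewhat different (and arguably more intrinsic) route. The paper works locally: over an open $U\subset\C\P^{n-1}$ where $\underline{\E}$ is free, it trivializes $\E|_{\pi^{-1}(U)}$ by holomorphic sections $\pi^*\underline{s}_j$, writes $s=\sum_j f_j\,\pi^*\underline{s}_j$ for holomorphic functions $f_j$ on $\pi^{-1}(U)$, and then Taylor-expands each $f_j$ into homogeneous pieces; the resulting local decomposition is shown to be independent of the trivialization (via $L^2$-orthogonality over $\pi^{-1}(V)$ for any $V\subset U$), hence glues to global sections $s_d$ on $B\setminus\Sigma$, which then extend across $\Sigma$ by Hartogs. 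Your approach instead performs the decomposition globally in one stroke via the $S^1$-Fourier integral, which sidesteps the local-to-global gluing argument entirely; the price is that you must explicitly compute the degree shift $k\mapsto k+\mu$ from the Chern connection form $\mu\,\p\log|z|^2\cdot I+\pi^*\underline{\omega}$, whereas in the paper this shift is absorbed implicitly into the pairing between the Taylor degree of the coefficient $f_{j,e}$ and the metric factor $|z|^{2\mu}$. The two are of course equivalent---the Taylor expansion of a holomorphic function at $0$ \emph{is} its $S^1$-Fourier series---but your presentation makes the role of the cone holonomy $e^{-2\pi\sqrt{-1}\mu}$ more transparent and avoids any choice of local frame.
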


\begin{proof}
First of all suppose $(\E, A)$ is the direct sum of simple HYM cones $(\E_j, A_j)$, then on $B\setminus \Sigma$ we can naturally write $s=\sum s_j$, where $s_j$ is a holomorphic section of $\E_j|_{B\setminus\Sigma}$. The normality of $\E_j$ implies $s_j$ is indeed a holomorphic section of $\E_j$. Clearly, this is also an $L^2$ orthogonal decomposition. Therefore, without loss of generality we may assume $A$ is a simple HYM cone. 

Suppose $\E=\pi^*\underline\E$ and $H=|z|^{2\mu}\pi^*\underline H$.  Then locally choose a small open set $U\subset\C\P^{n-1}$ such that $\underline \E|_U$ is free and admits a trivialization by holomorphic sections $\underline s_j(j=1, \cdots, m:= \text{rank}(\E))$. On $\pi^{-1}(U)$, we can write $s=\sum_{j=1}^m f_j(z) \pi^*\underline s_j$ for some holomorphic functions $f_j$ on $\pi^{-1}(U)$. We can perform Laurent series expansion along the fibers of $\pi$ and write $f_j=\sum f_{j, e}$, where each $f_{j, e}$ is homogeneous of degree $e$ under the $\C^*$ action. So on $\pi^{-1}(U)$ we have an expansion $s=\sum s_d$ into direct sum of homogeneous sections, which is $L^2$ orthogonal over $\pi^{-1}(V)$ for any $V\subset U$. In particular, such an expansion is independent of the choice of the local trivialization $\{\underline s_j\}$. This implies each $s_d$ is indeed a global holomorphic section on $B\setminus\Sigma$, which also extends to the entire $B$ by Hartogs's theorem. 

\end{proof}

\begin{prop}\label{prop3-5}
Given a holomorphic section $s$ of $\E$ over $B$ with $\int_{B}|s|^2<\infty$, we have
$$\int_{B_{1/4}}|s|^2 \cdot \int_{B}|s|^2\geq (\int_{B_{1/2}}|s|^2)^2. $$
Moreover, the equality holds if and only if $s$ is homogeneous.

\end{prop}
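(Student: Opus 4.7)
My plan is to decompose $s$ into its homogeneous pieces via Lemma \ref{lem3-4} and then reduce the inequality to the standard log-convexity of a sum of positive exponentials.

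Since $\int_B |s|^2 < \infty$, Lemma \ref{lem3-4} produces an $L^2$-orthogonal decomposition $s = \sum_{d \in \Gamma} s_d$, with each $s_d$ holomorphic and homogeneous of degree $d$; the pieces $s_d$ are mutually orthogonal in $L^2(B_\rho)$ for every $\rho \in (0,1]$ by Lemma 3.3. The homogeneity relation $\nabla_{\p_r} s_d = d r^{-1} s_d$, combined with the unitarity of the Chern connection and the fact that $d \in \Gamma$ is real, gives $\p_r |s_d|^2 = 2 d r^{-1} |s_d|^2$, so that $|s_d(r\theta)|^2 = r^{2d} |s_d(\theta)|^2$ for $\theta \in S^{2n-1}$. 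Writing $c_d := \int_{S^{2n-1}} |s_d|^2\, d\sigma \geq 0$ and integrating in polar coordinates yields
$$\int_{B_\rho} |s_d|^2 = \frac{c_d}{2n + 2d}\, \rho^{2n + 2d}.$$
Finiteness of $\int_B |s|^2$ then forces $2n + 2d > 0$ whenever $c_d > 0$. Setting $a_d := c_d/(2n+2d) \geq 0$ and $\alpha_d := 2n+2d > 0$, the $L^2$-orthogonality of the $s_d$'s on each $B_\rho$ yields the expansion
$$f(\rho) := \int_{B_\rho} |s|^2 = \sum_{d} a_d\, \rho^{\alpha_d}.$$

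The heart of the argument is then the log-convexity of $\phi(u) := f(e^u) = \sum_d a_d e^{\alpha_d u}$ in $u = \log \rho$. Differentiating termwise (justified by absolute convergence on compact subsets of $u \leq 0$, since $\alpha_d > 0$) and applying Cauchy--Schwarz to the sequences $(\sqrt{a_d}\, e^{\alpha_d u/2})_d$ and $(\sqrt{a_d}\,\alpha_d e^{\alpha_d u/2})_d$ gives $\phi(u)\phi''(u) \geq \phi'(u)^2$, hence $(\log \phi)'' \geq 0$. Applying midpoint convexity with endpoints $u_0 = -2\log 2$, $u_2 = 0$ and midpoint $u_1 = -\log 2$ yields $2\log f(1/2) \leq \log f(1/4) + \log f(1)$, which is the claimed inequality.

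For the equality case, equality above forces $\log \phi$ to be affine on $[-2\log 2, 0]$, hence $\phi \phi'' \equiv (\phi')^2$ there; the equality clause in Cauchy--Schwarz then forces the sequences $(\sqrt{a_d})_d$ and $(\sqrt{a_d}\,\alpha_d)_d$ to be proportional, which, since $a_d \geq 0$, means only a single value of $\alpha_d$ with $a_d > 0$ can occur, i.e.\ only one degree $d$ appears and $s = s_d$ is homogeneous. The main (minor) technical issue is the termwise integration when $\Gamma$ is infinite, which is handled by monotone convergence applied to $|s|^2 = \sum_d |s_d|^2$ in conjunction with the $L^2$-orthogonality from Lemma 3.3; the rest is just the elementary convexity fact.
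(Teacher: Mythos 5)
Your proof is correct and follows essentially the same route as the paper's: decompose $s$ into homogeneous pieces via Lemma \ref{lem3-4}, use the scaling $\int_{B_r}|s_d|^2=r^{2d+2n}\int_B|s_d|^2$, and apply Cauchy--Schwarz (the paper applies it directly to the three sums at $r=1,1/2,1/4$, whereas you package the same inequality as log-convexity of $\rho\mapsto\int_{B_\rho}|s|^2$ in $\log\rho$ plus midpoint convexity). The equality analysis via the Cauchy--Schwarz equality clause is likewise the intended one.
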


\begin{proof}
We simply apply the above Lemma and write $s=\sum_{d\in\Gamma}s_d$. Notice for each $d$, by changing variable and using Equation \ref{eqn3.1}
$$\int_{B_r}|s_d|^2=r^{2d+2n}\int_{B}|s_d|^2. $$
Then the conclusion follows from the Cauchy-Schwarz inequality.
\end{proof}

Now we move on to the general case when $(\E, A)$ is an admissible Hermitian-Yang-Mills connection on $B$ with isolated singularity at $0$. The above discussion can be applied to all the tangent cones $(\E_\infty, A_\infty)$ of $A$.

\begin{prop} \label{prop3-6}
For any $\lambda\notin (\text{rank}(\E)!)^{-1}\Z$, we can find $j_0=j_0(\lambda)\in \Z_{\geq0}$ such that for all $j\geq j_0$, if a holomorphic section $s$ of $\E$ defined on $B_{2^{-j}}$ satisfies
$$\int_{B_{2^{-j-1}}} |s|^2 \geq  2^{-2\lambda-2n} \int_{B_{2^{-j}}} |s|^2 , $$
then 
$$\int_{B_{2^{-j-2}}} |s|^2 > 2^{ -2\lambda-2n} \int_{B_{2^{-j-1}}} |s|^2 . $$
\end{prop}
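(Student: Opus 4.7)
The plan is to prove Proposition \ref{prop3-6} by a contradiction-and-rescaling argument, reducing the estimate to the cone version already established in Proposition \ref{prop3-5}. Suppose the conclusion fails for some $\lambda \notin (\rk(\E)!)^{-1}\Z$; then there exist $j_k\to\infty$ and holomorphic sections $s_k$ of $\E$ on $B_{2^{-j_k}}$ for which the hypothesis holds but the conclusion does not. Pulling back by the dilation $z\mapsto 2^{-j_k}z$, I obtain holomorphic sections $\tilde s_k$ of the rescaled bundles $(\E,A_{2^{-j_k}})$ on the unit ball $B$, which I normalize so that $\int_B|\tilde s_k|^2=1$. The scale-invariance of the HYM equation with vanishing Einstein constant preserves the uniform bound $|\Lambda_{\omega_0}F_{A_{2^{-j_k}}}|\leq C$, so the Bando--Siu/Moser iteration bound \eqref{eqn2-5} gives a uniform $L^\infty$ bound for $\tilde s_k$ on $B_{3/4}$.

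Next I extract a subsequential tangent cone: by the discussion in Section \ref{Tangent Cones}, after passing to a subsequence and applying gauges there is a HYM cone $(\E_\infty,A_\infty)$ on $\C^n$ with bubbling set $\tilde\Sigma$ such that $A_{2^{-j_k}}$ converges smoothly to $A_\infty$ on compact subsets of $\C^n\setminus\tilde\Sigma$. Combining the $L^\infty$ bound for $\tilde s_k$ with standard elliptic estimates for $\bp_{A_{2^{-j_k}}}\tilde s_k=0$, I extract a $C^\infty_{loc}$ limit $s_\infty$ on $B\setminus\tilde\Sigma$, which is a holomorphic section of $\E_\infty$ there. Since $|s_\infty|$ is bounded on $B_{3/4}\setminus\tilde\Sigma$ and $\tilde\Sigma$ has locally finite codimension four Hausdorff measure, the section extends across $\tilde\Sigma$ (by Hartogs/Bando--Siu) to a genuine holomorphic section of $\E_\infty$ on $B$ with $\int_B|s_\infty|^2\leq 1$.

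Now I pass the two integral inequalities to the limit. Dominated convergence on $B_{1/2}\setminus\tilde\Sigma$ (where we have uniform $L^\infty$ and pointwise convergence) gives
$$\int_{B_{1/2}}|s_\infty|^2\geq 2^{-2\lambda-2n}\int_B|s_\infty|^2\quad\text{and}\quad \int_{B_{1/4}}|s_\infty|^2\leq 2^{-2\lambda-2n}\int_{B_{1/2}}|s_\infty|^2.$$
In particular $\int_{B_{1/2}}|s_\infty|^2\geq 2^{-2\lambda-2n}>0$, so $s_\infty\not\equiv 0$. Applying Proposition \ref{prop3-5} on the cone $\E_\infty$ together with the two inequalities above forces all three to be equalities, so $s_\infty$ must be homogeneous of some degree $d_\infty$. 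But homogeneity together with $\int_{B_{1/2}}|s_\infty|^2= 2^{-2\lambda-2n}\int_B|s_\infty|^2$ gives $d_\infty=\lambda$, contradicting $\lambda\notin(\rk(\E)!)^{-1}\Z$ since by the rigidity observation \eqref{rigidity} applied to the tangent cone we have $d_\infty\in(\rk(\E_\infty)!)^{-1}\Z=(\rk(\E)!)^{-1}\Z$.

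The main technical obstacle I anticipate is controlling the limit carefully enough to apply Proposition \ref{prop3-5}: specifically, verifying that (i) the normalized sections $\tilde s_k$ admit a subsequential $C^\infty_{loc}$ limit on $B\setminus\tilde\Sigma$ that is truly a holomorphic section of the tangent cone sheaf $\E_\infty$ (not merely a limit of sections of varying gauges), and (ii) no mass is lost in passing the two $L^2$ integrals to the limit, so that the cone inequality and its equality case can be invoked. The first is handled by combining Tian's Uhlenbeck-type compactness with the $L^\infty$ bound \eqref{eqn2-5} and the Bando--Siu extension theorem, while the second follows from the uniform $L^\infty$ bound on $B_{3/4}$ and the vanishing measure of $\tilde\Sigma$.
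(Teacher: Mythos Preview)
Your proof is correct and follows essentially the same contradiction-and-rescaling strategy as the paper. The one cosmetic difference is the choice of normalization: the paper normalizes so that the average of $|s_i|^2$ over the \emph{middle} ball $B_{1/2}$ equals $1$, which immediately guarantees the limit section is nontrivial and makes the two outer integrals into upper bounds $\leq 2^{2\lambda}$ and $\leq 2^{-2\lambda}$; you instead normalize on the outer ball $B$, then recover non-triviality via $\int_{B_{1/2}}|s_\infty|^2\geq 2^{-2\lambda-2n}$ together with $\int_B|s_\infty|^2\leq 1$ (Fatou). Both routes feed the same two inequalities into the Cauchy--Schwarz/three-circle inequality of Proposition~\ref{prop3-5} to force homogeneity of degree $\lambda$, contradicting \eqref{rigidity}.
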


\begin{proof}
Otherwise there exists $\lambda\neq (rank(\E)!)^{-1}\Z$, a subsequence $j_i\rightarrow\infty$, a holomorphic section $s_i$ over $B(i):=B_{2^{-j_i}}$ for each $i$, such that 
$$\frac{1}{\text{Vol}(2^{-1}B(i))}\int_{2^{-1}B(i)} |s_i|^2 =1,$$
but
$$\frac{1}{\text{Vol}(B(i))}\int_{B(i)} |s_i|^2 \leq  2^{2\lambda},$$
and
$$ \frac{1}{\text{Vol}(2^{-2}B(i))}\int_{2^{-2}B(i)} |s_i|^2 \leq 2^{-2\lambda}. $$
Passing to a further subsequence, we may assume the sequence of connections $A_i$, given by the pull-back of $A$ by the map $u_i: z\mapsto 2^{-j_i}z$ converges to a tangent cone $(\E_\infty, A_\infty)$ on $\C^n$. Let $\tilde\Sigma$ be the set where the convergence is not locally smooth. Then away from $\tilde \Sigma$,  the standard elliptic estimate for $L^2$ holomorphic sections implies that by possibly passing to a further subsequence $u_i^*s_i$ \footnote{Later in this section we will often talk about convergence of a sequence of holomorphic sections over $B_{2^{-j}}$ to a holomorphic section on the tangent cones, and we often omit the pull-back notation, which we hope will not cause confusion. } converges in $C^\infty_{loc}(B\setminus\tilde\Sigma)$ to a limit holomorphic section $s_\infty$ of $\E_\infty$. From the integral condition $s_i$ satisfies, by doing change of variables, we know $s_\infty$ satisfies
$$\frac{1}{\text{Vol}(B)}\int_{B} |s_\infty|^2 \leq 2^{2\lambda}, $$
and
$$\frac{1}{\text{Vol}(B_{1/4})} \int_{B_{1/4}} |s_\infty|^2 \leq 2^{-2\lambda}. $$
On the other hand, by (\ref{eqn2-5}), we have uniform bound of $|u_i^*s_i|$ on $B_{1/2}$.  Since $\tilde\Sigma$ has locally finite codimension four Hausdorff measure, $s_\infty$ extends across $\tilde \Sigma$ (see Lemma 3 in \cite{Shiffman}), and
$$\frac{1}{\text{Vol}(B_{1/2})} \int_{B_{1/2}} |s_\infty|^2 =1. $$
Now we apply Proposition \ref{prop3-5} to conclude that $s_\infty$ must be homogeneous with 
 degree  $\lambda$. This contradicts with (\ref{rigidity}). 
\end{proof}

\begin{cor} \label{cor3-7}
For any nonzero holomorphic section $s$ of $\E$ defined in a neighborhood of $0$, then the following is well-defined 
\begin{equation}\label{eqn3-1}
d(s):=\frac{1}{2}\lim_{r\rightarrow 0}\frac{\log \int_{B_r} |s|^2}{\log r}-n \in (k!)^{-1}\Z_{\geq 0}\cup\{+\infty\},
\end{equation}
where $k=rank(\E)$. If $d(s)<\infty$, by passing to a subsequence $s$ gives rise to limit homogeneous holomorphic sections of degree $d(s)$ on all the tangent cones. 
\end{cor}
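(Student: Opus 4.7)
The plan is to combine the three-circle dichotomy of Proposition \ref{prop3-6} with the blow-up argument used in its proof. Write $I(r):=\int_{B_r}|s|^2$ and $\alpha_j:=-\log_2\bigl(I(2^{-j-1})/I(2^{-j})\bigr)\in[0,+\infty]$, where the lower bound uses monotonicity of $I$ in $r$. Proposition \ref{prop3-6} rephrases as: for each $\lambda\notin(k!)^{-1}\Z$, once $\alpha_j<2\lambda+2n$ at some $j\ge j_0(\lambda)$, one has $\alpha_{j+1}\le 2\lambda+2n$, so by iteration $\limsup_k\alpha_k\le 2\lambda+2n$.

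First I would show $\alpha:=\lim_{j\to\infty}\alpha_j$ exists in $[0,+\infty]$. If $\underline\alpha:=\liminf\alpha_j$ were strictly less than $\bar\alpha:=\limsup\alpha_j$, choose $\lambda\notin(k!)^{-1}\Z$ with $\underline\alpha<2\lambda+2n<\bar\alpha$, which is possible because $(k!)^{-1}\Z$ is discrete. The strict inequality $\underline\alpha<2\lambda+2n$ supplies some $j\ge j_0(\lambda)$ with $\alpha_j<2\lambda+2n$, and then the dichotomy forces $\limsup_k\alpha_k\le 2\lambda+2n<\bar\alpha$, a contradiction. When $\alpha=+\infty$ the super-exponential decay of $I(2^{-j})$ yields $\log I(2^{-j})/\log 2^{-j}\to+\infty$, so $d(s)=+\infty$, and we are done.

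Next assume $\alpha$ is finite and set $\ell:=\alpha/2-n$. Mimicking the rescaling in Proposition \ref{prop3-6}, I normalize $\tilde s_j(z):=c_j\,s(2^{-j}z)$ with $c_j^2=2^{-2nj}/I(2^{-j-1})$, so that $\|\tilde s_j\|^2_{L^2(B_{1/2})}=1$ and
\[
\|\tilde s_j\|^2_{L^2(B_1)}=\frac{I(2^{-j})}{I(2^{-j-1})}\to 2^{2\ell+2n},\qquad \|\tilde s_j\|^2_{L^2(B_{1/4})}=\frac{I(2^{-j-2})}{I(2^{-j-1})}\to 2^{-2\ell-2n}.
\]
Passing to a subsequence along which the rescaled connections $u_j^*A$ (with $u_j(z)=2^{-j}z$) converge, modulo gauge, to a tangent cone $(\E_\infty,A_\infty)$ with bubbling locus $\tilde\Sigma$, the uniform $L^2$ bound together with Theorem \ref{thm2-24} and elliptic regularity in the converging gauges provides smooth subsequential convergence $\tilde s_j\to s_\infty$ on compact subsets of $B\setminus\tilde\Sigma$; the limit extends across $\tilde\Sigma$ by Shiffman's Hartogs-type extension, exactly as in the proof of Proposition \ref{prop3-6}. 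The normalization ensures $\|s_\infty\|_{L^2(B_{1/2})}=1$, so $s_\infty\ne 0$, and the limiting $L^2$ norms realize equality in Proposition \ref{prop3-5}, forcing $s_\infty$ to be a non-trivial homogeneous section of $\E_\infty$. Comparing the $L^2$ norms on $B_1$ and $B_{1/2}$ identifies its degree as $\ell$, and the rigidity (\ref{rigidity}) then forces $\ell\in(k!)^{-1}\Z$.

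Finally, to pass from the dyadic sequence to the continuous limit, I would use Stolz-Ces\`aro to upgrade $\alpha_j\to\alpha$ into $\log I(2^{-j})/(-j\log 2)\to\alpha$, and then exploit the monotonicity of $I(r)$ to sandwich $\log I(r)/\log r$ for $r\in[2^{-j-1},2^{-j}]$ between two quantities both tending to $\alpha$. The main obstacle is extracting the non-zero homogeneous limit $s_\infty$ on the tangent cone with the prescribed $L^2$ ratios, but this is taken care of by the normalization together with the standard blow-up/extension framework already in place for Proposition \ref{prop3-6}.
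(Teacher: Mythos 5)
Your argument is correct and is essentially the expanded version of the paper's one-line proof, which deduces the corollary from Proposition \ref{prop3-6} plus the observation that each $r\in(0,1)$ lies in a unique dyadic interval $[2^{-j-1},2^{-j})$. You rightly noticed that the quantization $d(s)\in(k!)^{-1}\Z\cup\{+\infty\}$ does not follow from the bare \emph{statement} of Proposition \ref{prop3-6} but requires re-running its blow-up argument to extract a nonzero homogeneous limit section of degree $d(s)$ on a tangent cone and then invoking (\ref{rigidity}); the paper leaves this step implicit, acknowledging it only in the remark following the corollary.
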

\begin{proof}
Fix any $s$ as above. Let 
$$b_j=\frac{\log \int_{B_{2^{-j}}}|s|^2}{-\log 2}$$ 
and $a_j=b_{j+1}-b_j$. By Proposition \ref{prop3-6}, for any $\lambda \notin (rank(\mathcal E)!)^{-1} \mathbb{Z}$, there exists $j_0=j_0(\lambda)$ so that for $j\geq j_0$, if 
$$a_j \leq 2\lambda+2n$$
then 
$$a_{j+1} \leq 2\lambda+2n.$$
From this, we know $\lim_j a_j$ exists in $\mathbb{R} \cup \{\pm \infty\}$, which implies 
$$
\lim_j \frac{\log\int_{B_{-j}} |s|^2}{-j\log 2}=\lim_j \frac{b_j}{j}=\lim_j \frac{\sum_{i=1}^{j-1} a_{i}}{j}
$$
is also well-defined in $\mathbb{R} \cup \{\pm \infty\}$. From this, using that for any $r\in (0,1]$, there exists a unique $j$ so that $r\in (2^{-j}, 2^{-j-1}]$ and $s$ is uniformly bounded, we know $d(s)$ is well defined. The fact that $|s|$ is bounded near $0$ implies $d(s)$ is nonnegative.  If $d(s)<\infty,$ i.e. $\lim b_j$ is finite, by repeating the argument in Proposition \ref{prop3-6}, a subsequence of the normalized sequence given by $s$ will converge to a homogeneous holomorphic section of some tangent cone $\E_\infty$, with degree equal to $d(s)$. In particular, we know $d(s)\in (k!)^{-1}\Z_{\geq 0}$. 
\end{proof}

In the following, we will call $d(s)$ the \emph{degree} of $s$.  
 
\

\

\subsection{Proof of Theorem \ref{main}} \label{section3-2}

Now we can start proving Theorem \ref{main}. In order to make the main ideas of the proof clear, we will present the proofs under assumptions of increasing generality. Let $(\E, H, A)$ be an admissible Hermitian-Yang-Mills connection on $B$ with isolated singularity at $0$. We assume $\E$ is isomorphic to ${(\iota_B)}_*\pi_B^*\underline \E$ for some vector bundle $\underline \E$ over $\C\P^{n-1}$. Let $(\E_\infty, H_\infty, A_\infty)$ be \emph{a} tangent cone of $A$ at $0$. Without loss of generality we may assume it is given by a rescaling limit coming from a subsequence of the particular sequence $\{2^{-j}\}_{j=0}^{\infty}$. We always fix the corresponding gauges to realize this convergence, which is smooth away from a closed subset $\tilde\Sigma$ of $B$ with Hausdorff codimension at least four, so that we can talk about the convergence of holomorphic sections of $\E$ to a holomorphic section of $\E_\infty$.

\subsubsection{Stable case}
In this subsection we assume $\underline\E$ is stable. By the Donaldson-Uhlenbeck-Yau theorem $\underline \E$ admits a Hermitian-Yang-Mills connection $(\underline H, \underline A)$ with Einstein constant $\mu=\mu(\underline \E)$. As in Section 2.2 this gives rise to a  simple HYM cone $(\E,  \widehat H, \widehat A)$ with holonomy $e^{-2\pi\sqrt{-1}\mu}\Id$. In this case Theorem \ref{main} reduces to the following

\begin{thm} \label{thm3-8}
 $(\E_\infty, A_\infty)$ is a simple HYM cone and is isomorphic to $(\E, \widehat A)$.
\end{thm}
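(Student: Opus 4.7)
The plan is to use the model HYM cone $(\E,\widehat H,\widehat A)$ as a comparison and then identify $\E_\infty$ by tracking degrees of holomorphic sections.

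First, since $\widehat H=|z|^{2\mu}\pi^*\underline H$ arises exactly as in Section \ref{Tangent Cones}, $\widehat A$ is an admissible Hermitian-Einstein connection on $\E$ with vanishing Einstein constant, and hence both $H$ and $\widehat H$ are admissible Hermitian-Einstein metrics on the \emph{same} reflexive sheaf $\E$ with vanishing Einstein constant. The estimate (\ref{cor2-26}), together with the remark immediately following it, then yields
\begin{equation*}
C^{-1}\widehat H \leq H \leq C\widehat H \quad \text{on } B^*
\end{equation*}
for some $C>0$. As a consequence, the degree $d(s)$ from Corollary \ref{cor3-7} may be computed using $\widehat H$ in place of $H$. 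Since $\E=\pi^*\underline\E$, any local holomorphic section decomposes as $s=\sum_{m\geq 0}s_m$ with $s_m$ the pull-back of some $\underline s_m\in H^0(\C\P^{n-1},\underline\E(m))$, and the explicit form of $\widehat H$ gives $d(s)=\mu+m_{\min}(s)$, where $m_{\min}(s)$ is the smallest index with $\underline s_m\neq 0$. In particular every nonzero local holomorphic section has finite degree.

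Next I would fix $m_0$ large enough that $\underline\E(m_0)$ is globally generated by sections $\underline s_1,\ldots,\underline s_N$. Their pull-backs $s_1,\ldots,s_N$ are global holomorphic sections of $\E$ on $B$, each of degree $\mu+m_0$. By Corollary \ref{cor3-7}, after passing to a subsequence they converge to homogeneous sections $s^\infty_j$ of $\E_\infty$ of the same degree. Lemma \ref{lem3-2} then places all the $s^\infty_j$ inside a single simple HYM cone summand $\E'_\infty=\pi^*\underline\E'_\infty$ of $\E_\infty$; choosing the descent so that $\mu(\underline\E'_\infty)=\mu+m_0$, they correspond to sections $\underline s^\infty_j\in H^0(\C\P^{n-1},\underline\E'_\infty)$. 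Every holomorphic relation $\sum_j f_j\underline s_j=0$ on $\C\P^{n-1}$ passes to the limit to give $\sum_j f_j\underline s^\infty_j=0$, so the assignment $\underline s_j\mapsto\underline s^\infty_j$ factors through a well-defined non-trivial sheaf map $\psi\colon\underline\E(m_0)\to\underline\E'_\infty$.

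Now I would apply Corollary \ref{cor2-5}: $\underline\E(m_0)$ is stable, $\underline\E'_\infty$ is polystable (its HYM cone origin endows it with an admissible Hermitian-Einstein metric), and both have slope $\mu+m_0$, so $\psi$ realizes $\underline\E(m_0)$ as a direct summand of $\underline\E'_\infty$. Since Uhlenbeck convergence preserves the rank of the underlying smooth bundle, $\rk(\underline\E'_\infty)\leq\rk(\E_\infty)=\rk(\E)=\rk(\underline\E(m_0))$, forcing $\underline\E'_\infty\cong\underline\E(m_0)$ and $\E'_\infty=\E_\infty$. Combined with Proposition \ref{prop2-3} (which determines a simple HYM cone up to an $\O(m)$-twist) and the uniqueness of the Hermitian-Einstein metric on the stable $\underline\E$ (Corollary \ref{cor2-6}), this identifies $(\E_\infty,A_\infty)$ with $(\E,\widehat A)$ up to gauge. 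Uniqueness of the tangent cone across different subsequences is automatic since every subsequential limit produces the same answer.

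The main technical obstacle I anticipate is justifying that algebraic relations among the $\underline s_j$ genuinely persist as relations among the limit sections $\underline s^\infty_j$ on all of $\C\P^{n-1}$: the convergence $s_j\to s^\infty_j$ is smooth only off the bubbling set $\tilde\Sigma$, so a Hartogs-type extension argument across the complex-codimension-two bubbling locus---combined with the uniform $L^\infty$ bound for holomorphic sections provided by Theorem \ref{thm2-24}---is needed to promote the pointwise limit relations to a bona fide sheaf morphism defined globally on $\C\P^{n-1}$.
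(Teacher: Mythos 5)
Your argument is correct in substance but takes a genuinely different route from the paper at the key step. Both proofs begin identically, with the two-sided comparison $C^{-1}\widehat H\leq H\leq C\widehat H$ obtained from (\ref{cor2-26}), and both end the same way, with Corollary \ref{cor2-5} forcing an isomorphism onto a direct summand, a rank count, and Corollary \ref{cor2-6} to identify the connection. The difference is how the nontrivial homomorphism into $\E_\infty$ is produced. The paper applies the degree machinery to the \emph{single} identity section of $Hom(\E,\E)=\E^*\otimes\E$, where the first factor carries the model cone metric $\widehat H^*$ and the second carries the unknown $H$: since the first factor is already a cone, the tangent cone of this Hom-bundle is $\E^*\otimes\E_\infty$, the identity section has degree zero by (\ref{cor2-26}), and its homogeneous limit is directly a nontrivial homomorphism $\underline\E\to\underline\E_\infty'$ --- no global generation, no relations to pass to the limit, and no issue of reassembling a sheaf map from many separately converging sections. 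Your route instead uses the pull-backs of generators of $\underline\E(m_0)$ and rebuilds the map from their limits; this is essentially the strategy the paper is forced into in the semistable and general cases (Sections 3.2.2--3.2.3), where no global comparison cone metric exists. What the Hom-section trick buys in the stable case is precisely the avoidance of the technical obstacle you flag at the end.

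One point you should make explicit: for a relation $\sum_j a_j s_j(x)=0$ to pass to the limit, the sections $s_j^\infty$ must be obtained with a \emph{common} normalization (e.g.\ dividing every $s_j$ by $2^{-k(\mu+m_0+n)}$, or by $M_k=\sup_j\|s_j\|_k$ as the paper does later), not by normalizing each $s_j$ individually --- otherwise the normalizing constants of different sections need not have comparable asymptotics and linear relations are destroyed in the limit. In the stable case this is harmless exactly because of the two-sided bound $C^{-1}\widehat H\leq H\leq C\widehat H$: the ratios $\|s_j\|_k/M_k$ stay bounded away from $0$ and $\infty$, so after a further subsequence every commonly normalized limit is a nonzero multiple of your $s_j^\infty$ and the map $\psi$ is well defined and nontrivial. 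In the semistable case this uniformity fails, which is why the paper has to introduce the successive orthogonal projections of Proposition \ref{prop3-12}.
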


To see this we consider  $\F:=Hom(\E, \E)=\E^*\otimes\E$. It has an admissible connection induced by $(\widehat H, \widehat A)$ on $\E^*$ and $(H, A)$ on $\E$. This connection is not exactly Hermitian-Yang-Mills since $(\widehat H, \widehat A)$ is only Hermitian-Yang-Mills with respect to $\omega_0$. However, since we have $\omega=\omega_0+O(|z|^2)$, and it is easy to see $|F_{\widehat A}|_{\omega_0}=O(|z|^{-2})$, we have 
\begin{equation}
|\Lambda_{\omega} F_{\widehat A}(z)|\leq C,  \ \ \ \ z\in B_{1/2}^*
\end{equation}
Notice $\F$ also has a natural section $s$ given by the identity map. By definition and (\ref{cor2-26}) we see that $d(s)=0$, and $s$ gives rise to a non-trivial limit homogeneous degree zero section $s_\infty$ of the HYM cone $(\F_{\infty}:=\E^*\otimes \E_\infty, \widehat H^*\otimes H_\infty,  A^*\otimes A_\infty)$.  Lemma \ref{lem3-2} implies that there is a simple HYM direct summand, say $\E_\infty'$ of $\E_\infty$ with holonomy $e^{-2\pi\sqrt{-1}\mu}\Id$, such that $s_\infty$ is a degree zero section of $\E^*\otimes \E_\infty'$. In particular, $s_\infty$ induces a homomorphism $\underline s_\infty: \underline \E\rightarrow \underline \E_\infty'$. Since $\underline \E$ is stable, and both sheaves have the same slope $\mu$, by Corollary \ref{cor2-5} we conclude that $\underline s_\infty$ must be an isomorphism. Hence $(\E_\infty, A_\infty)$ is a simple HYM cone and $\underline\E_\infty$ is isomorphic to $\underline\E$. By Corollary \ref{cor2-6} we see $\underline A_\infty$ must be isomorphic to $\underline A$. This finishes the proof of Theorem \ref{thm3-8}.

\

As mentioned in the introduction this case has already been proved by \cite{JSW} using PDE method. The point is that in this case (or more generally, when $\underline \E$ is the direct sum of stable vector bundles), one can as above construct a HYM cone on $\E$ and use the inequality in Theorem \ref{thm2-24} to obtain an $L^\infty$ bound between the unknown Hermitian metric and the Hermitian metric on the HYM cone.  This initial $L^\infty$ bound allows \cite{JSW} to go further to obtain higher regularity and decay estimates. 

In the case when $\underline \E$ is not a direct sum of stable vector bundles, and more seriously when the Harder-Narasimhan-Seshadri filtration of $\E$ is not given by sub-bundles, as our main result Theorem \ref{main} shows, $\E_\infty$ is not even isomorphic to $\E$ as reflexive sheaves. Hence one expects an essential difficulty by a direct PDE argument. This is also reflected in the fact that our proof above also needs to be refined and this is what we shall elaborate in the following subsections.

\

\

\subsubsection{Semistable case} \label{semistable}
In this subsection we assume $\underline\E$ is semistable and $\Gr^{HNS}(\underline\E)$ is reflexive. Let $0=\underline\E_0\subset \underline\E_1\subset\cdots\subset\underline\E_q=\underline\E$ be \emph{one} Seshadri filtration of $\underline \E$. We recall that Seshadri filtration may not be unique, but the corresponding graded sheaf $\oplus_{j=1}^q (\E_j/\E_{j-1})^{**}$ is unique up to isomorphism. Denote $\E_i={\iota_B}_*\pi_B^*\underline\E_i$. Then Theorem \ref{main} in this case reduces to the following

\begin{thm}\label{thm3-9}
$(\E_\infty, A_\infty)$ is a simple HYM cone. Moreover, $\underline\E_\infty$ is isomorphic to $\Gr^{HNS}(\underline\E)$, and $A_\infty$ is gauge equivalent to the natural Hermitian-Yang-Mills cone connection on $\iota_*\pi^*(Gr^{HNS}(\underline\E))$.
\end{thm}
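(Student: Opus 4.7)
The plan is to generalize the argument for the stable case (Theorem \ref{thm3-8}) by replacing the exact HYM cone comparison metric with an approximate one, exploiting the semistability of $\underline\E$, and then using the rigidity of the degree invariant $d(s) \in (\rk(\E)!)^{-1}\Z$ from Section 3.1.

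First I would apply Theorem \ref{thm2-10} to the semistable vector bundle $\underline\E$ to produce, for each $\epsilon>0$, a Hermitian metric $\underline H_\epsilon$ on $\underline\E$ with $|\sqrt{-1}\Lambda_{\omega_{FS}}F_{\underline A_\epsilon} - (n-1)\mu\,\Id|_{L^\infty} < \epsilon$. Setting $\hat H_\epsilon := |z|^{2\mu}\pi^*\underline H_\epsilon$, Lemma \ref{lem2-16} gives $r^2 |\Lambda_{\omega_0} F_{\hat A_\epsilon}|\leq \epsilon$ on $B^*$. Comparing $\hat H_\epsilon$ with the true Hermitian--Einstein metric $H$ via (\ref{cor2-26}) (invoking Lemma \ref{lem2-30} to accommodate the fact that $\hat H_\epsilon$ is smooth but not necessarily admissible) one gets
$$
|\log \Tr_H\hat H_\epsilon| + |\log \Tr_{\hat H_\epsilon}H| \leq C\epsilon(-\log|z|) + O(1)
$$
as $z \to 0$. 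Hence for any holomorphic section $s$ of $\E$ near $0$, the degrees with respect to $H$ and $\hat H_\epsilon$ satisfy $|d_H(s) - d_{\hat H_\epsilon}(s)| \leq C\epsilon$; discreteness of $(\rk(\E)!)^{-1}\Z$ and letting $\epsilon \to 0$ force equality.

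Next, for each $m \geq 0$ and each $\underline s \in H^0(\C\P^{n-1}, \underline\E(m))$, the pullback $\pi^*\underline s$ is a degree-$m$ polynomial section of $\E$ that extends holomorphically across $0$. A direct computation using the cone structure of $\hat H_\epsilon$ yields $|\pi^*\underline s|^2_{\hat H_\epsilon}(z) \sim |z|^{2(\mu + m)}$ away from the zeros, so $d_H(\pi^*\underline s) = \mu + m$. Corollary \ref{cor3-7} combined with the proof of Proposition \ref{prop3-6} then produces a nonzero homogeneous section $\tilde s$ of $\E_\infty$ of degree $\mu + m$. Lemma \ref{lem3-2} confines $\tilde s$ to a direct summand of $\E_\infty$ with holonomy $e^{-2\pi\sqrt{-1}\mu}\,\Id$; choosing the Einstein-constant representative equal to $\mu$, these summands assemble into a polystable reflexive sheaf $\underline\E_\infty^{[\mu]}$ on $\C\P^{n-1}$, and $\tilde s$ descends to a section $\underline{\tilde s} \in H^0(\underline\E_\infty^{[\mu]}(m))$. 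The assignment $\underline s \mapsto \underline{\tilde s}$ is $\C$-linear, and a fibre-evaluation argument for $m$ sufficiently large that both $\underline\E(m)$ and $\underline\E_\infty^{[\mu]}(m)$ are globally generated (plus a compatibility check across twists) then promotes it to a sheaf homomorphism $\Phi : \underline\E \to \underline\E_\infty^{[\mu]}$.

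Once $\Phi$ is in hand, I would fix any Seshadri filtration $0 = \underline\E_0 \subset \cdots \subset \underline\E_q = \underline\E$ with stable successive quotients $\underline Q_i$ of slope $\mu$ and analyse $\Phi$ piece by piece. The restriction $\Phi|_{\underline\E_1} = \Phi|_{\underline Q_1}$ is nonzero, since any nonzero $\underline s \in H^0(\underline Q_1(m))$ for $m$ large lifts to a section of $\underline\E(m)$ whose pullback has finite degree $\mu + m$ and hence nonzero limit. Corollary \ref{cor2-5} then realises $\underline Q_1$ as a direct summand of $\underline\E_\infty^{[\mu]}$. Iterating on $\underline\E/\underline\E_1$ identifies each $\underline Q_i$ with a direct summand, so $\Gr^{HNS}(\underline\E) = \oplus_i \underline Q_i$ embeds into $\underline\E_\infty^{[\mu]} \subseteq \underline\E_\infty$. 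A rank count (both sides equal $\rk(\E)$, the right side preserved under Uhlenbeck convergence) forces $\underline\E_\infty = \underline\E_\infty^{[\mu]} \simeq \Gr^{HNS}(\underline\E)$. The connection statement then follows from Corollary \ref{cor2-6}, as $A_\infty$ and the natural HYM cone connection are both admissible Hermitian--Einstein on the same polystable reflexive sheaf.

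The hard part will be the construction of $\Phi$ as a bona fide sheaf homomorphism: the limit $\tilde s$ depends a priori on the gauge sequence realising the Uhlenbeck convergence, and one has to verify both the compatibility across different twists $m$ and the locality on $\C\P^{n-1}$ needed to descend a collection of linear maps on global-section spaces to a genuine morphism of sheaves. A second subtle point is the inductive use of the non-canonical Seshadri filtration; ensuring that after splitting off $\underline Q_1$ the residual map on the quotient remains controllable will require a careful variant of the techniques from \cite{DS2015} alluded to in the introduction.
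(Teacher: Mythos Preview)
Your opening---constructing the comparison metrics $\hat H_\epsilon$ via Theorem \ref{thm2-10}, applying (\ref{cor2-26}) and Lemma \ref{lem2-30}, and concluding $d(\pi^*\underline s)=\mu$ for every nonzero $\underline s\in H^0(\C\P^{n-1},\underline\E)$---is correct and coincides exactly with Lemma \ref{lem3-11} in the paper.

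The real gap is in the passage from individual limit sections to a sheaf homomorphism $\Phi$. The assignment $\underline s\mapsto\underline{\tilde s}$ you describe is \emph{not} $\C$-linear: each $\tilde s$ is obtained by normalising $s$ by its own $L^2$ norm $\|s\|_j$ before taking the limit, and the ratios $\|s_1\|_j/\|s_2\|_j$ need not converge (Proposition \ref{prop3-6} only controls the exponential rate, not the constant). If instead you use a common normalisation $M_j=\sup_i\|\sigma_i\|_j$ for a fixed basis, the map becomes linear but many limit sections may vanish---and then nothing prevents the image of $\Phi$ from collapsing into a single stable summand of $\underline\E_\infty$, so your induction ``iterating on $\underline\E/\underline\E_1$'' has no content: the induced map on the quotient could simply be zero.

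The paper resolves precisely this difficulty, and it is the heart of the semistable case. One first proves a \emph{uniform} two-sided bound $2^{\mu-\epsilon}\|s\|_j\leq\|s\|_{j-1}\leq2^{\mu+\epsilon}\|s\|_j$ valid for \emph{all} $s\in HG$ simultaneously and all $j\geq j(\epsilon)$ (Proposition \ref{prop3-11}; the uniformity requires a compactness argument on the unit sphere in $HG$). This forces the $L^2$ inner products on $B_{2^{-j}}$ to be nearly compatible as $j$ varies, and combined with a projected version of the convexity argument (Proposition \ref{prop3-12}) yields that the $L^2$ projection of any $s\in HG$ orthogonal to a given $\sigma\in HG$ still has degree exactly $\mu$. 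Iterating this Gram--Schmidt procedure produces a basis whose normalised projections converge to an $L^2$-\emph{orthonormal} family of homogeneous sections on the tangent cone. Only then can one run the induction: the paper proves coupled statements $(a)_r$ (the map $\E_r/\E_{r-1}\to\E_\infty/\bigoplus_{k<r}\S_k$ is an isomorphism onto a summand) and $(b)_r$ (the sub-bundle $\E_r$ converges fibrewise to $\bigoplus_{k\leq r}\S_k$ under the chosen gauges), where $(b)_{r-1}$ is exactly what makes $(a)_r$ well-defined on the quotient. Note also that your outline never invokes the hypothesis that $\Gr^{HNS}(\underline\E)$ is reflexive; in the paper this is used critically (Remark \ref{rmk3-15}) to guarantee that the limit sections $\sigma^\infty_{k,s}$ generate all of $\S_k$ fibrewise, which is needed for non-triviality of the map at step $r$.
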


We first begin with a non-technical description of what we are going to do in this section. There are two points that need to be taken care of 
\begin{itemize}
\item Calculate the degree of the pull-back sections from $\underline \E$. This is done by comparing the approximate Hermitian-Einstein metric on $\underline \E$ with the unknown Hermitian-Einstein metric. See Lemma \ref{lem3-11} below.

\item Given a linearly independent set of $N$ such pull-back sections with the same degree $\mu$, we want to get a set of $N$ limiting $L^2$ othornormal homogeneous sections of the tangent cone with degree equal to $\mu$. This requires us to do $L^2$-orthogonalization on each rescaled ball. For this reason, we need to introduce a modified  notion of degree function, see Proposition \ref{prop3-14}. It turns out that the new limiting sections have the same degree as the original ones by a refined analysis of the degree function we defined in Section \ref{Convexity for holomorphic sections} (See Corollary \ref{cor3.14} below).
\end{itemize}
Given the above, using those sections, we can represent the chosen subsheaves using a sequence of global resolution of the subsheaves $\E_r$ of $\E$ given by the chosen sections on each rescaled ball, and study the convergence of those resolutions. It turns out that these can allow us to build isomorphisms we need and finish the proof.

Now we start to prove the main theorem in detail. Using the asymptotic Riemann-Roch formula for reflexive sheaves (Theorem $1.1.24$ on Page $21$ in \cite{Lazarsfeld}) and standard vanishing theorems,  by tensoring with a large power of $\mathcal O(1)$, we may without loss of generality assume the following holds for all $i=1, \cdots, q$, 

\begin{itemize}
\item Each $\underline \E_i$ and $\underline \E_i/\underline\E_{i-1}$ is generated by its global sections; 
\item For all $i$, the following sequence is exact
$$0\rightarrow H^0(\C\P^{n-1}, \underline \E_{i-1})\rightarrow H^0(\C\P^{n-1}, \underline \E_i)\rightarrow H^0(\C\P^{n-1}, \underline \E_i/\underline \E_{i-1})\rightarrow 0. $$
\end{itemize}

By Theorem \ref{thm2-10}, for any $\epsilon>0$ we can find a Hermitian metric $\underline H_\epsilon$ on $\underline \E$ such that $$\|\sqrt{-1}\Lambda_{\omega_{FS}}F_{\underline A_\epsilon}-\mu\Id\|_{L^\infty(\C\P^{n-1})}<\epsilon$$ with $\mu=\mu(\underline \E)$. Let $H_\epsilon=|z|^{2\mu}\pi^*\underline H_\epsilon$. Then $|F_{(H_\epsilon,\bp_{\E})}|=O(r^{-2})$,  so $F_{H_{\epsilon,\bp_\E)}} \in L^{2}(B^*)$. Furthermore, for all $r\in (0, 1]$,  we have 
$$r^2\sup_{\p B_r}|\Lambda_{\omega_0} F_{(H_\epsilon,\bp_\E)}|<\epsilon.$$
Again it is easy to see 
$$r^2\sup_{\p B_r}|\Lambda_{\omega} F_{(H_\epsilon,\bp_\E)}|<\epsilon+Cr^2.$$

\begin{lem} \label{lem3-11}
For any $s={\iota_B}_*\pi_B^{*}\underline s$, where $\underline s\in H^0(\C\P^{n-1}, \underline \E)$, we have $d(s)=\mu$.
\end{lem}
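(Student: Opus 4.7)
My plan is to compare the unknown Hermitian-Einstein metric $H$ on $\E$ with the explicit comparison metric $H_\epsilon = |z|^{2\mu}\pi^{*}\underline{H}_\epsilon$ built from the approximately Hermitian-Einstein metric on $\underline\E$. For $s = \pi^{*}\underline{s}$, the norm with respect to $H_\epsilon$ is $|s|^2_{H_\epsilon}(z) = |z|^{2\mu}|\underline{s}(\pi(z))|^2_{\underline{H}_\epsilon}$, so by Fubini in polar coordinates
\begin{equation*}
\int_{B_r} |s|^2_{H_\epsilon} = \Big(\int_{0}^{r}\rho^{2\mu+2n-1}\,d\rho\Big)\cdot\int_{S^{2n-1}}|\underline{s}\circ\pi|^2_{\underline{H}_\epsilon}\,dS,
\end{equation*}
which scales as $r^{2\mu+2n}$ up to a positive constant, since the sphere integral is strictly positive (as $\underline{s}\not\equiv 0$).

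The next step is to control the ratio between $H$ and $H_\epsilon$ via Lemma \ref{lem2-30}. The integrability hypothesis holds: $|F_H|\in L^2_{loc}$ by admissibility of $A$, while $|F_{H_\epsilon}|=O(r^{-2})$ lies in $L^{1+\delta}(B)$ for any sufficiently small $\delta>0$ (this is where $n\geq 3$ enters, so that $r^{-2(1+\delta)}r^{2n-1}$ is integrable near the origin). The $L^\infty$ condition on $r^2(|\Lambda F_H|+|\Lambda F_{H_\epsilon}|)$ holds because $H$ is Hermitian-Einstein with vanishing Einstein constant, so $\Lambda F_H\equiv 0$, and by construction $r^2\sup_{\partial B_r}|\Lambda F_{H_\epsilon}|<\epsilon$. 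Plugging into (\ref{cor2-26}) gives
\begin{equation*}
\limsup_{z\to 0}\frac{|\log \Tr_H H_\epsilon(z)|+|\log \Tr_{H_\epsilon} H(z)|}{-\log|z|}\ \leq\ C_0\,\limsup_{r\to 0}r^{1-2n}\int_{\partial B_r}r^2|\Lambda F_{H_\epsilon}|\ \leq\ C'\epsilon,
\end{equation*}
for a dimensional constant $C'$. This in turn yields $C_\epsilon^{-1}|z|^{C'\epsilon}|s|^2_{H_\epsilon}\leq |s|^2_H\leq C_\epsilon|z|^{-C'\epsilon}|s|^2_{H_\epsilon}$ on a punctured neighborhood of $0$.

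Combining these two estimates, $\int_{B_r}|s|^2_H$ is squeezed between positive multiples of $r^{2\mu+2n-2C'\epsilon}$ and $r^{2\mu+2n+2C'\epsilon}$ as $r\to 0$. By Corollary \ref{cor3-7} the limit defining $d(s)$ exists, is finite, and lies in the interval $[\mu-C'\epsilon,\mu+C'\epsilon]$. Since $\epsilon>0$ was arbitrary, we conclude $d(s)=\mu$.

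The main technical obstacle is making Lemma \ref{lem2-30} applicable: the $O(r^{-2})$ singularity of $|F_{H_\epsilon}|$, which is intrinsic to the homogeneous propagation of $\underline{H}_\epsilon$, must be absorbed using $n\geq 3$ to produce the requisite $\delta>0$; and the approximate-Einstein error of $\underline{H}_\epsilon$ must feed cleanly into the spherical average on the right-hand side of (\ref{cor2-26}). These together produce an $\mathcal O(\epsilon)$ window around $\mu$ that collapses to the single value $\mu$ as $\epsilon\to 0$.
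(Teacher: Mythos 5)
Your proof is correct and follows essentially the same route as the paper: compute $\int_{B_r}|s|^2_{H_\epsilon}\sim r^{2\mu+2n}$ for the explicit comparison metric $H_\epsilon=|z|^{2\mu}\pi^*\underline H_\epsilon$, invoke Lemma \ref{lem2-30} (the paper takes $\delta=1$ since $|F_{H_\epsilon}|=O(r^{-2})\in L^2$ for $n\geq 3$, but your smaller $\delta$ works equally well) to get the $|z|^{\pm C\epsilon}$ comparison between $H$ and $H_\epsilon$ via (\ref{cor2-26}), and let $\epsilon\to 0$. No gaps.
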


\begin{proof}
By definition, there is a constant $C(\epsilon)>0$ such that for all $r\in (0, 1)$
\begin{equation}\label{eqn}
C(\epsilon)^{-1}r^{2n+2\mu}\leq \int_{B_r} |s|^2_{H_\epsilon}\leq C(\epsilon) r^{2n+2\mu}. 
\end{equation}
Applying  Lemma \ref{lem2-30} with $\delta=1$, we see (\ref{cor2-26}) holds with $H'=H_\epsilon$, so 
$$\limsup_{z\rightarrow 0}\frac{|\log \Tr_{H_\epsilon}{H}(z)|+|\log \Tr_{H}{H_\epsilon}(z)|}{-\log |z|}\leq C_0\epsilon,$$
where $C_0$ is a constant \emph{independent} of $\epsilon$. In particular, we have 
$$
(2C_0)^{-1} |z|^{\epsilon}\leq H_\epsilon \leq 2C_0 |z|^{-\epsilon} H_\epsilon
$$
near the origin, which together with Equation (\ref{eqn}) implies 
$$d(s)\in [\mu-C_0\epsilon, \mu+C_0\epsilon]$$ for all $\epsilon>0$. By letting $\epsilon$ go to $0$, we obtain $d(s)=\mu$. 
\end{proof}

At this point one can try to argue as in the stable case to build maps from $\E_i(i=1, \cdots, q)$ to $\E_\infty$, and aim for the desired isomorphism between $\iota_*\pi^*(\Gr(\underline\E))$ and $\E_\infty$. This works in a straightforward way in the case when $q=2$, but in general it becomes very complicated especially when some factor $\underline \E_i/\underline \E_{i-1}$ appears with multiplicity bigger than one in $\Gr(\underline \E)$. To overcome this we need more involved arguments. 

Let 
$$HG_i:=\{s={\iota_B}_*\pi_B^{*}\underline s : \underline s \in  H^{0}(\C\P^{n-1}, \underline \E_{i})\}, $$
and denote $HG:=HG_q$. 
This defines a filtration 
\begin{equation}
0=HG_0\subset HG_1 \subset\cdots\subset HG_{q}=HG
\end{equation}
We know each non-zero  $s\in HG$ gives rise to non-trivial homogeneous sections of the limiting sheaves of all the tangent cones, but a priori two different elements in $HG$ may yield the same limit. To deal with issue we need to refine the discussion in Section 3.1.

To make the idea clear, we start with a fixed nonzero holomorphic section $\sigma\in HG$. Denote
$$\sigma^j=\frac{\sigma|_{B_{2^{-j}}}}{\|\sigma\|_j}, $$
where 
$$\|\sigma\|_j:=\sqrt{\frac{1}{\text{Vol}(B_{2^{-j}})}\int_{B_{2^{-j}}}|\sigma|^2}. $$
Given $s\in H^0(B_{2^{-j}}, \E)$, we denote by $p^j s$  the $L^2$ orthogonal projection of $s$ to the orthogonal complement of $\sigma^j$ on $B_{2^{-j}}$. The following is a straightforward analogue of Proposition 3.11 in \cite{DS2015}, and we include a proof here for the convenience of readers. 

\begin{prop}\label{prop3-12}
Given any $\lambda\notin(rank(\E)!)^{-1}\Z$, we can find $j(\lambda, \sigma)$ such that  for any $j\geq j(\lambda, \sigma)$ and $s\in H^{0}(B_{2^{-j}},\E)$, if $\|p^{j+1} s\|_{j+1}\geq 2^{-\lambda}\|p^{j}s\|_{j}$, then $\|p^{j+2}s\|_{j+2}> 2^{-\lambda}\|p^{j+1}s\|_{j+1}$. 
\end{prop}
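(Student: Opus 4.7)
My plan is to argue by contradiction via a rescaling and compactness argument, paralleling the proof of Proposition \ref{prop3-6} (and Proposition 3.11 in \cite{DS2015}). Suppose the conclusion fails: there exist $\lambda\notin(\rk(\E)!)^{-1}\Z$, a subsequence $j_i\to\infty$, and sections $s_i\in H^0(B_{2^{-j_i}},\E)$ with $\|p^{j_i+1}s_i\|_{j_i+1}\ge 2^{-\lambda}\|p^{j_i}s_i\|_{j_i}$ but $\|p^{j_i+2}s_i\|_{j_i+2}<2^{-\lambda}\|p^{j_i+1}s_i\|_{j_i+1}$. Since $p^{j+1}\sigma=0$ and $p^{j+1}\circ p^{j}=p^{j+1}$, replacing $s_i$ by $p^{j_i}s_i$ does not affect these inequalities, so I may assume $s_i\perp\sigma$ in $L^2(B_{2^{-j_i}})$. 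Normalizing so that $\|p^{j_i+1}s_i\|_{j_i+1}=1$ then yields $\|s_i\|_{j_i}\le 2^\lambda$ and $\|p^{j_i+2}s_i\|_{j_i+2}<2^{-\lambda}$.

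Next I pass to a tangent cone: letting $u_i(z)=2^{-j_i}z$, along a subsequence the rescaled connections $u_i^*A$ converge, after gauge transformations $g_i$, to an admissible Hermitian-Yang-Mills connection $A_\infty$ on a reflexive sheaf $\E_\infty$, smoothly on compact subsets of $B\setminus\tilde\Sigma$. Crucially, $\sigma=\pi^*\underline\sigma$ is $\C^*$-invariant, so $u_i^*\sigma=\sigma$. The $L^2$ bounds on $\tilde s_i:=u_i^*s_i$ and on $\sigma/\|\sigma\|_{j_i}$, combined with the Moser iteration estimate \eqref{eqn2-5}, yield uniform $L^\infty$ control on compact subsets of $B\setminus\tilde\Sigma$. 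Along a further subsequence, $g_i\cdot\tilde s_i\to s_\infty$ and $g_i\cdot\sigma/\|\sigma\|_{j_i}\to\sigma_\infty$ smoothly on compacts of $B\setminus\tilde\Sigma$, and both extend as holomorphic sections of $\E_\infty$ across $\tilde\Sigma$. Since $d(\sigma)=\mu$ by Lemma \ref{lem3-11}, the rescaling argument from the proof of Proposition \ref{prop3-6} identifies $\sigma_\infty$ as a nonzero homogeneous section of degree $\mu$.

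The orthogonality $s_i\perp\sigma$ on $B_{2^{-j_i}}$ passes to the limit, giving $s_\infty\perp\sigma_\infty$ on $B$. I then decompose $s_\infty=\sum_d (s_\infty)_d$ via Lemma \ref{lem3-4}. Homogeneous sections of different degrees are $L^2$-orthogonal on any $S^1$-invariant set, so only $(s_\infty)_\mu$ can pair nontrivially with $\sigma_\infty$; moreover, because both are homogeneous of degree $\mu$, the ratio $\langle(s_\infty)_\mu,\sigma_\infty\rangle_{B_r}/\|\sigma_\infty\|_{B_r}^2$ is $r$-independent. Hence $s_\infty\perp\sigma_\infty$ on every $B_r$, and the projections $p^{j_i+k}\tilde s_i$ converge to $s_\infty$ itself for $k=1,2$. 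Combining with the normalizations yields $\|s_\infty\|_{L^2_{\mathrm{norm}}(B)}\le 2^\lambda$, $\|s_\infty\|_{L^2_{\mathrm{norm}}(B_{1/2})}=1$, and $\|s_\infty\|_{L^2_{\mathrm{norm}}(B_{1/4})}\le 2^{-\lambda}$. Proposition \ref{prop3-5} then forces $\|s_\infty\|_{L^2_{\mathrm{norm}}(B_{1/4})}^2\cdot\|s_\infty\|_{L^2_{\mathrm{norm}}(B)}^2\ge 1$, which saturates the upper bounds and gives equality in Proposition \ref{prop3-5}. Therefore $s_\infty$ is homogeneous of some degree $d$, and matching the three norms gives $d=\lambda$; but by \eqref{rigidity}, $d\in(\rk(\E)!)^{-1}\Z$, contradicting $\lambda\notin(\rk(\E)!)^{-1}\Z$.

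The main technical obstacle I anticipate is justifying the convergence of the projections $p^{j_i+k}\tilde s_i$ to $s_\infty$ itself on both $B_{1/2}$ and $B_{1/4}$. This requires extracting nonzero limit sections $s_\infty$ and $\sigma_\infty$ through the gauge transformations $g_i$ (made possible by the scale-invariance of $\sigma$ and uniform $L^\infty$ bounds), together with the propagation of $L^2$-orthogonality from $B$ to every subball, which follows from the homogeneous decomposition and the scale-invariance of pairings between same-degree sections.
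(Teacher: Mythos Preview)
Your proof is correct and follows essentially the same route as the paper's: a contradiction/compactness argument passing to a tangent cone, followed by the observation that orthogonality of the limit section to the homogeneous $\sigma_\infty$ on $B$ propagates to every subball, forcing the limit to be homogeneous of the forbidden degree $\lambda$. The only organizational difference is that you replace $s_i$ by $p^{j_i}s_i$ at the outset and track a single limit $s_\infty$, whereas the paper tracks three limits $\tau,\tau',\tau''$ of $p^{j_i}s_i$, $p^{j_i+1}s_i$, $p^{j_i+2}s_i$ and shows they coincide; since $p^{j+1}\circ p^j=p^{j+1}$ these amount to the same computation.
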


\begin{proof}
Otherwise, there exists $\lambda \notin (rank(\E)!)^{-1}\Z$,  a subsequence $j_i\rightarrow\infty$, and a section $s_i\in H^0(B_{2^{-j_i}}, \E)$ for each $i$,  satisfying the following:
\begin{equation}
\|p^{j_i+1} s_i\|_{j_i+1}\geq 2^{-\lambda}\|p^{j_i}s_i\|_{j_i},
\end{equation}
but
\begin{equation}
\|p^{j_i+2} s_i\|_{j_i+2}\leq 2^{-\lambda}\|p^{j_i+1}s_i\|_{j_{i+1}}.
\end{equation}
We can normalize so that $\|p^{j_i+1}s_i\|_{j_{i}+1}=1$. As in the proof of Proposition \ref{prop3-6}, by taking a further subsequence we may assume $\{p^{j_i}s_i\}_i$ converge to holomorphic sections in some tangent cone (under the pull-back maps $z\mapsto 2^{-j_i}z$), namely, 
\begin{equation}
\tau=\lim_{i\rightarrow\infty} p^{j_i}s_i, \tau'=\lim_{i\rightarrow\infty} p^{j_i+1} s_i,\tau''=\lim_{i\rightarrow\infty}p^{j_i+2} s_i
\end{equation} 
satisfying 
$$\frac{1}{\text{Vol}(B)}\int_B|\tau|^2\leq 2^{2\lambda}, \frac{1}{\text{Vol}(B_{1/2})}\int_{B_{1/2}}|\tau'|^2\leq 1, \frac{1}{\text{Vol}(B_{1/4})}\int_{B_{1/4}}|\tau''|^2\leq 2^{-2\lambda}.$$
Notice  $\tau'$ and $\tau''$ are defined on $B_{1/2}$ and $B_{1/4}$ respectively. Now we have the following $L^2$ orthogonal decomposition 
\begin{equation}\label{eqn3-9}
p^{j_i}s_i|_{B_{2^{-{(j_i+1)}}}}=p^{j_i+1} s_i+c_{i}\sigma^{j_i+1}
\end{equation}
for some constant $c_i$. 
By taking a further subsequence, we may assume $c_{i}\sigma^{j_i+1}$ converges to $c\sigma^{\infty}$ and we get 
\begin{equation}
\tau|_{B_{\frac{1}{2}}}=\tau'+c \sigma^{\infty}.
\end{equation}
Then $\tau'$ is $L^2$ orthogonal to $\sigma^{\infty}$ on $B_{\frac{1}{2}}$ and $\tau$ is $L^2$ orthogonal to $\sigma^{\infty}$ on $B$. As $\sigma^{\infty}$ is homogeneous, $\tau|_{B_{\frac{1}{2}}}$ is $L^2$ orthogonal to $\sigma^\infty$ on $B_{1/2}$. More precisely, by writting $\tau=\sum_d s_d$ as a sum of $L^2$ orthogonal homogeneous holomorphic sections, by Equation \ref{eqn3.1} we have 
$$
\int_{B_{\frac{1}{2}}} (\tau, \sigma_\infty)=\int_{B_{\frac{1}{2}}} (s_{d_0}, \sigma_\infty)=2^{-n} \int_{B}(s_{d_0}, \sigma_\infty)=2^{-n} \int_{B} (\tau, \sigma_\infty)
$$
where $d_0=d(\sigma_\infty)$. This implies $c=0$ and thus $\tau|_{B_{\frac12}}=\tau'$. Thus 
$$\|p^{j_i}s_i\|_{j_i+1}\geq 1.$$  
We have 
$$\frac{1}{\text{Vol}(B_{1/2})} \int_{B_{1/2}} |\tau'|^2=\frac{1}{\text{Vol}(B_{1/2})} \int_{B_{1/2}} |\tau|^2\geq 1$$
as the proof of Proposition \ref{prop3-6}.
Similarly $\tau|_{B_{\frac{1}{4}}}=\tau''$. So we have 
 $$(\int_{B_{1/2}}|\tau|^2)^2\geq \int_B|\tau|^2\cdot \int_{B_{1/4}}|\tau|^2. $$
By Proposition \ref{prop3-5}, $\tau$ must be a homogeneous section on the tangent cone of degree $\lambda$, which contradicts our choice of $\lambda$. This finishes the proof. 
\end{proof}

Now given $s\in H^0(B, \E)\setminus \C\langle \sigma\rangle$. Using the above Proposition, similar to the proof of Corollary \ref{cor3-7},  we obtain
 
\begin{prop}
\label{prop3-14}
We have
\begin{itemize}
\item[1.] The following is well-defined
\begin{equation}
d_{\sigma}(s):=\lim_{j\rightarrow\infty}\frac{\log\|p^{j}s\|_{j}}{-j\log2}\in (k!)^{-1}\Z\cup \{\infty\}, 
\end{equation}
where $k=rank(\E)$. 
\item[2.] If $d_{\sigma}(s)<\infty$, then the sequence $\{\sigma^j, p^js/\|p^js\|_j\}$ converges to an $L^2$ orthonormal set of homogeneous sections on the tangent  cones with degree $d(\sigma)$ and $d_\sigma(s)$ respectively. 
\end{itemize}
\end{prop}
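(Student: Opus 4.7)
The plan is to deduce Proposition \ref{prop3-14} from Proposition \ref{prop3-12} in exactly the manner Corollary \ref{cor3-7} is deduced from Proposition \ref{prop3-6}. Set $a_j:=\|p^j s\|_j$ and $\lambda_j:=\log_2(a_j/a_{j+1})$, so that $d_\sigma(s)$ equals $\lim_j (1/j)\sum_{i<j}\lambda_i$ by Ces\`aro summation. Proposition \ref{prop3-12} asserts that for each $\lambda\notin (k!)^{-1}\Z$ and each $j\geq j(\lambda,\sigma)$, once $\lambda_j\leq\lambda$ holds it persists for all larger indices. I would first use this one-sided saturation to conclude $\limsup_j\lambda_j=\liminf_j\lambda_j$: if they were unequal, a value $\lambda\notin(k!)^{-1}\Z$ strictly between them would be crossed infinitely often in both directions, contradicting the saturation. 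Denote the common value by $\Lambda\in(-\infty,+\infty]$.

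To show $\Lambda\in(k!)^{-1}\Z\cup\{+\infty\}$ I would argue by contradiction and assume $\Lambda<\infty$ is not in $(k!)^{-1}\Z$. Extract a subsequence $j_i\to\infty$ along which the rescalings $z\mapsto 2^{-j_i}z$ produce a tangent cone $(\E_\infty,A_\infty)$; by the interior $L^\infty$ bound (\ref{eqn2-5}) applied to the normalized sections and Shiffman's extension across $\tilde\Sigma$, after passing to a further subsequence $\sigma^{j_i}\to\sigma^\infty$ and $\tau^{j_i}:=p^{j_i}s/a_{j_i}\to\tau^\infty$ as holomorphic sections of $\E_\infty$ on $B$. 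Repeating the analysis of Proposition \ref{prop3-12} at every dyadic level, the decomposition $p^{j_i}s|_{B_{2^{-j_i-k}}}=p^{j_i+k}s+\sum_{l=1}^k c_{i,l}\sigma^{j_i+l}$ combined with the $L^2$-orthogonality of $\tau^\infty$ to the homogeneous $\sigma^\infty$ on nested balls identifies $\tau^\infty|_{B_{2^{-k}}}$ with the rescaling limit of $p^{j_i+k}s/a_{j_i}$, so the three-ball inequality of Proposition \ref{prop3-5} saturates because $\lim\lambda_j=\Lambda$. Hence $\tau^\infty$ is homogeneous of degree $\Lambda$, and (\ref{rigidity}) forces $\Lambda\in(k!)^{-1}\Z$, contradicting our assumption. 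This completes part~(1).

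For part~(2), assuming $d_\sigma(s)<\infty$, the same extraction along any tangent-cone subsequence produces $\sigma^\infty,\tau^\infty$. These limits are $L^2$-orthonormal on $B$ because $\|\sigma^{j_i}\|_{j_i}=\|\tau^{j_i}\|_{j_i}=1$ and $\langle\sigma^{j_i},\tau^{j_i}\rangle$ vanishes on $B_{2^{-j_i}}$ by construction of $p^{j_i}$, and these properties pass to the limit under the $L^2$ convergence on $B$. Homogeneity of $\sigma^\infty$ of degree $d(\sigma)$ is the equality case of Proposition \ref{prop3-5} applied to $\sigma$ via Corollary \ref{cor3-7}, and homogeneity of $\tau^\infty$ of degree $d_\sigma(s)$ is the main output of the preceding paragraph.

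The step I expect to be most delicate is the identification $\tau^\infty|_{B_{2^{-k}}}=\lim_i p^{j_i+k}s/a_{j_i}$ used to activate Proposition \ref{prop3-5}. The difficulty is that $p^{j+1}s$ and $p^js$ are orthogonal to $\sigma$ with respect to \emph{different} inner products (on $B_{2^{-j-1}}$ versus $B_{2^{-j}}$), so their difference is not a priori a scalar multiple of $\sigma$. Overcoming this requires using the asymptotic homogeneity of $\sigma^\infty$, which makes the $L^2$-pairings $\langle\cdot,\sigma^\infty\rangle$ on nested balls $B_{2^{-k}}$ proportional to powers of $2^{-(d(\sigma)+n)}$, thereby allowing all correction terms to be absorbed into the $\sigma^\infty$-direction in the tangent cone. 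This absorption is precisely what reduces Proposition \ref{prop3-14} to the scalar three-ball theorem and pins $\tau^\infty$ to a single homogeneous degree.
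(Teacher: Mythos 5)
Your proposal is correct and follows exactly the route the paper intends: the paper gives no separate proof of Proposition \ref{prop3-14}, deducing it from Proposition \ref{prop3-12} in the same way Corollary \ref{cor3-7} is deduced from Proposition \ref{prop3-6} (following Proposition 3.11 of \cite{DS2015}), which is precisely the saturation-plus-Ces\`aro argument and the limit extraction you carry out. One small remark: the ``delicate'' identification you flag is in fact immediate at the finite level, since $p^js$ and $p^{j+1}s$ each equal $s$ minus a scalar multiple of $\sigma$, so their difference is automatically a multiple of $\sigma$ (this is (\ref{eqn3-9})); the only place homogeneity of $\sigma^\infty$ is genuinely needed is to conclude that orthogonality to $\sigma^\infty$ on one ball persists on all smaller balls, which kills the correction constant in the limit exactly as you describe.
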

 
Since $\| p^{j}s\|_{j}\leq \| s\|_{j}$, we get $d_\sigma(s)\geq d(s)$. Up to this point it is still a straightforward analogue of the general result in \cite{DS2015}. Now in our case the new point is that the converse inequality also holds for $s\in HG\setminus\C\langle\sigma\rangle$, based on the following crucial observation

\begin{prop}\label{prop3-11}
For all $\epsilon>0$ sufficiently small, there exists $j(\epsilon)$ large enough so that for any $j\geq j(\epsilon)$ and for any $s\in HG$, we have 
\begin{equation}\label{equ3-1}
2^{\mu-\epsilon}\|s\|_j \leq \|s\|_{j-1}\leq 2^{\mu+\epsilon}\|s\|_j.
\end{equation}
\end{prop}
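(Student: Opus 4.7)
The plan is to analyze the ratios $\rho_j(s) := \|s\|_{j+1}/\|s\|_j$ by combining the approximate monotonicity from the three-circle theorem (Proposition \ref{prop3-6}) with the pointwise decay rate coming from the approximate Hermitian-Einstein metric $\widetilde H_{\epsilon'} := |z|^{2\mu}\pi^*\underline H_{\epsilon'}$ underlying Lemma \ref{lem3-11}. I first observe that the proof of Proposition \ref{prop3-6} actually yields a \emph{strict} conclusion when $\lambda \notin (\rk(\E)!)^{-1}\Z$: under its hypotheses, $\rho_{j+1}(s) > 2^{-\lambda}$ strictly, because any saturation at the boundary would force the limit section on some tangent cone to be homogeneous of degree exactly $\lambda$, ruled out by (\ref{rigidity}). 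Consequently the sets $A_\lambda(s) := \{j \geq j_0(\lambda) : \rho_j(s) > 2^{-\lambda}\}$ are upward closed: each is either empty or of the form $[N(\lambda,s), \infty)$.

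For the lower bound $\|s\|_{j-1} \geq 2^{\mu-\epsilon}\|s\|_j$, I would take $\lambda_1 = \mu - \epsilon/2$ (shrinking $\epsilon$ slightly to ensure $\lambda_1 \notin (\rk(\E)!)^{-1}\Z$). If $A_{\lambda_1}(s)$ were non-empty, then $\rho_j(s) > 2^{-\mu+\epsilon/2}$ for all $j \geq N(\lambda_1, s)$, which telescopes to $\log\|s\|_j^2 + 2j\mu\log 2 \geq (j-N)\epsilon \log 2 + O(1)$, contradicting $d(s) = \mu$ from Lemma \ref{lem3-11}. Hence $A_{\lambda_1}(s) = \emptyset$ \emph{uniformly} in $s$, giving $\rho_j(s) \leq 2^{-\mu + \epsilon/2}$ for all $j \geq j_0(\lambda_1)$, which translates to $\|s\|_{j-1}/\|s\|_j \geq 2^{\mu - \epsilon/2}$ for $j \geq j_0(\lambda_1) + 1$.

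For the upper bound $\|s\|_{j-1} \leq 2^{\mu+\epsilon}\|s\|_j$, the more delicate direction, take $\lambda_2 = \mu + \epsilon/2 \notin (\rk(\E)!)^{-1}\Z$. The same telescope rules out the empty possibility, so $A_{\lambda_2}(s) = [N(s), \infty)$ for some finite $N(s)$; I need a uniform bound on $N(s)$. To this end, fix $\epsilon'$ so small that $C_0\epsilon' < \epsilon/4$ and use the comparison metric $\widetilde H_{\epsilon'}$ on $\E|_{B^*}$. A direct polar calculation, exploiting $|s|_{\widetilde H_{\epsilon'}}^2(z) = |z|^{2\mu}\pi^*|\underline s|_{\underline H_{\epsilon'}}^2$, gives $\|s\|_{\widetilde H_{\epsilon'}, j}^2 = c\|\underline s\|_{L^2(\underline H_{\epsilon'})}^2 \cdot 2^{-2j\mu}$ exactly. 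The pointwise metric comparison from (\ref{cor2-26}) and Lemma \ref{lem2-30}, namely $|z|^{C_0\epsilon' + \delta} \leq |s|_H^2/|s|_{\widetilde H_{\epsilon'}}^2 \leq |z|^{-(C_0\epsilon'+\delta)}$ near $z=0$, then integrates to uniform two-sided bounds
\[
c(\epsilon')\|\underline s\|\, 2^{-j(\mu + \epsilon/4)} \;\leq\; \|s\|_j \;\leq\; C(\epsilon')\|\underline s\|\, 2^{-j(\mu - \epsilon/4)}, \qquad j \geq j_0(\epsilon').
\]
On the other hand, the defining property of $N(s)$ forces $\rho_j(s) \leq 2^{-\mu - \epsilon/2}$ on $[j_0(\epsilon'), N(s))$, hence $\|s\|_{N(s)} \leq \|s\|_{j_0(\epsilon')} \cdot 2^{-(N(s) - j_0(\epsilon'))(\mu + \epsilon/2)}$. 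Plugging in the two-sided $\|s\|_j$ estimates and using that $\|s\|_{j_0(\epsilon')}/\|\underline s\|$ is bounded on the unit sphere of the finite-dimensional space $HG$ (equivalence of norms), the resulting inequality is linear in $N(s)$ with strictly positive coefficient (thanks to $\epsilon/4 < \epsilon/2$), yielding $N(s) \leq N^*(\epsilon)$ independently of $s$. For $j \geq N^*(\epsilon)+1$ we then have $\rho_{j-1}(s) > 2^{-\mu-\epsilon/2}$, i.e., $\|s\|_{j-1}/\|s\|_j < 2^{\mu + \epsilon/2} \leq 2^{\mu+\epsilon}$.

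The main obstacle is the upper bound, where neither ingredient alone suffices: the three-circle monotonicity controls the shape of the ratio sequence $(\rho_j(s))$ but not its absolute values, while the metric comparison alone yields $\|s\|_j \approx 2^{-j\mu}$ only up to multiplicative errors in the exponent that accumulate as $j \to \infty$. The combination succeeds because the lower bound from metric comparison precludes a long initial segment with $\rho_j$ below threshold, and the (strict) three-circle monotonicity then propagates the recovered behavior to all larger scales; the uniformity in $s$ comes ultimately from the finite-dimensionality of $HG$.
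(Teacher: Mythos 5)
Your proof is correct, and while the easy half coincides with the paper's, the hard half takes a genuinely different route. For the lower bound you argue as the paper does: a violation at one scale propagates by Proposition \ref{prop3-6} and forces $d(s)\leq \mu-\epsilon/2$, contradicting Lemma \ref{lem3-11}, with uniformity in $s$ automatic because the threshold $j_0(\lambda)$ in Proposition \ref{prop3-6} is independent of $s$. For the upper bound the paper is soft: it fixes a norm on the finite-dimensional space $HG$, uses $d(s_0)=\mu$ to find a scale $j(\epsilon,s_0)$ at which the inequality holds for $s_0$, passes to a neighborhood of $s_0$ in the unit sphere by continuity of $s\mapsto \|s\|_j$ at that one fixed scale, propagates to all larger scales by Proposition \ref{prop3-6}, and finishes by compactness of the sphere. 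You instead bound the transition scale $N(s)$ quantitatively, by playing the telescoped decay $\|s\|_{N(s)}\leq \|s\|_{j_1}2^{-(N(s)-j_1)(\mu+\epsilon/2)}$ against the uniform two-sided estimate $\|s\|_j\asymp \|\underline s\|\,2^{-j\mu}$ up to factors $2^{\pm j\epsilon/4}$, obtained by comparing $H$ with the cone metric $|z|^{2\mu}\pi^*\underline H_{\epsilon'}$ via (\ref{cor2-26}) and Lemma \ref{lem2-30}. Both arguments are valid; yours produces an explicit $j(\epsilon)$ and, contrary to your closing remark, does not really rest on finite-dimensionality of $HG$ — the uniformity comes from the metric comparison, which holds for every $s=\pi^*\underline s$, so the appeal to equivalence of norms at the initial scale is redundant given the two-sided bound there. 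Two further points: your strengthening of Proposition \ref{prop3-6} to a strict conclusion is legitimate (the paper's contradiction argument only uses the non-strict inequalities on the outer and inner balls) and is in fact what makes the sets $A_\lambda(s)$ cleanly upward closed, a point the paper glosses over; and the telescoping interval should start at $\max\{j_0(\lambda_2), j_1(\epsilon')\}$, where $j_1(\epsilon')$ is the scale below which the pointwise metric comparison is in force, rather than at a single unnamed threshold.
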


\begin{proof}
To see the first inequality, we assume $\epsilon>0$ is sufficiently small so that $\mu-\epsilon\notin (rank (\E)!)^{-1}\Z$. Let $j(\epsilon)$ be the number $j_0(\mu-\epsilon)$ given in Proposition \ref{prop3-6}, i.e. by letting $\lambda=\mu-\epsilon$ in Proposition \ref{prop3-6}. If for some $j\geq j(\epsilon)$ and  some $s\in HG$, the first inequality does not hold, i.e.,  
$$2^{\mu-\epsilon}\|s\|_j>\|s\|_{j-1}. $$
Then it follows from Proposition \ref{prop3-6} that the same inequality 
$$2^{\mu-\epsilon}\|s\|_k\geq \|s\|_{k-1} $$
holds for all $k\geq j$. In particular, this implies $d(s)\leq \mu-\epsilon$. Contradiction.

For the second inequality, fix any norm on $HG$, and let $S^N$ be the unit sphere in it, where $N+1=\dim HG$. Since the inequality is invariant under multiplying $s$ by constant and $S^N$ is compact, it suffices to show that given any $s_0\in S^N$, there is an open neighborhood $U$ of $s_0$ in $S^N$, such that (\ref{equ3-1}) holds for all $s\in U$.  Notice since $d(s_0)=\mu$,  by Corollary \ref{cor3-7}, we can find $j(\epsilon, s_0)$ such that for all $j\geq j(\epsilon, s_0)$ the following holds
$$ \|s_0\|_{j-1}\leq 2^{\mu+\epsilon/2}\|s_0\|_j. $$
We may choose $j(\epsilon, s_0)>j_0=j_0(\lambda)$ where $\lambda=\mu+\frac{\epsilon}{2}$ and $j_0$ is given as in Proposition \ref{prop3-6} and we may assume $\mu+\epsilon/2\notin (rank(\E)!)^{-1}\mathbb{Z}$ for $\epsilon$ sufficiently small. 
Now by continuity we can find an open neighborhood $U$ of $s_0$ such that for all $s\in U$, we have 
$$ \|s\|_{j(\epsilon, s_0)-1}\leq 2^{\mu+\epsilon}\|s\|_{j(\epsilon, s_0)}. $$
So by Proposition \ref{prop3-6} again we see for all $s\in U$ and $j\geq j(\epsilon, s_0)$
$$\|s\|_{j-1}\leq 2^{\mu+\epsilon}\|s\|_{j}. $$
Now we can choose $j(\epsilon)$ by compactness of $S^N$. 
\end{proof}

\begin{cor}\label{cor3.14}
For all $s\in HG\setminus \C\langle \sigma\rangle$,  $d_\sigma(s)=\mu$. 
\end{cor}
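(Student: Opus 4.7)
The plan is to prove the two bounds $d_\sigma(s) \geq \mu$ and $d_\sigma(s) \leq \mu$ separately. The first is already noted in the text preceding the Corollary: since $\|p^j s\|_j \leq \|s\|_j$, Proposition \ref{prop3-14} gives $d_\sigma(s) \geq d(s)$, and $d(s) = \mu$ by Lemma \ref{lem3-11}. The substance of the argument is the upper bound $d_\sigma(s) \leq \mu$, which I would prove by contradiction.

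Assume $d_\sigma(s) = \mu + \delta$ for some $\delta > 0$, and fix $\epsilon \in (0,\delta/2)$ small enough that Proposition \ref{prop3-11} applies with this $\epsilon$ and threshold $j(\epsilon)$. The key observation is that for every $j$ the projected section $t_j := p^j s$ still lies in $HG$: writing $p^j s = s - \alpha_j \sigma$ for the appropriate scalar $\alpha_j = \langle s, \sigma\rangle_j/\|\sigma\|_j^2$, and recalling that $s = \pi^*\underline s$ and $\sigma = \pi^*\underline\sigma$ are pulled back from $\C\P^{n-1}$, one has $t_j = \pi^*(\underline s - \alpha_j \underline\sigma) \in HG$. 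Thus Proposition \ref{prop3-11} applies to $t_j$ itself; iterating the bound $\|t_j\|_{j'-1} \leq 2^{\mu+\epsilon}\|t_j\|_{j'}$ from $j' = j$ down to $j' = j(\epsilon)+1$ yields
$$\|t_j\|_{j(\epsilon)} \leq 2^{(\mu+\epsilon)(j - j(\epsilon))}\,\|p^j s\|_j.$$
By the definition of $d_\sigma$ in Proposition \ref{prop3-14}, the assumption $d_\sigma(s) = \mu + \delta$ forces $\|p^j s\|_j \leq 2^{-(\mu+\delta/2)j}$ for all $j$ sufficiently large, and combined with $\epsilon < \delta/2$ this gives $\|t_j\|_{j(\epsilon)} \to 0$ as $j \to \infty$.

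The contradiction comes from a positive lower bound on $\|t_j\|_{j(\epsilon)}$ that is independent of $j$. Because $s \notin \C\langle\sigma\rangle$, for every scalar $\alpha \in \C$ the section $s - \alpha\sigma$ is a nonzero holomorphic section of $\E$ and therefore has strictly positive $L^2$ norm on the open set $B_{2^{-j(\epsilon)}}$. Since the infimum over $\alpha$ is attained at the orthogonal projection,
$$\|t_j\|_{j(\epsilon)} \;\geq\; \inf_{\alpha \in \C} \|s - \alpha\sigma\|_{j(\epsilon)} \;=\; \|p^{j(\epsilon)} s\|_{j(\epsilon)} \;>\; 0.$$
This uniform positive lower bound contradicts $\|t_j\|_{j(\epsilon)} \to 0$, completing the argument.

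The critical feature, which I expect to be the main subtlety, is the \emph{uniformity} of the threshold $j(\epsilon)$ in Proposition \ref{prop3-11} over all of $HG$: it depends on $\epsilon$ but not on the particular section. This is precisely what allows the estimate to be applied to the moving family $\{t_j\}_j$. Without such uniformity one would need a possibly much larger threshold for each $t_j$, which would destroy the iteration and invalidate the conclusion $\|t_j\|_{j(\epsilon)} \to 0$; it is exactly to enable this reuse that Proposition \ref{prop3-11} is stated uniformly over $HG$.
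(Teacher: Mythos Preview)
Your proof is correct and takes a genuinely different, more elementary route than the paper's. The key observation you exploit---that $p^j s = s - \alpha_j\sigma$ remains in $HG$---lets you apply Proposition \ref{prop3-11} directly to the moving family $\{t_j\}$, iterate down to the fixed scale $j(\epsilon)$, and finish with a single positivity bound $\|t_j\|_{j(\epsilon)}\ge\|p^{j(\epsilon)}s\|_{j(\epsilon)}>0$; this bypasses Proposition \ref{prop3-12} entirely. The paper instead proves an approximate-orthogonality claim (if $\sigma,\tau\in HG$ are $L^2$-orthonormal at scale $j-1$ then $|\langle\sigma,\tau\rangle_j|\le C\epsilon$), deduces the one-step lower bound $\|p^j s\|_j\ge(1-C\epsilon)2^{-\mu-\epsilon}\|p^{j-1}s\|_{j-1}$, and then invokes Proposition \ref{prop3-12} to propagate it forward. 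Your argument is shorter and avoids both the inner-product estimate and the projected three-circle lemma; the paper's approach yields as a byproduct a quantitative scale-to-scale comparison of the projections, which feeds more naturally into the inductive construction of the basis $\mathcal B$ that follows. Both rest equally on the uniformity of $j(\epsilon)$ over $HG$, which you correctly identify as the crux. One minor remark: your hypothesis ``$d_\sigma(s)=\mu+\delta$ for some $\delta>0$'' should be read to include the case $d_\sigma(s)=+\infty$ allowed by Proposition \ref{prop3-14}; there the bound $\|p^js\|_j\le 2^{-(\mu+\delta/2)j}$ holds for every $\delta>0$ and the argument goes through unchanged.
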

\begin{proof}
 First, by Proposition \ref{prop3-11} for any $\epsilon>0$ sufficiently small we can choose $j$ large so that for any $s\in HG$, we have 
\begin{equation} \label{eqn3-12}
2^{\mu-\epsilon}\|s\|_j \leq \|s\|_{j-1}\leq 2^{\mu+\epsilon}\|s\|_j. 
\end{equation}
 \textbf{Claim:} There is a constant $C>0$ independent of $\epsilon$ and $j$ such that for all $s\notin\C\langle\sigma\rangle$, 
$$ \|p^j s\|_j\geq (1-C\epsilon) \|p^{j-1}s\|_j\geq (1-C\epsilon) 2^{-\mu-\epsilon}\|p^{j-1}s\|_{j-1}. $$

\

 Given this \textbf{Claim}, we can choose $\epsilon>0$ sufficiently small so that $(1-C\epsilon)2^{-\mu-\epsilon}=2^{-\mu-\epsilon'}$ with $(rank(\E)!)^{-1} \mathbb{Z}\cap (\mu, \mu+\epsilon']=\emptyset$. Then we choose $j$ bigger than $j(\mu+\epsilon', s_1)$, and apply Proposition \ref{prop3-12} and Proposition \ref{prop3-14} to get $d_\sigma(s)\leq \mu+\epsilon'$. As mentioned above,  $d_\sigma(s)\geq \mu$, which then implies $d_\sigma(s)=\mu$ since $\epsilon$ can be made arbitrarily small and so is $\epsilon'$.
 
\

Now we prove the \textbf{Claim}. It suffices to show that for $\sigma, \tau\in HG$ with  $\|\sigma\|_{j-1}=\|\tau\|_{j-1}=1$ with $\langle \sigma, \tau\rangle_{j-1}=0$, we have 
$|\langle \sigma, \tau\rangle_{j}|\leq C\epsilon$ for a constant $C>0$ independent of $j$ and $\epsilon$. This can be easily seen using  (\ref{eqn3-12}) and the elementary fact that
$$Re \langle \sigma, \tau\rangle_j= \frac{1}{2}(\|\sigma+\tau\|_{j}^2-\|\sigma\|_j^2-\|\tau\|_j^2). $$
\end{proof}

Arguing by induction it is straightforward to obtain a basis $\mathcal B$ of $HG$, which can be written as the union 
$$\mathcal B=\bigcup_{r=1}^q \mathcal B_r, $$
with $$\mathcal B_r=\{\sigma_{r, 1}, \cdots, \sigma_{r, s_r}\}\subset HG_r, $$
such that the following hold

\begin{itemize}
\item
$\mathcal B_r$ descends to a basis of $HG_r/HG_{r-1}$;
\item
For any fixed $(r, t)$, let $p^j\sigma_{r, t}$ be the $L^2$ projection on $B_{2^{-j}}$ to the orthogonal complement of the linear span $\C\langle \bigcup_{(q, s)<(r, t)}\sigma_{q, s}\rangle$, where $(q, s)<(r, t)$ means either $q<r$, or $q=r$ and $s<t$. If $(r, t)=(1, 1)$ then we do not do projectio. Then after passing to subsequences $p^j\sigma_{r, t}/\|p^j\sigma_{r, t}\|_{j}$ converges (again, under the natural pull-back map) to homogeneous holomorphic sections $\zeta_{r, t}$ of degree exactly $\mu$ on all the tangent cones, that are orthogonal to all $\zeta_{q, s}$ with $(q, s)<(r, t)$. 
\end{itemize}

Here the induction is done by adding more sections and repeating the argument for Proposition \ref{prop3-14} and Corollary \ref{cor3.14}.

\

For each fixed $(r, t)$, we denote by $\sigma^j_{r, t}$ the $L^2$ projection of $\sigma_{r, t}$ to the orthogonal complement of $HG_{r-1}$  on $B_{2^{-j}}$. In particular, $\sigma^j_{1, t}=\sigma_{1, t}$ for all $1\leq t\leq s_1$ and all $j$. 

 We denote 
$$M_{r}^j:=\sup_{1\leq t\leq s_r} \|\sigma_{r, t}^j\|_j. $$
Then from the above discussion we know for each $(r, t), 1\leq t\leq s_r$, the sequence of sections $(M_{r}^j)^{-1}\sigma_{r, t}^j$ converge (by passing to subsequences) to homogeneous holomorphic sections $\sigma_{r, t}^\infty$ on the tangent cones which, if non-zero, is of degree exactly $\mu$. Moreover,  there is at least one $t$ such that $\sigma_{r, t}^\infty\neq 0$.

For each $j$, the elements $\{\frac{1}{M_{r}}\sigma^j_{r, t}|1\leq t\leq s_r\}$ define an obvious homomorphism $\tau_{r}^j: \O^{\oplus s_r}\rightarrow \E$ over $B_{2^{-j}}$ as $\tau_r^j(a_1,\cdots, a_{s_r})=\sum_t a_t \frac{1}{M_{r}}\sigma^j_{r, t}$.  For each $j$, by global generation property, $\tau_r^j$ induces a map from $\E_r/\E_{r-1}$ to $\E/\E_{r-1}$ and we denote this by $\phi^j_r$. The goal is to show $\phi^j_r$ also converges to a homomorphism from $(\E_r/\E_{r-1})^{**}$ to  $\E_\infty$ that descends over the projective space. 
  Recall that we have denoted by $\tilde \Sigma$ the locus where the convergence to the tangent cone $\E_\infty$ is not smooth. Denote by $\tilde\Sigma'$ the union of $\tilde\Sigma$ and the locus where $\oplus^q_{r=1} \E_r/\E_{r-1}$ is not locally free. It is also a closed subset of $\C^n\setminus \{0\}$ with Hausdorff codimension at least $4$.

We shall prove the following statements by induction on $r$. Notice Theorem \ref{thm3-9} is a direct consequence of $(a)_r$ in the following while $(b)_r$ is used for the inductive argument.

\begin{itemize}
\item[$(a)_r$:] There is a simple HYM cone direct summand $\S_r$ of $\E_\infty$ with holonomy $e^{-2\pi \sqrt{-1}\mu}$ with $\S_r\cap \S_{k}=0$ for all $k<r$ (we make the convention that $\mathcal S_0=0$), such that the map $\tau_{r}^\infty$ induces a homogeneous isomorphism $\phi_{r}^\infty: \E_r/\E_{r-1}\rightarrow\S_r$.
\item[$(b)_r$:] On $B\setminus \tilde \Sigma'$, under the fixed gauge we use to identify $\E$ with $\E_\infty$,  the sub-bundle $\E_r$ converges smoothly to $\bigoplus_{k\leq r}\S_k$. In other words, under this convergence, for any fixed point $x\in B\setminus \tilde\Sigma'$,  unit vectors of $\E_r$ over $2^{-j}x$  naturally converge to unit vectors of $\bigoplus_{k\leq r}\S_k$ over $x$. 
\end{itemize}

First consider the case $r=1$.  We claim $\tau^\infty_{1}$ descends to a homomorphism $\phi_{1}^\infty$ from $\E_1$ to $\E_\infty$. To see this, we first define the corresponding vector bundle homomorphism on $B\setminus \tilde\Sigma'$. For any $x\in B\setminus \tilde \Sigma'$ and any $\xi$ in the fiber $\E_1(x)$ (as a vector space, not to be confused with the sheaf stalk), we can write 
$$\xi=\sum_s a_s \sigma_{1, s}(x)$$ for $a_s\in \C$, then we define 
$$\phi_{1}^\infty(\xi)=\sum_s a_s\sigma^\infty_{1, s}(x)$$ as a vector in the fiber $\E_\infty(x)$. To see this is well-defined, suppose 
$$\sum_s a_s\sigma_{1, s}(x)=0,$$ then it follows that for any $j$ we have 
$$\sum_s  \frac{1}{M_{1}^j} a_s\sigma^j_{1, s}(2^{-j}x)=0,$$ hence $\sum_s a_s\sigma^\infty_{1, s}(x)=0$ because by definition we have smooth convergence at $x$. It is also easy to see that $\phi_{1}^\infty$ is holomorphic. Now we can view $\phi_{1}^\infty$ as a holomorphic section of the reflexive sheaf $(\E_1)^*\otimes \E_\infty$ over $B\setminus \tilde\Sigma'$. By Hartogs's theorem (as in the proof of Corollary \ref{cor2-5}, using lemma 3 in \cite{Shiffman} ) we know that $\phi_{1}^\infty$ extends to a holomorphic section over the whole $B$, and since it is induced by the correspondence of homogeneous sections pulled back from the projective space, it naturally descends over the projective space. More precisely, write $\E_\infty$ as the direct sum of simple HYM cones of different holonomy, since $d(\sigma_{1, s})=d(\sigma^\infty_{1, s})=\mu$ by Lemma \ref{lem3-11}, Corollary \ref{cor3-7} and Lemma \ref{lem3-2}, the image of $\phi_1^\infty$ must be contained in the simple HYM cone which is a direct summand of $\E_\infty$ with holonomy $e^{-2\pi\sqrt{-1}\mu}$ which we denote by $\S_1$, then $\phi_1^\infty$ induces a homomorphism $\underline \phi_1^\infty: \underline\E_1\rightarrow\underline \S_1$, with $\mu(\underline \E_1)=\mu(\underline \S_1)=\mu$ by Lemma \ref{lem3-2}. Since $\underline \E_1$ is stable and reflexive it follows that $\underline\phi_1^\infty$ is an isomorphism onto a direct summand of $\underline\S_1$. For simplicity we still denote the later by $\S_1$. This proves $(a)_1$.

For any $x\in B\setminus \tilde\Sigma'$, we can choose $r_1$ sections $\gamma_1, \cdots, \gamma_{r_1}$ in the span of $\sigma^\infty_{1, s}$ that are orthonormal at $x$ (as vectors in the fiber of the bundle $\S_1$), where $r_1=\rk(\E_1)$. By definition each $\gamma_l$ is the limit of holomorphic sections $\gamma_{l}^j$ of $\E_1$ over $B_{2^{-j}}$ (for a subsequence of $\{j\}$). It follows that $\gamma_{l}^j (1\leq l\leq r_1)$ generate the fiber of $\E_1$ over the point $2^{-j}x$, and are approximately orthonormal. Then it is clear that the fiber $(\E_1)_{2^{-j}x}$ converges to $(\S_1)_x$.
  This proves $(b)_1$.

Now suppose we have established both $(a)_1, \cdots, (a)_{r-1}$ and $(b)_1, \cdots, (b)_{r-1}$, and we want to prove $(a)_r$ and $(b)_r$.  We write $\E_\infty=(\bigoplus_{k<r} \S_k)\bigoplus \V_r$.  We claim $\tau_{r}^\infty$ still induces a well-defined homomorphism  
$$\phi_r^\infty: \E_r/\E_{r-1} \rightarrow \E_\infty/(\bigoplus_{k<r}\S_k)\simeq \V_r.$$
 Indeed,  we can  first define a vector bundle homomorphism on $B\setminus \tilde\Sigma'$ as follows. Given a vector  $\xi$ in the fiber of $\E_r$ over $x\in B\setminus \tilde\Sigma'$, we can find a global section $u\in HG_r$ in the span of $\{\sigma_{r, t}, 1\leq t\leq s_r\}$,  with $u(x)-\xi\in (\E_{r-1})_x$. Writing 
 $$u=\sum_{1\leq t\leq s_r}a_{r, t} \sigma_{r, t}+\sum_{k< r, 1\leq t\leq s_k} a_{k, t} \sigma_{k, t}$$ for some constants $a_{r, t}, a_{k, t}\in \C$. Let $u^\infty$ be the (pointwise) projection of $\sum_{1\leq t\leq s_r}a_{r, t}\sigma^\infty_{r, t}$ to $\E_\infty/(\bigoplus_{k<r}\S_k)\simeq \V_r$, and we define $\phi_{r}^\infty(\xi)=u^\infty(x)$. Then $\phi_{r}^\infty$ is well-defined on $B\setminus \tilde\Sigma'$. Indeed, if $u(x)=0$, then for all $j$ we know $\frac{1}{M_{r}^j}\Sigma_{1\leq t\leq s_r} a_{r, t} \sigma^j_{r, t}(2^{-j}x)$ is in the fiber of $\E_{r-1}$ at $2^{-j}x$. Then by $(b)_{r-1}$ we know that  over $B\setminus \tilde\Sigma'$ the limit $\sum_{1\leq t\leq s_r} a_{r, t}\sigma^\infty_{r, t}$ must be contained in $\bigoplus_{k<r}\S_k$. It also follows that $\phi_r^\infty$ factors through $\E_r/\E_{r-1}$ which we still denote by $\phi_{r}^\infty$. As before we can view $\phi_{r}^\infty$ as a holomorphic section of the reflexive sheaf $(\E_r/\E_{r-1})^*$ over $B\setminus\tilde\Sigma'$, hence by Hartogs's theorem it extends to a global section over $B$. We  claim  $\phi_{r}^\infty$ is non-trivial. Indeed, if it were trivial, it would mean that any limit section $\sigma^\infty_{r, t} (1\leq t\leq s_r)$ is a section of $\bigoplus_{k<r}\S_k$. We know there must be some $t$ such that $\sigma^\infty_{r, t}\neq 0$, but on the other hand by our construction it must be  $L^2$ orthogonal to $\sigma^\infty_{k, s}$ for all $k<r$ and $1\leq s\leq s_k$. Now by induction assumption $(a)_1, \cdots (a)_{r-1}$ and  our hypothesis on the global sections of each $\underline \E_k/\underline\E_{k-1}$, we know for all $k<r$, the global sections of $\underline \S_{k}\cong \underline \E_k /\underline \E_{k-1}$ with $\mu(\underline \S_{k})=\mu$ are given by $\{\underline\sigma_{k, t}: 1\leq t\leq s_k\}$. If $\phi^\infty_r$ becomes trivial, then as mentioned above, for some $k$, $\underline\S_{k}$ will have one extra section perpendicular to all sections in $\{\sigma_{k, t}: 1\leq t\leq s_k\}$ which is a contradiction. This is the place where the assumption in our main theorem needs to be made, i.e. $\underline\E_k /\underline \E_{k-1}$ is reflexive so that we know all the sections of $\underline \S_k$ comes from the limits.

Now it is clear from construction that $\phi_{r}^\infty$ descends to be over the projective space and its image must be contained in a simple HYM cone which is a direct summand $\S_r$ of $\V_r$ of holonomy $e^{-2\pi\sqrt{-1}\mu}$, hence descends to a map $\underline \phi_r^\infty$ from $\underline \E_r/\underline \E_{r-1}$ to $\underline \S_r$. Since $\underline \E_r/\underline \E_{r-1}$ is a stable reflexive sheaf, $\underline\phi_{r}^\infty$ must be an isomorphism onto a direct summand of $\underline \S_r$ which we still denote by $\underline\S_r$. This establishes $(a)_r$.  For any $x\in B\setminus \tilde\Sigma'$, for each $k=1, \cdots, r-1$, we choose $r_k$ sections $\gamma_{k, l}(1\leq l\leq r_k)$ in the span of $\{\sigma_{k, t}|k\leq r, 1\leq t\leq s_k\}$ (where $r_k$ is the rank of $\E_k/\E_{k-1}$), so that they generate an orthonormal basis of the fiber $(\bigoplus_{k\leq r}\S_k)_x$. Each $\gamma_{k, l}$ is the limit of the corresponding section $\gamma_{k, l}^j$ in the span of $\{\sigma_{k, t}^j|1\leq t\leq s_k\}$, and $\{\gamma_{k, l}^j\}$  are approximately orthonormal at $x$. This easily implies $(b)_r$.

\begin{rmk} \label{rmk3-15}
We point out that here we make crucial use of the hypothesis that $\Gr(\underline \E)$ is reflexive; in general it is only torsion free and $\sigma^\infty_{k, s}$ only generates a torsion free subsheaf of $\S_k$. This is a key technical difficulty in extending the technique developed here to prove more general results. In \cite{CS2} we will use a different approach, bypassing this difficulty, to study the case without the assumption that $\Gr(\underline\E)$ being reflexive. 
\end{rmk}

\

\subsubsection{General case} \label{S3-5}

Now we assume $\underline \E$ is a general holomorphic vector bundle over $\C\P^{n-1}$ such that $\Gr^{HNS}(\underline \E)$ is reflexive.  Compared to the semistable case treated in Section \ref{semistable}, the new difficulty lies in the construction of a ``good" comparison metric, especially when the Harder-Narasimhan filtration has singularities.

Let 
$$0=\underline \E_0\subset \underline \E_1\subset\cdots \underline\E_{m}=\underline \E$$ 
be the Harder-Narasimhan filtration of $\underline \E$, with $\mu_i=\mu(\underline\E_i/\underline\E_{i-1})$ strictly decreasing in $i$, and 
$$\underline \E_{i-1}=\underline\E_{i, 0}\subset \underline \E_{i, 1}\subset \cdots \underline \E_{i, q_i}=\underline\E_{i}$$
be  a Seshadri filtration of $\underline \E_i$. 

As in Section \ref{semistable},  by tensoring $\underline \E$ with $\O(p)$ for $p$ large we may assume each $\underline\E_i$, $\underline\E_{i, q}$ is generated by its global sections, and for all $i$ and $q\geq 1, q'\geq 1$,  we have a short exact sequence of the form 
$$0\rightarrow H^0(\C\P^{n-1}, \underline \E_{i, q-1})\rightarrow H^0(\C\P^{n-1}, \underline \E_{i, q})\rightarrow H^0(\C\P^{n-1}, \underline \E_{i, q}/\underline \E_{i, q-1})\rightarrow 0. $$

For $i=1, \cdots, m$, we  define 
$$HG_i:=\{s={\iota_B}_*\pi_B^*\underline s|\underline s\in H^0(\C\P^{n-1}, \underline \E_i)\}, $$
then we have
$$0=HG_0\subset HG_1\subset\cdots \subset HG_m. $$
The key property is

\begin{prop}\label{onesidedbound}
For any $s\in HG_i\setminus HG_{i-1}$, $d(s)\leq \mu_i$.
\end{prop}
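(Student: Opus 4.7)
The strategy is to construct a comparison Hermitian metric $H'$ on $\E|_{B^*}$ which is approximately Hermitian-Einstein in an integrated sense, and for which the lower bound $|s|^2_{H'}\gtrsim |z|^{2\mu_i}$ is manifest from the Harder-Narasimhan structure. Once this is done, Lemma \ref{lem2-30} will transfer the lower bound to the true Hermitian-Einstein metric $H$, yielding $d(s)\leq \mu_i+O(\epsilon)$ for every $\epsilon>0$, and letting $\epsilon\to 0$ gives the claim. In the semistable case treated in Lemma \ref{lem3-11} a single slope governed everything and the uniform weight $|z|^{2\mu}\pi^*\underline H_\epsilon$ sufficed; now each Harder-Narasimhan piece $Q_j=\underline\E_j/\underline\E_{j-1}$ must be weighted by its own factor $|z|^{2\mu_j}$.

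Concretely, given $\epsilon>0$ I would apply Proposition \ref{prop2-11} to $\underline\E$ to produce an approximately critical metric $\underline H_\epsilon$ with associated smooth $L^2$-orthogonal splitting $\underline\E=\bigoplus_j Q_j$ and projections $\underline p_j$ on $\C\P^{n-1}\setminus\S(\underline\E)$. On $B^*\setminus\pi^{-1}(\S(\underline\E))$ set
\begin{equation*}
H'_0=\sum_j |z|^{2\mu_j}\,\pi^*\bigl(\underline H_\epsilon|_{Q_j}\bigr).
\end{equation*}
Applying Lemma \ref{complexgauge} to the complex gauge transform $g=\sum_j |z|^{\mu_j}\pi^*\underline p_j$, combined with Lemma \ref{lem2-16}, one checks that the leading $|z|^{-2}$-divergence $-|z|^{-2}\pi^*\psi^{\underline H_\epsilon}$ produced by the radial weights cancels the leading part $|z|^{-2}\pi^*\psi^{\underline H_\epsilon}$ coming from $\sqrt{-1}\Lambda_{\omega_0}\pi^*F_{(\underline H_\epsilon,\bp_S)}$. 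What remains is bounded pointwise by $|z|^{-2}$ times the pull-back of $|\sqrt{-1}\Lambda_{\omega_{FS}}F_{(\underline H_\epsilon,\bp_S)}-\psi^{\underline H_\epsilon}|+C\sup_j|\beta_j|^2+C\sup_j|\Lambda_{\omega_{FS}}\p_{\underline\E}\beta_j|$, whose $L^1(\C\P^{n-1})$-norm is $O(\epsilon)$ by items (1)--(3) of Proposition \ref{prop2-11}.

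To extend $H'_0$ across $\pi^{-1}(\S(\underline\E))$ I would invoke the cut-off construction of Lemma \ref{cut-off}, interpolating between $H'_0$ outside a $\delta$-neighborhood of $\S(\underline\E)$ and a fixed smooth background metric on $\pi^*\underline\E$ inside a smaller tube. Item (5) of Proposition \ref{prop2-11} provides smooth $\epsilon$-closeness outside the tube, while Proposition \ref{prop2-15} furnishes the higher integrability $|\bp\underline\pi_j|\in L^{2+\delta_0}$ needed to absorb the curvature introduced by the cut-off. For a suitable choice $\delta=\delta(\epsilon)$ the resulting smooth metric $H'$ on $\E|_{B^*}$ satisfies $|F_{H'}|\in L^{1+\delta_0}(B)$ and
\begin{equation*}
\sup_{r\in(0,1]}r^{1-2n}\int_{\p B_r}r^2|\Lambda_{\omega_0}F_{H'}|\leq C\epsilon.
\end{equation*}
Since $\Lambda_{\omega_0}F_H=0$, Lemma \ref{lem2-30} applied to $(H,H')$ then yields
\begin{equation*}
\limsup_{z\to 0}\frac{|\log\Tr_{H'}H|(z)}{-\log|z|}\leq C_0\epsilon.
\end{equation*}
Because $\underline s\notin H^0(\underline\E_{i-1})$, the image $\underline p_i\underline s$ is a non-zero holomorphic section of $Q_i$ on $\C\P^{n-1}\setminus\S(\underline\E)$, so $\int_{\p B_r}|s|^2_{H'_0}\geq c\,r^{2\mu_i+2n-1}$ uniformly in $r$ and the same inequality persists for $H'$ up to constants. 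Combining these two estimates gives $\int_{B_r}|s|^2_H\geq c'r^{2n+2\mu_i+C_0\epsilon}$, hence $d(s)\leq \mu_i+C_0\epsilon/2$; letting $\epsilon\to 0$ finishes the argument.

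The main obstacle I anticipate is the cut-off step: since the Harder-Narasimhan filtration need not consist of sub-bundles, the projections $\underline p_j$ are only $W^{1,2}$ in general and the near-cancellation enjoyed by $H'_0$ degrades as one approaches $\S(\underline\E)$. Balancing the thickness of the cut-off tube against the higher integrability from Proposition \ref{prop2-15} while keeping the relevant curvature integrals uniformly $O(\epsilon)$ on every scale $r\in(0,1]$ is precisely what the delicate Lemma \ref{cut-off} advertised in the introduction is designed to accomplish.
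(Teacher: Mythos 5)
Your proposal is essentially the paper's own proof of Proposition \ref{onesidedbound}: the comparison metric is obtained from the approximately critical metric of Proposition \ref{prop2-11} by the complex gauge transform $g=\sum_j f_j(\pi_j-\pi_{j-1})$ of Lemma \ref{complexgauge} with radial weights $|z|^{\mu_j}$ interpolated, via the cut-off of Lemma \ref{cut-off}, to the unweighted metric near $\pi^{-1}(\S(\underline\E))$; the cancellation of the leading $|z|^{-2}\pi^*\psi^{\underline H_\epsilon}$ term, the $L^{1+\delta}$ integrability from Proposition \ref{prop2-15}, and the transfer through Lemma \ref{lem2-30} and Lemma \ref{lem2-19} are exactly as in the paper.

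One point where your write-up overclaims: the bound $\limsup_{z\to 0}|\log\Tr_{H'}H|(z)/(-\log|z|)\leq C_0\epsilon$ cannot hold uniformly on $B^*$, because Lemma \ref{lem2-19}(2) requires a pointwise control of $|z|^2|\Lambda_{\omega_0}F_{H'}|$ on a ball around $z$, and this quantity blows up inside the cut-off tube (both from $|\nabla^2\chi|$ and from the unbounded second fundamental forms $\beta_j$ near $\S(\underline\E)$). The paper therefore only establishes the comparison $H\geq e^{-C}|z|^{\epsilon}H_\epsilon$ for $z\notin\Sigma^{10^{-3}}$ and correspondingly takes the lower bound for $\int|s|^2_{H_\epsilon}$ over $B_r^*\setminus\Sigma^{10^{-3}}$, which still yields the exponent $2\mu_i$ since $(\pi_i-\pi_{i-1})\underline s$ is not identically zero off that neighborhood; you should make the same restriction. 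Relatedly, the tube radius must shrink like $R_r=Rr^N$ with $N\gg\mu_1$ (not a fixed angular width $\delta$): with a fixed cone the contribution $r^{1-2n}\int_{\p B_r\cap\text{tube}}R_r^{-2}r^{-2\mu_1}$ to item (ii) diverges as $r\to 0$. You correctly defer this balancing to Lemma \ref{cut-off}, so the gap is one of precision rather than of strategy.
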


\begin{rmk}
This is proved via analytic means. Below (Lemma \ref{twosidedbound}) we shall prove the equality indeed holds, but we need to make crucial use of the property of algebraic stability. 
\end{rmk}

Assuming this for the moment, we first finish the proof of Theorem \ref{main}. It is a slight modification of the proof in the semistable case in Section \ref{semistable}, and below we shall only outline the overall argument and point out the places where the change is necessary. We first prove 

\begin{lem} \label{twosidedbound}
For any $s\in HG_i\setminus HG_{i-1}$, $d(s)= \mu_i$.
\end{lem}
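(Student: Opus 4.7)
Since Proposition \ref{onesidedbound} already supplies the upper bound $d(s)\le \mu_i$ for every $s\in HG_i\setminus HG_{i-1}$, the plan is to establish the matching lower bound $d(s)\ge \mu_i$ for every nonzero $s\in HG_i$. I would proceed in parallel with the semistable argument of Lemma \ref{lem3-11}: for each sufficiently small $\epsilon>0$, construct a comparison Hermitian metric $H_\epsilon$ on $\E|_{B^*}$ that is approximately Hermitian-Einstein in the senses required by Lemma \ref{lem2-30} and Lemma \ref{lem2-19}(2), and under which every $s\in HG_i$ satisfies $\int_{B_r}|s|^2_{H_\epsilon}\le C\,r^{2n+2\mu_i}$.

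To build $H_\epsilon$, I would first apply Proposition \ref{prop2-11} with $\epsilon$ small to obtain a smooth Hermitian metric $H$ on $\underline{\E}$ whose $H$-orthogonal Harder-Narasimhan splitting $\underline{\E}\simeq \bigoplus_k Q_k$ (valid outside $\S(\underline{\E})$) satisfies $\int_{\C\P^{n-1}}|\sqrt{-1}\Lambda_{\omega_{FS}}F_{(H,\bp_S)}-\psi^H|\le \epsilon$ together with the $L^2$ control on the second fundamental forms $\beta_k$ listed there. On the complement of $\pi^{-1}(\S(\underline{\E}))$ I would then set
\[
H_\epsilon \;:=\; \bigoplus_k |z|^{2\mu_k}\,\pi^*(H|_{Q_k}).
\]
The radial weights are chosen so that under $\sqrt{-1}\Lambda_{\omega_0}$ on $\C^n$, the contribution $-\p\bp\log|z|^{2\mu_k}$ from the Chern connection of the $k$th block yields $-(n-1)\mu_k|z|^{-2}$ on that block (via Lemma \ref{lem2-16}), exactly cancelling the $k$th block of $|z|^{-2}\pi^*\psi^H$ produced by $\sqrt{-1}\Lambda_{\omega_0}\pi^*F_{(H,\bp_S)}$. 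In a neighbourhood of $\pi^{-1}(\S(\underline{\E}))$, I would interpolate $H_\epsilon$ smoothly back to $\pi^*H$ using the cut-off construction (Lemma \ref{cut-off}). Invoking Lemma \ref{complexgauge}, the $L^2$ smallness of the $\beta_k$, and Proposition \ref{prop2-15} in the cut-off region, this should yield $|F_{H_\epsilon}|\in L^{1+\delta}(B)$ together with
\[
\sup_{r\in(0,1]}\,r^{1-2n}\int_{\p B_r} r^2|\Lambda_{\omega_0}F_{H_\epsilon}|\;\le\; C\epsilon,
\]
with $C$ independent of $\epsilon$.

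For $s=\pi^*\underline{s}\in HG_i$, decomposing $\underline{s}=\sum_{k\le i}\underline{s}_k$ in the $H$-orthogonal block splitting gives $|s|^2_{H_\epsilon} = \sum_{k\le i}|z|^{2\mu_k}|\pi^*\underline{s}_k|^2_H$. Since $\mu_k\ge \mu_i$ for $k\le i$, this yields $|s|^2_{H_\epsilon}\le C|z|^{2\mu_i}$ outside the cut-off region; inside, reflexivity of $\Gr^{HNS}(\underline{\E})$ (so that $\S(\underline{\E})$ has complex codimension $\ge 3$) together with a sufficiently concentrated cut-off preserves the bound. Hence $\int_{B_r}|s|^2_{H_\epsilon}\le C\,r^{2n+2\mu_i}$. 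Combining this with Lemma \ref{lem2-30} and the estimate \eqref{cor2-26} applied to the pair $(H,H_\epsilon)$ gives $\log\Tr_H(H_\epsilon)\le C_0\epsilon(-\log|z|)+O(1)$, and therefore $\int_{B_r}|s|^2_H\le C\,r^{2n+2\mu_i-C_0\epsilon}$. This proves $d(s)\ge \mu_i-C_0\epsilon$, and letting $\epsilon\to 0$ completes the proof.

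The main obstacle will be the cut-off step in the construction of $H_\epsilon$: one must interpolate between the block-weighted metric and $\pi^*H$ across $\pi^{-1}(\S(\underline{\E}))$ in a way that simultaneously preserves the $L^{1+\delta}$ bound on $|F_{H_\epsilon}|$ (required by Lemma \ref{lem2-30}) and the radial-averaged $L^1$ bound on $|\Lambda_{\omega_0}F_{H_\epsilon}|$ (required by Lemma \ref{lem2-19}(2)). The codimension-three singular structure of $\Gr^{HNS}(\underline{\E})$ together with Proposition \ref{prop2-15} is what should make such a cut-off possible; this delicate patching is the content of the cut-off lemma flagged in the introduction.
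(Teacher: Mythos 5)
Your reduction to the lower bound $d(s)\ge\mu_i$ is the right first step (the upper bound is exactly Proposition \ref{onesidedbound}), but the analytic route you propose for the lower bound has a genuine gap at the transfer step, and it is precisely the gap the paper's Remark after Proposition \ref{onesidedbound} is warning about. To convert your upper bound $\int_{B_r}|s|^2_{H_\epsilon}\le C r^{2n+2\mu_i}$ into an upper bound on $\int_{B_r}|s|^2_{H}$ you need the pointwise comparison $\Tr_{H_\epsilon}H\le C|z|^{-C_0\epsilon}$ to hold on essentially \emph{all} of $B_r^*$. But the comparison metric is necessarily cut off near $\Sigma=\pi^{-1}(\underline\Sigma)$, where the weights $|z|^{2\mu_k}$ and the projections $\pi_k$ degenerate, and there $|z|^2|\Lambda_{\omega_0}F_{H_\epsilon}|$ is large (of order $R_r^{-2}r^{-2\mu_1}$), not $\le\epsilon$. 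Lemma \ref{lem2-19}(2) controls $\log\Tr_{H_\epsilon}H(z)$ by $-\epsilon\log|z|$ only when the \emph{local supremum} $\sup_{|w-z|\le|z|/A}|f(w)|$ is small, i.e.\ only for $z$ outside a fixed conical neighborhood $\Sigma^{10^{-3}}$; the sphere-averaged $L^1$ bound alone does not control the near-diagonal part of the Green's function integral. Moreover the region where the pointwise curvature bound fails cannot be shrunk to the thin set $\Sigma'$, because item (5) of Proposition \ref{prop2-11} (Sibley--Wentworth) gives pointwise smallness only outside a fixed $\delta$-neighborhood of $\S(\F)$. Since $\Sigma^{10^{-3}}\cap S^{2n-1}(r)$ has measure comparable to the whole sphere, the uncontrolled contribution $\int_{B_r\cap\Sigma^{10^{-3}}}|s|^2_H$ cannot be absorbed, and a priori $|s|_H$ could concentrate there, making $d(s)<\mu_i$. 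Note the asymmetry: for the upper bound $d(s)\le\mu_i$ one needs only a \emph{lower} bound on $\int_{B_r}|s|^2_H$, so restricting the integration to $B_r^*\setminus\Sigma^{10^{-3}}$ is harmless — this is why the one-sided analytic argument works and the two-sided one does not.

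The paper closes the other direction algebraically rather than analytically. One chooses a basis $\{\sigma_s\}$ of $HG_i$, normalizes by $M_j=\sup_s\|\sigma_s\|_j$, and passes to a tangent cone to obtain limit homogeneous sections, not all zero, of a common degree $\nu$ with $d(\sigma)\ge\nu$ for every $\sigma\in HG_i$. These assemble (as in Section \ref{semistable}) into a nontrivial homogeneous homomorphism $\phi^\infty:\E_i\to\E_\infty$ landing in a simple HYM cone summand $\S$ of holonomy $e^{-2\pi\sqrt{-1}\nu}$, hence a nontrivial map $\underline\E_{k,q}/\underline\E_{k,q-1}\to\underline\S$ for some $k\le i$. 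Since $(\underline\E_{k,q}/\underline\E_{k,q-1})^*\otimes\underline\S$ is polystable of slope $\nu-\mu_k$, Proposition \ref{prop2-4} forces $\nu\ge\mu_k\ge\mu_i$. If you want to keep a purely metric-comparison proof, you would have to restrict to the case where the Harder--Narasimhan filtration is given by sub-bundles, where no cut-off is needed and your argument does go through.
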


\begin{proof}[Proof of Lemma \ref{twosidedbound}]
By Proposition \ref{onesidedbound},  we know for any $s\in HG_i\setminus HG_{i-1}$, $d(s)\leq \mu_i$. In order to prove the equality holds, it suffices to show for all $i$ and $s\in HG_i$, $d(s)\geq \mu_i$. Choose a basis $\{\sigma_s\}$ of $HG_i$, and let  $M_j=\sup_s\{\|\sigma_s\|_j\}$. Consider the sequence of sections $\{\frac{1}{M_j}\sigma_s\}$ on $B_{2^{-j}}$. Passing to a subsequence, we may assume for each $s$ as $j\rightarrow\infty$ these converge to limit sections $\sigma^\infty_s$ on some tangent cone. By definition there is at least one $s$ with $\sigma_s^\infty\neq 0$, and the degree of all the non-zero $\sigma_s^\infty$'s must be all equal to the same number, say $\nu$. It is also clear that $d(\sigma_s)\geq \nu$ for all $s$, and we only need to show $\nu\geq \mu_i$. 

As in the discussion in the semistable case, we obtain a non-trivial homomorphism $\phi^\infty$ from $\E_i$ to $\E_\infty$. Moreover, this is  homogeneous and the image is in a simple HYM cone  $\S$ of holonomy $e^{-2\pi \sqrt{-1}\nu}$, which is a direct summand of $\E_\infty$. So it descends to a non-trivial homomorphism $\underline\phi^\infty: \underline \E_i\rightarrow \underline \S$ with $\mu(\underline \S)=\nu$. Hence it induces a non-trivial homomorphism from $\underline\E_{k, q}/\underline \E_{k, q-1}$ to $\underline \S$, for some $k\leq i$ and $1\leq q\leq q_k$. Since $(\underline \E_{k, q}/\underline \E_{k,q-1})^*\otimes \underline\S$ is polystable by Proposition \ref{prop2-4} we conclude $\nu\geq \mu_i$.
\end{proof}

Fix a vector space decomposition
\begin{equation*}
HG_m=\bigoplus_{i=1}^{m} V_i
\end{equation*}
where each $V_i$ is chosen to be complementary to $HG_{i-1}$ in $HG_i$ with repect to any fixed Hermitian metric. We now build an isomorphism between the graded sheaf $\bigoplus_{i=1}^m \bigoplus_{q=1}^{q_i} \E_{i, q}/\E_{i, q-1}$ and $\E_\infty$ by working inductively on both $i$ and $q$. 

When $i=1$ we can use exactly the same arguments as in the proof of semistable case,  in view of Lemma \ref{twosidedbound}. For the case $i>1$ we need
to replace Proposition \ref{prop3-11} with the following
\begin{prop}
For $\epsilon>0$ sufficiently small, we can find $j(\epsilon)$ so that for any $j\geq j(\epsilon)$ and for all $i$ and for any $s\in V_i$, we have 
\begin{equation*}
2^{\mu_i-\epsilon}\|s\|_j\leq \|s\|_{j-1} \leq 2^{\mu_i+\epsilon}\|s\|_j.
\end{equation*}
\end{prop}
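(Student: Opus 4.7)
The plan is to imitate the proof of Proposition \ref{prop3-11}, carrying out the argument on each subspace $V_i$ separately and then combining over the finite index set $\{1,\ldots,m\}$. The two essential inputs are Lemma \ref{twosidedbound} (which identifies $d(s)=\mu_i$ for every non-zero $s\in V_i$, in place of the single value $\mu$ that Lemma \ref{lem3-11} gave in the semistable case) and the three-circle Proposition \ref{prop3-6}. I would first fix $\epsilon>0$ small enough that $\mu_i\pm\epsilon\notin(\rk(\E)!)^{-1}\Z$ for every $i$, and set $j_0:=\max_i j_0(\mu_i-\epsilon)$ with $j_0(\cdot)$ as in Proposition \ref{prop3-6}; this will be the common threshold.

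For the lower bound, suppose some non-zero $s\in V_i$ violated $2^{\mu_i-\epsilon}\|s\|_j\leq \|s\|_{j-1}$ at some $j\geq j_0+1$, i.e.\ $\|s\|_j/\|s\|_{j-1}>2^{-(\mu_i-\epsilon)}$. Iterated forward application of Proposition \ref{prop3-6} with $\lambda=\mu_i-\epsilon$ then yields $\|s\|_{j+k}/\|s\|_{j+k-1}\geq 2^{-(\mu_i-\epsilon)}$ for every $k\geq 0$; telescoping and Corollary \ref{cor3-7} force $d(s)\leq \mu_i-\epsilon<\mu_i$, contradicting Lemma \ref{twosidedbound}. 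Hence the lower bound holds uniformly in both $s$ and $i$ for $j\geq j_0+1$, with no compactness needed.

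For the upper bound I would fix $i$, choose any norm on the finite-dimensional space $V_i$, and work on the compact unit sphere $S_i\subset V_i$. For each $s_0\in S_i$, the forward-persistence supplied by Proposition \ref{prop3-6} combined with $d(s_0)=\mu_i$ (Lemma \ref{twosidedbound}) yields the stronger statement that the ratios $r_k(s_0):=\|s_0\|_{k+1}/\|s_0\|_k$ converge to $2^{-\mu_i}$ as $k\to\infty$; in particular there exists $j(\epsilon,s_0)\geq j_0(\mu_i+\epsilon)$ with $\|s_0\|_{j(\epsilon,s_0)-1}\leq 2^{\mu_i+\epsilon/2}\|s_0\|_{j(\epsilon,s_0)}$. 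Since $\|s\|_j$ depends continuously on $s$ for each fixed $j$ (being the square root of a quadratic form on $V_i$), the same inequality with $\epsilon$ in place of $\epsilon/2$ extends to an open neighborhood $U(s_0)\subset S_i$; applying Proposition \ref{prop3-6} with $\lambda=\mu_i+\epsilon$ propagates it forward to $\|s\|_{k-1}\leq 2^{\mu_i+\epsilon}\|s\|_k$ for all $s\in U(s_0)$ and all $k\geq j(\epsilon,s_0)$. Extracting a finite subcover of $S_i$ gives a uniform threshold $j(\epsilon,i)$, and $j(\epsilon):=\max_i j(\epsilon,i)$ completes the argument.

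The substantive difficulty, already present in the semistable case, is converting the asymptotic identity $d(s)=\mu_i$ into pointwise two-sided ratio bounds uniform on $V_i$. The lower bound drops out cleanly from the forward persistence of Proposition \ref{prop3-6}, because once the ratio exceeds the threshold $2^{-(\mu_i-\epsilon)}$ it must stay there forever, immediately contradicting $d(s)=\mu_i$. The upper bound has no such one-step derivation from the asymptotic information alone and genuinely requires compactness of each $S_i$; finiteness of the Harder--Narasimhan filtration is what allows these finitely many compactness steps to be amalgamated into a single threshold.
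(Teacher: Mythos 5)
Your proposal is correct and follows essentially the same route as the paper, which simply repeats the proof of Proposition \ref{prop3-11} on each $V_i$ using Lemma \ref{twosidedbound} in place of Lemma \ref{lem3-11}: the lower bound by forward persistence from Proposition \ref{prop3-6} contradicting $d(s)=\mu_i$, and the upper bound by a compactness/continuity argument on the unit sphere of $V_i$, with the finitely many thresholds amalgamated at the end.
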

The proof is exactly the same as that of Proposition \ref{prop3-11}: the key point is that by Lemma \ref{twosidedbound} we know $d(s)=\mu_i$ for all nonzero $s\in V_i$. Given this, we can do $L^2$ projections between any chosen basis of holomorphic sections of the same degree $\mu_i$ so that they will converge to a $L^2$ orthonormal set of homogeneous sections of the tangent cone with degree $\mu_i$. In this way, we can repeat the argument in the semistable case to finish the proof for the main theorem in general. 

Indeed,the case $i>1$ is proved in almost the same way as in the semistable case. Here we only point out the places that we need to be careful about when we build an isomorphism between $\underline\E_{i,q}/\underline\E_{i,q-1}$ and a direct summand of $\E_\infty$ inductively on $(i,q)$. Suppose the isomorphisms between $\underline\E_{s,t}/\underline \E_{s,t-1}$ and $\underline \S_{s,t}$ have been built for $(s,t)<(i,q)$, i.e. for $(s,t)$ satisfying either $s\leq i, t<q$ or $s<i$.  

For the factor $\underline\E_{i,q}/\underline\E_{i,q-1}$, we can perform orthogonal projection within $V_i\cong HG_i/HG_{i-1}$, and obtain corresponding limit map from $\E_{i, q}/\E_{i, q-1}$ onto a simply HYM cone direct summand $\S_{i, q}$ of $\E_\infty$. If there is no $l<k$ such that $\mu_l\equiv \mu_k (\mod \   \Z)$ then we know $\S_{k, q}$ must have different holonomy from $\S_{l, u}$ for all $l<k$. In particular, the image induced by the homogeneous sections will automatically avoid $\oplus_{(s,t)<(i,q)}\underline \S_{s,t}$. In this case we can follow exactly the same arguments as in the semistable case. If there are $l<k$ with $\mu_l\equiv \mu_i (mod \ \Z)$ then we need to enlarge $V_i$ by also including those sections of the form $\iota_*\pi^*s$ where $s\in H^0(\C\P^{n-1},  \underline \E_l\otimes \O(p))$ with $d(s)=\mu_k$, $l<k$ and some $p\in \Z$. Then we can perform orthogonal projection here, and the arguments go through as before. The reason that we need this extra consideration is due to the fact that in this case $\S_{k, q}$ and $\S_{l, u}$ have the same holonomy so can not be automatically separated. This finishes the proof of Theorem \ref{main}.

\

The rest of this subsection is devoted to the proof of Propsition \ref{onesidedbound}. When the Harder-Narasimhan filtration is given by sub-bundles, there is a direct construction. Since in general the filtration may have singularities, more delicate arguments are required. We emphasize here the construction holds for any holomorphic vector bundle $\underline \E$ and we do not need $Gr^{HNS}(\underline \E)$ to be reflexive. 

Let $\underline \Sigma\subset\C\P^{n-1}$ be the subset where the Harder-Narasimhan filtration is not given by sub-bundles. It is of complex codimension at least $2$. We will use the following notations 
\begin{itemize}
\item $\Sigma=\pi^{-1}(\underline \Sigma)$;
\item $\Sigma_{r}= \Sigma \cap S(r)$ where $S(r)=\{x\in B^*: |x|=r\}$;
\item $\underline\Sigma^s=\{x\in \C\P^{n-1}|d(x, \underline \Sigma)\leq s\}$, where the distance is measured by the fixed Fubini-Study metric on $\C\P^{n-1}$; 
\item $\Sigma^s=\pi^{-1}(\underline\Sigma^s)$;
\item $\Sigma_r^s=\{x\in S^{2n-1}(r)|d(x, \Sigma_r)<s\}$, where the distance is measured with respect to the round metric on $S^{2n-1}(r)$.
\end{itemize}

\begin{prop}\label{constructedmetric}
For any $0<\epsilon<<1$, there exists a smooth Hermitian metric $H_{\epsilon}$ on $\E|_{B^*}$ satisfying the following
\begin{enumerate}[(i).]
\item $|F_{(H_{\epsilon}, \bp_\E)}|\in L^{1+\delta}(B^*)$ for some $\delta>0$;
\item $\limsup_{r\rightarrow 0} r^{1-2n}\int_{S^{2n-1}(r)}r^2|\Lambda_{\omega}F_{(H_\epsilon,\bp_\E)}| \leq\epsilon$;
\item $|z|^2|\Lambda_{\omega}F_{(H_{\epsilon},\bp_\E)}(z)|\leq \epsilon$ for $z\notin \Sigma^{10^{-4}}$;
\item For any $s\in HG_i\setminus HG_{i-1}$, 
$$
\lim_{r\rightarrow 0} \frac{1}{2}\frac{\log \int_{B^*_r\setminus \Sigma^{10^{-3}}}|z|^\epsilon |s|_{H_\epsilon}^2}{\log r}-n=\mu_i+\frac{\epsilon}{2}.
$$
\end{enumerate}
\end{prop}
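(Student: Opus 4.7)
The plan is to construct $H_\epsilon$ as a regularization of a cone ansatz $|z|^{2A}\pi^*\underline H$, where $\underline H$ is an approximately block-Einstein metric on $\underline\E$ adapted to the Harder--Narasimhan filtration, and $A=\sum_{i=1}^m\mu_i(\pi_i-\pi_{i-1})$ is the self-adjoint endomorphism of $\underline\E$ encoding the slopes. First I apply Proposition \ref{prop2-11} with parameter $\epsilon'\ll\epsilon$ to produce such $\underline H$ together with projections $\pi_i$ satisfying the integral and pointwise estimates there (smallness of $\beta_i$, of $\Lambda\p_\E\beta_i$, and of $\sqrt{-1}\Lambda F_{(\underline H,\bp_S)}-\psi^{\underline H}$, both globally in $L^1$ and pointwise outside $\underline\Sigma^\delta$). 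Since $A$ commutes with the $\pi_j$ and is $\underline H$-self-adjoint, $|z|^{2A}$ makes sense as an endomorphism.

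The main curvature identity, on $B^*\setminus\Sigma$ and computed first with the split structure $\bp_S$, is that using $\sqrt{-1}\p\bp\log|z|^2=\pi^*\omega_{FS}$ and Lemma \ref{lem2-16}, together with the fact that $A$ acts as the scalar $\mu_i$ on the $i$-th block, the $|z|^{2A}$-twist contributes a term exactly cancelling the leading $(n-1)|z|^{-2}A$ arising from $\pi^*\psi^{\underline H}$, leaving a remainder of the form $|z|^{-2}\pi^*[\sqrt{-1}\Lambda_{\omega_{FS}}F_{(\underline H,\bp_S)}-\psi^{\underline H}]$. Switching back from $\bp_S$ to $\bp_\E$ via Lemma \ref{complexgauge} and Corollary \ref{splittingcurvature} introduces only terms quadratic in $\beta_i$ and a $\Lambda\p_\E\beta_i$ term, all small in the norms provided by Proposition \ref{prop2-11}. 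To obtain a \emph{smooth} metric across $\Sigma$, I fix a smooth reference Hermitian metric $H_0$ on $\E|_{B^*}$ and set
$$H_\epsilon = (1-\chi)\cdot|z|^{2A}\pi^*\underline H + \chi\cdot H_0,$$
where $\chi$ is a cut-off function of the scale-invariant quantity $d(z,\Sigma)/|z|$, equal to $1$ where $d(z,\Sigma)/|z|\leq 10^{-5}$ and to $0$ where $d(z,\Sigma)/|z|\geq 10^{-4}/2$. Property (i) then splits into a cone-region contribution, where $|F_{H_\epsilon}|=O(|z|^{-2})\in L^{1+\delta}(B)$ for small $\delta$ since $n\geq 3$, and a transition-region contribution, where the derivatives of the $\pi_i$ appearing via $d\chi$ are handled by the $L^{2+\delta_0}$ bound from Proposition \ref{prop2-15}. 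Properties (ii) and (iii) follow from the curvature cancellation, the pointwise smallness of $\beta_i$ and $\Lambda\p_\E\beta_i$ away from $\underline\Sigma^\delta$, and a sphere-by-sphere estimate on the transition region whose $S^{2n-1}(r)$-measure shrinks like a tube around $\Sigma_r$.

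For (iv), given $s=\pi^*\underline s$ with $\underline s\in H^0(\underline\E_i)$ but $\underline s\notin H^0(\underline\E_{i-1})$, the $\underline H$-orthogonal splitting $\underline\E=\bigoplus_j(\underline\E_j/\underline\E_{j-1})^\perp$ outside $\underline\Sigma$ gives a decomposition $\underline s=\sum_{j\leq i}\underline s_j$ with $\underline s_i\not\equiv 0$; outside $\Sigma^{10^{-4}}$ this yields $|s|^2_{H_\epsilon}=\sum_{j\leq i}|z|^{2\mu_j}|\pi^*\underline s_j|^2$. Because $\mu_j>\mu_i$ for $j<i$, the term $|z|^{2\mu_i}|\pi^*\underline s_i|^2$ dominates as $r\to 0$, and integrating against $|z|^\epsilon$ over $B^*_r\setminus\Sigma^{10^{-3}}$, whose projection to $\C\P^{n-1}$ contains the open set $\C\P^{n-1}\setminus\underline\Sigma^{10^{-3}}$ on which $\underline s_i\not\equiv 0$, produces an integral comparable to $r^{2\mu_i+\epsilon+2n}$. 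The main obstacle throughout will be choosing the cut-off $\chi$ so that properties (i)--(iii) hold simultaneously without the $d\chi$-contributions polluting the degree computation in (iv); the separation of scales $10^{-4}$ (inside which (iii) is allowed to fail) and $10^{-3}$ (outside which (iv) is evaluated) and the auxiliary weight $|z|^\epsilon$ together provide precisely the buffer making these conditions compatible.
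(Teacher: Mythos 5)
Your overall strategy is the same as the paper's: take the approximately critical metric $\underline H_{\epsilon'}$ from Proposition \ref{prop2-11}, form the cone ansatz with block weights $|z|^{2\mu_i}$, use the cancellation of $\psi^{\underline H_{\epsilon'}}$ against $\Lambda_{\omega_0}\p\bp\log|z|^2$, and cut off near $\Sigma$; your treatment of (iv) and of the region away from $\Sigma$ is correct. The genuine gap is the cut-off. A cut-off depending only on the scale-invariant quantity $d(z,\Sigma)/|z|$, i.e.\ a transition region of \emph{fixed angular aperture} around $\Sigma$, does not yield property (ii). In the transition region the interpolating factors pass from $|z|^{\mu_i}$ to $1$ across a tube of width $\rho_r$ inside $S^{2n-1}(r)$, so $\p\bp$ of these factors contributes terms of size $\rho_r^{-2}r^{-2\mu_1}$ to $r^2|\Lambda_{\omega_0}F|$ (this is the $R_r^{-2}r^{-2\mu_1}$ term in the paper's estimates of $T_0$ and $T_1$). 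Since $\underline\Sigma$ has complex codimension at least two, the tube has volume $\lesssim r^{2n-1}(\rho_r/r)^4$, so its contribution to the quantity in (ii) is of order
$$r^{1-2n}\cdot r^{2n-1}\Bigl(\frac{\rho_r}{r}\Bigr)^4\cdot \rho_r^{-2}r^{-2\mu_1}=\rho_r^{2}\,r^{-4-2\mu_1}.$$
With your choice $\rho_r\sim c\,r$ this equals $c^{2}r^{-2-2\mu_1}$, which is unbounded as $r\to 0$ (recall that the normalization by tensoring with $\O(p)$, $p\gg0$, needed for global generation forces $\mu_1>0$; and even shifting the weights cannot help, since the gap $\mu_1-\mu_m>0$ is intrinsic in the unstable case). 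The fixed-aperture tube only gains a constant factor $c^4$ over the whole sphere, not a power of $r$, so the supremum over $r\in(0,1]$ in (ii) is infinite; the same term also endangers the $L^{1+\delta}$ bound in (i) once $\mu_1$ is large.

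This is precisely why the paper proves Lemma \ref{cut-off}: the cut-off $\chi_R$ must be supported in a tube of radius $R_r=Rr^{N}$ with $N\gg\mu_1$ (in fact $N\geq 2+\mu_1$ suffices in the display above, giving $\rho_r^{2}r^{-4-2\mu_1}=R^2r^{2N-4-2\mu_1}\leq R^2$), while still satisfying $|\nabla\chi_R|\lesssim R_r^{-1}$ and $|\nabla^2\chi_R|\lesssim R_r^{-2}$. Constructing such a function — by mollifying the indicator of the tube at the $r$-dependent scale $R_r$ both spherically and radially, and checking the resulting derivative bounds — is the ``delicate choice of cut-off'' flagged in the introduction and cannot be replaced by a cone of fixed aperture. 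Your separation of the scales $10^{-4}$ and $10^{-3}$ only reconciles (iii) with (iv); it does not address this blow-up. A secondary, fixable point: interpolating by a convex combination with an arbitrary smooth reference $H_0$ introduces the uncontrolled comparison between $H_0$ and $\pi^*\underline H_{\epsilon'}$ in the transition region; the paper instead applies the complex gauge transformation $g=\sum_i\bigl((1-\chi_R)|z|^{\mu_i}+\chi_R\bigr)(\pi_i-\pi_{i-1})$ to the single metric $\pi^*\underline H_{\epsilon'}$, so that Lemma \ref{complexgauge} gives the curvature in closed form and every error term is expressed through $\beta_i$, $\p_\E\beta_i$ and the derivatives of $\chi_R$.
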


We will first prove Proposition \ref{onesidedbound} assuming Proposition \ref{constructedmetric},  and then prove Proposition \ref{constructedmetric}.

\

\begin{proof}[Proof of Proposition \ref{onesidedbound}]
For any $0<\epsilon\leq 1$, let $H_{\epsilon}$ be the metric given in Proposition \ref{constructedmetric}. 
Let $g=\log \Tr_{H} H_\epsilon$, and $f(z)=|z|^2|\Lambda_{\omega}F_{(H_{\epsilon},\bp_\E)}(z)|$. By Theorem \ref{thm2-24}, we have on $B^*$, 
$$\Delta g\geq -|z|^{-2} f(z), $$
So by items (i), (ii), (iii) in Proposition \ref{constructedmetric}, Lemma \ref{lem2-30}, and  item (2) in Lemma \ref{lem2-19} (replacing $|z|/2$ by $|z|/A$ for some big but fixed $A$), we see that there is a constant $C$ independent of $\epsilon$ such that for any $z\notin \Sigma^{10^{-3}}$, 
$$g(z)\leq C-\epsilon \log |z|. $$
In other words, we have 
$$
H \geq e^{-C}|z|^{\epsilon}H_\epsilon. 
$$
Now for any $s\in HG_i\setminus HG_{i-1}$, we have 
$$
\int_{B_r^*}|s|^2_{H} \geq  e^{-C} \int_{B_r^*\setminus \Sigma^{10^{-3}}} |z|^\epsilon |s|_{H_\epsilon}^2, 
$$
which, by item (iv) in Proposition \ref{constructedmetric}, implies that 
\begin{equation*}
\begin{aligned}
d(s) =\lim_{r\rightarrow 0}\frac{1}{2} \frac{\log \int_{B_r^*} |s|_H^2}{\log r}-n\leq \mu_i+\frac{\epsilon}{2}
\end{aligned}
\end{equation*}
Let $\epsilon\rightarrow 0$, we get $d(s)\leq\mu_i$. This finishes the proof of Proposition \ref{onesidedbound}.
\end{proof}

Before starting the proof of Proposition \ref{constructedmetric}, we need a lemma concerning the existence of a good cut-off function.

\begin{lem}\label{cut-off}
For any fixed $N>>1$ , there exists $R(N)\in (0, 10^{-7})$ which is decreasing with respect to $N$, and a constant $C=C(N)>0$ so that for any $R\in (0, R(N)]$, there exists a smooth function $\chi_R: B^* \rightarrow [0,1]$ such that the following holds on $B^*$
\begin{itemize}
\item $ \chi_R|_{\Sigma^{R_r}_r}\equiv 1$; 
\item $\chi_R|_{S^{2n-1}(r)\setminus\Sigma^{200R_r}_r}\equiv 0$; 
\item $|\nabla \chi_{R}| \leq C R_r^{-1} $; 
\item $|\nabla^2 \chi_{R}|\leq C R_r^{-2}$. 
\end{itemize}
Here $R_r=Rr^{N}$. 
\end{lem}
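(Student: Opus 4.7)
The plan is to define $\chi_R$ by composing a fixed smooth profile with a smoothed Fubini--Study distance to $\underline{\Sigma}$, rescaled appropriately in the radial variable. First I would invoke the existence of a Whitney-type regularized distance: a smooth function $\sigma:\C\P^{n-1}\setminus\underline{\Sigma}\to(0,\infty)$ which extends continuously to $0$ on $\underline{\Sigma}$ and satisfies
$$c_1\, d_{FS}(x,\underline{\Sigma})\leq \sigma(x)\leq c_2\, d_{FS}(x,\underline{\Sigma}),\qquad |\nabla^k_{FS}\sigma|\leq C_k\,\sigma^{1-k}\ \ (k=1,2).$$
Such a $\sigma$ is obtained by mollifying $d_{FS}(\cdot,\underline{\Sigma})$ with a kernel of width a small multiple $\alpha$ of the distance itself; taking $\alpha$ small one arranges $c_2/c_1\leq(1+\alpha)/(1-\alpha)$, in particular $c_2<200 c_1$. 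Next, fix a smooth non-increasing $\psi:\mathbb{R}\to[0,1]$ with $\psi\equiv 1$ on $(-\infty,c_2]$ and $\psi\equiv 0$ on $[200c_1,\infty)$, and set
$$\chi_R(z):=\psi\!\left(\frac{\sigma([z])}{R\,|z|^{N-1}}\right),\qquad z\in B^*.$$

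The bridge between the two distance notions required for the support conditions is the identity
$$d_{S^{2n-1}(r)}(z,\Sigma_r)=r\cdot d_{FS}([z],\underline{\Sigma}),$$
which follows because $\pi:S^{2n-1}(r)\to\C\P^{n-1}$ is a Riemannian submersion after the Fubini--Study metric is rescaled by $r^{-2}$; this is automatic for distances below the injectivity radius of the Hopf bundle, which is always the case for $r\in(0,1]$ and $R_r$ small. Consequently: on $\Sigma^{R_r}_r$ one has $d_{FS}([z],\underline{\Sigma})\leq Rr^{N-1}$, so $\sigma([z])\leq c_2 Rr^{N-1}$ and $\chi_R\equiv 1$; off $\Sigma^{200R_r}_r$ one has $\sigma([z])\geq 200c_1\, Rr^{N-1}$, so $\chi_R\equiv 0$.

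For the derivative bounds I would decompose $\nabla\chi_R$ into its horizontal (perpendicular to the radial direction) and radial components. On $S^{2n-1}(r)$ horizontal Euclidean unit vectors project to Fubini--Study vectors of length $1/r$, so $|\nabla^{h}\pi^{*}\sigma|_{\mathrm{Eucl}}\leq |\nabla_{FS}\sigma|/r\leq C/r$, and similarly $|(\nabla^{h})^{2}\pi^{*}\sigma|\leq C(r^{-2}\sigma^{-1}+r^{-1})$, the extra $r^{-1}$ term absorbing the O'Neill curvature of the Hopf submersion. Plugging into $\chi_R=\psi(\sigma/(Rr^{N-1}))$ and using $\sigma\sim Rr^{N-1}$ on the support of $\psi'$ and $\psi''$ yields horizontal estimates $|\nabla^{h}\chi_R|\leq CR_r^{-1}$ and $|(\nabla^{h})^{2}\chi_R|\leq CR_r^{-2}$. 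In the radial direction, differentiating the factor $r^{-(N-1)}$ produces an extra $(N-1)/r$; since $(N-1)/r=(N-1)Rr^{N-1}/R_r\leq (N-1)R\cdot R_r^{-1}$, this is bounded by $R_r^{-1}$ once $R(N)\leq 1/(N-1)$. Second radial derivatives are handled analogously, contributing at most $C(N)R_r^{-2}$.

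The only nontrivial ingredient is the regularized distance $\sigma$ meeting both the narrow multiplicative-gap $c_2/c_1<200$ and the scale-invariant derivative bounds $|\nabla^k\sigma|\leq C_k\sigma^{1-k}$. This is a classical Whitney-type construction (self-adapting convolution of $d_{FS}(\cdot,\underline{\Sigma})$ performed in local coordinate charts on $\C\P^{n-1}$ and patched by a partition of unity), but tuning the mollification scale to make $c_2/c_1$ close to $1$ while keeping the $C_k$ finite is the one place where care is required. Once this is in hand, the support verification and the chain-rule computations above close the argument, with $R(N)$ chosen small (and monotone in $N$) to absorb all the $N$-dependent factors.
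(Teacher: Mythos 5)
Your construction is correct, but it takes a genuinely different route from the paper's. The paper builds $\chi_R$ by a two-step mollification: on each sphere $S^{2n-1}(r)$ it convolves the indicator function of the tube $\Sigma_r^{100R_r}$ against a mollifier of width $R_r$, and then mollifies the resulting family $f_r$ in the radial variable at scale $R_r$. The price of the radial mollification is that one must check the tubes $\Sigma_t^{aR_t}$ nest correctly as $t$ ranges over $[r-2R_r, r+2R_r]$, and this is exactly what forces the explicit smallness condition $R(N)=\min\{\tfrac12[(\tfrac43)^{1/(N-1)}-1],\tfrac12(1-50^{-1/(N-1)})\}$. You avoid the radial mollification entirely by writing $\chi_R=\psi(\sigma([z])/(R|z|^{N-1}))$: since $\sigma\circ\pi$ is homogeneous of degree zero, the radial derivatives come only from the explicit weight $r^{-(N-1)}$ and are controlled exactly, so your $R(N)$ plays almost no role (indeed the factor $(N-1)/r$ you absorb into $R(N)\le 1/(N-1)$ could just as well go into $C(N)$). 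In exchange, you outsource the hard part to the regularized distance $\sigma$ with the two properties $c_1 d_{FS}\le\sigma\le c_2 d_{FS}$, $|\nabla^k\sigma|\le C_k\sigma^{1-k}$ \emph{and} the tight ratio $c_2/c_1<200$; the last point is genuinely needed for your profile $\psi$ to exist, and the off-the-shelf Stein--Whitney constants are not guaranteed to satisfy it, so you do need the refined version with ratio arbitrarily close to $1$ (which is classical but should be quoted precisely, as you acknowledge). The paper's convolution of an indicator sidesteps this ratio issue automatically. Two small remarks: the identity $d_{S^{2n-1}(r)}(z,\Sigma_r)=r\,d_{FS}([z],\underline\Sigma)$ holds (up to the harmless normalization of $\omega_{FS}$) for any Riemannian submersion and any closed \emph{saturated} set, with no injectivity-radius caveat needed; and one should note explicitly that $\chi_R\equiv 1$ on a neighborhood of $\Sigma$, so smoothness on all of $B^*$ is not obstructed by the non-smoothness of $\sigma$ along $\underline\Sigma$.
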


\begin{proof}
Let $\phi: \mathbb{R}\rightarrow [0,1]$ be a  smooth function so that $\int_{\mathbb{R}}\phi(t) dt=1$, $\text{Supp}(\phi)=(-2, 2)$ and $\phi(t)=1$ for $t$ in a neighborhood of $0$. For $s>0$ we denote 
$$\phi_s(t)=\phi(\frac{t}{s}).$$
Define $\psi_{r}: S^{2n-1}(r)\rightarrow\mathbb{R}$ by setting 
\begin{equation*}
\psi_r=
\begin{cases}
1, \ \ x\in \Sigma^{100 R_r}_r ; \\
0, \ \  x\notin \Sigma^{100 R_r}_r.
\end{cases}
\end{equation*}
Then we define $f_r: S^{2n-1}(r) \rightarrow [0,1]$ by
\begin{equation*}
f_r(x)=\frac{ \int_{S^{2n-1}(r)} \psi_{r}(y) \phi_{R_r}(d_{S^{2n-1}(r)}( x,y)) \dvol_{S^{2n-1}(r)}(y)}{\int_{S^{2n-1}(r)}\phi_{R_r}(d_{S^{2n-1}(r)}(x,y)) \dVol_{S^{2n-1}(r)}(y)}.
\end{equation*}
Since $\phi_{R_r}(d_{S^{2n-1}(r)}(x,y))= 1$ if $d_{S^{2n-1}(r)}(x,y)$ is small, we know $f_r$ is indeed smooth. It is also direct to see $f_r=0$ on $S^{2n-1}(r) \setminus\Sigma^{150 R_r}_r$, $f_r=1$ on $\Sigma^{50R_r}_r$. Furthermore we have on $S^{2n-1}(r)$, 
$$|\nabla f_r|\leq C R_r^{-1}$$ and $$|\nabla^2 f_r|\leq C R_r^{-2}$$ for some constant $C=C(N)$.  Finally we define the smooth function $\chi_R: B^*\rightarrow [0,1]$ by
\begin{equation*}
\chi_R(x)=\int_{\mathbb{R}} f_t(t\underline x)\cdot \frac{1}{R_r}\phi(\frac{1}{R_r}(r-t))dt
\end{equation*}
where $r=|x|$, and $\underline x=|x|^{-1}x$. Now we verify $\chi_R$ satisfies the desired properties for $0<R\leq R(N)$,  where  
$$R(N)=\min\{\frac{1}{2}[(\frac{4}{3})^{\frac{1}{N-1}}-1],  \frac{1}{2}(1-50^{-\frac{1}{N-1}})\}.$$
The choice of $R(N)$ comes from the discussion below.
\begin{itemize}
\item If $x\in S^{2n-1}(r)\setminus\Sigma_r^{200R_r}$, then $t\underline x \in S^{2n-1}(t)\setminus \Sigma_t^{200\frac{t R_r}{r}}$. If $|\frac{r-t}{R_r}|\leq 2$, i.e., $$(1-2\frac{R_r}{r}) r\leq t\leq (1+2\frac{R_r}{r})r,$$ then $$\Sigma_t^{150R_t}\subset \Sigma_t^{200 (1+2\frac{R_r}{r})^{-N+1} R_t}\subset\Sigma_t^{200t\frac{R_r}{r}}$$ for $0<R\leq \frac{1}{2}[(\frac{4}{3})^{\frac{1}{N-1}}-1]$, thus $$t\underline x \in S^{2n-1}(t)\setminus\Sigma_t^{150R_t}.$$ This implies $f_t(t\underline x)=0$, thus by definition $\chi_R(x)=0$. So we have 
$$\chi_R(x)|_{S^{2n-1}(r)\setminus\Sigma_r^{200R_r}}\equiv 0;$$
\item For $x\in\Sigma^{R_r}_r$, if $|\frac{r-t}{R_r}|\leq 2$, i.e., $$(1-2\frac{R_r}{r}) r\leq t\leq (1+2\frac{R_r}{r})r,$$ then $$\Sigma_t^{t\frac{R_r}{r}}\subset\Sigma_t^{(1-2\frac{R_r}{r})^{-N+1}R_t}\subset\Sigma^{50R_t}_t$$ for $0< R \leq \frac{1}{2}(1-50^{-\frac{1}{N-1}})$, thus $t\underline x \in \Sigma^{50R_t}_t$. This implies  $f_t(t\underline x)=1$, thus by definition $\chi_R(x)=1$. So we have 
$$\chi_R|_{\Sigma^{R_r}_r}\equiv 1;$$
\item Denote by $\partial_r$  the unit radial vector field and $r^{-1}\partial_{\theta}$ any unit vector field tangential to the sphere $S^{2n-1}(r)$. Then 
\begin{equation*}
\begin{aligned}
|r^{-1}\partial_\theta \chi_R|
&=|\int_{\mathbb{R}} r^{-1}\partial_\theta (f_t(t\underline x))\cdot \frac{1}{R_r}\phi(\frac{1}{R_r}(r-t))dt|
\\
&\leq C\int_{\mathbb{R}} R_t^{-1} \frac{1}{R_r}\phi(\frac{1}{R_r}(r-t))dt
\\
&\leq C\int_{|r-t|\leq 2R_r} R_t^{-1} R_r^{-1}dt
\\
&\leq C R_r^{-1} 
\end{aligned}
\end{equation*}
and 
\begin{equation*}
\begin{aligned}
|\partial_r \chi_R|
&=
\int_{\mathbb{R}} f_t(t\underline x)\cdot \partial_r(\frac{1}{R_r}\phi(\frac{1}{R_r}(r-t)))dt
\\
&\leq C\int_{|r-t|\leq 2R_r} R_{r}^{-2} dt
\\
&\leq C R_r^{-1}.
\end{aligned}
\end{equation*} 
So $|\nabla \chi_{R}|\leq CR_r^{-1}$;
\item Similarly one can show  $|\nabla^2 \chi_{R}|\leq C R_r^{-2}$.
\end{itemize}
This finishes the proof. 
\end{proof}

Now we  prove Proposition \ref{constructedmetric}.
\begin{proof}[Proof of Proposition \ref{constructedmetric}]
Let $K$ be the constant  given by Proposition \ref{prop2-11}. Denote by $X_0=\C\P^{n-1}\setminus \underline\Sigma$. For any $0<\epsilon'<<1$, let $\underline H_{\epsilon'}$ be the metric on $\underline\E$ satisfying the  properties listed  in Proposition \ref{prop2-11} with $\delta=10^{-4}$, i.e., 
\begin{enumerate}[(1).]
\item $\sup_i\int_{X_0}|\underline\beta_i|_{\omega_{FS}}^2\leq{\epsilon'}$;
\item $\sup_i\int_{X_0}|\Lambda_{\omega_{FS}}\bp_{\underline\E}\underline\beta_i|\leq{\epsilon'}$;
\item $\int_{X_0} |\sqrt{-1}\Lambda_{\omega_{FS}}F_{(\underline H_{\epsilon'},\bp_{\underline S})}-\psi^{\underline H_{\epsilon'}}|\leq{\epsilon'}$;
\item $\|\Lambda_{\omega_{FS}} F_{(\underline H_{\epsilon'},\bp_{\underline\E})}\|_{L^\infty(\C\P^{n-1})}\leq K$;
\item $|\sqrt{-1}\Lambda_{\omega_{FS}}F_{(\underline H_{\epsilon'},\bp_{\underline S})}-\psi^{\underline H_{\epsilon'}}|(z)\leq {\epsilon'}$ for $z\notin \underline\Sigma^{10^{-4}}$;
\item $\sup_i|\underline\beta_i(z)|_{\omega_{FS}} \leq{\epsilon'}$ for $z\notin\underline \Sigma^{10^{-4}}$;
\item $\sup_i|\Lambda_{\omega_{FS}}\p_{\underline\E}\underline\beta_i(z)|\leq{\epsilon'}$ for $z\notin\underline\Sigma^{10^{-4}}$;
\item $\sup_i|F_{(\underline H_{\epsilon'},\bp_{\underline\E})}|\leq K$ for $z\notin\underline\Sigma^{10^{-4}}$.
\end{enumerate}
Denote $H_{\epsilon'}=\pi^*\underline H_{\epsilon'}$. Let $\pi_i: \E \rightarrow \pi^*\underline \E_i$ be the orthogonal projection given by the metric $H_{\epsilon'}$. Fix $N>>\mu_1$, and let $R(N)$ be given by Lemma \ref{cut-off}. Let $R\in (0, R(N)]$ be determined later. Apply Lemma \ref{complexgauge} with $g=\sum^m_{i=1} f_i(\pi_i-\pi_{i-1})$ where 
$$f_i=(1-\chi_R)|z|^{\mu_i}+\chi_R$$ 
and $\chi_R$ is given by Lemma \ref{cut-off}. As in Lemma \ref{complexgauge} we can write 
\begin{equation*}
F_{(H_{\epsilon'}, g\cdot\bp_\E)}=T_0 +T_1+T_2.
\end{equation*}
In the following, we will estimate $T_i$ for $i=0,1,2$ separately and we also use $A\lesssim B$ to denote $A\leq C B$ for some constant $C=C(n,m,N,K,\mu_1)$. For simplicity we introduce one more notation (see Figure \ref{mainfig})
\begin{itemize}
\item $\Sigma'=\bigcup_{r\in (0, 1)} \Sigma_r^{200R_r}$, $\Sigma''=\bigcup_{r\in (0, 1)} \Sigma_r^{R_r}$.
\end{itemize}
\begin{figure}
\begin{tikzpicture}[scale=0.9]
\draw (5, 0) to [out = 90, in = 0] (0, 5);
\draw (0, 5) to [out=180, in=90] (-5, 0);
\draw (-5,0) to [out=270, in=180] (0, -5); 
\draw (0, -5) to [out=0, in=270] (5, 0);
\draw[very thin] (0, 0) to (4.25, 2.68);
\draw[very thin] (0, 0) to (4.25, -2.68);
\draw[very thin] (0, 0) to [out=0, in=235] (4.51, 2.11);
\draw[very thin] (0, 0) to [out=0, in=255] (4.74, 1.68);
\draw[very thin] (0, 0) to [out=0, in=-235] (4.51, -2.11);
\draw[very thin] (0, 0) to [out=0, in=105] (4.74, -1.68);
\draw[thick] (0,0) to (5, 0);
\node at (0, 0) {$\bullet$};
\node at (-2, 0) {$B^*\setminus \Sigma^{10^{-4}}$};
\draw[<-, dotted,  thick] (2, 1.34) to [out=120, in=90] (-2, 0.5);
\draw[<-, dotted,  thick] (2.12, -1.34) to [out=-120, in=-90] (-2, -0.5);
\node at (4.7, 0.5) {$\Sigma'$};
\node at (4.3, -0.2) {$\Sigma''$};
\draw[<-, dotted, thick] (4.2, 1.6) to [out=-45, in=110] (4.65, 0.8);
\draw[<-, dotted, thick] (4.2, -1.6) to [out=45, in=-80] (4.65, 0.2);
\node at (2.5, -3) {$\Sigma$};
\draw[<->, dotted,  thick] (4, 0.6) to [out=-45, in=45] (4, -0.6);
\draw[<-, dotted, thick] (3.5, 0) to  (2.5, -2.8);
\end{tikzpicture}
\caption{The cut-off}
\label{mainfig}
\end{figure}

\begin{enumerate}[(A).]
\item For $T_0= F_{(H_{\epsilon'},g\cdot\bp_S)}=F_{(H_{\epsilon'}, \bp_S)}-\sum_{i} \partial\bp\log (f_i^2) (\pi_i-\pi_{i-1})$. Since $f_i=(1-\chi_R)|z|^{\mu_i}+\chi_R$, we have 
\begin{enumerate}[(a).]
 \item For $z\notin\Sigma'$, 
\begin{equation*}
\begin{aligned}
T_0
&=\pi^*F_{(\underline H_{\epsilon'}, \bp_{\underline S})}-\sum_i \mu_i\p\bp \log |z|^2(\pi_i-\pi_{i-1}), \\
\Lambda_{\omega_0}T_0
&=\frac{\pi^*(\Lambda_{\omega_{FS}}F_{(\underline H_{\epsilon'},\bp_{\underline\S})}+\sqrt{-1}\psi^{\underline H_{\epsilon'}})}{r^2}
\end{aligned}
\end{equation*}
As a result, by Lemma \ref{lem2-16}, we have 
$$|\Lambda_{\omega_0}T_0|=\frac{|\sqrt{-1}\Lambda_{\omega_{FS}}F_{(\underline H_{\epsilon'},\bp_{\underline S})}-\psi^{\underline H_{\epsilon'}}|}{r^2}, $$
and by Corollary \ref{splittingcurvature} 
$$|T_0|_{\omega_0}\lesssim \frac{|F_{(\underline H_{\epsilon'}, \bp_{\underline \E})}|_{\omega_{FS}}+\sup_i|\underline \beta_i|^2_{\omega_{FS}}+1}{r^2}. $$
So 
$$|\Lambda_{\omega}T_0|\lesssim \frac{|\sqrt{-1}\Lambda_{\omega_{FS}}F_{(\underline H_{\epsilon'},\bp_{\underline S})}-\psi^{\underline H_{\epsilon'}}|}{r^2}+|F_{(\underline H_{\epsilon'}, \bp_{\underline \E})}|_{\omega_{FS}}+\sup_i|\underline \beta_i|^2_{\omega_{FS}}+1$$

\item Similarly, for $z\in\Sigma'$, using the fact that $\mu_i$ is strictly decreasing, by Corollary \ref{splittingcurvature} and Lemma \ref{cut-off}, we have 
$$|T_0|_{\omega_0}\lesssim \frac{|F_{(\underline H_{\epsilon'}, \bp_{\underline \E})}|_{\omega_{FS}}+\sup_i |\underline \beta_i|_{\omega_{FS}}^2+R^{-2}_rr^{-2\mu_1}}{r^2}, $$
and
\begin{equation*}
\begin{aligned}
|\Lambda_{\omega}T_0|\lesssim\frac{K+\sup_i|\underline\beta_i|_{{\omega_{FS}}}^2+R^{-2}_rr^{-2\mu_1}}{r^2}.
\end{aligned}
\end{equation*}

\end{enumerate}
Combining (a) and (b), and using Items (1) and (3) above,  we get

\begin{equation*}
\begin{aligned}
&r^{-(2n-1)}\int_{S^{2n-1}(r)}r^2|\Lambda_{\omega}T_0|\\
\lesssim& \int_{X_0} |\sqrt{-1}\Lambda_{\omega_{FS}}F_{(\underline H_{\epsilon'},\bp_{\underline S})}-\psi^{\underline H_{\epsilon'}}|+Cr^2\\
&+r^{-(2n-1)}\int_{\Sigma'\cap S^{2n-1}(r)}(K+\sup_i|\underline\beta_i|_{{\omega_{FS}}}^2+R^{-2}_rr^{-2\mu_1})\\
\lesssim & \epsilon'+R^2+Cr^2, 
\end{aligned}
\end{equation*}
where the last step we used Lemma \ref{cut-off} and the fact that $\underline \Sigma$ has complex codimension at least two. 
By Proposition \ref{prop2-15}, using $\beta_i=\bp_\E \pi_i$, we have
$$
|T_0|_{\omega} \in L^{1+\delta}(B^*)
$$
for some $\delta>0$. Also, for $z\notin  \Sigma^{10^{-4}}$, by Items (5), (6) and (8) above, 
$$r^2|\Lambda_{\omega}T_0|\lesssim {\epsilon'}+Cr^2. $$
\item For $T_1=-(g\cdot\bp_S)(g \beta g^{-1})^*
+(g\cdot \bp_S)^*(g \beta g^{-1})$, by Lemma \ref{complexgauge}, we have 
$$
\begin{aligned}
(g\cdot\bp_S)(g \beta g^{-1})^*=&\sum_{i<j}\frac{f_i}{f_j} (\pi_j-\pi_{j-1})(\bp_S \beta^*)(\pi_i-\pi_{i-1})\\
&-2\sum_{i<j}\bp(\frac{f_i}{f_j})\wedge(\pi_j-\pi_{j-1})(\p_{\E} \pi_i) (\pi_i-\pi_{i-1}).
\end{aligned}
$$
Plugging in $f_i=(1-\chi_R)|z|^{\mu_i}+\chi_R$, we have 
\begin{enumerate}[(a).]
\item If $z\notin\Sigma'$, then $\frac{f_i}{f_j}=|z|^{\mu_i-\mu_j}$ where $\mu_i>\mu_j$. As a result, 
$$
\begin{aligned}
|(g\cdot\bp_S)(g \beta g^{-1})^*|_{\omega}
&\lesssim\frac{\sup_i|\underline\beta_i|_{\omega_{FS}}^2+\sup_i|\p_{\underline\E}\underline\beta_i|_{\omega_{FS}}+r^{1+\mu'}\sup_i|\underline\beta_i|_{\omega_{FS}}}{r^2}\\
&\lesssim \frac{2\sup_i|\underline\beta_i|_{\omega_{FS}}^2+|F_{(\underline H_{\epsilon'},\bp_{\underline\E})}|_{\omega_{FS}}+r^{1+\mu'}\sup_i|\underline\beta_i|_{\omega_{FS}}}{r^2}, 
\end{aligned}
$$
where the second inequality follows from Equation (\ref{sf}). Here $\mu'=\min\{i<j: \mu_i-\mu_j\}$. So by the explicit formula above, we also have
$$|\Lambda_{\omega}(g\cdot\bp_S)(g \beta g^{-1})^*|\lesssim\frac{\sup_i|\underline\beta_i|_{\omega_{FS}}^2+\sup_i|\Lambda_{\omega_{FS}}\p_{\underline\E}\underline\beta_i|+r^{1+\mu'}\sup_i|\underline\beta_i|_{\omega_{FS}}}{r^2}$$
\item If $z\in \Sigma'$, using $|\bp \frac{f_i}{f_j}|\lesssim R_r^{-1}r^{-\mu_1-1}$ by Lemma \ref{cut-off}, we get
$$|\Lambda_{\omega}(g\cdot\bp_S)(g \beta g^{-1})^*|\lesssim\frac{\sup_i|\underline\beta_i|_{\omega_{FS}}^2+\sup_i|\Lambda_{\omega_{FS}}\p_{\underline\E}\underline\beta_i|_{\omega_{FS}}+R_r^{-2}r^{-2\mu_1}}{r^2}$$
and
$$
\begin{aligned}
|(g\cdot\bp_S)(g \beta g^{-1})^*|_{\omega}
&\lesssim\frac{\sup_i|\underline\beta_i|_{\omega_{FS}}^2+\sup_i|\p_{\underline\E}\underline\beta_i|_{\omega_{FS}}+R_r^{-2} r^{-2\mu_1}}{r^2}\\
&\lesssim \frac{2\sup_i|\underline\beta_i|_{\omega_{FS}}^2+|F_{(\underline H,\bp_{\underline\E})}|_{\omega_{FS}}+R_r^{-2} r^{-2\mu_1}}{r^2}
\end{aligned}
$$
\end{enumerate} 
Now combining the estimates in (a) and (b), and using Items (1) and (2), we have 
\begin{equation*}
\begin{aligned}
&r^{-(2n-1)}\int_{S^{2n-1}(r)}r^2|\Lambda_{\omega}T_1|\\
&\lesssim\int_{X_0}\sup_i|\underline\beta_i|_{\omega_{FS}}^2+\sup_i|\Lambda_{\omega_{FS}}\p_{\underline\E}\underline\beta_i|_{\omega_{FS}}+r^{-(2n-1)}\int_{\Sigma'} R_r^{-2}r^{-2\mu_1}\\
&\leq \epsilon'+R^2. 
\end{aligned}
\end{equation*}
Similar to the estimate for $T_0$, using $\beta_i=\bp_\E \pi_i$, by Proposition \ref{prop2-15}, we have 
$$
|T_1|\in L^{1+\delta}(B^*).
$$
 Also, for $z\notin \Sigma^{10^{-4}}$, by Items (6) and (7), 
$$r^2|\Lambda_{\omega}T_1|\leq\sup_i|\underline\beta_i|_{\omega_{FS}}^2+\sup_i|\Lambda_{\omega_{FS}}\p_{\underline\E}\underline\beta_i|+\sup_i r^{1+\mu'}|\underline\beta_i|_{\omega_{FS}}\lesssim \epsilon'.$$
\item  For $T_2=-
g \beta g^{-1}\wedge(g \beta g^{-1})^*
-
(g \beta g^{-1})^*\wedge g \beta g^{-1}$, we have 
\begin{equation}
|T_2|_{\omega}\leq 2|g \beta g^{-1}|_{\omega}^2\lesssim (\sup_{i<j} |\frac{f_i}{f_j}|^2)\sup_i|\beta_i|_{\omega}^2\lesssim \frac{\sup_i |\underline \beta_i|_{\omega_{FS}}^2}{r^2}.
\end{equation}
 Here, the second inequality follows from 
\begin{equation*}
\begin{aligned}
g \beta g^{-1}
&=-\sum_{i,j,k} \frac{f_i}{f_j}(\pi_i-\pi_{i-1})(\pi_k-\pi_{k-1})\bp_\E \pi_k (\pi_j-\pi_{j-1})\\
&=-\sum_{i,j}\frac{f_i}{f_j}(\pi_i-\pi_{i-1})\bp_\E \pi_i (\pi_j-\pi_{j-1})\\
&=-\sum_{i<j}\frac{f_i}{f_j}(\pi_i-\pi_{i-1})\bp_\E \pi_i (\pi_j-\pi_{j-1})
\end{aligned}
\end{equation*}
The last equality follows from $\bp_\E \pi_i \cdot \pi_i=0$ ($\pi_i$'s are all weakly holomorphic). As a result, we have 
$$
r^{-(2n-1)}\int_{S^{2n-1}(r)}r^2|\Lambda_{\omega} T_2|\lesssim {\epsilon'}
$$
and 
$$
|T_2|_{\omega}\in L^{1+\delta}(B^*)
$$
for some $\delta>0$ as $|T_0|$ and $|T_1|$. Also, for $z\notin  \Sigma^{10^{-4}}$, $r^2|\Lambda_{\omega}T_2|\leq{\epsilon'}$.
\end{enumerate}
Now combining (A), (B), (C), we have
$$
r^{-(2n-1)}\int_{S^{2n-1}(r)}r^2|
\Lambda_{\omega} F_{(H_{\epsilon'},g\cdot\bp_\E)}|\lesssim \epsilon'+R^2+cr^2, 
$$
and for $z\notin \Sigma^{10^{-4}}$, 
$$r^2|\Lambda_{\omega}F_{(H_{\epsilon', g\cdot\bp_\E)}}|\lesssim\epsilon'. $$
 Since $|F_{(H_{\epsilon'},g\cdot\bp_\E)}|\leq |T_0|+|T_1|+|T_2|$, we have $|F_{(H_{\epsilon'},g\cdot\bp_\E)}|\in L^{1+\delta}(B^*)$ for some $\delta>0$. For any $0<{\epsilon}<<1$, choose ${\epsilon'}$ and $R$ small so that $\epsilon'+R^2<<\epsilon$ and let $H_\epsilon=H_{\epsilon'}(g\cdot, g\cdot)$, i.e. 
\begin{equation}
H_\epsilon=\sum_i (1-\chi_R)|z|^{2\mu_i}H_{\epsilon'}((\pi_i-\pi_{i-1})\cdot,(\pi_i-\pi_{i-1})\cdot)+\chi_R H_{\epsilon'}.
\end{equation}
The calculation above shows that $H_\epsilon$ satisfies $(i)$, $(ii)$ and $(iii)$. It suffices to verify that $H_\epsilon$ satisfies $(iv)$. For any $s\in HG_i\setminus HG_{i-1}$, we have $(\pi_i-\pi_{i-1})s\neq 0$, so 
\begin{equation*}
\begin{aligned}
&\int_{B_r^*\setminus  \Sigma^{10^{-3}}}|z|^{\epsilon}|s|^2_{H_\epsilon}\\
=&\int_{B_r^*\setminus  \Sigma^{10^{-3}}} \sum_{j\leq i} |z|^{\epsilon+2\mu_j}H_{\epsilon'}((\pi_j-\pi_{j-1})s,(\pi_j-\pi_{j-1})s )\\
=&\sum_{j\leq i} a_j r^{\epsilon+2\mu_i+2n}
\end{aligned}
\end{equation*}
where $a_i\neq 0$. Thus by taking limit  $r\rightarrow 0$, we have 
$$
\lim_{r\rightarrow 0} \frac{1}{2}\frac{\log \int_{B^*_r\setminus \Sigma^{10^{-3}}}|z|^{\epsilon}|s|_{H_\epsilon}^2}{\log r}-n=\mu_i+\frac{\epsilon}{2}.
$$
This finishes the proof.
\end{proof}

 \subsection{Proof of Theorem \ref{thm2}} \label{Section3-3}
Going back to the general setting in the introduction, we let $A$ be an admissible Hermtian-Yang-Mills connection on $B$ with vanishing Einstein constant, and with an isolated singularity at $0$. Let $\E$ be the corresponding reflexive sheaf. In this section, we shall use the following notations
\begin{itemize}
\item $p: \widehat{B}\rightarrow B$ denotes the blow-up of $B$ at $0$.  We can identify $\widehat B$ naturally with an open neighborhood of the zero section in the total space of the line bundle $\mathcal O(-1)\rightarrow \C\P^{n-1}$; 
\item $i:  p^{-1}(0)\simeq \C\P^{n-1}\rightarrow\widehat{B}$ denotes the obvious inclusion map;
\item $\phi: \widehat B\rightarrow \mathbb{\C\P}^{n-1}$ denotes  the restriction of the projection map $\mathcal{O}(-1)\rightarrow\mathbb{\C\P}^{n-1}$;
\end{itemize}

 We first introduce the following definition, which has been informally mentioned in the Introduction.

\begin{defi}
An algebraic tangent cone of $\E$ at $0$ is a coherent sheaf on $\C\P^{n-1}$ which is given by the restriction of a reflexive sheaf $\F$ on $\widehat B$, such that $\F|_{\widehat B\setminus p^{-1}(0)}$ is isomorphic to $p^*(\E|_{B\setminus \{0\}})$.
\end{defi}

For the convenience of the reader we digress to discuss the notion of restriction of reflexive coherent analytic sheaves. The corresponding theory in the category of algebraic geometry is well-known. For a coherent sheaf $\F$ on a complex manifold $X$ and a smooth divisor $D$, we denote by $\F|_D=i_D^*\F$ to be the pull-back of $\F$ to  $D$ as a coherent sheaf, i.e. $\F|_D=i^{-1}_D\F\otimes_{i^{-1}_D\O_X} \O_D$ where $i_D: D \hookrightarrow X$ denotes the inclusion map and $i_D^{-1}$ denotes the inverse image functor.

\begin{lem} \label{lem3-24}
If $\F$ is reflexive then $\F|_D$ is torsion free. 
\end{lem}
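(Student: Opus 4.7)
The plan is to use the classical local structure of reflexive sheaves: locally on $X$ one can embed $\mathcal F$ as the kernel of a map between locally free sheaves, which reduces the torsion-freeness of $\mathcal F|_D$ to a computation of a single $\mathrm{Tor}$ group.

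First, I would work locally near a point $x\in D$ and produce a short exact sequence
\begin{equation*}
0\to \mathcal F \to \mathcal E_0 \to \mathcal Q \to 0
\end{equation*}
with $\mathcal E_0$ locally free and $\mathcal Q$ a coherent subsheaf of a locally free sheaf $\mathcal E_1$. This is standard: since $\mathcal F^*$ is coherent, we can choose a local presentation $\mathcal E_1^*\to \mathcal E_0^*\to \mathcal F^*\to 0$; dualizing and using $\mathcal F=\mathcal F^{**}$ gives the embedding $0\to \mathcal F\to \mathcal E_0 \to \mathcal E_1$. In particular the quotient $\mathcal Q$ is torsion free, being a subsheaf of the locally free sheaf $\mathcal E_1$.

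Next I would tensor the sequence with $\mathcal O_D$. Since $D$ is a smooth divisor, the ideal sheaf $\mathcal I_D$ is locally principal, generated by a function $t$ that is a non-zero-divisor in $\mathcal O_X$. The short exact sequence $0\to \mathcal O_X\xrightarrow{\cdot t} \mathcal O_X\to \mathcal O_D\to 0$ is a free resolution of $\mathcal O_D$, so for any coherent sheaf $\mathcal G$ we have
\begin{equation*}
\mathrm{Tor}_1^{\mathcal O_X}(\mathcal G,\mathcal O_D)=\ker(\mathcal G\xrightarrow{\cdot t}\mathcal G).
\end{equation*}
Applied to $\mathcal G=\mathcal Q$, which is torsion free, this group vanishes. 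The long exact sequence of $\mathrm{Tor}$ therefore yields the short exact sequence
\begin{equation*}
0\to \mathcal F|_D \to \mathcal E_0|_D \to \mathcal Q|_D \to 0.
\end{equation*}
Since $\mathcal E_0|_D$ is locally free on $D$ (in particular torsion free), its subsheaf $\mathcal F|_D$ is torsion free. As torsion-freeness is a local property, this establishes the lemma on all of $D$.

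The argument is essentially formal once the two ingredients are in place: the local embedding of a reflexive sheaf into a locally free sheaf with torsion-free cokernel, and the vanishing of $\mathrm{Tor}_1^{\mathcal O_X}(-,\mathcal O_D)$ on torsion-free sheaves when $D$ is a Cartier divisor. I do not anticipate any real obstacle; the only point requiring slight care is producing the local resolution $0\to \mathcal F\to \mathcal E_0\to \mathcal E_1$ with $\mathcal E_1$ locally free, which is precisely where reflexivity (as opposed to mere torsion-freeness) is used.
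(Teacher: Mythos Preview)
Your proof is correct and follows essentially the same approach as the paper: both use the local embedding $0\to\mathcal F\to\mathcal O^{n_1}\to\mathcal O^{n_2}$ obtained by dualizing a presentation of $\mathcal F^*$, and then argue that $\mathcal F|_D\hookrightarrow\mathcal O_D^{n_1}$. The only difference is cosmetic---the paper carries out a direct element chase (if $\eta\in\mathcal F_p$ lies in $z_1\cdot\mathcal O^{n_1}$ then $\eta\in z_1\cdot\mathcal F_p$), whereas you package the same computation as the vanishing of $\mathrm{Tor}_1^{\mathcal O_X}(\mathcal Q,\mathcal O_D)$ for the torsion-free cokernel $\mathcal Q$.
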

\begin{proof}
It suffices to prove the stalk of $\F|_D$ at any point $p$ is torsion free. For this purpose we can work in the local holomorphic coordinates $z_1, z_2, \cdots, z_n$ centered $p$ and assume $D$ is locally given by $\{z_1=0\}$. Since $\F$ is reflexive we can find a local short exact sequence in a neighborhood $U$ of $p$ of the form 
$$0\rightarrow \F\rightarrow \O^{n_1}_U\xrightarrow {\phi}\O_U^{n_2}. $$This  can be achieved, for example, by first choosing a locally free resolution of $\F^*$ and then taking dual. Suppose there exists $s\in (\F|_D)_{p}$ with $f\cdot s=0$. By definition we can write $s=[\eta]$ for an element $\eta$ of $\F_p$. Then $f\cdot s=0$ implies that there is a nonzero element $\lambda \in \F_p$ such that $f\eta=z_1\lambda$. Using the above short exact sequence we can view both $\eta$ and $\lambda$ as elements of $(\O_{X}^{n_1})_p$, which implies that $\eta=z_1\eta'$ for some $\eta'$ in $(\O_X^{n_1})_p$. Since $\phi(\eta)=\phi(z_1\eta')=z_1\phi(\eta')=0$, we know $\phi(\eta')=0$ i.e. $\eta'\in \F_p$ which implies $\eta=z_1\eta'$ and so $s=0$. This finishes the proof. 
\end{proof}

\begin{rmk}
Another proof (pointed to us by the referee) can be done by noting $Im(\phi)$ is torsion free and so $Tor_1(\O_D, Im(\phi))=0$. This will imply that the restriction map $\F|_D \rightarrow \O_D^{n_1}$ is still injective. In particular, $\F|_D$ is torsion free. 
\end{rmk}

\begin{rmk}\label{rmk3.26}
Using this we can give an alternative description of $\F|_D$, as the subsheaf of $\O_{D}^{n_1}$ generated by the restriction of local holomorphic sections of $\F$, viewed as sections of $\O_{X}^{n_1}$. Indeed, if we denote the latter sheaf by $\F'$, then there is an obvious surjective homomorphism $\psi$ from $\F|_D$ to $\F'$. Notice since the quotient $\O_X^{n_1}/\F$ is torsion free, we know $\F$ is locally free   outside a codimension two complex analytic subvariety of $X$ and the map $\F\rightarrow \O_X^{n_1}$ realizes $\F$ as a sub-bundle of the trivial bundle. So outside a divisor in $D$ the restriction $\F|_D$ is locally free and $\psi$ is an isomorphism. Thus  the kernel of $\psi$ is necessarily torsion and the above Lemma implies it has to be zero. This description is often helpful when dealing with explicit examples. For example, if $\F$ is the reflexive sheaf on $\C^3$ defined as the kernel of the map $\O^{\oplus 3}_{\C^3}\rightarrow\O_{\C^3}$ given by dot product with $(z_1, z_2, z_3)$, and $D$ is the divisor $\C^2=\{z_1=0\}$, then $\F|_D$ is isomorphic to $\I_0\oplus \O_{\C^2}$, where $\I_0$ is the ideal sheaf of the origin in $\C^2$. Indeed, it is easy to see using the exactness of Koszul complex that, $\F$ is the subsheaf of $\O_{\C^3}^{\oplus 3}$ globally generated by three holomorphic sections $(z_2, -z_1, 0)$, $(z_3, 0, -z_1)$, and $(0, z_3, -z_2)$.  Restricting to $D=\C^2$ these become sections of $\O_{\C^2}^{\oplus 3}$ given by $(z_2, 0, 0)$, $(z_3, 0, 0)$, and $(0, z_3, -z_2)$. The first two sections generate the sheaf $\I_0$ and the last section generates a sheaf isomorphic to $\O_{\C^2}$. 
\end{rmk}
\begin{rmk}
Lemma \ref{lem3-24} is not true if $\F$ is only torsion-free. For example, it is easy to see that for the ideal sheaf $\I_0$ of the origin in $\C^2$, the restriction to a line $\C$ through the origin indeed has torsion. 
\end{rmk}

Lemma \ref{lem3-24} implies that an algebraic tangent cone is always torsion free. It is also easy to see an algebraic tangent cone always exists. For instance, one way to obtain an algebraic tangent cone is by first taking $(p^*\E^*)^*$, and then restrict to $p^{-1}(0)$. We will show how to calculate this algebraic tangent cone by examples in Section \ref{section3-4}.  

\

As mentioned in the Introduction, the motivation for introducing the notion  of an algebraic tangent cone is to extend Theorem \ref{main} to the general case of non-homogeneous singularities (we will see examples of these in Section \ref{section3-4}), and as a first step we want to find an object playing the role of $\underline \E$ which serves as a good homogeneous approximation of the singularity. Notice even though an algebraic tangent cone always exists, it is far from being unique since it involves  extending a coherent sheaf across a  complex co-dimension one subset. In \cite{CS3} we will make a systematic algebro-geometric study on the set of algebraic tangent cones. In particular the notion of an \emph{optimal} algebraic tangent cone is introduced. Here we briefly summarize the key points and the interested readers are referred to \cite{CS3} for details.
Given an algebraic tangent cone $\underline{\widehat\E}$, and let $0=\underline{\E}_0\subset \underline{\E}_1\subset \cdots \underline{\E}_m=\underline{\widehat\E}$ be its Harder-Narasimhan filtration, then $\underline{\widehat{\E}}$ is said to be optimal if the difference of slopes $\mu(\underline \E_m /\underline \E_{m-1})-\mu(\underline{\E}_1)\in [0, 1)$. Obviously a semistable algebraic tangent cone is always optimal, but semistability is not always achievable and being optimal is the next best to hope for. In \cite{CS3} it is proved that an optimal tangent cone always exists and it is unique up to certain natural transforms (which includes in particular twisting by some $\O(k)$); more crucially, the isomorphism class of the corresponding torsion-free sheaf $\iota_*\pi^*(\Gr^{HNS}(\underline {\widehat \E})) $  (notice there are notational differences between this paper and \cite{CS3}) on $\C^n$ does not depend on the specific choice of the optimal algebraic tangent cone. With these preparations we can now state the following conjecture which gives a fairly complete picture for describing the tangent cones of admissible Hermitian-Yang-Mills connections in terms of the algebraic geometry of the underlying reflexive sheaf. 

\begin{conj} \label{conj1-3}
Given any reflexive sheaf $\E$ on $B$ with an isolated singularity at $0$, for any admissible Hermitian-Yang-Mills connection $A$ on $\E$ there is a unique tangent cone $A_\infty$ at $0$, whose corresponding sheaf $\E_\infty$ is isomorphic to $ \iota_*\pi^*(\Gr^{HNS}(\underline {\widehat \E}))^{**}$ where $\underline {\widehat \E}$ is any optimal algebraic tangent cone of $\E$ at $0$, and $A_\infty$ is gauge equivalent to the natural Hermitian-Yang-Mills cone that is induced by the admissible Hermitian-Yang-Mills connection on  $(\Gr^{HNS}(\underline{\widehat \E}))^{**}$. 
\end{conj}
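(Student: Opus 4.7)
The plan is to combine the analytic framework developed in Section \ref{section3-2} with new algebraic input coming from the blow-up geometry, and to treat the conjecture as an existence statement about a canonically chosen algebraic tangent cone. First, I would attempt to produce a candidate $\underline{\E}^{alg}$ directly from $\E$: a natural starting point is $\widehat\E := (p^*\E^*)^*$, the reflexive hull on $\widehat B$ of the pull-back, whose restriction to $p^{-1}(0)$ is torsion-free by Lemma \ref{lem3-24}. In general this first candidate may not yet have the property required by the conjecture, because its HNS filtration can fail to be compatible with the degree function $d(\cdot)$ of Corollary \ref{cor3-7}. I would therefore work in a larger class: among all reflexive extensions of $p^*\E|_{\widehat B \setminus p^{-1}(0)}$, all of which differ by elementary transformations along the exceptional divisor, single out $\underline{\E}^{alg}$ by requiring that the filtration of $H^0(\C\P^{n-1}, \underline{\E}^{alg}\otimes \O(m))$ by HN slopes (for $m$ sufficiently positive) coincides, under pull-back, with the filtration of the corresponding space of holomorphic sections of $\E$ by the degree $d$.

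Once $\underline{\E}^{alg}$ is selected, I would extend the three-step argument of Section \ref{semistable}--\ref{S3-5} to this setting. The first step is to prove an analog of Proposition \ref{constructedmetric}: for every $\epsilon>0$ there exists a smooth Hermitian metric $H_\epsilon$ on $\E|_{B^*}$ with $|F_{H_\epsilon}|\in L^{1+\delta}(B^*)$, with $r^2|\Lambda_{\omega_0}F_{H_\epsilon}|$ small in the appropriate integral sense, and such that the $H_\epsilon$-norm of any section lifted from $\underline{\E}^{alg}$ sitting in the $i$-th HN piece decays like $r^{2\mu_i + \epsilon + 2n}$. The construction would mimic Lemma \ref{complexgauge} and Proposition \ref{constructedmetric}, but starting from an approximately HYM metric on $\underline{\E}^{alg}$ (produced via Theorem \ref{thm2-10}/Proposition \ref{prop2-11} on a resolution of singularities) and transporting it to $\widehat B$ via $\phi$, then to $B$ via $p$, with a radial scaling factor $|z|^{2\mu_i}$ on each HN piece and a cutoff analogous to Lemma \ref{cut-off} that handles both the exceptional divisor of $p$ and the singular set of the HN filtration.

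The second step is to run the three-circle analysis (Proposition \ref{prop3-6}, Proposition \ref{prop3-12}, Lemma \ref{twosidedbound}) to show that sections from the $i$-th HNS piece of $\underline{\E}^{alg}$ have degree exactly $\mu_i$. The third step is to build, by inductive orthogonal projection as in Section \ref{semistable}, homomorphisms $\phi_r : \underline{\E}^{alg}_r/\underline{\E}^{alg}_{r-1} \to \S_r \subset \E_\infty$ from the successive stable quotients to simple HYM summands of $\E_\infty$; stability plus Corollary \ref{cor2-5} upgrades each $\phi_r$ to an isomorphism onto a direct summand of the double-dual of $\Gr^{HNS}(\underline{\E}^{alg})$, which by rank count exhausts $\E_\infty$. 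Uniqueness of the HE metric on a polystable sheaf (Corollary \ref{cor2-6}) then identifies $A_\infty$ as claimed.

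The main obstacle will be analytic rather than algebraic. Without a pull-back model $\pi^*\underline\E$ at our disposal, the construction of the comparison metric $H_\epsilon$ must be done on the blow-up $\widehat B$, where one has to simultaneously control the exceptional divisor $p^{-1}(0)$ and the singular locus of the HN filtration of $\underline{\E}^{alg}$. The delicate cutoff of Lemma \ref{cut-off}, which was already required to handle the radial scaling in the homogeneous case, will need a substantial upgrade to accommodate both singularities at once, and to ensure the resulting metric descends to $\E$ on $B^*$ with the right $L^{1+\delta}$ curvature control. A secondary obstacle is that the correct choice of $\underline{\E}^{alg}$ among the infinitely many algebraic tangent cones is not a priori known to exist; one approach is to show that the selection criterion above is well-posed by proving the induced filtration is independent of $m$ and of the choice of HYM connection $A$, but verifying this without circular reasoning may require first establishing the analytic statement on a large enough class of examples and then deducing the existence of $\underline{\E}^{alg}$ a posteriori.
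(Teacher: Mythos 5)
The statement you are trying to prove is stated in the paper as Conjecture \ref{conj1-3}; the paper itself offers no proof of it, only the two special cases Theorem \ref{main} (homogeneous singularity with $\Gr^{HNS}(\underline\E)$ reflexive) and Theorem \ref{thm2} (a locally free stable algebraic tangent cone). So your proposal cannot be measured against an argument in the paper, and on its own terms it is a research program rather than a proof: you yourself flag the two decisive obstacles at the end without resolving either.

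The most serious gap is your selection of $\underline\E^{alg}$. You propose to single it out by demanding that the HN-slope filtration of $H^0(\C\P^{n-1},\underline\E^{alg}\otimes\O(m))$ match, under pull-back, the filtration of sections of $\E$ by the analytic degree $d(\cdot)$. But $d$ is defined via the Hermitian-Einstein metric $H$ and the three-circle argument, i.e.\ via the connection $A$ whose tangent cone you are trying to compute. Defining the algebraic tangent cone this way makes the main content of the conjecture --- that a single, purely algebro-geometric object controls the tangent cone of \emph{every} admissible HYM connection on $\E$ --- either circular or vacuous; you would still have to show the resulting extension is independent of $A$, exists as an actual reflexive sheaf on $\widehat B$ (matching a filtration of section spaces for large $m$ does not by itself produce an elementary transformation realizing it), and is computable without the analysis. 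The second genuine gap is that your third step reproduces exactly the induction of Section \ref{semistable}, whose final step ($(b)_r$, the smooth convergence of $\E_r$ to $\bigoplus_{k\le r}\S_k$) the paper explicitly says requires $\Gr^{HNS}$ to be reflexive: Remark \ref{rmk3-15} states that when the graded object is only torsion free, the limit sections generate merely a torsion-free subsheaf of $\S_k$ and the argument breaks down, and that a genuinely different idea is needed (deferred to \cite{CS2}). Since the conjecture deliberately allows $\Gr^{HNS}(\underline\E^{alg})$ to be non-reflexive (hence the double dual in its statement), your ``rank count exhausts $\E_\infty$'' step fails precisely in the new cases the conjecture is meant to cover. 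Finally, the comparison metric of your first step is asserted, not constructed: Proposition \ref{prop2-11} is proved for locally free sheaves on a compact K\"ahler manifold, and transporting an approximate structure from a resolution of a possibly singular, possibly non-reflexive $\underline\E^{alg}$ down to $\E|_{B^*}$ with $L^{1+\delta}$ curvature control and the correct radial weights is exactly the hard analytic content that neither the paper nor your proposal supplies.
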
 

\begin{rmk}
\begin{itemize}
\item In \cite{CS3} it is also proved that when $\E={(\iota_B)}_*\pi_B^*\underline\E $ is homogeneous, for any optimal algebraic tangent cone $\underline{\widehat\E}$, $ \iota_*\pi^*(\Gr^{HNS}(\underline {\widehat\E}))$ is isomorphic to  $ \iota_*\pi^*(\Gr^{HNS}(\underline \E))$. Hence Theorem \ref{main} confirms Conjecture \ref{conj1-3} in the homogeneous case under the extra assumption that $Gr^{HNS}(\underline\E)$ is reflexive. Also Theorem \ref{thm2} confirms Conjecture \ref{conj1-3} in the case when there is an algebraic tangent cone which is locally free and stable (hence is optimal). 
\item In \cite{CS2} we shall prove Conjecture \ref{conj1-3} in the homogeneous case without extra assumptions, and we also strengthen the conjecture by relating the analytic bubbling information and the non-reflexive locus of  $ \iota_*\pi^*(\Gr^{HNS}(\underline{\widehat \E}))$.
\item One  expects that a similar statement holds even if $\E$ has a non-isolated singularity. Notice the existence of tangent cones for admissible HYM connections is already known in general (c.f. Section 5 in \cite{Tian}). Also the algebro-geometric results in \cite{CS3} are proved for general singularities. The proof of Conjecture \ref{conj1-3} in the general case of non-isolated singularities will suffice for understanding the singularities of global admissible Hermitian-Yang-Mills connections on compact K\"ahler manifolds. One may also ask whether any reflexive sheaf over a ball $B$ admits an admissible Hermitian-Yang-Mills connection, and it is reasonable to expect the answer is yes. In the case of isolated singularities, one may try to extend the result of Donaldson \cite{Donaldson3} to solve a Dirichlet boundary value problem. The key point is that the maximum principle reduces the key $C^0$ estimate to the boundary, and this replaces the use of the slope stability in the compact case.  We leave this to future study. 
\end{itemize}

\end{rmk}

From now on we assume that  there is an algebraic tangent cone $\underline{\widehat \E}$ which is locally free (i.e. defines a holomorphic vector bundle) on $\C\P^{n-1}$.  Denote by $\widehat \E$ the reflexive sheaf on $\widehat B$ that restricts to $\underline{\widehat \E}$ on $p^{-1}(0)$ and is isomorphic to the pull-back of $\E$ outside $p^{-1}(0)$. By the discussion in Remark \ref{rmk3.26} we see that $\widehat\E$ itself is also locally free. 

\begin{prop}\label{holomorphicsectioninverse}
For $k$ large, the natural map 
$$r: H^{0}(\widehat{B}, \widehat{\E}\otimes\phi^*\O(k)) \rightarrow H^{0}(\C\P^{n-1}, \underline{\widehat{\E}}\otimes\mathcal{O}(k))$$ 
is surjective. 
\end{prop}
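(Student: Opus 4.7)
The plan is to translate the surjectivity statement into the vanishing of a sheaf cohomology group on $\widehat{B}$, and then to establish that vanishing via Grauert's theorem on formal functions combined with Serre vanishing on $\C\P^{n-1}$.

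Let $D = p^{-1}(0)\cong \C\P^{n-1}$ denote the exceptional divisor, whose conormal bundle satisfies $\O(-D)|_D = \O_{\C\P^{n-1}}(1)$, and set $V_j := \widehat{\E}\otimes\phi^*\O(k)\otimes\O(-jD)$. Twisting the standard sequence $0\to \O(-D)\to \O_{\widehat{B}}\to i_*\O_D\to 0$ by the locally free sheaf $V_j$ produces, for every $j\geq 0$,
$$0 \to V_{j+1} \to V_j \to i_*\bigl(\underline{\widehat{\E}}\otimes\O(k+j)\bigr) \to 0,$$
using $\widehat{\E}|_D = \underline{\widehat{\E}}$. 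The long exact sequence for $j=0$ identifies $r$ with the map induced on global sections, so surjectivity of $r$ is equivalent to the vanishing $H^1(\widehat{B}, V_1) = 0$.

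To obtain this vanishing I first establish its analogue on infinitesimal neighborhoods. Since $\underline{\widehat{\E}}$ is a fixed holomorphic vector bundle, Serre vanishing provides a threshold $k_0$ with $H^1(\C\P^{n-1}, \underline{\widehat{\E}}\otimes\O(d)) = 0$ whenever $d\geq k_0$. For $k\geq k_0-1$, the graded pieces $V_j/V_{j+1} \cong i_*(\underline{\widehat{\E}}\otimes\O(k+j))$ (with $j\geq 1$) all have vanishing $H^1$. A routine induction on $m$, using the short exact sequences above on each thickening, then yields
$$H^1(D_m, V_1|_{D_m}) = 0 \qquad\text{for every } m\geq 0,$$
where $D_m$ denotes the $m$-th infinitesimal neighborhood of $D$ in $\widehat{B}$.

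Finally, $p\colon\widehat{B}\to B$ is proper and $V_1$ is coherent, so by Grauert's direct image theorem $R^1p_*V_1$ is coherent on $B$, and the analytic theorem on formal functions provides
$$(R^1p_*V_1)_0^{\wedge}\;\cong\;\varprojlim_m H^1(D_m, V_1|_{D_m}) \;=\; 0.$$
Faithful flatness of $\O_{B,0}\to \widehat{\O}_{B,0}$ on the coherent stalk then forces $(R^1p_*V_1)_0 = 0$; and because $p$ is an isomorphism away from the origin, $R^1p_*V_1 = 0$ on all of $B$. Since $B$ is Stein, the Leray spectral sequence collapses to $H^1(\widehat{B}, V_1)\cong H^0(B, R^1p_*V_1) = 0$, which is what we needed. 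The step I would regard as the main technical point is the invocation of the theorem on formal functions in the analytic category (necessary because $\widehat{B}$ is neither compact nor Stein); once that is granted, the rest of the proof is bookkeeping with twists and elementary homological algebra.
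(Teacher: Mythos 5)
Your argument is correct, but it takes a genuinely different route from the paper's. The paper proves the proposition by an $L^2$ extension argument: it writes $\widehat\E\otimes\phi^*\O(k_0)$ as $\widehat\E\otimes K_{\widehat B}\otimes\phi^*\O(k_0+n-1)$, equips it with a metric of the form $e^{-K|z|^2}h_0\otimes h_{k_0}$ which is Nakano nonnegative once $K$ and $k_0$ are large, and then invokes the Ohsawa--Takegoshi extension theorem (with plurisubharmonic weight $\psi=p^*\log|z|^2$) to extend holomorphic sections from the exceptional divisor to $\widehat B$. Your proof replaces this by a purely cohomological argument: reduce surjectivity of $r$ to $H^1(\widehat B, V_1)=0$ via the ideal-sheaf sequence of $D$, kill the $H^1$ of every infinitesimal thickening $D_m$ by filtering with the graded pieces $\underline{\widehat\E}\otimes\O(k+j)$ and applying Serre vanishing, and then pass back to $\widehat B$ via Grauert's comparison theorem (the analytic theorem on formal functions), faithful flatness of completion, and the Leray spectral sequence over the Stein base $B$. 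Both proofs are complete; the paper's is shorter once one grants Ohsawa--Takegoshi and stays within the differential-geometric toolkit used throughout the paper, while yours avoids $L^2$ estimates entirely at the cost of invoking the (nontrivial, but classical) analytic comparison theorem, and it makes the threshold on $k$ explicitly the Serre vanishing threshold for $\underline{\widehat\E}$. One tiny imprecision: surjectivity of $r$ is \emph{implied by}, not equivalent to, $H^1(\widehat B, V_1)=0$ --- what is equivalent is the vanishing of the connecting homomorphism --- but since you prove the stronger vanishing this is harmless.
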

\begin{proof}
By the vanishing theorem of A. Fujiki (see Theorem $N'$ in \cite{Fujiki}), we know $H^1(\hat{B}, \widehat{\E} \otimes \phi^* \O(k+1))$ when $k$ is large. In particular, we know $r$ is surjective.
\end{proof}

Now we fix $k$ large given by the above Proposition, replace $\widehat{\E}$ by $\widehat{\E}\otimes \phi^*\mathcal O(k)$, and assume $r: H^0(\widehat B, \widehat\E)\rightarrow H^0(\C\P^{n-1}, \underline{\widehat \E})$ is surjective. We may assume $\widehat{\underline \E}$ is globally generated on $\C\P^{n-1}$. Notice since $\E$ is reflexive, there is a natural map $\phi_*: H^0(\widehat B, \widehat \E)\rightarrow H^0(B, \E)$. Denote by $HG$ the image of $\phi_*$.

\begin{prop} \label{prop3-29}
Suppose $\underline{\widehat \E}$ is a semistable vector bundle on $\C\P^{n-1}$, then for any $s\in HG\setminus\{0\}$, we have $d(s)=\mu(\underline{\widehat \E})$. 

\end{prop}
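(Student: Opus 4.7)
The plan is to mirror the proof of Lemma \ref{lem3-11} for the semistable pull-back case, but build the comparison metric from the algebraic tangent cone via the blow-up $p:\widehat B\to B$. Since $\underline{\widehat\E}$ is semistable on $\C\P^{n-1}$, Theorem \ref{thm2-10} produces, for each $\epsilon>0$, a smooth Hermitian metric $\underline H_\epsilon$ on $\underline{\widehat\E}$ whose Chern connection $\underline A_\epsilon$ satisfies $\bigl|\sqrt{-1}\Lambda_{\omega_{FS}}F_{\underline A_\epsilon}-(n-1)\mu\,\mathrm{Id}\bigr|_{L^\infty(\C\P^{n-1})}<\epsilon$, where $\mu=\mu(\underline{\widehat\E})$. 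Because $\widehat\E$ is locally free on the complex manifold $\widehat B$, one extends $\underline H_\epsilon$ by a partition-of-unity construction to a smooth Hermitian metric $\widehat H_\epsilon$ on $\widehat\E$ over all of $\widehat B$ with $\widehat H_\epsilon|_{p^{-1}(0)}=\underline H_\epsilon$.

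The next step is to transport $\widehat H_\epsilon$ to $\E|_{B^*}$ via the biholomorphism $p:\widehat B\setminus p^{-1}(0)\to B^*$ and the canonical identification of $\widehat\E|_{\widehat B\setminus p^{-1}(0)}$ with $p^*\E$ (keeping track of the trivialization of the $\phi^*\O(k)$-factor on the complement of the exceptional divisor), then multiply by $|z|^{2\mu}$ to obtain the comparison metric $H_\epsilon$ on $\E|_{B^*}$. The crucial analytic task is to verify the curvature estimates needed by Lemma \ref{lem2-30}: with the decomposition $F_{H_\epsilon}=(p^{-1})^*F_{\widehat H_\epsilon}+\mu\sqrt{-1}\partial\bar\partial\log|z|^2\cdot\mathrm{Id}$, the identity $\Lambda_{\omega_0}\sqrt{-1}\partial\bar\partial\log|z|^2=(n-1)|z|^{-2}$, and the approximate-HE bound on $\widehat H_\epsilon$ near $p^{-1}(0)$, one should derive $|F_{H_\epsilon}|\in L^{1+\delta}(B^*)$ for some $\delta>0$ together with $\sup_{r\in(0,1]}r^{1-2n}\int_{\partial B_r}r^2|\Lambda_{\omega_0}F_{H_\epsilon}|\leq C\epsilon$ for a constant $C$ independent of $\epsilon$.

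To control $\int_{B_r}|s|^2_{H_\epsilon}$ for $s=\phi_*\widehat s\in HG\setminus\{0\}$, pulling back through $p$ gives
$$\int_{B_r}|s|^2_{H_\epsilon}=\int_{p^{-1}(B_r)}|z|^{2\mu}|\widehat s|^2_{\widehat H_\epsilon}\,p^*\frac{\omega_0^n}{n!}.$$
In standard blow-up coordinates $(t,w)$ on a chart, where $|z|^2=|t|^2(1+|w|^2)$ and $p^*\omega_0^n$ carries a factor $|t|^{2(n-1)}$, the domain $p^{-1}(B_r)$ is essentially $\{|t|(1+|w|^2)^{1/2}<r\}$, and a direct evaluation (integrating in $t$ first, then in $w$) shows this is asymptotic to $C r^{2n+2\mu}$ as $r\to 0$ with $C$ proportional to $\int_{\C\P^{n-1}}|\underline{\widehat s}|^2_{\underline H_\epsilon}\omega_{FS}^{n-1}$. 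The choice of $k$ in Proposition \ref{holomorphicsectioninverse} is what ensures $\underline{\widehat s}=\widehat s|_{p^{-1}(0)}$ is nonzero whenever $\widehat s\ne 0$, so $C>0$.

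Finally, Lemma \ref{lem2-30} combined with the curvature bounds yields
$$\limsup_{z\to 0}\frac{|\log\Tr_H H_\epsilon(z)|+|\log\Tr_{H_\epsilon}H(z)|}{-\log|z|}\leq C_0\epsilon$$
for a universal constant $C_0$, so $H$ and $H_\epsilon$ agree up to a factor $|z|^{\pm C_0\epsilon}$ near the origin. Together with the asymptotic $\int_{B_r}|s|^2_{H_\epsilon}\sim r^{2n+2\mu}$, this gives $d(s)\in[\mu-C_0\epsilon,\mu+C_0\epsilon]$, and letting $\epsilon\to 0$ concludes $d(s)=\mu$. The hard part will be the curvature analysis in the second step: verifying both the $L^{1+\delta}$ bound globally and the sharp control of $r^{1-2n}\int_{\partial B_r}r^2|\Lambda_{\omega_0}F_{H_\epsilon}|$, which requires careful bookkeeping of how the singular factor $|z|^{2\mu}$ combines with the pulled-back connection on $\widehat\E$ near the exceptional divisor, and in particular how the approximate-HE error on the base propagates through this combination without being amplified.
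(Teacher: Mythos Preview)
Your approach matches the paper's (which routes through an auxiliary Lemma~\ref{lem3-31}): build a comparison metric from the approximate Hermitian--Einstein metric on $\underline{\widehat\E}$, transport it to $\E|_{B^*}$, multiply by $|z|^{2\mu}$, and invoke Lemma~\ref{lem2-30}. The paper is more explicit about the extension step than your partition-of-unity construction: it pulls $\underline H_\epsilon$ back along the projection $\phi:\widehat B\to\C\P^{n-1}$ to the bundle $\E':=\phi^*\underline{\widehat\E}$, fixes a smooth bundle isomorphism $\widehat\E\simeq\E'$ that is the identity over $p^{-1}(0)$, and writes $\bar\partial_{\widehat\E}=\bar\partial_{\E'}+\beta$. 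Because the tangential restriction of $\beta$ to the exceptional divisor vanishes, one gets directly $|\beta|_{p^*\omega_0}\leq C$ and $|\partial_{\E'}\beta|_{p^*\omega_0}\leq C|z|^{-1}$, hence the pointwise bound $|z|^2|\Lambda_{\omega_0}F_{H_\epsilon}(z)|\leq\epsilon+C|z|$. This is cleaner than tracking a generic partition-of-unity extension through the degenerate metric $p^*\omega_0$, and it immediately gives both the $L^2$ curvature bound and the $\limsup$ needed in~(\ref{cor2-26}); your route should reduce to essentially the same computation once you make the extension compatible with $\phi$.

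One genuine slip: your claim that the choice of $k$ forces $\underline{\widehat s}=\widehat s|_{p^{-1}(0)}\neq 0$ whenever $\widehat s\neq 0$ is wrong---Proposition~\ref{holomorphicsectioninverse} gives \emph{surjectivity} of the restriction map $r$, not injectivity. If $\widehat s$ vanishes to order $m\geq 1$ along $p^{-1}(0)$ then your blow-up integral is asymptotic to $r^{2n+2\mu+2m}$, not $r^{2n+2\mu}$, and your argument yields $d(s)=\mu+m$ rather than $\mu$. For the actual application to Theorem~\ref{thm2} this is harmless, since the paper works only with a subspace $V\subset H^0(\widehat B,\widehat\E)$ on which $r$ is an isomorphism; there $\underline{\widehat s}\neq 0$ by construction.
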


\begin{lem}\label{lem3-31}
There exists a constant $C>0$ such that for any $\epsilon>0$ there is a smooth Hermitian metric $H_\epsilon$ on $\E$ over $B^*$ with the following properties
\begin{enumerate}[(1).]
\item $\int_{B^*} |F_{(H_\epsilon,\bp_\E)}|^{2}<\infty$; 
\item $|z|^2|\Lambda_{\omega}F_{(H_\epsilon,\bp_\E)}(z)|\leq \epsilon+C|z|$ for all $z\in B^*$; 
\item For all $s\in HG\setminus\{0\}$, 
$$\frac{1}{2}\lim_{r\rightarrow 0} \frac{\log \int_{B_r^*} |s|_{H_\epsilon}^2}{ \log r}-n=\mu(\underline{\widehat \E}).$$
\item Given a holomorphic section $s$ of $\widehat{\E}$ over $\widehat{B}$, if $s(0,[z])=0$ for some $z\in B^*$, then viewed as a section of $\E$ over $B^*$.
$$
|s(t.z)|_{H_\epsilon}\leq C |tz|^{\mu+1}
$$
for some constant $C>0$ and for any $t\in \C^*$ with $|t|$ small.

\end{enumerate}
\end{lem}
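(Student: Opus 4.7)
The plan is to construct $H_\epsilon$ as a perturbation of the natural Hermitian-Yang-Mills cone metric associated with $\underline{\widehat{\E}}$, where we use an approximately Hermitian-Einstein metric (provided by Kobayashi's Theorem~\ref{thm2-10}) in place of an exact one. Specifically, first I would fix $\epsilon'>0$ (to be chosen as a small multiple of $\epsilon$) and apply Theorem~\ref{thm2-10} to the semistable bundle $\underline{\widehat{\E}}$ to obtain a smooth Hermitian metric $\underline{H}_{\epsilon'}$ whose Chern connection $\underline{A}_{\epsilon'}$ satisfies $|\sqrt{-1}\Lambda_{\omega_{FS}}F_{\underline{A}_{\epsilon'}}-(n-1)\mu\,\Id|_{L^\infty(\C\P^{n-1})}<\epsilon'$, where $\mu=\mu(\underline{\widehat{\E}})$. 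Next, since $\widehat{\E}$ is a holomorphic vector bundle on $\widehat{B}$ with $\widehat{\E}|_{p^{-1}(0)}=\underline{\widehat{\E}}$, I would extend $\underline{H}_{\epsilon'}$ smoothly to a Hermitian metric $\widehat{H}_{\epsilon'}$ on $\widehat{\E}$ on a neighborhood of $p^{-1}(0)$ (and then globally using a cut-off), taking care to choose an extension which, in local holomorphic trivializations adapted to $p^{-1}(0)$, equals $\underline{H}_{\epsilon'}$ independently of the normal coordinate. Finally, using the biholomorphism $p\colon\widehat{B}\setminus p^{-1}(0)\to B^*$ and the isomorphism $\widehat{\E}|_{\widehat{B}\setminus p^{-1}(0)}\cong p^*(\E|_{B^*})$, set $H_\epsilon=|z|^{2\mu}\widehat{H}_{\epsilon'}$ as a Hermitian metric on $\E|_{B^*}$.

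For property~(2), the analogue of Equation~(\ref{eqn2-1}) gives $F_{H_\epsilon}=F_{\widehat{H}_{\epsilon'}}+\sqrt{-1}\mu\,\pi^*\omega_{FS}\cdot\Id$, and combining with Lemma~\ref{lem2-16}, $|z|^2\sqrt{-1}\Lambda_{\omega_0}F_{H_\epsilon}=|z|^2\sqrt{-1}\Lambda_{\omega_0}F_{\widehat{H}_{\epsilon'}}-(n-1)\mu\,\Id$. To analyze the first term, I would work in local coordinates $(w,\xi_2,\ldots,\xi_n)$ near $p^{-1}(0)$ with $p(w,\xi)=(w,w\xi_2,\ldots,w\xi_n)$, in which the pulled-back metric $\omega_0$ factors as $|w|^2$ times a smooth nondegenerate Kähler metric on the $\C\P^{n-1}$ directions plus smooth radial and mixed terms. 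This structure gives the pointwise expansion $|z|^2\sqrt{-1}\Lambda_{\omega_0}F_{\widehat{H}_{\epsilon'}}(w,\xi)=\pi^*\sqrt{-1}\Lambda_{\omega_{FS}}F_{\underline{A}_{\epsilon'}}(\xi)+O(|w|)$, where the leading term uses that the tangential part of $F_{\widehat{H}_{\epsilon'}}$ on $p^{-1}(0)$ equals $F_{\underline{A}_{\epsilon'}}$. Substituting $|{\sqrt{-1}\Lambda_{\omega_{FS}}F_{\underline{A}_{\epsilon'}}-(n-1)\mu\Id}|<\epsilon'$, the $(n-1)\mu$ cancels and we obtain $|z|^2|\Lambda_{\omega_0}F_{H_\epsilon}|\leq C\epsilon'+C|z|$; choosing $\epsilon'=\epsilon/C$ yields~(2). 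Property~(1) follows because $F_{\widehat{H}_{\epsilon'}}$ is smooth on $\widehat{B}$ and the singularities of $\omega_0^{-1}$ near $p^{-1}(0)$ (of order $|w|^{-2}$ tangentially) are compensated by the volume factor $p^*\omega_0^n\sim|w|^{2n-2}$, while the additional $|\pi^*\omega_{FS}|_{\omega_0}^2\sim|z|^{-4}$ contribution is integrable against $\omega_0^n\sim|z|^{2n-1}d|z|$ for $n\geq 3$.

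For property~(3), given $s\in HG\setminus\{0\}$ write $s=p_*\hat{s}$ for $\hat{s}\in H^0(\widehat{B},\widehat{\E})$, and compute
\begin{equation*}
\int_{B_r^*}|s|_{H_\epsilon}^2\,\omega_0^n=\int_{p^{-1}(B_r^*)}|z|^{2\mu}|\hat{s}|_{\widehat{H}_{\epsilon'}}^2\,p^*\omega_0^n.
\end{equation*}
Using $|z|^{2\mu}\sim|w|^{2\mu}$ and $p^*\omega_0^n\sim|w|^{2n-2}$ times a smooth positive density, rescaling $w=rt$ and letting $r\to 0$ gives $\int_{B_r^*}|s|_{H_\epsilon}^2\sim C_{\hat{s}}\,r^{2\mu+2n}$ with $C_{\hat{s}}>0$ under the assumption (which we may ensure after sufficient twisting by $\phi^*\O(k)$) that the restriction map $r\colon H^0(\widehat{B},\widehat{\E})\to H^0(\C\P^{n-1},\underline{\widehat{\E}})$ is injective on the image we care about, hence $\hat{s}|_{p^{-1}(0)}\neq 0$; this yields $d(s)=\mu(\underline{\widehat{\E}})$. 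The main obstacle I foresee is the construction and verification of the $O(|z|)$ error term in property~(2): a general smooth extension of $\underline{H}_{\epsilon'}$ would only give an $O(1)$ error in $|z|^2\Lambda_{\omega_0}F_{\widehat{H}_{\epsilon'}}$, failing the required bound. One must carefully choose the normal-direction behavior of $\widehat{H}_{\epsilon'}$, exploiting the decomposition of $p^*\omega_0$ into tangential $|w|^2$ part plus radial and mixed terms, so that the cross-terms of $F_{\widehat{H}_{\epsilon'}}$ of type $dw\wedge d\bar\xi$ and $d\xi\wedge d\bar{w}$ contribute only $O(|w|^{-1})$ (rather than $O(|w|^{-2})$) to $\Lambda_{\omega_0}$.
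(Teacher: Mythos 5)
Your construction is essentially the paper's: take Kobayashi's approximately Hermitian--Einstein metric $\underline H_{\epsilon'}$ on the semistable bundle $\underline{\widehat\E}$, transplant it to $\widehat\E$ so that it restricts to $\underline H_{\epsilon'}$ on the exceptional divisor, multiply by $|z|^{2\mu}$, and push down to $B^*$; properties (1) and (3) are then checked exactly as you do. The one step you leave open --- producing an extension for which $|z|^2\Lambda_{\omega_0}F$ has only an $O(\epsilon')+O(|z|)$ error --- is closed in the paper by a cleaner device than a coordinate-by-coordinate choice of normal behavior: one takes $H_{\epsilon'}'=\phi^*\underline H_{\epsilon'}$ on $\E'=\phi^*\underline{\widehat\E}$ (so that $F_{(H_{\epsilon'}',\bp_{\E'})}$ is \emph{exactly} the pull-back of $F_{\underline A_{\epsilon'}}$ and Lemma \ref{lem2-16} applies on the nose), transports it to $\widehat\E$ via a fixed smooth bundle isomorphism restricting to the identity over $p^{-1}(0)$, and controls the discrepancy through $\beta=\bp_{\widehat\E}-\bp_{\E'}$, whose tangential restriction to the divisor vanishes, giving $|\beta|_{\pi^*\omega_0}\leq C$ and $|\p_{\E'}\beta|_{\pi^*\omega_0}\leq C|z|^{-1}$, hence a curvature error of order $|z|^{-1}$. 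Your final worry is in any case too pessimistic: in the blow-up coordinates the inverse metric satisfies $g^{w\bar\xi}=O(|w|^{-1})$ (not $O(|w|^{-2})$), so the mixed $dw\wedge d\bar\xi$ components of a curvature form smooth across $p^{-1}(0)$ contribute only $O(|w|^{-1})$ to $\Lambda_{\omega_0}F$, and any smooth extension restricting correctly on the divisor already yields $|z|^2\Lambda_{\omega_0}F=\Lambda_{\omega_{FS}}F_{\underline A_{\epsilon'}}+O(|z|)$.
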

Assuming this, applying Corollary \ref{cor2-26} we obtain that 
\begin{equation} \label{eqn3-13}
C|z|^{\epsilon} H_\epsilon \leq H\leq C|z|^{-\epsilon}H_\epsilon,  
\end{equation}
and Proposition \ref{prop3-29} follows easily. 

\begin{proof}[Proof of Lemma \ref{lem3-31}]
As in Section \ref{semistable}, for any $\epsilon>0$ we can find a Hermitian metric $\underline H_\epsilon$ on $\underline{\widehat \E}$ such that $|\sqrt{-1}\Lambda_{\omega_{FS}}F_{\underline A_\epsilon}-\mu\Id|_{L^\infty}<\epsilon$ with $\mu=\mu(\underline{\widehat \E})$. Pulling back to $\widehat B$ by the map $\phi$, we get a Hermitian metric $H_\epsilon'$ on $\E':=\phi^*(\underline{\widehat \E})$. Now by our assumption we know that $\widehat\E$ is also a vector bundle and it is isomorphic to $\E'$ as smooth complex vector bundles. Fixing any smooth isomorphism between these two which restricts to the natural identity map on $\widehat{\underline \E}$ over the exceptional divisor $\C\P^{n-1}$, we may then view $H_\epsilon'$ naturally as a Hermitian metric on $\widehat\E$ too. Through this isomorphism we write $\beta=\bp_{\widehat\E}-\bp_{\E'}$. In particular, the tangential component of the restriction of $\beta$ to $\C\P^{n-1}$ is zero. A direct computation shows 

\begin{itemize}
\item $|\beta|_{\pi^*\omega}\leq C$;
\item $|\p_{\E'}\beta|_{\pi^*\omega}\leq C|z|^{-1}$.
\end{itemize}
So 
$$|F_{( H_\epsilon', \bp_{\widehat\E})}-F_{(H_\epsilon', \bp_{\E'})}|_{\pi^*\omega}\leq C|z|^{-1}. $$
Now let $H_\epsilon=|z|^{2\mu}H_\epsilon'$ and using the map $p$ we obtain a corresponding Hermitian metric on $\E|_{B^*}$, which we still denote by $H_\epsilon$. Then it is clear that (1) and (2) hold. (3) follows from the fact that there exists $C$ independent of $r$ so that 
$$
C^{-1} r^{2n-1+2\mu}\leq \int_{\p B_r} |s|^2_{H_\epsilon} \leq C r^{2n-1+2\mu}
$$
for $r$ small. (4) follows similarly. 
\end{proof}

Now we prove Theorem \ref{thm2}. The idea is similar to that has been previously used in Section \ref{semistable}. Let $(\E_\infty, A_\infty)$ be a tangent cone of $A$ at $0$.  We can build a homogeneous homomorphism $\tau: {\iota_B}_*\pi_B^*\underline{\widehat \E}\rightarrow \E_\infty$ as follows. Fix a subspace $V$ of $H^0(\widehat B, \widehat \E)$ such that $r: V\rightarrow H^0(\C\P^{n-1}, \widehat{\underline \E})$ is an isomorphism and we identify $V$ with a subspace of $H^0(B, \E)$ using the map $\phi_*$.  Choose a basis $s_i$ of $H^0(\C\P^{n-1}, \widehat{\underline \E})$ and correspondingly a basis $\sigma_i$ of $V$. By Proposition \ref{prop3-29} and by passing to a subsequence we may assume $\sigma_i$ converges to homogeneous holomorphic sections $\sigma_{i, \infty}$ of $\E_\infty$. Let $M_j=\sup_{i}\|\sigma_i\|_{j}$, and let $\sigma'_{i, \infty}$ be the limit of $\frac{1}{M_j}\sigma_{i}$. Then $\sigma'_{i, \infty}$ is either zero or homogeneous of degree $\mu$ and there is at least one $i$ such that $\sigma'_{i, \infty}$ is non-zero. 

 For any $z\in B^*$, and any $\eta$ on the fiber $\pi^*\underline{\widehat \E}|_{z}$, we may write $\eta=\sum_i a_i\pi^*s_i(\pi(z))$. Then we define $\tau(\eta)$ to be $\sum_i a_i \sigma'_{i, \infty}(z)$. To see this is well-defined suppose a section $s=\sum_i a_is_i\in H^0(\C\P^{n-1}, \underline{\widehat \E})$ vanishes at $\pi(z)$, then we need to show the corresponding limit section $\sum_i a_i \sigma'_{i, \infty}$ vanishes at $z$. This follows from (\ref{eqn3-13}): let $\sigma=\sum_i a_i \sigma_i$, by (4) in Lemma \ref{lem3-31} $|\sigma(tz)|_{H_\epsilon}\leq C|tz|^{\mu+1}$, hence 
$$|\sigma(tz)|_{H}\leq C|tz|^{\mu+1-\epsilon/2}. $$

On the other hand since $d(\sigma_i)=\mu$ for all $i$ we have
$$M_j \geq C2^{-j(\mu+\epsilon/2)}. $$
If we have chosen a priori that $\epsilon$ is sufficiently small then we know the corresponding limit of $\frac{1}{M_j}|\sigma(2^{-j}z)|$ is zero.
 
Now by analogy with the previous discussion it is easy to see $\tau$ is indeed a non-trivial homogeneous homomorphism. As before using the stability of $\underline{\widehat \E}$ one can conclude that $\E_\infty$ is a simple HYM cone with holonomy $e^{-2\pi\sqrt{-1}\mu}$, and $\tau$ induces an isomorphism between $\underline{\widehat \E}$ and $\underline\E_\infty$. This finishes the proof of Theorem \ref{thm2}. 

\subsection{Discussion and examples}\label{section3-4}
 Let $\E$ be a reflexive sheaf defined over the $n$-dimensional ball $B\subset \C^n$ with a (not necessarily isolated) singularity at $0$. 

\begin{defi} \label{defi3-31}
We say $0$ is a \emph{homogeneous} singularity of $\E$  if there is a reflexive sheaf $\underline \E$ over $\C\P^{n-1}$ such that $\E$ is isomorphic to ${\iota_U}_*\pi_U^*\underline \E$ on some neighborhood $U$ of $0$.
\end{defi}

 We briefly recall the notion of \emph{Fitting invariants}, following \cite{DE}.  Choose a finitely generated free presentation of the stalk $\E_0$ 
$$ \F\xrightarrow{\phi} \G\rightarrow \E_0\rightarrow 0,  $$
we define the $j$-th \emph{Fitting ideal} $\text{Fitt}_j(\E, 0)$ of $\E$ at $0$ to be the ideal of $\O_0$ given by the image of the $\O_0$-module homomorphism 
$$\Lambda^{b-j}\phi: (\Lambda^{b-j} \G)^*\otimes (\Lambda^{b-j}\F)\rightarrow \O_0,  $$
where $b=rank(\G)$.
If we identify $\F$ with $\O_0^{\oplus a}$ and $\G$ with $\O_0^{\oplus b}$, and represent $\phi$ by a $\O_0$-valued matrix, then $\text{Fitt}_j(\E, 0)$ is the ideal of $\O_0$ generated by all the $(b-j)\times (b-j)$ minors of the matrix. We make the convention that $\text{Fitt}_{j}(\E, 0)=\O_0$ if $j\geq b$. It is not hard to show (see for example \cite{DE}, Chapter 20) that for all $j$, $\text{Fitt}_j(\E, 0)$ is a well-defined invariant of the stalk $\E_0$, i.e. it does not depend on the choice of the particular presentation. 

The following is pointed out to us by Professor Jason Starr.

\begin{prop}\label{prop3.34}
If $\E$ is homogeneous at $0$ then all the corresponding Fitting ideals $\text{Fitt}_j(\E, 0)$ must be homogeneous ideals of $\O_0$. 
\end{prop}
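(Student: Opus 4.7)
The plan is to exploit the homogeneity hypothesis to equip the stalk $\E_0$ with a $\C^*$-equivariance, and then to use that Fitting ideals are intrinsic invariants of modules. For each $\lambda\in\C^*$, let $m_\lambda\colon\C^n\to\C^n$ denote multiplication by $\lambda$. Since $\pi\circ m_\lambda=\pi$, one has a tautological identification $m_\lambda^*(\pi^*\underline\E)=\pi^*\underline\E$ on $U\setminus\{0\}$, which combined with $\E|_{U\setminus\{0\}}\cong\pi^*\underline\E$ yields an isomorphism $m_\lambda^*\E\cong\E$ on a punctured neighborhood of $0$. Both sheaves are reflexive and $\{0\}$ has complex codimension $\geq 2$, so the isomorphism extends uniquely across $0$ via the standard Hartogs-type extension property for morphisms of reflexive sheaves (used already in the proof of Corollary \ref{cor2-5}). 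Passing to stalks at $0$ produces an additive automorphism
$$\phi_\lambda\colon\E_0\xrightarrow{\ \sim\ }\E_0$$
satisfying $\phi_\lambda(r\cdot s)=m_\lambda^*(r)\cdot\phi_\lambda(s)$ for all $r\in\O_0$, $s\in\E_0$, where $m_\lambda^*\colon\O_0\to\O_0$ is the ring automorphism $f(z)\mapsto f(\lambda z)$.

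Next, I invoke the elementary fact that for any ring automorphism $\sigma\colon R\to R$ and any additive automorphism $\phi\colon M\to M$ of an $R$-module satisfying $\phi(rm)=\sigma(r)\phi(m)$, one has $\sigma(\text{Fitt}_j(M))=\text{Fitt}_j(M)$: applying $\sigma$ entrywise to a finite matrix presentation of $M$ produces the matrix for the presentation coming from the generators $\phi(m_i)$, whose Fitting ideals (being independent of the choice of presentation) must coincide with the original ones. Applied to $\phi_\lambda$, this gives
$$m_\lambda^*\bigl(\text{Fitt}_j(\E,0)\bigr)=\text{Fitt}_j(\E,0)\qquad\text{for every }\lambda\in\C^*,$$
so $I:=\text{Fitt}_j(\E,0)$ is $\C^*$-invariant as a subset of $\O_0$.

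The proof is completed by the standard fact that any $\C^*$-invariant ideal of the Noetherian local ring $\O_0$ is generated by homogeneous polynomials. For each $N\geq 0$, the quotient $\O_0/\mathfrak m^{N+1}$ carries the natural $\C^*$-action by weight, so the image $I/(I\cap\mathfrak m^{N+1})$, being a $\C^*$-invariant subspace of a finite-dimensional $\C^*$-representation, decomposes into weight spaces; hence every homogeneous component $f_k$ of a given $f=\sum_k f_k\in I$ satisfies $f_k\in I+\mathfrak m^{N+1}$. Letting $N\to\infty$ and applying Krull's intersection theorem (which guarantees that $I$ is $\mathfrak m$-adically closed in $\O_0$) yields $f_k\in I$ for every $k$, as required. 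The main technical point of this outline is the first step --- promoting the isomorphism $m_\lambda^*\E\cong\E$ on the punctured neighborhood to an isomorphism across the origin --- and this is precisely the juncture at which the reflexivity of $\E$ enters essentially.
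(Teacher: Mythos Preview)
Your argument is correct and takes a genuinely different route from the paper's own proof. The paper proceeds constructively: it builds an explicit graded free presentation of $\E_0$ by first resolving $\underline\E$ on $\C\P^{n-1}$ by direct sums of line bundles $\bigoplus_l \O(-l)^{\oplus n_l}$, then pulling back to $B\setminus\{0\}$ and pushing forward across $0$. The nontrivial step there is to verify that the resulting map $\F\to\E$ remains surjective at the origin, which is handled by a Taylor-series argument together with the observation that $\text{Im}(\phi)\supset\I_0^m\E$ for some $m$. Once one has a presentation $\G\to\F\to\E\to 0$ in which the matrix of $\G\to\F$ has homogeneous polynomial entries, the homogeneity of the Fitting ideals is immediate from the definition.

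Your approach bypasses the explicit resolution entirely: you extract only the $\C^*$-equivariance of $\E_0$ (via Hartogs extension of $m_\lambda^*\E\cong\E$ across the origin, using reflexivity), and then invoke the intrinsic nature of Fitting ideals under base change along the ring automorphism $m_\lambda^*$. This is more conceptual and would adapt with no change to other intrinsic module invariants or to other group actions. The paper's approach, by contrast, yields more: it produces an actual homogeneous presentation matrix, which is what one wants for the explicit computations with $\E_f$ carried out in the examples at the end of Section~\ref{section3-4}. One minor remark: your final step shows that every homogeneous component $f_k$ of every $f\in I$ lies in $I$; strictly speaking one should then observe (again via Artin--Rees closure) that this forces $I$ to coincide with the ideal generated by its homogeneous elements, hence to be generated by finitely many homogeneous polynomials since $\O_0$ is Noetherian. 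This is routine, and the paper's remark after the statement treats the two conditions as interchangeable.
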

\begin{rmk}
Here we say an ideal $\I$ of $\O_0$ is \emph{homogeneous} if it is generated by homogeneous polynomials; it is not hard to see that for a homogeneous ideal $\I$, if a function $f$ belongs to $\I$, then all the homogeneous components in the Taylor expansion of $f$ at $0$ also belong to $\I$. 
\end{rmk}
\begin{proof}
Since the claimed property only depends on the local structure of $\E$ near $0$, we may assume without loss of generality that on $B\setminus\{0\}$,  $\E$ is isomorphic to ${\iota_B}_*\pi_B^*\underline \E$ for some reflexive sheaf $\underline \E$ on $\C\P^{n-1}$. Let $l_0$ be the smallest $l$ such that $H^0(\C\P^{n-1}, \underline\E(l))\neq 0$, and choose $l_1$ such that the maps $H^0(\C\P^{n-1}, \underline \E\otimes \O(l))\otimes H^0(\C\P^{n-1}, \O(1))\rightarrow H^0(\C\P^{n-1}, \underline \E\otimes \O(l+1))$ are surjective for all $l\geq l_1$. Choosing a basis of $H^0(\C\P^{n-1}, \underline \E\otimes \O(l))$ for all $l\in[l_0, l_1]$ we obtain a surjective homomorphism 
$$\underline \phi: \underline \F:= \bigoplus_{l_0\leq l\leq l_1} \O(-l)^{\oplus n_l}\rightarrow \underline \E, $$
where $n_l=\dim H^0(\C\P^{n-1}, \underline \E\otimes \O(l))$. Pulling-back to $B\setminus \{0\}$ and pushing forward to $B$, we obtain the corresponding map 
$$\phi: \F\simeq \bigoplus_{l_0\leq l\leq l_1}\O_{B}^{\oplus n_l}\rightarrow \E $$
We claim $\phi$ is surjective at $0$. To see this we first notice that by definition $\phi$ is surjective on $B\setminus \{0\}$,  so the sheaf $\E/\text{Im}(\phi)$ is a torsion sheaf supported at the origin, hence there is an $m\geq 1$ such that $\text{Im}(\phi)$ contains $\I_{0}^m \E$, where $\I_0$ is the ideal sheaf of $0$. Similar to the proof of Lemma \ref{lem3-4} (notice we did not use the HYM condition there), we know any local section $s$ of $\E$ can be written as a Taylor series 
$s=\sum_{j\geq l_0} \pi^*s_j$, where $s_j$ is a holomorphic section of $\underline{\E}\otimes \O(j)$, and we have used the natural identification $\pi^*(\O(-1))\simeq \O_{B\setminus \{0\}}$. Now by our choice of $l_1$ and $m$ it follows that $\pi^*s_j$ is a section of $\text{Im}(\phi)$ for $j\leq l_1+m$, and $\sum_{j\geq l_1+m}\pi^*s_j$ defines a germ of a section $\text{Im}(\phi)$ in a neighborhood of $0$. This proves the claim.  

By analogy with the previous discussion, we get a locally free presentation of $\underline\E$
$$\underline\G\xrightarrow{\underline \psi}\underline \F\xrightarrow{\underline \phi}\underline\E\rightarrow 0, $$
where $\underline\G$ is also given by a direct sum of line bundles on $\C\P^{n-1}$, and the map $\psi$ is then represented by a matrix of homogeneous polynomials. Hence it also induces a corresponding locally free presentation of  $\E$
$$\G\xrightarrow{\psi}\F\xrightarrow{\phi}\E\rightarrow 0. $$
Then the conclusion follows directly.
\end{proof}

Using Proposition \ref{prop3.34}, one can easily find explicit examples of  reflexive sheaves with non-homogeneous singularities.
Now we consider the case $n=3$. Let $\E_f$ be given by the short exact sequence
\begin{equation}\label{reflexivesheafexample}
0\rightarrow\O_{B}\xrightarrow f \O_{B}^{\oplus 3}\rightarrow \E_f \rightarrow 0,
\end{equation} 
where $f=(f_1, f_2, f_3)$ is a triple of holomorphic functions defined over $B$. We assume $0$ is an isolated zero of $f$. Then $\E_f$ is a rank two reflexive sheaf in a neighborhood of $0$,  by the Remark after Example $1.1.13$ on Page $77$, \cite{CMH}.  $\E_f$ has an isolated singularity at $0$ and it follows from definition that 
$\text{Fitt}_2(\E_f, 0)$ is the ideal of $\O_0$ generated by $f_1, f_2, f_3$. So if $f_1, f_2, f_3$ do not generate a homogeneous ideal then the corresponding $\E_f$ is not homogeneous.

As mentioned before, one possible algebraic tangent cone is given by $(p^*\E_f^*)^{*}\otimes \O_D$, which we denote by $\widehat{\underline{\E_f}}$. Similarly, we define $\widehat{\E^*_f}=(p^*\E_f)^{*}\otimes \O_D$ We will show that $\widehat{\underline {\E_f}}$ can be explicitly calculated under suitable assumption on $f$. 
\begin{lem}\label{lem3.36}
$\E_f\cong\E_f^*$ in a neighborhood of $0$. 
\end{lem}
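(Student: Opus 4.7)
The plan is to construct an explicit isomorphism $\psi: \E_f \to \E_f^*$ using the Koszul complex of the section $f = (f_1, f_2, f_3)$. First I would dualize the defining sequence \eqref{reflexivesheafexample}, obtaining the left-exact sequence
$$0 \to \E_f^* \to \O_{\C^3}^{\oplus 3} \xrightarrow{f^T} \O_{\C^3},$$
so that $\E_f^*$ is identified with $\ker(f^T)$, where $f^T$ sends $(g_1,g_2,g_3) \mapsto f_1 g_1 + f_2 g_2 + f_3 g_3$.

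Next I would introduce the antisymmetric matrix
$$A = \begin{pmatrix} 0 & -f_3 & f_2 \\ f_3 & 0 & -f_1 \\ -f_2 & f_1 & 0 \end{pmatrix},$$
regarded as a morphism $A: \O_{\C^3}^{\oplus 3} \to \O_{\C^3}^{\oplus 3}$ (concretely, $v \mapsto f \times v$). A direct computation gives $A f = 0$ and $f^T A = 0$, so $A$ descends to a map out of $\E_f = \O_{\C^3}^{\oplus 3}/\mathrm{Im}(f)$ whose image lies in $\ker(f^T) = \E_f^*$. This defines a canonical morphism $\psi: \E_f \to \E_f^*$, which is just the middle differential of the Koszul complex
$$0 \to \O \xrightarrow{f} \O^{\oplus 3} \xrightarrow{A} \O^{\oplus 3} \xrightarrow{f^T} \O.$$

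I would then verify that $\psi|_{B\setminus\{0\}}$ is an isomorphism. Since $\{0\}$ is the only common zero of $f$, at each $x \in B\setminus\{0\}$ some $f_i$ is a unit in $\O_{B,x}$, so $(f_1, f_2, f_3)$ forms a regular sequence in that local ring. Standard Koszul theory then yields exactness at the two middle positions, i.e.\ $\ker(A) = \mathrm{Im}(f)$ and $\mathrm{Im}(A) = \ker(f^T)$ on $B\setminus\{0\}$, which translate respectively into injectivity and surjectivity of $\psi$ between rank-two locally free sheaves there.

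Finally, since $\{0\}$ has complex codimension three in $B$ and both $\E_f$ and $\E_f^*$ are reflexive, the inverse $\psi^{-1}$ defined on $B\setminus\{0\}$ extends uniquely to a global morphism $\E_f^* \to \E_f$ by Hartogs's theorem for reflexive sheaves (exactly as used in the proof of Corollary \ref{cor2-5}). Its composition with $\psi$ agrees with the identity away from the origin, hence everywhere by uniqueness of extension; this upgrades $\psi$ to the desired global isomorphism. The only place requiring any real care is this Hartogs-type extension across the origin; everything else is routine once the Koszul construction of $\psi$ is in hand.
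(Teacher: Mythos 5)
Your proof is correct and follows the same essential route as the paper: both identify $\E_f^*$ with $\ker(f^T)$ and obtain the isomorphism from the middle differential of the Koszul complex of $f=(f_1,f_2,f_3)$, whose exactness comes from the hypothesis that $0$ is an isolated common zero. The only difference is minor: the paper invokes exactness of the Koszul complex at the stalk $\O_{\C^3,0}$ directly (since $f_1,f_2,f_3$ is a regular sequence there), whereas you establish the isomorphism only on $B\setminus\{0\}$ and then extend its inverse across the origin using reflexivity — both steps are valid, with the paper's being slightly more direct and yours making the map $\psi$ more explicit.
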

\begin{proof}
This is actually true for any rank $2$ reflexive sheaf $\F$ over  $B$ where $B$ is the unit ball centered at $0$ in $\C^n$. Indeed, we know $\det(\F)$ is a holomorphic line bundle over $B$ which has to be trivial. Let $\Theta$ be a global trivialization of $\det(\F)$ over $B$. Then one can naturally define an isomorphism $\F^* \rightarrow \F$ outside $Sing(\F)$ by 
$$
v\in \F^*(x) \longmapsto i_v \Theta\in \F(x)
$$
where $i_v$ denotes the contraction with $v$. By using $\mathcal{H}om(\F^*, \F)$ and $\mathcal{H}om(\F, \F^*)$ are both reflexive, we know that the map above can be extended to be an isomorphism between $\F^*$ and $\F$. 
\end{proof}

Using Lemma \ref{lem3.36} and the fact that the pull-back functor is right-exact (See for example Page 7 in \cite{F}), we obtain 
$$\O_{\hat B}\xrightarrow{p^*f}\O_{\hat B}^{\oplus 3}\rightarrow p^*\E_f^*\rightarrow 0. $$
Taking dual we get
$$0\rightarrow (p^*\E_f^*)^*\rightarrow\O_{\hat B}^{\oplus 3}\xrightarrow{(p^*f)^t}\O_{\hat B},$$
where $(p^*f)^t$ denotes the transpose of $p^*f$.

Given $f=(f_1,f_2,f_3)$, we define a new triple $\widehat f=(\widehat f_1, \widehat f_2, \widehat f_3)$ as follows. Let $g_i$ be the homogeneous part of $f_i$ which has the lowest degree in the Taylor expansion of $f_i$ at $0$ and let  
$d$ be the smallest degree among the degrees of $g_1, g_2, g_3$. Then we define $\widehat f_i=g_i$ if the degree of $g_i$ is $d$ and $\widehat f_i=0$ otherwise.  Denote by $D=\C\P^{2}$ the exceptional divisor of the map $p: \widehat B\rightarrow B$. Then we can naturally view $p^*f_i$, $i=1, 2, 3$ as  holomorphic sections of $\O_{\hat B}(-d\cdot D)$, and view $\hat f_i$ as sections of $\O_{\C\P^2}(d)$ over $D=\C\P^2$.

\begin{lem} \label{lem3-36}
Suppose the common zero set of $\hat{f}_1, \hat{f}_2,\widehat{f}_3$ consists of finitely many points in $D$, then $\underline{\widehat{\E_f}}\otimes \O_{\C\P^2}(d)=\underline{\widehat{\E_f^*}}\otimes \O_{\C\P^2}(d)$ is the subsheaf of $\O_{\C\P^2}(d)^{\oplus 3}$ generated by the global sections  $(\widehat{f}_2, -\widehat{f}_1,0), (\widehat{f}_3,0,-\widehat{f}_1)$, and $(0,\widehat{f}_3, -\widehat{f}_2)$.  
\end{lem}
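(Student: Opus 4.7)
The plan is to compute the restriction $(p^*\E_f^*)^*|_D$ explicitly in local coordinates near each point of $D = \C\P^2$ and to recognize the result as the subsheaf generated by the Koszul-type sections $v_1, v_2, v_3$. The text has already set this up: via the Koszul isomorphism $\E_f \cong \E_f^*$ and right-exactness of $p^*$, the reflexive sheaf $(p^*\E_f^*)^*$ equals the kernel of $\O_{\widehat B}^{\oplus 3} \xrightarrow{p^*f} \O_{\widehat B}$. Since $s_D$ is not a zero-divisor and each $p^*f_i$ vanishes on $D$ to order at least $d$, setting $F_i = p^*f_i/s_D^d \in H^0(\widehat B, [D]^{\otimes(-d)})$ yields $(p^*\E_f^*)^* = \ker(F)$, where $F = (F_1, F_2, F_3)\colon \O_{\widehat B}^{\oplus 3} \to [D]^{\otimes(-d)}$ and $F_i|_D = \widehat f_i$ under the canonical identification $[D]^{\otimes(-d)}|_D \cong \O_D(d)$.

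To restrict $\F := \ker(F)$ to $D$, I will tensor with $\O_D$ the two short exact sequences $0 \to \F \to \O_{\widehat B}^{\oplus 3} \to \mathcal K \to 0$ and $0 \to \mathcal K \to [D]^{\otimes(-d)} \to C \to 0$, where $\mathcal K = \text{image}(F)$. Because $\O_D$ admits a length-one locally free resolution $0 \to [D]^{\otimes(-1)} \to \O_{\widehat B} \to \O_D \to 0$, all higher $\text{Tor}$'s against $\O_D$ vanish; dimension-shifting from the second sequence then forces $\text{Tor}_1(\mathcal K, \O_D) = 0$, and tensoring the first sequence gives an embedding $\F|_D \hookrightarrow \O_D^{\oplus 3}$. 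After twisting by $\O_D(d)$, the sheaf $\F|_D \otimes \O_D(d) = \underline{\widehat{\E_f}} \otimes \O_D(d)$ sits inside $\O_D(d)^{\oplus 3}$ — the ambient sheaf in which the global sections $v_i$ live.

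I then verify pointwise on $D$ that this embedded $\F|_D \otimes \O_D(d)$ is generated by the germs of $v_1, v_2, v_3$. By the regularity hypothesis, the common zero set of $(\widehat f_1, \widehat f_2, \widehat f_3)$ on $D$ is a finite set $Z$; since $f_1, f_2, f_3$ meet only at $0 \in B$, the $F_i$ have no common zero on $\widehat B \setminus D$, so $Z$ is the full common zero set of $(F_1, F_2, F_3)$ on the smooth threefold $\widehat B$. At any $q \in Z$, the three $F_i$ cut out the isolated point $\{q\}$ and hence form a regular sequence in the Cohen--Macaulay local ring $\O_{\widehat B, q}$; the Koszul complex is then exact, and $\F = \ker(F)$ is locally generated by the three Koszul syzygies $(F_2, -F_1, 0)$, $(F_3, 0, -F_1)$, $(0, F_3, -F_2)$, whose restrictions to $D$ become exactly $v_1, v_2, v_3$. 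At any $p \in D \setminus Z$, some $F_i$ (say $F_1$) is a local generator of $[D]^{\otimes(-d)}$ and $F$ is locally a split surjection, so $\F$ is locally free of rank two generated by $(F_2, -F_1, 0)$ and $(F_3, 0, -F_1)$, which restrict to $v_1, v_2$ (and $v_3$ is recovered from the Koszul relation $\widehat f_3 v_1 - \widehat f_2 v_2 + \widehat f_1 v_3 = 0$ since $\widehat f_1$ is locally invertible). In both cases every stalk of $\F|_D \otimes \O_D(d)$ is generated by the germs of $v_1, v_2, v_3$, which gives the claim.

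The main point to be careful about is that $\F|_D$ is \emph{strictly smaller} than the naive kernel $\ker(\O_D^{\oplus 3} \xrightarrow{(\widehat f_1, \widehat f_2, \widehat f_3)} \O_D(d))$ at points of $Z$: the discrepancy is precisely the torsion submodule $\text{Tor}_1(C, \O_D) \subset \mathcal K|_D$ coming from the second restriction sequence. Working with the Koszul syzygies of the regular sequence $(F_1, F_2, F_3)$ in the threefold $\widehat B$, rather than with the syzygies of $(\widehat f_1, \widehat f_2, \widehat f_3)$ on the surface $D$, is what yields the generators $v_i$ and keeps $\F|_D$ torsion-free in accordance with Lemma~\ref{lem3-24}.
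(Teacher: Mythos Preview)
Your proof is correct and follows essentially the same route as the paper: both identify $(p^*\E_f^*)^*$ with the kernel of $(F_1,F_2,F_3)$, use the regularity hypothesis to argue that the $F_i$ form a regular sequence at each point of $Z$ so that the Koszul syzygies generate, and then restrict to $D$ to obtain the sections $(\widehat f_2,-\widehat f_1,0)$, $(\widehat f_3,0,-\widehat f_1)$, $(0,\widehat f_3,-\widehat f_2)$. The only difference is packaging---the paper works in explicit blow-up coordinates and invokes the Remark after Lemma~\ref{lem3-24} to describe $\F|_D$ as the subsheaf generated by restrictions of local sections, whereas you obtain the embedding $\F|_D\hookrightarrow\O_D^{\oplus 3}$ via the $\mathrm{Tor}$-vanishing argument; both accomplish the same thing.
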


\begin{proof}
By definition $\widehat{\E_f^*}$ is given by the kernel of the map $p^*f:\O_{\widehat B}^{\oplus 3}\rightarrow \O_{\widehat B}$. We work in the local coordinate chart $\{z_1, w_2, \cdots, w_n\}$ of $\widehat B$ so that the map $p$ is given by $(z_1, w_2, \cdots, w_n)\mapsto (z_1, z_1w_2, \cdots, z_1w_n)$. Observe the restriction of the function $z_1^{-d}p^*f_i$ to $U\cap D$ equals $l_1^{-\otimes d}\cdot \widehat f_i$, where $l_1$ is the local trivialization of $\O_{\C\P^2}(d)$ on $U$ induced by $z_1$ and as above we view $\widehat f_i$ as a holomorphic section of $\O_{\C\P^2}(d)$ over $D$. So from the assumption  we know on $U$, the common zero set of $z_1^{-d}p^*f_1, z_1^{-d}p^*f_2, z_1^{-d}p^*f_3$ consists of finitely many points in $D$. Then by Page 687 in \cite{GH} it follows from the exactness of the Koszul complex of $(z_1^{-d}p^*f_1, z_1^{-d}p^*f_2, z_1^{-d}p^*f_3)$ that  
$\widehat{\E_f^*}$ on $U$ is the subsheaf of $\O_U^{\oplus 3}$ generated by the sections $z_1^{-d}(p^*f_2, -p^*f_1, 0)$, $z_1^{-d}(p^*f_3, 0, -p^*f_1)$, and $z_1^{-d}(0, p^*f_3, -p^*f_2)$. By Remark \ref{rmk3.26} we know $\underline{\widehat{\E_f^*}}$ on $U\cap D$ is the subsheaf of $\O_{U\cap D}^{\oplus 3}$ generated by $l_1^{-\otimes d}(\widehat{f}_2, -\widehat{f}_1,0)$, $l_1^{-\otimes d}(\widehat{f}_3,0,-\widehat{f}_1)$, and $l_1^{-\otimes d}(0,\widehat{f}_3, -\widehat{f}_2)$. Tensoring with $l_1^{\otimes d}$ it follows that $\underline{\widehat{\E_f^*}}\otimes \O_{\C\P^2}(d)$ on $U$ is the subsheaf of $\O_{\C\P^2}(d)^{\oplus 3}$ generated by $(\widehat{f}_2, -\widehat{f}_1,0), (\widehat{f}_3,0,-\widehat{f}_1)$, and $(0,\widehat{f}_3, -\widehat{f}_2)$. From this the conclusion follows. 
\end{proof}
\

\begin{cor}\label{cor3.38}
If the common zero set of $\widehat{f}_1, \widehat{f}_2,\widehat{f}_3$ is empty in $D$, then $\underline{\widehat{\E}_f}$ is a stable vector bundle on $\C\P^2$.
\end{cor}

\begin{proof}
Since slope stability is preserved under taking dual, it suffices to show that $(\underline{\widehat{\E}_f})^*$ is stable. 
 By Lemma \ref{lem3-36} and the assumption, we know $\underline{\widehat{\E_f}}$ is locally free and we have the exact sequence of locally free sheaves 
$$
0\rightarrow\underline{\widehat{\E_f}}\otimes \O_{\C\P^2}(d)\rightarrow \O_{\C\P^2}(d)^{\oplus 3}\xrightarrow{(\widehat f_1, \widehat f_2, \widehat f_3)} \O_{\C\P^2}(2d)\rightarrow 0$$
We first assume $d=2k$ for some $k\in \mathbb{Z}_{+}$. Then we know $c_{1}((\underline{\widehat{\E}_f})^*\otimes \O_{\C\P^2}(-k))=0$ and by taking the dual of the previous sequence we get the exact sequence 
$$
0\rightarrow \O_{\C\P^2}(-3k) \rightarrow (\O_{\C\P^2}(-k))^{\oplus 3} \rightarrow (\underline{\widehat{\E}_f})^*\otimes \O_{\C\P^2}(-k) \rightarrow 0.
$$
By Kodaira vanishing theorem we know $H^1(\C\P^2, \O_{\C\P^2}(-3k))=0$, so we obtain $H^0(\C\P^2, (\underline{\widehat{\E}_f})^*\otimes \O_{\C\P^2}(-k))=0$ and the stability follows (see page 84 in \cite{CMH}). When $k$ is odd, the arguments are similar.
\end{proof}

We finish this section with two examples.
\begin{enumerate}
\item[] \textbf{Example 1.} $f=(z_1^2-z_1z_2z_3, z_2^2-z_3^3, z_3^2-z_1^3)$. The ideal $\text{Fitt}_2(\E_f, 0)$ is not homogeneous, for otherwise  the polynomials $z_1^2, z_2^2$ and $z_3^2$ must belong to the ideal generated by $f_1, f_2, f_3$, and it is easy to see this is impossible. Since $f$ here satisfies the assumption in Corollary \ref{cor3.38}, the  algebraic tangent cone  $\underline{\widehat {\E_f}}$ defined above is a stable bundle on $\C\P^2$. So our Theorem \ref{thm2} applies here, yielding that any admissible Hermitian-Yang-Mills connection on the germ of $\E_f$ at $0$ has a unique tangent cone which is a simple  HYM cone defined by the Hermitian-Einstein metric on $\underline{\widehat {\E_f}}$.

\item[] \textbf{Example 2.} Let $\E\rightarrow \C\P^{3}$ be given by the following exact sequence
\begin{equation}\label{globalexample}
0\rightarrow\O_{\C\P^{3}} \xrightarrow{s}\O_{\C\P^3}(3)^{\oplus 3}\rightarrow\E\rightarrow 0
\end{equation}
where $s=(z_0 z_1^2-z_1z_2z_3, z_0z_2^2-z_3^3, z_0z_3^2-z_1^3)\in H^0(\C\P^{3}, \O(3)^{\oplus 3})$. We know by Bezout's theorem $\text{Sing}(\E)=\{Z\in\C\P^{3}: s(Z)=0\}$ which consists of $27$ points (counted with multiplicities). Since $c_1(\E(-5))=-1$ and $H^0(\C\P^3, \E(-5))=0$, $\E$ is a stable reflexive sheaf. 

So by Theorem \ref{DUYBS} we know $\E$ admits an admissible Hermitian-Einstein metric. By choosing local coordinate and doing direct computations, $\Sing(\E)=\{Z_1, Z_2, \cdots, Z_{13}\}$ which is set of zeroes of $s$, where $Z_1=[1:0:0:0]$ is a zero of $s$ with multiplicity 8, and locally around $Z_1$ the sheaf $\E$ is modeled exactly by Example 1; $Z_2=[0:0:1:0]$ is also a zero of $s$ with multiplicity 8, whose local model is more complicated; all the other $Z_i$'s are simple zeroes of $s$ locally around which $\E$ is homogeneous and is isomorphic to the pull-back of the tangent bundle of $\C\P^2$. So using our results in this paper we know the tangent cones of the admissible Hermitian-Yang-Mills connection at $Z_i$ for $i\neq 2$. Indeed, we take $Z_1$ for example. Near $Z_1$, one can choose local coordinate given by $(\tau_1, \tau_2, \tau_3)=(\frac{z_1}{z_0}, \frac{z_2}{z_0}, \frac{z_3}{z_0})$ and $\frac{1}{z^3_0}$ to be a trivialization for $\O(3)$. In particular, near $Z_1$, $\E$ is modelled as follows 
$$
0\rightarrow \O_{B} \xrightarrow{(\tau_1^2-\tau_1\tau_2\tau_3, \tau_2^2-\tau_3^3, \tau_3^2-\tau_1^3)} \O_B^{\oplus 3} \rightarrow \E \rightarrow 0
$$
which is exactly given by Example 1. Other points are similar. 
\end{enumerate}

\end{document}